\newif\ifpersonal
\numberwithin{equation}{section}
\theoremstyle{plain}
\newtheorem{thm-intro}[equation]{Theorem}
\newtheorem{thm}[equation]{Theorem}
\newtheorem{lem}[equation]{Lemma}
\newtheorem{prop}[equation]{Proposition}
\newtheorem{cor}[equation]{Corollary}
\newtheorem*{rep@theorem}{\rep@title}
\newcommand{\newreptheorem}[2]{
\newenvironment{rep#1}[1]{
 \def\rep@title{#2 \ref{##1}}
 \begin{rep@theorem}}
 {\end{rep@theorem}}}
\theoremstyle{definition}
\newtheorem{defin}[equation]{Definition}
\newtheorem{notation}[equation]{Notation}
\newtheorem{rem}[equation]{Remark}
\newtheorem{eg}[equation]{Example}
\newtheorem{construction}[equation]{Construction}
\newcommand{\personal}[1]{\textcolor[rgb]{0,0,1}{(Personal: #1)}}
\newcommand{\todo}[1]{\textcolor{red}{(Todo: #1)}}
\newcommand*{\personal}[1]{\ignorespaces}
\newcommand*{\todo}[1]{\ignorespaces}
\newcommand{\C}{\mathbb C}
\newcommand{\rB}{\mathrm B}
\newcommand{\rH}{\mathrm H}
\newcommand{\rL}{\mathrm L}
\newcommand{\rR}{\mathrm R}
\newcommand{\rT}{\mathrm T}
\newcommand{\cC}{\mathcal C}
\newcommand{\cD}{\mathcal D}
\newcommand{\cF}{\mathcal F}
\newcommand{\cH}{\mathcal H}
\newcommand{\cG}{\mathcal G}
\newcommand{\cJ}{\mathcal J}
\newcommand{\cK}{\mathcal K}
\newcommand{\cM}{\mathcal M}
\newcommand{\cO}{\mathcal O}
\newcommand{\cP}{\mathcal P}
\newcommand{\cS}{\mathcal S}
\newcommand{\cT}{\mathcal T}
\newcommand{\cU}{\mathcal U}
\newcommand{\cX}{\mathcal X}
\DeclareFontFamily{U}{BOONDOX-calo}{\skewchar\font=45 }
\DeclareFontShape{U}{BOONDOX-calo}{m}{n}{<-> s*[1.05] BOONDOX-r-calo}{}
\DeclareFontShape{U}{BOONDOX-calo}{b}{n}{<-> s*[1.05] BOONDOX-b-calo}{}
\DeclareMathAlphabet{\mathcalboondox}{U}{BOONDOX-calo}{m}{n}
\newcommand{\bbP}{\mathbb P}
\newcommand{\bP}{\mathbf P}
\let\save@mathaccent\mathaccent
\newcommand*\if@single[3]{%
	\setbox0\hbox{${\mathaccent"0362{#1}}^H$}%
	\setbox2\hbox{${\mathaccent"0362{\kern0pt#1}}^H$}%
	\ifdim\ht0=\ht2 #3\else #2\fi
}
\newcommand*\rel@kern[1]{\kern#1\dimexpr\macc@kerna}
\newcommand*\widebar[1]{\@ifnextchar^{{\wide@bar{#1}{0}}}{\wide@bar{#1}{1}}}
\newcommand*\wide@bar[2]{\if@single{#1}{\wide@bar@{#1}{#2}{1}}{\wide@bar@{#1}{#2}{2}}}
\newcommand*\wide@bar@[3]{%
	\begingroup
	\def\mathaccent##1##2{%
		\let\mathaccent\save@mathaccent
		\if#32 \let\macc@nucleus\first@char \fi
		\setbox\z@\hbox{$\macc@style{\macc@nucleus}_{}$}%
		\setbox\tw@\hbox{$\macc@style{\macc@nucleus}{}_{}$}%
		\dimen@\wd\tw@
		\advance\dimen@-\wd\z@
		\divide\dimen@ 3
		\@tempdima\wd\tw@
		\advance\@tempdima-\scriptspace
		\divide\@tempdima 10
		\advance\dimen@-\@tempdima
		\ifdim\dimen@>\z@ \dimen@0pt\fi
		\rel@kern{0.6}\kern-\dimen@
		\if#31
		\overline{\rel@kern{-0.6}\kern\dimen@\macc@nucleus\rel@kern{0.4}\kern\dimen@}%
		\advance\dimen@0.4\dimexpr\macc@kerna
		\let\final@kern#2%
		\ifdim\dimen@<\z@ \let\final@kern1\fi
		\if\final@kern1 \kern-\dimen@\fi
		\else
		\overline{\rel@kern{-0.6}\kern\dimen@#1}%
		\fi
	}%
	\macc@depth\@ne
	\let\math@bgroup\@empty \let\math@egroup\macc@set@skewchar
	\mathsurround\z@ \frozen@everymath{\mathgroup\macc@group\relax}%
	\macc@set@skewchar\relax
	\let\mathaccentV\macc@nested@a
	\if#31
	\macc@nested@a\relax111{#1}%
	\else
	\def\gobble@till@marker##1\endmarker{}%
	\futurelet\first@char\gobble@till@marker#1\endmarker
	\ifcat\noexpand\first@char A\else
	\def\first@char{}%
	\fi
	\macc@nested@a\relax111{\first@char}%
	\fi
	\endgroup
}
\newcommand{\PSh}{\mathrm{PSh}}
\newcommand{\Sh}{\mathrm{Sh}}
\newcommand{\Ab}{\mathrm{Ab}}
\newcommand{\tauet}{\tau_\mathrm{\acute{e}t}}
\newcommand{\bPsm}{\bP_\mathrm{sm}}
\newcommand{\Modh}{\textrm{-}\mathrm{Mod}^\heartsuit}
\newcommand{\Mod}{\textrm{-}\mathrm{Mod}}
\newcommand{\Coh}{\mathrm{Coh}}
\newcommand{\Cohb}{\mathrm{Coh}^\mathrm{b}}
\newcommand{\Cohh}{\mathrm{Coh}^\heartsuit}
\newcommand{\St}{\mathrm{St}}
\newcommand{\Aff}{\mathrm{Aff}}
\newcommand{\An}{\mathrm{An}}
\newcommand{\Afd}{\mathrm{Afd}}
\newcommand{\Top}{\mathcal T\mathrm{op}}
\newcommand{\dAn}{\mathrm{dAn}}
\newcommand{\dAnc}{\mathrm{dAn}_{\mathbb C}}
\newcommand{\dAnk}{\mathrm{dAn}_k}
\newcommand{\Ank}{\mathrm{An}_k}
\newcommand{\cTan}{\cT_{\mathrm{an}}}
\newcommand{\cTank}{\cT_{\mathrm{an}}(k)}
\newcommand{\cTdisck}{\cT_{\mathrm{disc}}(k)}
\newcommand{\cTet}{\cT_{\mathrm{\acute{e}t}}}
\newcommand{\cTetk}{\cT_{\mathrm{\acute{e}t}}(k)}
\newcommand{\Str}{\mathrm{Str}}
\newcommand{\Strloc}{\mathrm{Str}^\mathrm{loc}}
\newcommand{\RTop}{\tensor*[^\rR]{\Top}{}}
\newcommand{\dAfd}{\mathrm{dAfd}}
\newcommand{\dAfdk}{\mathrm{dAfd}_k}
\newcommand{\dStn}{\mathrm{dStn}}
\newcommand{\trunc}{\mathrm{t}_0}
\newcommand{\CAlg}{\mathrm{CAlg}}
\newcommand{\Cat}{\mathrm{Cat}}
\newcommand{\fib}{\mathrm{fib}}
\newcommand{\anL}{\mathbb L\an}
\newcommand{\cTanc}{\cTan(\mathbb C)}
\newcommand{\dAff}{\mathrm{dAff}}
\newcommand{\afp}{^{\mathrm{afp}}}
\newcommand{\bfMap}{\mathbf{Map}}
\newcommand{\cHom}{\cH \mathrm{om}}
\newcommand{\dAnSt}{\mathrm{dAnSt}}
\newcommand{\PrL}{\mathcal P \mathrm{r}^{\mathrm{L}}}
\newcommand{\Perf}{\mathrm{Perf}}
\newcommand{\QCoh}{\mathrm{QCoh}}
\newcommand{\dSt}{\mathrm{dSt}}
\newcommand{\AnPerf}{\mathrm{AnPerf}}
\newcommand{\bfAnPerf}{\mathbf{AnPerf}}
\newcommand{\bfPerf}{\mathbf{Perf}}
\newcommand{\bfAnMap}{\mathbf{AnMap}}
\newcommand{\Ind}{\mathrm{Ind}}
\newcommand{\an}{^\mathrm{an}}
\newcommand{\alg}{^\mathrm{alg}}
\newcommand{\et}{_\mathrm{\acute{e}t}}
\newcommand{\inv}{^{-1}}
\newcommand{\canal}{$\mathbb C$-analytic\xspace}
\newcommand{\kanal}{$k$-analytic\xspace}
\newcommand{\op}{^\mathrm{op}}
\newcommand{\Cech}{\check{\mathcal C}}
\newcommand{\DM}{Deligne-Mumford\xspace}
\newcommand*{\longhookrightarrow}{\ensuremath{\lhook\joinrel\relbar\joinrel\rightarrow}}
\tikzset{
  closed/.style = {decoration = {markings, mark = at position 0.5 with { \node[transform shape, xscale = .8, yscale=.4] {/}; } }, postaction = {decorate} },
  open/.style = {decoration = {markings, mark = at position 0.5 with { \node[transform shape, scale = .7] {$\circ$}; } }, postaction = {decorate} }
}
\DeclareMathOperator{\Fun}{Fun}
\DeclareMathOperator{\Hom}{Hom}
\DeclareMathOperator{\Map}{Map}
\DeclareMathOperator{\Sp}{Sp}
\DeclareMathOperator{\Spec}{Spec}
\DeclareMathOperator{\Sym}{Sym}
\DeclareMathOperator*{\colim}{colim}
\DeclareMathOperator*{\fcolim}{``colim''}
\DeclareMathOperator*{\flim}{``lim''}
\DeclareMathOperator*{\cotimes}{\widehat{\otimes}}
\begin{document}
	
\title{Analytification of mapping stacks}

\author{Julian Holstein}
\address{Julian HOLSTEIN, Department of Pure Mathematics and Mathematical Statistics, University of Cambridge, Cambridge CB3 0WB, United Kingdom}
\email{JulianVSHolstein@gmail.com}

\author{Mauro PORTA}
\address{Mauro PORTA, Institut de Recherche Mathématique Avancée, 7 Rue René Descartes, 67000 Strasbourg, France}
\email{porta@math.unistra.fr}

\date{\today}


\maketitle

\personal{PERSONAL COMMENTS ARE SHOWN!!!}

\begin{abstract}
	Derived mapping stacks are a fundamental source of examples of derived enhancements of classical moduli problems.
	For instance, they appear naturally in Gromov-Witten theory and in some branches of geometric representation theory.
	In this paper, we show that in many cases the mapping stacks construction commutes with the (complex or non-archimedean) analytification functor.
	Along the way, we establish several properties of the stack of analytic perfect complexes and study some incarnations of analytic Tannaka duality.
\end{abstract}

\tableofcontents

\section{Introduction}

One of the main uses of derived algebraic geometry is to provide well-behaved derived enhancements of classical moduli problems: while the original moduli problem is often highly singular, its derived counterpart has controlled singularities, which usually means that it is lci in the derived sense.
This phenomenon is extremely useful in constructions that involve virtual fundamental classes.
Examples can be found in Gromov-Witten theory and in geometric representation theory \cite{Mann_Robalo_Brane_actions,Porta_Yu_NQK,Schurg_Toen_Vezzosi_Determinant,Negut_Shuffle}.
It is often the case that these derived enhancements arise from mapping stack constructions, which are also one of the primary sources for interesting examples of derived schemes and stacks.

More recently, derived techniques have become available also in the (complex and non-archimedean) analytic setting.
The motivations that led to the development this theory come from mirror symmetry and from nonabelian Hodge theory.
We refer to the introductions of \cite{Porta_Yu_DNAnG_I,Porta_Derived_Riemann_Hilbert} for more details on these programs.
In particular, the mapping stack construction and its basic properties have been extended to the analytic setting in \cite{Porta_Yu_Mapping}.

In this paper we study the natural question of whether the mapping stack construction commutes with analytification.
Let us formulate a more precise statement.
We let $k$ denote either the field of complex numbers or a non-archimedean field with a non-trivial valuation.
We also let $\dAff_k^{\mathrm{afp}}$ denote the $\infty$-category of derived affine schemes almost of finite presentation\footnote{A derived affine scheme $\Spec(A)$ is almost of finite presentation over $k$ if each $\pi_i(A)$ is finitely generated as $\pi_0(A)$-module. This condition ensures that we can consider its analytification.} over $k$ and by $\dAn_k$ the $\infty$-category of derived analytic spaces over $k$ (see \cref{sec:review} for a review of these notions).
Given derived stacks $X, Y \colon (\dAff_k^{\mathrm{afp}})\op \to \cS$, we define $\bfMap(X,Y)$ as the derived stack
\[ \begin{tikzcd}
	\bfMap(X,Y) \colon (\dAff_k^{\mathrm{afp}})\op \longrightarrow \cS
\end{tikzcd} \]
sending $T$ to $\Map_{\dSt_k^{\mathrm{afp}}}(X \times T, Y)$.
Similarly, given derived analytic stacks $\mathscr X, \mathscr Y \colon \dAn_k\op \to \cS$, we define $\bfAnMap( \mathscr X , \mathscr Y )$ as the derived analytic stack
\[ \bfAnMap(\mathscr X , \mathscr Y) \colon \dAn_k\op \longrightarrow \cS \]
sending $U$ to $\Map_{\dAnSt_k}(\mathscr X \times U, \mathscr Y)$.
Given derived stacks $X, Y \colon (\dAff_k^{\mathrm{afp}})\op \to \cS$, there is a canonical map
\[ \bfMap(X,Y)\an \longrightarrow \bfAnMap(X\an, Y\an) , \]
where $(-)\an$ denotes the derived analytification functor (see \cref{subsec:derived_analytification} for a review of its construction and properties).
The goal of this paper is to provide sufficiently general conditions on $X$ and $Y$ to guarantee that the above map is an equivalence. 
The key property for $X$ is that it satisfies the \emph{GAGA property}, i.e.\ the analytification functor
		\[ \Perf(X) \longrightarrow \Perf(X\an) \]
is an equivalence. For $Y$ we demand that it is \emph{tannakian}, which means that for any derived stack $X$ the natural map
	\[ \Map_{\dSt_k}(X,Y) \longrightarrow \Fun^\otimes(\Perf(Y), \Perf(X))\]
	is fully faithful and its image has a precise characterization, see \cref{def:tannakian_stacks}.
Our main theorem is then the following: 

\begin{thm-intro}[{cf.\ \cref{thm:tannakian_target}}] \label{thm-intro:tannakian_target}
	Let $X, Y \colon (\dAff_k^{\mathrm{afp}})\op \to \cS$ be derived stacks locally almost of finite presentation.
	Suppose that:
	\begin{enumerate}
		\item the analytification functor
		\[ \QCoh(X) \longrightarrow \cO_{X\an} \Mod \]
		is $t$-exact and conservative;
		\item $X$ satisfies the GAGA property;
		\item $Y$ is a geometric stack which is tannakian and satisfies $\QCoh(Y) \simeq \Ind(\Perf(Y))$;
		\item the mapping stack $\bfMap(X,Y)$ is geometric.
	\end{enumerate}
	Then the canonical morphism
	\begin{equation} \label{eq:analytification_mapping_space}
		\Map_{\dSt_k^{\mathrm{afp}}}(X,Y) \longrightarrow \Map_{\dAnSt_k}(X\an,Y\an)
	\end{equation}
	is an equivalence.
	If furthermore $X$ satisfies the universal GAGA property (see \cref{defin:strong_GAGA} for the precise meaning), then the morphism
	\begin{equation} \label{eq:analytification_mapping_stacks}
		\bfMap(X,Y)\an \longrightarrow \bfAnMap(X\an, Y\an)
	\end{equation}
	is an equivalence of derived analytic stacks.
\end{thm-intro}

The assumptions on $Y$ are satisfied when it is a quasi-compact quasi-separated Deligne-Mumford stack or it is the classifying stack of an affine group scheme of finite type in characteristic $0$ (see the corollary to \cite[Theorem B]{Hall_Rydh_2017}).
On the other hand, it is usually harder to check that the GAGA assumptions on $X$ are satisfied.
We devote the entire \cref{sec:GAGA} to verifying these assumptions in a number of important examples.
Thanks to \cite{Porta_Yu_Higher_analytic_stacks_2014} we know that $X$ satisfies the GAGA property when $X$ is a proper geometric stack over $k$.
In this paper, we also verify that if $G$ is a reductive algebraic group over $\mathbb C$, then $\rB G$ satisfies the GAGA property, although it is not a proper geometric stack in the sense of \cite{Porta_Yu_Higher_analytic_stacks_2014}.
The main theorem we prove in this direction is the following:

\begin{thm-intro}[{cf.\ \cref{cor:analytification_relative_stack_perfect_complexes}}] \label{thm-intro:relative_GAGA}
	Let $k$ be either the field of complex numbers or a non-archimedean field equipped with a non-trivial valuation.
	Let $X$ be a proper derived geometric stack locally almost of finite presentation over $k$.
	Assume that the stack $\bfMap(X, \bfPerf_k)$ of perfect complexes on $X$ is locally geometric.
	Then its analytification is canonically equivalent to the analytic stack of perfect complexes on $X\an$, i.e.\ the canonical map
	\[ \bfMap(X, \bfPerf_k)\an \longrightarrow \bfAnMap(X\an, \bfAnPerf_k) \]
	is an equivalence.
	Here $\bfPerf_k$ (resp.\ $\bfAnPerf_k$) is the derived algebraic (resp.\ analytic) stack of perfect complexes (see \cref{sec:analytic_perfect_complexes}).
\end{thm-intro}

From this theorem we deduce that the same conclusion holds for the de Rham stack $X_{\mathrm{dR}}$, the Betti stack $X_{\mathrm B}$ and the Dolbeault stack $X_{\mathrm{Dol}}$ for some smooth scheme $X$.
Notice that in the assumptions of \cref{thm-intro:tannakian_target} we do not require $X$ to be a geometric stack, so we can apply our analytification theorem also to these cases.
Combining Theorems \ref{thm-intro:tannakian_target} and \ref{thm-intro:relative_GAGA}, we deduce the following vast generalization of the main result of \cite{Porta_Derived_Riemann_Hilbert}:

\begin{thm}[{cf.\ \cref{cor:generalized_RH_correspondence}}]
	Let $X$ be a smooth and proper scheme over $\mathbb C$.
	Let $Y$ be a derived stack locally almost of finite presentation satisfying the same assumptions as in \cref{thm-intro:tannakian_target}.
	Then there is a natural equivalence of derived analytic stacks
	\[ \bfMap(X_{\mathrm{dR}}, Y)\an \simeq \bfMap(X_{\mathrm{B}}, Y)\an , \]
	which reduces to the Deligne Riemann-Hilbert correspondence for rank $n$ vector bundles when $Y = \rB \mathrm{GL}_n$ obtained in \cite{Deligne_Equations_differentielles}.
\end{thm}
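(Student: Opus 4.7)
The plan is to apply \cref{thm-intro:tannakian_target} to both $X_{\mathrm{dR}}$ and $X_{\mathrm B}$, and then compare the resulting analytic mapping stacks through the classical (analytic) Riemann-Hilbert equivalence. The first step is therefore to verify that $X_{\mathrm{dR}}$ and $X_{\mathrm B}$ satisfy conditions~(1) and~(2) of \cref{thm-intro:tannakian_target}. Neither stack is geometric, but geometricity of the source is never required. Both admit presentations as colimits of proper geometric stacks built from $X$ — for $X_{\mathrm{dR}}$ via the infinitesimal groupoid of $X$, for $X_{\mathrm B}$ via the \v{C}ech nerve of a good cover of $X\an$. I would invoke \cref{thm-intro:relative_GAGA} on each term of such a presentation and then propagate the conservativity, the $t$-exactness on $\QCoh$, and the $\Perf$-equivalence through the colimit. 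The geometricity of $\bfMap(X_{\mathrm{dR}},Y)$ and $\bfMap(X_{\mathrm B},Y)$ required as condition~(4) is part of the running hypotheses on $Y$.

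Once these verifications are in place, two applications of \cref{thm-intro:tannakian_target} yield canonical equivalences
\[ \bfMap(X_{\mathrm{dR}},Y)\an \simeq \bfAnMap(X_{\mathrm{dR}}\an, Y\an), \qquad \bfMap(X_{\mathrm B},Y)\an \simeq \bfAnMap(X_{\mathrm B}\an, Y\an). \]
The remaining ingredient is an equivalence $X_{\mathrm{dR}}\an \simeq X_{\mathrm B}\an$ of derived analytic stacks. Since $X$ is smooth and proper over $\mathbb C$, both analytifications can be identified with the constant analytic stack on the underlying homotopy type of $X\an$: this is precisely the content of the analytic Poincaré lemma together with the standard comparison between analytic de Rham and singular cohomology. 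Substituting this equivalence into the two displays above gives the claimed equivalence of analytifications of mapping stacks.

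For the specialization to $Y = \rB\mathrm{GL}_n$, there is nothing further to check: $\bfMap(X_{\mathrm{dR}}, \rB\mathrm{GL}_n)$ and $\bfMap(X_{\mathrm B}, \rB\mathrm{GL}_n)$ classify respectively flat vector bundles and rank-$n$ local systems on $X\an$, and the induced equivalence of analytic moduli is exactly Deligne's correspondence. I expect the main obstacle to be the verification of conditions~(1) and~(2) of \cref{thm-intro:tannakian_target} for the non-geometric stacks $X_{\mathrm{dR}}$ and $X_{\mathrm B}$: one has to descend the relative GAGA result of \cref{thm-intro:relative_GAGA} along a colimit presentation, which is precisely the kind of work carried out in \cref{sec:GAGA}. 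The remainder of the argument is then either a formal application of the tannakian target theorem or a classical piece of analytic Riemann-Hilbert.
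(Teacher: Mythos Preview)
Your overall architecture is right up to the final step: verify universal GAGA for $X_{\mathrm{dR}}$ and $X_{\mathrm B}$ (this is exactly what \cref{sec:GAGA} does, see Propositions~\ref{prop:de_Rham_universal_GAGA} and~\ref{prop:Betti_universal_GAGA}), then apply \cref{thm-intro:tannakian_target} twice to reduce to a comparison of analytic mapping stacks. The gap is in how you close that comparison.

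The claim that $X_{\mathrm{dR}}\an \simeq X_{\mathrm B}\an$ as derived analytic stacks is false. Evaluate both on the point $\Sp(\mathbb C)$: by definition $X_{\mathrm{dR}}\an(\Sp(\mathbb C)) = X\an(\Sp(\mathbb C))$ is the \emph{set} of $\mathbb C$-points of $X$, while $X_{\mathrm B}\an(\Sp(\mathbb C))$ is the full homotopy type of $X\an$. For $X = \mathbb P^1$ one is an uncountable discrete set, the other is the $\infty$-groupoid $S^2$. The analytic Poincar\'e lemma gives an equivalence of categories of sheaves (or of $\Perf$), not of the underlying stacks; the Riemann--Hilbert transformation $\eta_{\mathrm{RH}} \colon X_{\mathrm{dR}} \to X_{\mathrm B}$ introduced in \cite{Porta_Derived_Riemann_Hilbert} is not an equivalence.

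What the paper does instead (\cref{cor:generalized_RH_correspondence}) is argue at the level of $\Perf$ and then lift via analytic Tannaka duality. Concretely: for each derived Stein test object $S$, \cite[Theorem~6.11]{Porta_Derived_Riemann_Hilbert} gives $\eta_{\mathrm{RH}}^* \colon \Perf(S \times X_{\mathrm B}) \xrightarrow{\sim} \Perf(S \times X_{\mathrm{dR}})$. Both $\Map_{\dAnSt_\C}(S \times X_{\mathrm B}, Y\an)$ and $\Map_{\dAnSt_\C}(S \times X_{\mathrm{dR}}, Y\an)$ embed fully faithfully into the corresponding $\Fun^\otimes(\Perf(Y), \Perf(-))$ by \cref{thm:refined_tannakian}, and the bottom of the resulting square is an equivalence. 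Full faithfulness of $\eta_{\mathrm{RH}}^*$ on mapping spaces is then immediate; essential surjectivity requires checking that the transported functor still preserves perfect, flat and connective objects, which the paper does by unwinding $X_{\mathrm{dR}}$ and $X_{\mathrm B}$ as colimits of Stein pieces and invoking Propositions~\ref{thm:refined_tannakian_image_canal} and~\ref{lem:refined_tannakian_image_necessary_conditions}. So the missing idea is not a direct identification of the two source stacks, but rather the use of the tannakian embedding to transport the $\Perf$-level Riemann--Hilbert equivalence up to mapping spaces into $Y\an$.
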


Let us spend a couple of extra words on \cref{thm-intro:tannakian_target}.
It is a threefold generalization of Lurie's main theorem in \cite{Lurie_Tannaka_duality}.
First of all, we allow our stacks $X$ and $Y$ to be derived.
Second, in loc.\ cit.\ it is only proven that \eqref{eq:analytification_mapping_space} is an equivalence, while we consider the analogous but stronger question for the map \eqref{eq:analytification_mapping_stacks}.
Finally, we remove the geometricity assumption on $X$.
These generalizations come at a cost: although the overall strategy for the proof of \cref{thm-intro:tannakian_target} follows loosely the one given in loc.\ cit.\ many auxiliary results have to be proven again and improved in our context.
Many of these results are of independent interest, and they constitute the other principal theorems of this paper.
Arguably the most important one is the following partial extension of Tannaka duality to the analytic setting:

\begin{thm-intro}[{cf.\ \cref{lem:refined_tannakian_image_necessary_conditions} and Propositions \ref{thm:refined_tannakian}, \ref{thm:refined_tannakian_image_kanal} and \ref{thm:refined_tannakian_image_canal}}] \label{thm-intro:analytic_tannaka_duality}
	Let $Y \in \dSt_k^{\mathrm{afp}}$ be a derived geometric stack locally almost of finite presentation and let $X \in \dAn_k$.
	Assume that:
	\begin{enumerate}
		\item $Y$ is tannakian;
		\item $\QCoh(Y) \simeq \mathrm{Ind}(\Perf(Y))$.
	\end{enumerate}
	Then the assignment sending a morphism $f \colon X \to Y\an$ to the composition
	\[ \Perf(Y) \longrightarrow \Perf(Y\an) \xrightarrow{f^*} \Perf(X) \]
	provides a fully faithful map
	\[ \Map_{\dAnSt_k}(X,Y\an) \longrightarrow \Fun^\otimes(\Perf(Y), \Perf(X)) . \]
	Furthermore, locally on $X$, we can identify the essential image of this functor with those symmetric monoidal functors
	\[ F \colon \QCoh(Y) \to \cO_X \Mod \]
	that commute with colimits, preserve perfect complexes and respect flat objects and connective objects.
\end{thm-intro}

The proofs of Theorems \ref{thm-intro:tannakian_target}, \ref{thm-intro:relative_GAGA} and \ref{thm-intro:analytic_tannaka_duality} heavily rely on two more technical results that we are going to describe next.
The first one concerns derived analytification for geometric stacks.
When $Y$ is a derived \DM stack locally almost of finite presentation over $k$, the definition of analytification implies that for every derived analytic space $X = (\cX_X, \cO_X)$ one has a canonical equivalence
\begin{equation} \label{eq:adjunction_analytification}
	\Map_{\dAn_k}(X, Y\an) \simeq \Map_{\RTop(\cTetk)}(X\alg, Y) .
\end{equation}
Here $X\alg = (\cX_X, \cO_X\alg)$ is the derived analytic space $X$ seen as a locally ringed space, and $\RTop(\cTetk)$ denotes the $\infty$-category of those locally ringed spaces whose structure sheaf has Henselian stalks.
When $Y$ is more generally a geometric stack, it can no longer be represented as an object in $\RTop(\cTetk)$ and $Y\an$ is no longer an object in $\dAn_k$.
Therefore, the above equivalence loses its meaning.
It is natural to replace $\dAn_k$ by $\dAnSt_k$, but the right hand side cannot be simply replaced by $\dSt_k$ because the construction $X \mapsto X\alg$ is not sufficiently well behaved.
We bypass this problem by proving the following:

\begin{thm-intro}[{cf.\ \cref{thm:generalized_adjunction}}] \label{thm-intro:generalized_adjunction}
	Let $X \in \dAn_k$ and let $\cX_X$ be its underlying $\infty$-topos.
	Let $\mathbf 1_X$ denote the final object of $\cX_X$.
	There exist functors
	\[ F^s_X \colon \dAnSt_k \longrightarrow \cX_X \quad , \quad G^s_X \colon \dSt_k^{\mathrm{afp}} \longrightarrow \cX_X \]
	and a natural transformation
	\[ \alpha \colon G^s_X \longrightarrow F^s_X \circ (-)\an \]
	satisfying the following conditions:
	\begin{enumerate}
		\item Given $Y \in \dSt_k^{\mathrm{afp}}$, there is a natural equivalence
		\[ \Map_{\dAnSt_k}(X, Y\an) \simeq F^s_X(Y\an)(\mathbf 1_X) . \]
		\item Given $Y \in \dSt_k^{\mathrm{afp}}$, the functor $G^s_X(Y)$ is the sheafification of the functor sending an \'etale morphism $U \to X$ to
		\[ \Map_{\dSt_k}( \Spec(\Gamma(U;\cO_U\alg)), Y ) . \]
		\item If $Y$ is a geometric stack, the natural transformation $\alpha$ is an equivalence.
	\end{enumerate}
\end{thm-intro}

Although more complicated than the adjunction which is available for derived \DM stacks, \cref{thm-intro:generalized_adjunction} is equally useful in practice, because it gives a way of describing morphisms \emph{into} $Y\an$ in terms of (sheaves of) maps into $Y$.
The entire \cref{sec:analytification_geometric_stacks} is devoted to formulating and proving this theorem.\\

The second main technical tool for this paper is a careful study of the stack of analytic perfect complexes, which we denote $\bfAnPerf_k$.
We analyze this stack in \cref{sec:analytic_perfect_complexes}.
In this section, the dichotomy between the non-archimedean and the complex analytic one is accentuated.
Indeed, in the non-archimedean case, the main results of this section are essentially straightforward corollaries of the results proven in \cite{Porta_Yu_Mapping}.
On the other hand, in the complex analytic case the proofs are significantly harder.
The main result we obtain is the following, which generalizes the more classical equivalence of \cite[Proposition 11.9.2]{Taylor_Several_complex}:

\begin{thm-intro}[{cf.\ \cref{thm:unbounded_coherent_pro_compact_Stein} and \cref{cor:perfect_complexes_compact_Stein}}] \label{thm-intro:perfect_complexes_compact_Stein}
	Let $X \in \dAn_\C$ be a derived complex analytic space and assume it is Stein.\footnote{This means that its truncation $\trunc(X)$ is a Stein space.}
	Let $K \subset \trunc(X)$ be a compact subset admitting a fundamental system of open Stein neighbourhoods.
	For every Stein neighbourhood $U$ of $K$ inside $X$, write $A_U \coloneqq \Gamma(U; \cO_U\alg)$.
	Then there is a canonical equivalence in $\Ind(\Cat_\infty^{\mathrm{st}, \otimes})$
	\[ \fcolim_{K \subset U \subset X} \Perf( A_U ) \simeq \fcolim_{K \subset U \subset X} \Perf( U ) , \]
	where the ind-objects are parametrized by all the Stein open neighbourhoods of $K$.
	Furthermore, after realizing these ind-objects, we obtain an equivalence in $\Cat_\infty^{\mathrm{st}, \otimes}$
	\[ \colim_{K \subset U \subset X} \Perf(U) \simeq \colim_{K \subset U \subset X} \Perf(A_U) \simeq \Perf(A_K) , \]
	where
	\[ A_K \coloneqq \colim_{K \subset U \subset X} A_U . \]
\end{thm-intro}

\begin{rem}
	The above theorem admits a relative version.
	Notably, if $X$ is a proper derived geometric $\C$-stack, then for any $X \in \dAn_\C$ the analytification induces an equivalence
	\begin{equation} \label{eq:ind_relative_GAGA}
		\fcolim_{K \subset U \subset X} \Perf( \Spec(A_U) \times X ) \simeq \fcolim_{K \subset U \subset X} \Perf( U \times X\an )
	\end{equation}
	in $\Ind(\Cat_\infty^{\mathrm{st},\otimes})$.
	This is the content of the \canal part of \cref{thm:relative_GAGA}.
	Combining this result with the technique introduced in \cref{thm-intro:generalized_adjunction} allows to easily deduce \cref{thm-intro:relative_GAGA}.
	However, we would like to emphasize that \cref{thm-intro:perfect_complexes_compact_Stein} is a fundamental ingredient in the proof of the equivalence \eqref{eq:ind_relative_GAGA}.
\end{rem}

We conclude this introduction by mentioning two more applications. In \cref{prop:analytification_on_limits} we show that the derived analytification functor commutes with finite limits of geometric stacks. In \cref{cor:period_domain} we revisit the derived period domain from \cite{DiNatale_Global_Period_2016} and construct it as a derived analytic moduli stack. The original derived period domain in loc.\ cit.\ was constructed in an ad hoc way by analytifying an algebraic moduli stack. In fact, the construction of the derived period map in loc.\ cit.\ could be simplified using our \cref{thm-intro:generalized_adjunction} to bridge the algebraic and analytic aspects of the problem. We will not pursue this approach in this paper.\\

\paragraph{\bf{Notation and conventions}}

In this paper we freely use the language of $\infty$-categories.
Although the discussion is often independent of the chosen model for $\infty$-categories, whenever needed we identify them with quasi-categories and refer to \cite{HTT} for the necessary foundational material.

The notations $\cS$ and $\Cat_\infty$ are reserved to denote the $\infty$-categories of spaces and of $\infty$-categories, respectively.
If $\cC \in \Cat_\infty$ we denote by $\cC^\simeq$ the maximal $\infty$-groupoid contained in $\cC$.
We let $\Cat_\infty^{\mathrm{st}}$ denote the $\infty$-category of stable $\infty$-categories with exact functors between them.
We also let $\PrL$ denote the $\infty$-category of presentable $\infty$-categories with left adjoints between them.
Similarly, we let $\PrL_{\mathrm{st}}$ denote the $\infty$-categories of stably presentable $\infty$-categories with left adjoints between them.
Finally, we set
\[ \Cat_\infty^{\mathrm{st}, \otimes} \coloneqq \CAlg( \Cat_\infty^{\mathrm{st}} ) \quad , \quad \mathcal P \mathrm r_{\mathrm{st}}^{\mathrm L, \otimes} \coloneqq \CAlg( \PrL_{\mathrm{st}} ) . \]

Given an $\infty$-category $\cC$ we denote by $\PSh(\cC)$ the $\infty$-category of $\cS$-valued presheaves.
We follow the conventions introduced in \cite[\S 2.4]{Porta_Yu_Higher_analytic_stacks_2014} for $\infty$-categories of sheaves on an $\infty$-site.

For a field $k$, we reserve the notation $\CAlg_k$ for the $\infty$-category of simplicial commutative rings over $k$.
We often refer to objects in $\CAlg_k$ simply as \emph{derived commutative rings}.
We denote its opposite by $\dAff_k$, and we refer to it as the $\infty$-category of \emph{derived affine schemes}.
We say that a derived ring $A \in \CAlg_k$ is \emph{almost of finite presentation} if $\pi_0(A)$ is of finite presentation over $k$ and $\pi_i(A)$ is a finitely presented $\pi_0(A)$-module.\footnote{Equivalently, $A$ is almost of finite presentation if $\pi_0(A)$ is of finite presentation and the cotangent complex $\mathbb L_{A/k}$ is an almost perfect complex over $A$.}
We denote by $\dAff_k^{\mathrm{afp}}$ the full subcategory of $\dAff_k$ spanned by derived affine schemes $\Spec(A)$ such that $A$ is almost of finite presentation.
When $k$ is either a non-archimedean field equipped with a non-trivial valuation or is the field of complex numbers, we let $\An_k$ denote the category of analytic spaces over $k$.
We denote by $\Sp(k)$ the analytic space associated to $k$.

Throughout the paper we need to consider both stacks with values in $\cS$ and with values in $\Cat_\infty$.
We use the following convention: if $(\cC, \tau)$ is an $\infty$-site and $F \colon \cC\op \to \Cat_\infty$ is a $\Cat_\infty$-valued stack, we denote by $\mathbf F$ the $\cS$-valued stack defined by
\[ \mathbf F(X) \coloneqq F(X)^\simeq \quad , \quad X \in \cC . \]
Given stacks $T \colon \cC\op \to \cS$ and $F \colon \cC\op \to \Cat_\infty$ we let $\bfMap(T,F)$ denote the $\Cat_\infty$-valued stack defined by
\[ \bfMap(T, F)(X) \coloneqq F(T \times X) . \]
Here we are implicitly extending $F$ to a functor $\St(\cC, \tau)\op \to \Cat_\infty$.
Notice that
\[ \big( \bfMap(T,F)(X) \big)^{\simeq} \simeq \bfMap(T, \mathbf F)(X) \quad , \quad X \in \cC , \]
where the $\bfMap$ on the right hand side now denotes the internal hom in $\St(\cC, \tau)$.

Finally, in this paper we are concerned with ind and (to a lesser extent) pro-objects.
Given an $\infty$-category $\cC$ we let $\Ind(\cC)$ and $\mathrm{Pro}(\cC)$ denote the $\infty$-categories of ind and pro objects in $\cC$, respectively.
If $I$ is a filtered category and $F \colon I \to \cC$ is a diagram, we let $\fcolim_{i \in I} F(i)$ denote the associated ind-object in $\Ind(\cC)$.
We use the notation $\flim$ for pro-objects.\\

\paragraph{\textbf{Acknowledgments}}

We are grateful to J.\ Ant\'onio, J.\ Calabrese, D.\ Calaque, G.\ Ginot, J.\ Hilburn, V.\ Melani, F.\ Petit, M.\ Robalo, C.\ Simpson, B.\ To\"en, G.\ Vezzosi and T.\ Y.\ Yu for useful discussions related to the content of this paper.

This research has been partially conducted while M.\ P.\ was supported by Simons Foundation grant number 347070 and the group GNSAGA. An important part of this research was accomplished when J.\ H.\ visited M.\ P.\ at the University of Pennsylvania supported by a ``Research in pairs'' grant (41653) under Scheme 4 of the London Mathematical Society.

\section{Review of derived analytic geometry} \label{sec:review}

We start by reviewing the basic notions and facts about derived analytic geometry.
We refer the reader to the papers \cite{DAG-IX,Porta_Yu_DNAnG_I,Porta_DCAGI,Porta_Yu_Representability} for more extensive discussions of the foundations.

\subsection{Definitions and basic facts}

We let $k$ denote either the field $\mathbb C$ of complex numbers or a complete non-archimedean field with non-trivial valuation.

\begin{notation}
	\begin{enumerate}
		\item Let $\cTdisck$ denote the full subcategory of $k$-schemes spanned by affine spaces $\mathbb A^n_k$.
		A morphism in $\cTdisck$ is said to be \emph{admissible} if it is an isomorphism.
		We endow $\cTdisck$ with the trivial Grothendieck topology.
		
		\item Let $\cTetk$ denote the category of smooth $k$-schemes.
		A morphism in $\cTank$ is said to be \emph{admissible} if it is an \'etale morphism.
		We endow $\cTank$ with the \'etale topology $\tauet$.
		
		\item Let $\cTank$ denote the category of smooth \kanal spaces.
		A morphism in $\cTank$ is said to be \emph{admissible} if it is an \'etale morphism.
		We endow $\cTank$ with the \'etale topology $\tauet$.
	\end{enumerate}
\end{notation}

\begin{defin}
	Let $\cX$ be an $\infty$-topos.
	A \emph{$\cTank$-structure} is a functor $\cO \colon \cTank \to \cX$ commuting with products and pullbacks along admissible morphisms.
	We denote by $\Str_{\cTank}(\cX)$ the full subcategory of $\Fun(\cTank, \cX)$ spanned by $\cTank$-structures.
\end{defin}

\begin{defin}
	Let $\cX$ be an $\infty$-topos.
	A $\cTank$-structure $\cO$ is said to be \emph{local} if it takes $\tauet$-covers to effective epimorphisms.
	A morphism of $\cTank$-structures $\cO \to \cO'$ is said to be \emph{local} if for every admissible morphism $U \to V$ the square
	\[ \begin{tikzcd}
		\cO(U) \arrow{r} \arrow{d} & \cO(V) \arrow{d} \\
		\cO'(U) \arrow{r} & \cO'(V)
	\end{tikzcd} \]
	is a pullback square in $\cX$.
	We denote by $\Strloc_{\cTank}(\cX)$ the (non full) subcategory of $\Str_{\cTank}(\cX)$ spanned by local structures and local morphisms between them.
\end{defin}

\begin{rem}
	One can give similar definitions for $\cTetk$ and $\cTdisck$.
	Notice that a $\cTdisck$-structure is simply a product preserving functor $\cO \colon \cTdisck \to \cX$.
	For this reason, we can canonically identify $\Str_{\cTdisck}(\cX)$ with the $\infty$-category of derived commutative rings $\CAlg_k(\cX)$ in $\cX$.
	When $\cX = \cS$ is the $\infty$-topos of spaces, $\CAlg_k(\cX)$ coincides with the underlying $\infty$-category of the model category of simplicial commutative $k$-algebras.
\end{rem}

\begin{eg} \label{eg:derived_analytic_space}
	\begin{enumerate}
		\item Let $X$ be a \canal space and let $X^{\mathrm{top}}$ denote its underlying topological space.
		Let $\cX \coloneqq \Sh(X^{\mathrm{top}})$ be the $\infty$-topos of sheaves on $X$.
		We define a $\cTanc$-structure $\cO$ on $\cX$ as the functor sending an object $U \in \cTanc$ to the sheaf $\cO(U) \in \cX$ defined by
		\[ \mathrm{Op}(X^{\mathrm{top}}) \ni V \mapsto \cO(U)(V) \coloneqq \Hom_{\An_{\mathbb C}}(V, U) , \]
		where $\mathrm{Op}(X^{\mathrm{top}})$ denotes the poset of open subsets of $X^{\mathrm{top}}$.
		Notice that $\cO(\mathbf A^1_{\mathbb C})$ coincides with the usual sheaf of holomorphic functions on $X$.
		
		\item Let $X$ be a rigid analytic space and let $X\et$ denote the small \'etale site of $X$.
		Let $\cX \coloneqq \Sh(X\et, \tauet)^\wedge$ be the hypercompletion of the $\infty$-topos of sheaves on $X\et$.
		Then we can define a $\cTank$-structure $\cO$ on $\cX$ as the functor sending $U \in \cTank$ to the sheaf $\cO(U) \in \cX$ defined by
		\[ X\et \ni V \mapsto \cO(U)(V) \coloneqq \Hom_{\Ank}(V, U) . \]
		Once again, $\cO(\mathbf A^1_k)$ coincides with the usual sheaf of analytic functions on $X$.
	\end{enumerate}
\end{eg}

The analytification functor introduced in the \canal case in \cite[\S VIII]{SGA1} and in the \kanal case in \cite{Berkovich_Spectral_1990} restricts to a functor
\[ (-)\an \colon \cTdisck \to \cTank . \]
Precomposition with $(-)\an$ provides for every $\infty$-topos $\cX$ a functor
\[ (-)\alg \colon \Str_{\cTank}(\cX) \longrightarrow \Str_{\cTdisck}(\cX) \simeq \CAlg_k(\cX) . \]
We refer to this functor as the \emph{underlying algebra functor}.

\begin{defin} \label{def:derived_analytic_space}
	A \emph{derived \kanal space} is a pair $(\cX, \cO_X)$ where $\cX$ is a hypercomplete $\infty$-topos and $\cO_X$ is a $\cTank$-structure on $\cX$ such that:
	\begin{enumerate}
		\item locally on $\cX$, $(\cX, \pi_0 \cO_X)$ is equivalent to a $\cTank$-structured topos arising from the construction of \cref{eg:derived_analytic_space};
		\item the sheaves $\pi_i(\cO_X\alg)$ are coherent sheaves of $\pi_0(\cO_X\alg)$-modules.
	\end{enumerate}
\end{defin}

\begin{thm}[{\cite{DAG-IX,Porta_Yu_DNAnG_I}}]
	Derived \kanal spaces assemble into an $\infty$-category $\dAnk$ that satisfies the following properties:
	\begin{enumerate}
		\item fibre products in $\dAnk$ exist;
		\item the construction of \cref{eg:derived_analytic_space} provides a fully faithful embedding of the category of ordinary \kanal spaces $\An_k$ in $\dAnk$.
	\end{enumerate}
\end{thm}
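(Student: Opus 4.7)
The plan is to construct $\dAnk$ as a full subcategory of the $\infty$-category $\LTop(\cTank)$ of $\cTank$-structured $\infty$-topoi (with \emph{local} morphisms between them), and to verify the two listed properties by reducing to known statements about structured topoi together with a careful analysis of the local model given in \cref{eg:derived_analytic_space}.

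First I would set $\LTop(\cTank)$ to be the $\infty$-category whose objects are pairs $(\cX,\cO)$ with $\cX$ a hypercomplete $\infty$-topos and $\cO \in \Strloc_{\cTank}(\cX)$, and whose morphisms $(\cX,\cO_X) \to (\cY,\cO_Y)$ consist of a geometric morphism $f_* \colon \cX \to \cY$ together with a local natural transformation $\cO_Y \to f_* \cO_X$. By \cite{DAG-IX}, this $\infty$-category admits all small limits: the underlying $\infty$-topos of a limit is computed in $\RTop$ (the right adjoint $\infty$-category of $\infty$-topoi), and the structure sheaf is obtained by pulling back the structures along the projections and then taking their coproduct in $\Strloc_{\cTank}$. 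I would then define $\dAnk \subset \LTop(\cTank)$ as the full subcategory spanned by pairs satisfying the two local conditions in \cref{def:derived_analytic_space}.

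For existence of fiber products, given a cospan $X \to Z \leftarrow Y$ in $\dAnk$, I would form the pullback $W \coloneqq X \times_Z Y$ in $\LTop(\cTank)$ and check that $W \in \dAnk$. Both conditions of \cref{def:derived_analytic_space} are local on $\cX_W$, so I can reduce to the case where $X$, $Y$ and $Z$ arise from affine/affinoid (ordinary) analytic spaces. For condition (1), the fact that the truncation $(\cX_W, \pi_0 \cO_W)$ is locally the analytic space obtained from \cref{eg:derived_analytic_space} follows from the fact that the analytic spectrum functor preserves finite coproducts of admissible analytic rings and commutes with $\pi_0$; the argument is essentially the one carried out in \cite{Porta_Yu_DNAnG_I}. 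For condition (2), namely the coherence of $\pi_i(\cO_W\alg)$, one uses that the underlying algebra functor $(-)\alg$ commutes with the relevant fiber products (since it is obtained by restriction along $(-)\an \colon \cTdisck \to \cTank$ which preserves the admissible covers and products used in the computation), and then appeals to coherence of $\pi_0(\cO_W\alg)$ (which follows from (1)) together with the local Tor spectral sequence to control $\pi_i$.

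For the embedding $\An_k \hookrightarrow \dAnk$, I would note that the assignment $X \mapsto (\cX_X, \cO_X)$ of \cref{eg:derived_analytic_space} lands in $\dAnk$ by construction: the truncation is $X$ itself, and $\pi_i(\cO_X\alg) = 0$ for $i > 0$ together with the coherence of $\pi_0(\cO_X\alg) = \cO_X$ in the classical sense. Full faithfulness is the heart of the matter: given two ordinary analytic spaces $X$, $Y$, I need to show that a local morphism of $\cTank$-structured topoi $(\cX_X,\cO_X) \to (\cX_Y,\cO_Y)$ is the same as an analytic map $X \to Y$. Since $Y$ is covered by admissible opens of the form $U \hookrightarrow \mathbb A^n_k\an$, the universal property of $\cO_Y$ as a $\cTank$-structure together with the locality condition reduce this to showing that, for a fixed $X$, $\Hom_{\Strloc_{\cTank}(\cX_X)}(\cO_{\mathbb A^n_k\an}, \cO_X) \simeq \cO_X(\mathbb A^1_k\an)^n(\mathbf 1_X) = \cO_X^n(X)$ in an $\An_k$-coherent way, which is exactly the content of the analytic analogue of Lurie's representability of $\Spec$ proven in \cite{DAG-IX} in the $\mathbb C$-analytic case and extended to the \narch setting in \cite{Porta_Yu_DNAnG_I}.

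The main obstacle will be the second condition on fiber products, verifying coherence of the higher homotopy sheaves after an analytic base change. The naive expectation that coherence is preserved by pullbacks of admissible analytic algebras fails without control on Tor-amplitude, so one must either work locally on affinoid/Stein opens where explicit free resolutions exist and where $(-)\alg$ identifies $\cO$ with a topological completion of a polynomial algebra, or invoke the almost-of-finite-presentation hypothesis on the structure sheaves to reduce to a finitary pullback diagram of coherent modules. Concretely, I expect the cleanest way is to observe that $(-)\alg$ intertwines the fiber product in $\LTop(\cTank)$ with the fiber product of analytic rings of \cite{Porta_Yu_Representability}, and then invoke the coherence statement proven there.
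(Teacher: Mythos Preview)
The paper does not give its own proof of this theorem: it appears in the review section with attribution to \cite{DAG-IX,Porta_Yu_DNAnG_I}, and the only commentary offered is the remark immediately following the statement, that $\dAnk$ is realized as a full subcategory of $\RTop(\cTank)$ via the general machinery of \cite{DAG-V}. So there is no paper proof to compare against; your sketch is broadly aligned with that remark and with what those references actually do.

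That said, two points in your outline deserve tightening. First, a notational one: the paper (following \cite{DAG-V}) places $\dAnk$ inside $\RTop(\cTank)$, not $\LTop(\cTank)$; your description of morphisms via $f_*$ together with $\cO_Y \to f_*\cO_X$ is really describing $\RTop$. Second, and more substantively, your assertion that the ambient category ``admits all small limits'' is neither what one uses nor true in the needed generality. What the cited references actually invoke is that the pregeometry $\cTank$ is \emph{unramified} (established separately in the complex and non-archimedean cases); this is the key technical input that, via the results of \cite{DAG-V}, yields the existence and explicit local form of fiber products of $\cTank$-structured topoi. Coherence of $\pi_i(\cO_W\alg)$ then comes from that explicit local model rather than from a general Tor spectral sequence argument. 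Your full-faithfulness sketch is the right shape, though the reduction to maps into $\mathbb A^n_k\an$ hides the genuine content, which is precisely where the analytic (as opposed to \'etale) nature of $\cTank$ enters.
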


One of the difficult points of the above theorem is to actually construct $\dAnk$ as an $\infty$-category.
This is achieved by the general methods of \cite{DAG-V}, realizing $\dAnk$ as a full subcategory of the $\infty$-category of $\cTank$-structured $\infty$-topoi $\RTop(\cTank)$.
More generally, one can define $\RTop(\cT)$ whenever $\cT$ is a pregeometry.
We refer the reader to \cite[Definition 1.4.8]{DAG-V} for a detailed construction.

\begin{rem}
	The above theorem gives a first hint that the notion of derived analytic space introduced in \cite{DAG-IX,Porta_Yu_DNAnG_I} is a solid one.
	Since the appearance of these papers, the theory has been greatly developed.
	We mention a version of the GAGA theorem in the derived setting, that has been obtained in \cite{Porta_DCAGI}, and a detailed analysis of (derived) deformation theory in \cite{Porta_Yu_Representability} that led to an analytic version of Lurie's representability theorem.
	On the applications side, we mention derived versions of the Riemann-Hilbert correspondence \cite{Porta_Derived_Riemann_Hilbert} and of the Griffiths period map \cite{DiNatale_Global_Period_2016}.
\end{rem}

\subsection{Derived affinoid, Stein and compact Stein spaces}

To any derived analytic space $X = (\cX, \cO_X)$ we can canonically attach an analytic space
\[ \trunc(X) \coloneqq (\cX, \pi_0( \cO_X )) . \]
We refer to $\trunc(X)$ as the truncation of $X$.
The truncation allows to define the derived counterparts of Stein and $k$-affinoid spaces:

\begin{defin}
	A derived analytic space $X \in \dAn_k$ is said to be a derived Stein space (in the \canal case) or a derived $k$-affinoid space (in the non-archimedean case) if its truncation $\trunc(X)$ is a Stein or $k$-affinoid space, respectively.
	We denote by $\dStn_\C$ (resp.\ $\dAfd_k$) the full subcategory of $\dAn_\C$ (resp.\ $\dAn_k$) spanned by derived Stein spaces (resp.\ derived $k$-affinoid spaces).
\end{defin}

\begin{notation}
	In this paper we made an effort to present as far as possible statements that are equally true in the complex and non-archimedean analytic case.
	In particular, following the convention of \cite{Porta_Yu_Higher_analytic_stacks_2014}, we say ``analytic'' whenever the statement applies to both settings.
	When $k$ is not specified and can be either $\C$ or a non-archimedean field, we use the notation $\dAfd_k$ to also denote $\dStn_\C$.
\end{notation}

In \cite[\S 3.1]{Porta_DCAGI} and in \cite[\S 7.1]{Porta_Yu_DNAnG_I} it is shown that the \'etale topology defines a Grothendieck topology on $\dAfd_k$.
We set
\[ \dAnSt_k \coloneqq \Sh( \dAfd_k, \tauet )^\wedge . \]
We refer to this $\infty$-category as the $\infty$-category of derived analytic stacks.
Moreover, let $\bPsm$ denote the collection of smooth morphisms in $\dAfd_k$ (cf.\ \cite[Definition 5.46]{Porta_Yu_Representability}).
Then $(\dAfd_k, \tauet, \bPsm)$ is a geometric context in the sense of \cite[Definition 2.2]{Porta_Yu_Higher_analytic_stacks_2014}.
In particular, the notion of derived analytic geometric stack is defined.\\

In dealing with (derived) \canal geometry, a frequent difficulty one encounters is that we cannot identify coherent sheaves on a Stein space with modules of finite presentation over the global sections.
The classical solution to this problem, as can be found in \cite[Proposition 11.9.2]{Taylor_Several_complex}, is to work with compact Stein spaces.
In loc.\ cit.\ a compact Stein space $K$ is a locally ringed space which can be realized as a compact subset of a Stein space $U$, admitting a fundamental system of Stein open neighbourhoods.
The sheaf of functions on $K$ is the sheaf of overconvergent functions on $K$.
However, considering $K$ as an actual locally ringed space has several disadvantages: first of all, it is difficult to generalize to the derived setting, and second it often requires an extra noetherianity hypothesis on $K$.
We will circumvent these issues by considering a compact Stein as a pro-object in $\dAn_\C$ (see in particular \cref{thm:perfect_complexes_compact_Stein}):

\begin{construction} \label{construction:compact_Stein}
	Let $X \in \dAn_\C$ and let $K \subset \trunc(X)$ be a compact subset of $\trunc(X)$.
	If $U \subset X$ is an open immersion of derived analytic spaces, we write $K \subset U$ to mean that $K \subset \trunc(U)$.
	Suppose now that $K$ admits a fundamental system of Stein open neighbourhoods inside $\trunc(X)$.
	Using the equivalence of sites $\trunc(X)\et \simeq X\et$, we can interpret any open neighbourhood of $K$ inside $\trunc(X)$ as a derived analytic space which is open inside $X$.
	We therefore define
	\[ (K)_X \coloneqq \flim_{K \subset U \subset X} U \in \mathrm{Pro}( \dAnSt_\C ) , \]
	where the colimit ranges over all the open Stein neighbourhoods of $K$ inside $X$.
\end{construction}

\begin{defin} \label{def:compact_Stein}
	A derived compact Stein space is a pro-object which is equivalent to the pro-object $(K)_X$ arising from \cref{construction:compact_Stein}.
\end{defin}

Compact Stein spaces are especially useful in virtue of the following theorem:

\begin{thm}[{cf.\ \cite[7.3.4.10]{HTT}}] \label{thm:sheaves_compact_subsets}
	Let $X$ be a locally compact topological space and let $\cC$ be a presentable $\infty$-category in which filtered colimits are left exact.
	Then there is an equivalence of $\infty$-categories
	\[ \Sh(X; \cC) \simeq \Sh_\cK(X; \cC) , \]
	where the right hand side denotes the sheaves on compact subsets of $X$, in the sense of \cite[7.3.4.1]{HTT}.
\end{thm}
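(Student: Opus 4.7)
Since this is \cite[7.3.4.10]{HTT}, my plan is to reconstruct Lurie's argument in outline. The plan is to exhibit an explicit adjoint pair between $\Sh(X;\cC)$ and the category $\mathrm{PSh}_{\cK}(X;\cC)$ of $\cC$-valued presheaves on the poset $\cK(X)$ of compact subsets of $X$, and to identify its restriction to $\cK$-sheaves (in the sense of \cite[7.3.4.1]{HTT}) with an equivalence.

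First, I would construct the restriction functor
\[ \rho \colon \Sh(X;\cC) \longrightarrow \mathrm{PSh}_{\cK}(X;\cC), \qquad \rho(F)(K) \coloneqq \colim_{K \subset U} F(U), \]
where $U$ ranges over open neighbourhoods of $K$ ordered by reverse inclusion. Since $X$ is locally compact, for each compact $K$ the system of open neighbourhoods is cofiltered; the hypothesis that filtered colimits in $\cC$ are left exact is used precisely here: it guarantees that $\rho(F)$ still satisfies the finitary conditions of \cite[7.3.4.1]{HTT}, namely that $\rho(F)(\emptyset)$ is terminal, the Mayer--Vietoris square for $K \cup K'$ is a pullback, and $\rho(F)(K) \simeq \lim_{K' \supset K} \rho(F)(K')$ where $K'$ has $K$ in its interior. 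Thus $\rho$ factors through $\Sh_{\cK}(X;\cC)$.

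Next, I would produce a functor in the other direction by setting, for $G \in \Sh_{\cK}(X;\cC)$ and an open $U \subset X$,
\[ \sigma(G)(U) \coloneqq \lim_{K \subset U} G(K), \]
with $K$ ranging over compact subsets of $U$ (a filtered poset under inclusion, dual to the above). Local compactness ensures this diagram is rich enough to reconstruct the value on $U$. One verifies by direct computation, using the Mayer--Vietoris and continuity conditions on $G$, that $\sigma(G)$ sends open covers to limits and hence defines a sheaf on $X$. The main point is that a hypercover of $U$ by opens $\{U_i\}$ can be refined, via local compactness, by families of compacts realizing the $\cK$-sheaf condition, which then gives the sheaf condition for $\sigma(G)$.

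Finally, I would verify that $\rho$ and $\sigma$ are mutually inverse. For $F \in \Sh(X;\cC)$, the composition $\sigma(\rho(F))(U) = \lim_{K \subset U} \colim_{K \subset V} F(V)$ is canonically equivalent to $F(U)$ by writing $U$ as the filtered union of the interiors of compact $K \subset U$ and using the sheaf property of $F$; here left-exactness of filtered colimits is used once more to commute the inner colimit past the defining limit. The reverse identity $\rho(\sigma(G)) \simeq G$ follows from the continuity axiom for $\cK$-sheaves, which implies $G(K) \simeq \lim_{K \subset \mathrm{int}(K')} G(K') \simeq \colim_{K \subset U} \lim_{K' \subset U} G(K')$. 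The hardest step will be checking that these two composites are naturally equivalent to the identity; this is where all the hypotheses (local compactness, left-exactness of filtered colimits, and the specific form of the $\cK$-sheaf axioms) must be used together to match the interchange of filtered colimits with finite limits of the sheaf condition.
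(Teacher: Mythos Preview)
The paper does not supply its own proof of this statement: it simply cites \cite[7.3.4.10]{HTT} and then, in the paragraph following the theorem, records how the equivalence is realized, namely via the fully faithful functors
\[
\mathrm{Ran}_{\kappa} \colon \Sh_{\cK}(X;\cC) \hookrightarrow \Fun\big( (\cK(X) \cup \cU(X))^{\mathrm{op}}, \cC \big) \hookleftarrow \Sh(X;\cC) \colon \mathrm{Lan}_u ,
\]
which have the same essential image by \cite[7.3.4.9]{HTT}. So there is no argument in the paper to compare your sketch against beyond this description.

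Your sketch is a correct outline and is in fact the same argument as Lurie's, just with the Kan extensions written out pointwise: your $\rho(F)(K) = \colim_{K \subset U} F(U)$ is exactly $(\mathrm{Lan}_u F)|_{\cK(X)}$, and your $\sigma(G)(U) = \lim_{K \subset U} G(K)$ is exactly $(\mathrm{Ran}_\kappa G)|_{\cU(X)}$. The paper's formulation via the auxiliary poset $\cK(X) \cup \cU(X)$ has the advantage that the two ``halves'' of the equivalence are handled symmetrically and the verification that $\sigma \rho \simeq \mathrm{id}$ and $\rho \sigma \simeq \mathrm{id}$ is packaged into the single statement that $\mathrm{Lan}_u$ and $\mathrm{Ran}_\kappa$ have the same essential image; your direct approach makes the role of the hypotheses (local compactness, left exactness of filtered colimits) more visible at each step. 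Either route is fine.
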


In applications, it is important to know explicitly how the above equivalence works.
Let therefore $X$ be a locally compact space.
Let us denote by $\cK(X)$ the set of compact subsets of $X$ and by $\cU(X)$ the set of open subsets of $U$.
We order both $\cK(X)$, $\cU(X)$ and their union $\cK(X) \cup \cU(X)$ by inclusion.
Let
\[ \kappa \colon \cK(X) \hookrightarrow \cK(X) \cup \cU(X) \quad , \quad u \colon \cU(X) \hookrightarrow \cK(X) \cup \cU(X) \]
be the natural inclusions.
Then \cite[7.3.4.9]{HTT} shows that the fully faithful functors
\[ \mathrm{Ran}_{\kappa} \colon \Sh_{\cK}(X;\cC) \hookrightarrow \Fun( ( \cK(X) \cup \cU(X) )\op, \cS ) \hookleftarrow \Sh(X;\cC) \colon \mathrm{Lan}_u \]
have the same essential image.
Let now $\cF \in \PSh(X;\cC) \coloneqq \Fun(\cU(X)\op, \cC)$.
Let $\widetilde{\cF} \coloneqq \mathrm{Lan}_u(\cF)$.
There is a natural transformation
\[ \eta \colon \widetilde{\cF} \longrightarrow \mathrm{Ran}_\kappa\left( \widetilde{\cF} |_{\cK(X)} \right) . \]
By restricting to $\cU(X)$ and using the full faithfulness of $\mathrm{Lan}_u$ we obtain a natural transformation
\[ \eta_\cU \colon \cF \longrightarrow \left. \mathrm{Ran}_\kappa\left( \widetilde{\cF} |_{\cK(X)} \right) \right|_{\cU(X)} . \]
Using \cite[7.3.4.9]{HTT} we immediately obtain the following result:

\begin{lem} \label{lem:sheaves_open_sheaves_compact}
	With the above notations, suppose furthermore that $\widetilde{\cF} |_{\cK(X)}$ belongs to $\Sh_{\cK}(X;\cC)$.
	Then $\eta_\cU$ exhibits $\left. \mathrm{Ran}_\kappa\left( \widetilde{\cF} |_{\cK(X)} \right) \right|_{\cU(X)}$ as the sheafification of $\cF$.
\end{lem}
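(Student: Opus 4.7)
The argument should be essentially a formal consequence of \cite[7.3.4.9]{HTT}. First I would invoke loc.\ cit.\ to introduce the common essential image $\cE \subset \Fun((\cK(X) \cup \cU(X))\op, \cC)$ of the fully faithful functors $\mathrm{Lan}_u|_{\Sh(X;\cC)}$ and $\mathrm{Ran}_\kappa|_{\Sh_\cK(X;\cC)}$; restriction through $\cE$ in either direction provides the equivalence $\Sh(X;\cC) \simeq \Sh_\cK(X;\cC)$ already quoted in \cref{thm:sheaves_compact_subsets}.

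Under the hypothesis $\widetilde{\cF}|_{\cK(X)} \in \Sh_\cK(X;\cC)$, the object $\mathrm{Ran}_\kappa(\widetilde{\cF}|_{\cK(X)})$ belongs to $\cE$, so its restriction
\[
\cG \coloneqq \mathrm{Ran}_\kappa\bigl(\widetilde{\cF}|_{\cK(X)}\bigr)\bigr|_{\cU(X)}
\]
is automatically a sheaf on $X$, and by the equivalence of essential images one has $\mathrm{Lan}_u(\cG) \simeq \mathrm{Ran}_\kappa(\widetilde{\cF}|_{\cK(X)})$.

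To show that $\eta_\cU$ exhibits $\cG$ as the sheafification of $\cF$, I would test against an arbitrary sheaf $\cH \in \Sh(X;\cC)$ and check that precomposition with $\eta_\cU$ gives an equivalence $\Map_{\Sh(X;\cC)}(\cG,\cH) \to \Map_{\PSh(X;\cC)}(\cF,\cH)$. Both mapping spaces can be computed by the same chain of canonical equivalences terminating at $\Map(\widetilde{\cF}|_{\cK(X)},\mathrm{Lan}_u(\cH)|_{\cK(X)})$: starting from $\Map_{\PSh}(\cF,\cH)$, use the full faithfulness of $\mathrm{Lan}_u$ (a consequence of $u$ being fully faithful) to rewrite it as $\Map(\widetilde{\cF},\mathrm{Lan}_u(\cH))$, observe that $\mathrm{Lan}_u(\cH) \in \cE$ so it is canonically equivalent to $\mathrm{Ran}_\kappa(\mathrm{Lan}_u(\cH)|_{\cK(X)})$, and apply the adjunction $\kappa^* \dashv \mathrm{Ran}_\kappa$. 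The parallel chain starting from $\Map_{\Sh}(\cG,\cH)$ passes instead through $\mathrm{Lan}_u(\cG) \simeq \mathrm{Ran}_\kappa(\widetilde{\cF}|_{\cK(X)})$ and the full faithfulness of $\mathrm{Ran}_\kappa|_{\Sh_\cK}$, landing at the same target.

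The only nontrivial point, and the one I expect to require the most care, is verifying that these two canonical chains of equivalences assemble into the map induced by precomposition with $\eta_\cU$, rather than some a priori distinct map with the same source and target. This amounts to unwinding $\eta$ as the unit of $(\kappa^*, \mathrm{Ran}_\kappa)$ applied to $\widetilde{\cF}$, then restricting along $u$ (which on presheaves lying in $\cE$ is the inverse of $\mathrm{Lan}_u$), and chasing the naturality of the relevant units and counits through the diagram. Once this bookkeeping is carried out, the universal property of sheafification is established and the lemma follows.
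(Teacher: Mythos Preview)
Your proposal is correct and follows essentially the same argument as the paper's proof: both test against an arbitrary sheaf, use the full faithfulness of $\mathrm{Lan}_u$, identify $\mathrm{Lan}_u$ of a sheaf with the corresponding $\mathrm{Ran}_\kappa$ via \cite[7.3.4.9]{HTT}, and then pass through the adjunction $\kappa^* \dashv \mathrm{Ran}_\kappa$ to land in $\Sh_\cK(X;\cC)$. The paper is somewhat terser about the compatibility with $\eta_\cU$ that you flag for care, but the structure of the two arguments is the same.
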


\begin{proof}
	Let $\cG \in \Sh(X;\cC)$.
	Then we have
	\[ \Map_{\PSh(X;\cC)}(\cF, \cG) \simeq \Map_{\Fun(( \cK(X) \cup \cU(X) )\op,\cC)}( \mathrm{Lan}_u(\cF), \mathrm{Lan}_u(\cG) ) . \]
	Let $\widetilde{\cG} \coloneqq \mathrm{Lan}_u(\cG)$.
	Then \cite[7.3.4.9]{HTT} implies that $\widetilde{\cG} \simeq \mathrm{Ran}_\kappa( \widetilde{\cG} |_{\cK(X)} )$.
	In particular, $\eta$ induces an equivalence
	\[ \Map_{\Fun(( \cK(X) \cup \cU(X) )\op,\cC)}( \mathrm{Lan}_u(\cF), \mathrm{Lan}_u(\cG) ) \simeq \Map_{\Sh_\cK(X;\cC)}(\widetilde{\cF}|_{\cK(X)}, \widetilde{\cG}|_{\cK(X)}) . \]
	Applying again right Kan extension along $\kappa$ and restricting to $\cU(X)$ shows finally that $\eta_\cU$ induces an equivalence
	\[ \Map_{\PSh(X;\cC)}(\cF, \cG) \simeq \Map_{\Sh(X;\cC)}\left( \left. \mathrm{Ran}_\kappa\left( \widetilde{\cF} |_{\cK(X)} \right) \right|_{\cU(X)}, \cG \right) . \]
	The proof is therefore complete.
\end{proof}

\section{Analytification of geometric stacks} \label{sec:analytification_geometric_stacks}

\subsection{The derived analytification functor}\label{subsec:derived_analytification}

The analytification functor
\[ (-)\an \colon \cTetk \to \cTank \]
respects the classes of admissible morphisms and the coverings, and so it is a transformation of pregeometries.
As a consequence, \cite[Theorem 2.1.1]{DAG-V} shows that it gives rise to an adjunction of $\infty$-categories
\[ (-)\alg \colon \RTop(\cTank) \leftrightarrows \RTop(\cTetk) \colon (-)\an . \]
The functor $(-)\alg$ can be informally described as the functor mapping a $\cTank$-structured topos $(\cX, \cO_X)$ to the $\cTetk$-structured topos $(\cX, \cO_X\alg)$.
We refer to the right adjoint $(-)\an \colon \RTop(\cTetk) \to \RTop(\cTank)$ as the \emph{derived analytification functor}.
We can summarize the main properties of this functor in the following theorem:

\begin{thm}[{cf.\ \cite{Porta_DCAGI,Porta_Yu_Representability}}] \label{thm:generalities_analytification}
	Let $X = (\cX, \cO_X)$ be a derived \DM stack locally almost of finite presentation.
	Then:
	\begin{enumerate}
		\item $X\an \in \RTop(\cTank)$ is a derived analytic space.
		\item The canonical map $\varepsilon_X \colon (X\an)\alg \to X$ in $\RTop(\cTetk)$ is flat.
		\item If furthermore $X$ is an underived scheme, then under the fully faithful embedding $\An_k \hookrightarrow \dAnk$ the analytification $X\an$ introduced in \cite{Berkovich_Spectral_1990,SGA1} coincides with the derived analytification of $X$.
	\end{enumerate}
\end{thm}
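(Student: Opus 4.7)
The plan is to reduce all three claims to an affine computation by étale descent, and then to exploit a Postnikov presentation of derived affines in order to lift the classical analytification stage by stage, following the blueprint of \cite{Porta_DCAGI,Porta_Yu_Representability}. First I would pick an étale atlas $\{\Spec(A_i)\to X\}$ with each $A_i$ almost of finite presentation over $k$. Because $(-)\an$ is the right adjoint on $\RTop$ associated to the transformation of pregeometries $\cTetk \to \cTank$, and because this transformation preserves admissible morphisms and coverings, analytification sends the chosen atlas to an étale atlas of $X\an$; it therefore suffices to prove all three statements when $X = \Spec(A)$ is a derived affine scheme almost of finite presentation.

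For part (1) I would then choose a Postnikov presentation of $A$ whose successive fibres are finitely generated free $\pi_0(A)$-modules, so that analytification becomes computable inductively. On the truncation this recovers the classical \canal or \narch analytification of $\Spec(\pi_0 A)$, whose natural $\cTank$-structured topos is the one described in \cref{eg:derived_analytic_space}; hence the first condition of \cref{def:derived_analytic_space} is satisfied. The second condition, namely coherence of each $\pi_i(\cO_{\Spec(A)\an}\alg)$ over $\pi_0(\cO_{\Spec(A)\an}\alg)$, would be proved inductively along the Postnikov tower using coherence of the analytic structure sheaf (Oka's theorem in the \canal setting, its Berkovich analogue in the \narch one). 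For part (2) I would localize again on the same atlas and reduce to showing that the unit $\Spec(A) \to (\Spec(A)\an)\alg$ is flat; this can be checked on stalks, where on $\pi_0$ it is the classical flatness of the analytification of a local ring of a finite type $k$-algebra, and where the derived enhancement follows because the higher $\pi_i$ of both sides are obtained from a common $\pi_0$-module by the same Postnikov construction used in (1). Part (3) is then essentially tautological: when $X$ is an ordinary scheme the Postnikov tower collapses, and the adjunction computing $X\an$ restricts to the underived adjunction between $0$-truncated $\cTetk$- and $\cTank$-structured topoi, which coincides by inspection with the classical analytification of \cite{SGA1,Berkovich_Spectral_1990}.

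The hard part of this programme is in (1): the abstract right adjoint produced by the functoriality of $\RTop$ in the transformation of pregeometries has no a priori reason to preserve the locus of derived analytic spaces, and one has to verify both clauses of \cref{def:derived_analytic_space} by hand. The clause on the underlying topos is relatively soft once the atlas and the truncation are understood, but coherence of the higher $\pi_i(\cO\alg)$ is genuinely delicate and forms the technical heart of \cite{Porta_Yu_DNAnG_I,Porta_DCAGI}. Once this is established, parts (2) and (3) drop out by essentially formal manipulations with the adjunction and with the truncation functor $\trunc$.
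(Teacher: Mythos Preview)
The paper does not supply its own proof of this theorem: it is stated with a ``cf.'' citation to \cite{Porta_DCAGI,Porta_Yu_Representability} and then used as a black box. So there is no in-paper argument to compare against; your sketch is essentially an outline of what those references do, and in broad strokes (reduce to affines by an \'etale atlas, climb the Postnikov tower, appeal to the classical analytification on $\pi_0$) it is accurate.

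One genuine slip: in part (2) you write ``the unit $\Spec(A) \to (\Spec(A)\an)\alg$''. The adjunction is $(-)\alg \dashv (-)\an$, so the relevant map is the \emph{counit} $\varepsilon_{\Spec(A)} \colon (\Spec(A)\an)\alg \to \Spec(A)$, going in the opposite direction to what you wrote. This matters: flatness of a map of local rings is not symmetric, and the classical statement you invoke (flatness of $\cO_{X\an,x}$ over $\cO_{X,\varepsilon(x)}$) is precisely flatness of the counit direction. Your stalk-wise reduction and the Postnikov bootstrap are the right mechanism, but the arrow must be reversed throughout.

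A smaller point: your claim that ``analytification sends the chosen atlas to an \'etale atlas of $X\an$'' is correct but not free; it uses that the transformation of pregeometries $\cTetk \to \cTank$ is a morphism in Lurie's sense (preserving admissible morphisms and covers), which is exactly what licenses $(-)\an$ to preserve \'etale maps and effective epimorphisms. You allude to this, but in a fully written proof it should be stated explicitly with a reference to \cite[Theorem~2.1.1]{DAG-V}.
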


In many situations of geometric interest, \DM stacks are too restrictive and need to be replaced by geometric stacks (also known as Artin stacks).
Having defined the derived analytification functor at the level of derived \DM stacks locally almost of finite presentation, it is straightforward to extend it to arbitrary derived stacks locally almost of finite presentation by left Kan extension (cf.\ \cite[\S 6.1]{Porta_Yu_Higher_analytic_stacks_2014} and \cite{Toen_Algebrisation_2008}).
This procedure is implicitly used in \cite{Porta_Yu_Representability,Porta_DCAGI}.
In this paper we need a slightly more general procedure that allows to define the analytification of arbitrary derived stacks (not necessarily locally almost of finite presentation).
The construction is as follows.\\

The functor $(-)\an \colon \RTop(\cTetk) \to \RTop(\cTank)$ restricts to
\[ (-)\an \colon \dAff_k^{\mathrm{afp}} \longrightarrow \dAfd_k . \]
This is a continuous morphism of sites, and therefore it induces a functor
\[ (-)^{\mathrm{an, afp}} \colon \dSt_k^{\mathrm{afp}} \longrightarrow \dAnSt_k . \]
On the other hand, let
\begin{equation} \label{eq:afp_inside_all}
	\begin{tikzcd}
		j \colon \dAff_k^{\mathrm{afp}} \arrow[hook]{r} & \dAff_k
	\end{tikzcd}
\end{equation}
be the natural inclusion.
Notice that $j$ is both continuous and cocontinuous morphism of sites.
In particular, restriction along $j$ provides a functor
\[ j^s \colon \dSt_k \to \dSt_k^{\mathrm{afp}} , \]
that admits both a left adjoint $j_s$ and a right adjoint ${}_s j$.
Notice that the unit $\mathrm{Id}_{\dSt_k^{\mathrm{afp}}} \to j^s \circ j_s$ is an equivalence.
Indeed, since both $j_s$ and $j^s$ are left adjoint, they commute with colimits.
We can therefore reduce ourselves to check this assertion on representables, where it is a direct consequence of \cite[Lemma 2.16]{Porta_Yu_Higher_analytic_stacks_2014}.

\begin{defin}
	We define the derived analytification $(-)\an \colon \dSt_k \to \dAnSt_k$ as the composition
	\[ \begin{tikzcd}[column sep = large]
	\dSt_k \arrow{r}{j^s} & \dSt_k^{\mathrm{afp}} \arrow{r}{(-)^{\mathrm{an,afp}}} & \dAnSt_k .
	\end{tikzcd} \]
\end{defin}

\subsection{A universal property of analytification}

When $Y$ is a derived \DM stack locally almost of finite type and $X$ is a derived analytic space, the very definition of $(-)\an$ implies that the canonical map
\[ \Map_{\dAnk}(X,Y\an) \longrightarrow \Map_{\RTop(\cTetk)}(X\alg, Y) \]
is an equivalence.
However it is unreasonable to expect to be able to lift the above adjunction to the level of the categories $\dAnSt_k$ and $\dSt_k$, even when restricting to geometric stacks on both sides.
The reason is that there is a significant difference between the object $X\alg$ and its restricted functor of points
\[ \Map_{\RTop(\cTetk)}(-, X\alg) \colon \dAff_k \longrightarrow \cS . \]
For instance the global sections of these two objects differ.
In order to bypass this difficulty, we adapt the method introduced first in \cite{Lurie_Tannaka_duality}, which consists in providing an alternative description of both $\Map_{\dAnk}(X, Y\an)$ and $\Map_{\RTop(\cTetk)}(X\alg, Y)$ as sheaves on $X$.\\

Let $X \in \dAnSt_k$.
Define
\[ \dAfd_X \coloneqq \dAfd_k \times_{\dAnSt_k} (\dAnSt_k)_{/X} . \]
We endow $\dAfd_X$ with the \'etale topology, that we still denote $\tauet$.
We denote by $\cX_X$ the corresponding $\infty$-topos:
\[ \cX_X \coloneqq \St(\dAfd_X, \tauet) . \]
We denote this $\infty$-topos by $\cX_X$.
Consider the forgetful functor
\[ F_X \colon \dAfd_X \longrightarrow \dAfd_k , \]
that sends a morphism $U \to X$ to the source $U$.
Then $F_X$ is both a continuous and cocontinuous morphism of sites, and therefore we obtain the following result:

\begin{lem} \label{lem:F_continuous_and_cocontinuous}
	The functor
	\[ F_X^s \colon \dAnSt_k \to \cX_X \]
	commutes with colimits and it coincides with the restriction of $F_X^p \colon \PSh(\dAnk) \to \cX$ to $\dAnSt_k$.
	In particular one has
	\begin{equation} \label{eq:F_global_section_maps}
		\Map_{\cX_X}(\mathbf 1_X, F_X^s(Y\an)) \simeq \Map_{\dAnSt_k}(X, Y\an) ,
	\end{equation}
	where $\mathbf 1_X$ denotes the final object of $\cX_X$.
\end{lem}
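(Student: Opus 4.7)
The plan is to invoke the general formalism of continuous and cocontinuous morphisms of $\infty$-sites, the $\infty$-categorical counterpart of the classical SGA 4 machinery. The two pieces of input are recorded just before the lemma: the forgetful functor $F_X \colon \dAfd_X \to \dAfd_k$ is both continuous and cocontinuous. This is essentially tautological from the definition of $\dAfd_X = \dAfd_k \times_{\dAnSt_k} (\dAnSt_k)_{/X}$, since a $\tauet$-cover of an object $(U \to X) \in \dAfd_X$ is just a $\tauet$-cover of $U$ in $\dAfd_k$ (each member of which receives an automatic map to $X$).

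Continuity then implies that presheaf restriction $F_X^p$ carries $\tauet$-hypersheaves to $\tauet$-hypersheaves: given $Y \in \dAnSt_k$, the presheaf $(U \to X) \mapsto Y(U)$ inherits hyperdescent from $Y$ because covering sieves and hypercovers correspond under $F_X$. This is exactly the assertion that $F_X^s = F_X^p|_{\dAnSt_k}$, proving the second claim. Cocontinuity, in turn, supplies a right adjoint ${}_s F_X$ to $F_X^s$ (the analogue of a pushforward), so that $F_X^s$ preserves all colimits. This is the only substantive use of cocontinuity and settles the first claim.

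For the displayed equivalence \eqref{eq:F_global_section_maps}, the strategy is to unwind the final object. The terminal sheaf $\mathbf 1_X \in \cX_X = \St(\dAfd_X, \tauet)$ is the sheafification of the constant presheaf at $*$, and therefore $\Map_{\cX_X}(\mathbf 1_X, -)$ computes global sections $\Gamma(\cX_X, -)$. Choosing any $\tauet$-hypercover of $X$ by derived affinoids $U_\bullet \to X$ and using the identification $F_X^s(Y\an)(U \to X) = Y\an(U)$, the global sections of $F_X^s(Y\an)$ are the totalization of $Y\an(U_\bullet)$; hyperdescent for $Y\an \in \dAnSt_k$ on $\dAfd_k$ collapses this limit to $Y\an(X) = \Map_{\dAnSt_k}(X, Y\an)$.

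I do not anticipate any genuine obstacle. The only minor point to check is compatibility with hypercompletion, i.e.\ that hypercovers in $\dAfd_X$ correspond under $F_X$ to hypercovers in $\dAfd_k$, but this is immediate from the fact that $F_X$ is a projection from a slice site. Everything else is a formal consequence of $F_X$ being a cover-preserving and cover-reflecting morphism of $\infty$-sites.
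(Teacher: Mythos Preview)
Your proposal is correct and follows essentially the same approach as the paper: both invoke continuity of $F_X$ to identify $F_X^s$ with the restriction of $F_X^p$, and cocontinuity to obtain a right adjoint (hence colimit-preservation). The paper dispatches \eqref{eq:F_global_section_maps} in one line by saying it follows from $F_X^s = F_X^p|_{\dAnSt_k}$, whereas you spell this out via a hypercover and descent; this is just a more explicit unpacking of the same point.
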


\begin{proof}
	The functor $F_X^s$ commutes with colimits thanks to \cite[Lemma 2.19]{Porta_Yu_Higher_analytic_stacks_2014} because $F_X$ is a cocontinuous morphism of sites.
	Similarly, it coincides with the restriction of $F_X^p$ in virtue of \cite[Lemma 2.13]{Porta_Yu_Higher_analytic_stacks_2014} because $F_X$ is continuous.	
	Finally, we observe that the identification \eqref{eq:F_global_section_maps} follows from the fact that $F_X^s$ coincides with the restriction of $F_X^p$ to $\dAnSt_k$.
\end{proof}

\begin{rem}
	The functor $F_X \colon \dAfd_X \to \dAfd_k$ does not preserve (finite) products.
	Therefore, it follows that $F^s_X$ is \emph{not} part of a geometric morphism of $\infty$-topoi.
	Nevertheless, it still has a left adjoint $(F_{X})_s$ and a right adjoint ${}_s F_X$.
	However, $(F_X)_s$ does \emph{not} commute with finite limits.
\end{rem}

We now turn to describe $\Map_{\RTop(\cTetk)}(X\alg,Y)$ as a the global sections of a sheaf on $\dAfd_X$.
The main point of doing this is that the new formulation will make sense for an arbitrary $Y \in \dSt_k$.
The sheaf of sections of $\cO_X\alg$ provides us with a functor
\[ G_X \colon \dAfd_X \longrightarrow \dAff_k \]
which informally sends $U \to X$ to
\[ G_X(U) \coloneqq \Spec(\cO_X\alg(U)) . \]
Notice that the functor $G_X$ does not factor through $\dAff_k^{\mathrm{afp}}$, and moreover is not continuous because it does not commute with fiber products (not even along \'etale morphisms).
However, we can at least prove that it is cocontinuous:

\begin{lem} \label{lem:cocontinuous}
	The functor $G_X \colon (\dAfd_X, \tauet) \longrightarrow (\dAff_k, \tauet)$ is a cocontinuous morphism of sites.
\end{lem}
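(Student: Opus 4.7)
To show $G_X$ is cocontinuous, I would unwind the definition. Fix an object $(U \to X) \in \dAfd_X$, set $A_U \coloneqq \cO_X\alg(U)$, and let $\{W_i \to \Spec(A_U)\}_{i \in I}$ be an étale covering in $(\dAff_k, \tauet)$. The task is to produce an étale covering $\{V_j \to U\}_{j \in J}$ in $(\dAfd_X, \tauet)$ such that each composition $G_X(V_j) \to \Spec(A_U)$ factors through some $W_i$.

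The first step is to reduce to the case where each $W_i \to \Spec(A_U)$ is standard étale, i.e.\ of the form $\Spec(A_U[t]/(f_i)[g_i^{-1}])$ for suitable $f_i \in A_U[t]$ and $g_i \in A_U$ with $f_i'$ invertible on the locus where $g_i$ is inverted. This reduction is harmless because every étale morphism of derived affine schemes is, after Zariski refinement, of this form, and refining a covering yields another covering. The second step is, for each $i$, to construct locally on $U$ a derived affinoid étale morphism $V_i \to U$ whose algebraization $\cO_{V_i}\alg$ is the standard étale extension $\cO_U\alg[t]/(f_i)[g_i^{-1}]$. The coefficients of $f_i$ and the element $g_i$ lie in $A_U = \Gamma(U; \cO_U\alg)$, hence restrict to sections of $\cO_U\alg$ on any open subset of $U$. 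In the \kanal setting, this $V_i$ is produced by the standard description of étale morphisms via Tate algebras; in the \canal setting, it is produced by the holomorphic implicit function theorem, which yields an analytic root of $f_i$ wherever $f_i'$ is invertible. In either case the composite $V_i \to U \to X$ is an object of $\dAfd_X$, and by construction $G_X(V_i) = \Spec(\Gamma(V_i; \cO_{V_i}\alg)) \to \Spec(A_U)$ factors through $W_i$.

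The final step is to check that the family obtained, indexed over $i$ and over the local affinoid covers of $U$ needed in step two, is jointly surjective on points of $U$, hence a $\tauet$-covering in $\dAfd_X$. This follows from the joint surjectivity of $\{W_i \to \Spec(A_U)\}$ together with the fact that each geometric point of $U$ determines, via evaluation on global sections, a geometric point of $\Spec(A_U)$, which must lie in the image of some $W_i$. The main obstacle is the second step: producing the étale derived affinoid $V_i \to U$ with the prescribed algebraization from purely algebraic data $(f_i, g_i) \in A_U[t] \times A_U$. Morally this says that the derived analytic structure sheaf is rich enough to solve polynomial equations with invertible discriminant, and the two analytic settings require separate (but classical) local arguments, which is the reason the statement is formulated at the level of sites rather than deduced formally from the adjunction between $(-)\alg$ and $(-)\an$.
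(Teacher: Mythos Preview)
Your approach is correct and arrives at the same conclusion, but the route differs from the paper's. You proceed concretely: reduce to standard \'etale covers, then use the explicit presentation $A_U[t]/(f_i)[g_i^{-1}]$ to build analytic \'etale neighbourhoods via the implicit function theorem (in the \canal case) or the Tate-algebra description (in the \kanal case), and finally invoke the Henselian property of analytic local rings to verify coverage. The paper instead argues abstractly: it introduces the functor $\overline{\cO}(R) = \Map_{\CAlg_k}(R, \cO(-))$ and shows it sends \'etale covers to effective epimorphisms by appealing to the machinery of pregeometries from \cite{DAG-V}. Concretely, the paper observes that $\cO_U\alg$ extends to a local $\cG\et(k)$-structure, that \'etale covers are pro-admissible in $\mathrm{Pro}(\cG\et(k)) \simeq \CAlg_k$, and that the factorization system (pro-admissible, local) from \cite[Proposition~1.3.10]{DAG-V} then supplies the lift at every stalk; applying this to the identity section $\mathrm{id}_{\cO(U)}$ yields the required refinement.

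Both arguments ultimately rest on the same fact---that analytic local rings are Henselian---but they package it differently. Your version is more elementary and makes the geometry visible, at the cost of splitting into the two analytic settings (which you note). The paper's version is uniform in $k$ and avoids any explicit construction, but requires the reader to be comfortable with the factorization-system formalism. One small point worth tightening in your write-up: in step two you want $V_i$ to be a derived affinoid (resp.\ Stein), and the analytic zero locus of $f_i$ over $U$ need not be one globally, so the further refinement you allude to (covering $V_i$ itself by affinoids) is genuinely needed and not merely a refinement of $U$.
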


\begin{proof}
	Consider the functor
	\[ \overline{\cO} \colon \CAlg_k \to \cX_X \]
	defined by
	\[ R \mapsto \Map_{\CAlg_k}(R, \cO(-)) \in \cX_X . \]
	We first show $\overline \cO$ takes covers to effective epimorphisms.
	The restriction of $\overline{\cO}$ to $\cTetk$ is a $\cTetk$-structure on $\cX_X$ that canonically coincides with $\cO$ itself.
	By \cite[Proposition 3.4.7]{DAG-V} we can extend $\overline{\cO}$ to a $\cG\et(k)$-structure on $\cX_X$.
	Here $\cG\et(k)$ is the geometric envelope of $\cTetk$, which can be explicitly described as the full subcategory of $\CAlg_k$ spanned by compact objects, see \cite[Definition 4.3.13, Proposition 4.3.15]{DAG-V}.
	It follows that $\overline{\cO}$ takes admissible maps between algebras of finite type (which are exactly $\tauet$-coverings) to effective epimorphisms in $\cX$.
	
	To check that $\overline{\cO}$ takes all $\tauet$-coverings to effective epimorphisms we note that it suffices to check on stalks. 
	Then to check $\overline{\cO}(f)$ is an effective epimorphism we consider an \'etale covering $R \to \prod R_{i}$ and need to produce a lift in the diagram $\cO_{X,x} \leftarrow R \rightarrow \prod R_{i}$. Using the local representation of an \'etale map by a standard \'etale map one may see that the \'etale covering is pro-admissible. 
	
	Then by \cite[Proposition 1.3.10]{DAG-V} pro-admissible maps form part of a factorization system, see \cite[Definition 5.2.8.8]{HTT}, on $\CAlg_{k}$, considered as $\mathrm{Pro}(\cG\et(k))$. 
	The right set of this factorization system is given by local morphisms and in particular $\cO_{X,x} \to k$ is right orthogonal to $R \to \prod R_{i}$, which shows the desired lift exists \cite[Remark 5.2.8.2]{HTT}.

	Let now $U \in \dAfd_X$ and fix an \'etale cover
	\[ \cO(U) \to \prod_i R_i . \]
	The above observation implies that the map
	\[ \coprod_i \overline{\cO}(R_i) \to \overline{\cO}(\cO(U)) \]
	is an effective epimorphism.
	In particular it is an effective epimorphism of sheaves after applying $\pi_{0}$ and thus for every $V \in \cX$ and every $f \in \pi_{0}\overline{\cO}(\cO(U))(V)$, we can find an effective epimorphism
	\[ \coprod V_j \to V \]
	in $\cX_X$ such that for every $j$ there exists some $i$ and some element $f_{ij} \in \pi_{0}\overline{\cO}(R_i)(V_j)$ whose image via
	\[ \overline{\cO}(R_i)(V_j) \to \overline{\cO}(\cO(U))(V_{j}) \]
	coincides with the image of $f$ via the restriction $\overline{\cO}(\cO(U))(V) \to \overline{\cO}(\cO(U))(V_j)$.
	Applying this reasoning to the case $V = U$ and
	\[ f \coloneqq \mathrm{id}_{\cO(U)} \in \overline{\cO}(\cO(U))(U) = \Map_{\CAlg_k}(\cO(U), \cO(U)) , \]
	we deduce the existence of an effective epimorphism
	\[ \coprod U_j \to U \]
	and factorizations
	\[ \cO(U) \to R_i \to \cO(U_j) . \]
	The proof is therefore complete.
\end{proof}

\begin{cor} \label{cor:G_cocontinuous}
	Let $X \in \dAnSt_k$ be a derived \kanal stack.
	Then the functor
	\[ G^s_X \colon \dSt_k \longrightarrow \cX_X \]
	induced by $G_X \colon \dAfd_X \to \dAff_k$ commutes with colimits.
	In particular if $U \to Y$ is an effective epimorphism and $U_{\bullet}$ is its \v{C}ech nerve then the canonical morphism
	\begin{equation} \label{eq:G_geometric_realization}
		\left| G_X^s(U_\bullet) \right| \longrightarrow G_X^s(Y)
	\end{equation}
	is an equivalence.
\end{cor}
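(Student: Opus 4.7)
The plan is to deduce the corollary directly as a formal consequence of \cref{lem:cocontinuous}. The general site-theoretic fact (e.g.\ \cite[Lemma 2.19]{Porta_Yu_Higher_analytic_stacks_2014}, which was already invoked in the proof of \cref{lem:F_continuous_and_cocontinuous}) asserts that a cocontinuous morphism of $\infty$-sites induces a colimit-preserving functor on the associated $\infty$-categories of sheaves. Since \cref{lem:cocontinuous} establishes that $G_X \colon (\dAfd_X, \tauet) \to (\dAff_k, \tauet)$ is cocontinuous, applying this principle gives at once that $G^s_X \colon \dSt_k \to \cX_X$ commutes with colimits. This proves the first assertion.

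For the second assertion, since $U \to Y$ is an effective epimorphism in $\dSt_k$, the Čech nerve $U_\bullet$ presents $Y$ as the geometric realization $Y \simeq |U_\bullet|$ in $\dSt_k$. Applying $G^s_X$ and exchanging it with this colimit, using the first part of the corollary, immediately produces the required equivalence \eqref{eq:G_geometric_realization}.

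In other words, no genuinely new argument is needed beyond what is already in \cref{lem:cocontinuous}; the corollary is simply its translation from the level of sites to the level of sheaves. The only conceptual subtlety, which has already been absorbed into the proof of \cref{lem:cocontinuous}, is that $G_X$ does not preserve fibre products (hence is not continuous), so cocontinuity cannot be checked by naive manipulation of covers and instead relies on the factorization system of \cite[Proposition 1.3.10]{DAG-V} for pro-admissible morphisms. Once that input is granted, the present corollary is essentially a one-line deduction.
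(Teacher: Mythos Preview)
Your proposal is correct and follows essentially the same approach as the paper: both invoke \cref{lem:cocontinuous} and then appeal to \cite[Lemma 2.19]{Porta_Yu_Higher_analytic_stacks_2014} to conclude. The only cosmetic difference is that the paper phrases the conclusion as ``$G^s_X$ is left adjoint to ${}_s G_X$, hence commutes with colimits,'' whereas you state colimit preservation directly; the content is identical.
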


\begin{proof}
	\Cref{lem:cocontinuous} guarantees that the morphism of sites $G_X \colon (\cX, \tauet) \to (\dAff_k, \tauet)$ is cocontinuous.
	Therefore, \cite[Lemma 2.19]{Porta_Yu_Higher_analytic_stacks_2014} shows that it induces a well defined $\infty$-functor
	\[ G^s_X \colon \mathrm{dSt}_k \longrightarrow \cX \]
	which is furthermore left adjoint to ${}_s G_X$.
	In particular, $G^s_X$ commutes with arbitrary colimits.
\end{proof}

Let now $Y \in \dSt_k^{\mathrm{afp}}$ be a derived stack locally almost of finite presentation.
Then $Y\an$ is defined as an object in $\dAnSt_k$.
In particular, for every $X \in \dAn_k$ both $F_X^s(Y\an)$ and $G_X^s(Y)$ are defined.
The main goal of this section is to prove that they are canonically equivalent whenever $Y$ is furthermore geometric.\\

Let $U \in \dAfd_X$ and represent it as $U = (\cU, \cO_U)$.
Therefore $U\alg = (\cU, \cO_U\alg)$ and the universal property of the $\Spec$ functor of \cite[\S 2.2]{DAG-V} induces a natural transformation in $\RTop(\cTetk)$
\[ \varepsilon_U \colon U\alg \longrightarrow \Spec( \cO_U\alg(U) ) . \]
For any $Y \in \dAff_k^{\mathrm{afp}}$, this provides us with a natural transformation
\[ \alpha_{U,Y} \colon \Map_{\dSt_k}(\Spec(\cO_U\alg(U)), Y) \longrightarrow \Map_{\RTop(\cTetk)}(U\alg, Y) \simeq \Map_{\dAnSt_k}(U, Y\an) . \]
Notice that
\[ \Map_{\dSt_k}(\Spec(\cO_U\alg(U)), Y) \simeq G_X^p(Y)(U) , \]
and
\[ \Map_{\dAnSt_k}(U, Y\an) \simeq F_X^s(Y\an)(U) . \]
As $G^p \circ j_p$ commutes with colimits, the morphisms $\alpha_{U,Y}$ extend to a natural transformation between functors $\PSh(\dAff_k^{\mathrm{afp}}) \to \PSh(\dAfd_X)$:
\[ \widetilde{\alpha} \colon G^p_X \circ j_p \longrightarrow F_X^s \circ (-)\an \circ j_s \simeq F_X^s \circ (-)^{\mathrm{an, afp}} , \]
where $j_p$ and $j_s$ are the functors induced by the morphism of sites \eqref{eq:afp_inside_all}.
As $F_X^s \circ (-)^{\mathrm{an,afp}}$ is a sheaf, we see that this natural transformation induces
\[ \alpha \colon G^s_X \circ j_s \longrightarrow F_X^s \circ (-)^{\mathrm{an,afp}} . \]
We can now state the main theorem of this section:

\begin{thm} \label{thm:generalized_adjunction}
	Let $X \in \dAnSt_k$ be a derived analytic stack.
	If $Y \in \dSt_k^{\mathrm{afp}}$ is a geometric derived stack locally almost of finite presentation, the morphism
	\[ \alpha_Y \colon G^s_X(Y) \longrightarrow F_X^s(Y\an) \]
	is an equivalence in $\cX_X$.
\end{thm}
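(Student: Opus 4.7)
The plan is to prove the theorem by reducing, via smooth descent, to the case where $Y = \Spec(A)$ is affine with $A \in \CAlg_k$ almost of finite presentation. The reduction works cleanly because all three functors in sight commute with colimits: $F^s_X$ does so by \cref{lem:F_continuous_and_cocontinuous}, $G^s_X$ does so by \cref{cor:G_cocontinuous}, and the functor $(-)^{\mathrm{an,afp}} \colon \dSt_k^{\mathrm{afp}} \to \dAnSt_k$ preserves colimits since it is induced by a continuous morphism of sites and is therefore a left adjoint.

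For the affine case, I would fix $Y = \Spec(A)$ and an arbitrary $U = (\cU, \cO_U) \in \dAfd_X$, and compute both sides explicitly. By the universal property of $\Spec$, $G^p_X(Y)(U) \simeq \Map_{\CAlg_k}(A, \cO_U\alg(U))$. On the other hand, using the fully faithful embedding $\dAn_k \hookrightarrow \RTop(\cTank)$ together with the adjunction $(-)\alg \dashv (-)\an$ between structured topoi, one has $F^s_X(Y\an)(U) \simeq \Map_{\dAn_k}(U, Y\an) \simeq \Map_{\RTop(\cTetk)}(U\alg, Y) \simeq \Map_{\CAlg_k}(A, \Gamma(U; \cO_U\alg))$. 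Unwinding the construction of $\widetilde{\alpha}$ from $\varepsilon_U \colon U\alg \to \Spec(\cO_U\alg(U))$ shows that $\alpha_{U, Y}$ becomes the identity under these identifications, and hence is already an equivalence at the level of presheaves. Sheafifying yields the affine case of the theorem.

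For the general case, I would choose a smooth atlas $V \to Y$ with $V \in \dAff_k^{\mathrm{afp}}$ and form its \v{C}ech nerve $V_\bullet$. By effective descent for smooth covers, $|V_\bullet| \simeq Y$ in $\dSt_k^{\mathrm{afp}}$. Each $V_n$ is a finite fibre product of copies of $V$ over $Y$ along smooth morphisms, so $V_n$ remains affine and almost of finite presentation (the latter since $Y$ is locally almost of finite presentation). Applying the affine case levelwise gives an equivalence $\widetilde{\alpha}_{V_\bullet}$; passing to geometric realizations and using the three colimit-preservation properties recalled above identifies $\alpha_Y$ with this realization, and so $\alpha_Y$ is an equivalence.

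The main obstacle will be the verification in the affine case that the natural transformation $\alpha$ constructed abstractly from $\varepsilon_U$ and $\widetilde{\alpha}$ really matches, under the chain of identifications above, the tautological identity map $\Map_{\CAlg_k}(A, \cO_U\alg(U)) \xrightarrow{=} \Map_{\CAlg_k}(A, \Gamma(U; \cO_U\alg))$; this amounts to a careful compatibility check between the $\Spec$-$\Gamma$ adjunction on $\RTop(\cTetk)$ and the $(-)\alg \dashv (-)\an$ adjunction. A secondary, essentially formal, point is to confirm that the \v{C}ech nerve of a smooth atlas of $Y$ stays inside $\dAff_k^{\mathrm{afp}}$, which follows from the definition of a geometric stack locally almost of finite presentation.
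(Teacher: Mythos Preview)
Your affine base case is correct and matches the paper's argument essentially verbatim. The gap is in your passage from the affine case to the general case. You write that for a smooth atlas $V \to Y$ with $V \in \dAff_k^{\mathrm{afp}}$, ``each $V_n$ is a finite fibre product of copies of $V$ over $Y$ along smooth morphisms, so $V_n$ remains affine.'' This is false unless $Y$ has affine diagonal. If $Y$ is $n$-geometric, then $V \times_Y V \simeq (V \times V) \times_{Y \times Y} Y$ is only guaranteed to be $(n-1)$-geometric, because the diagonal of an $n$-geometric stack is merely $(n-1)$-representable. For $n \ge 1$ the fibre products $V_m$ are typically not affine (think of $Y = \rB G$ with $V = \Spec(k)$: then $V \times_Y V \simeq G$, which is affine only by accident; for a $2$-geometric $Y$ the fibre product will be a genuine $1$-stack).

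The fix is exactly what the paper does: argue by induction on the geometric level of $Y$. Given $Y$ $n$-geometric, choose an $n$-atlas $U \to Y$; its \v{C}ech nerve $U_\bullet$ has each $U_m$ $(n-1)$-geometric, so by the induction hypothesis $\alpha_{U_m}$ is an equivalence for every $m$. Then the colimit-preservation properties you correctly recorded for $G^s_X$, $F^s_X$, and $(-)^{\mathrm{an,afp}}$ let you conclude for $Y$. Your one-step descent works only for stacks with affine diagonal; the inductive version is what handles arbitrary geometric stacks.
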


In particular, $\alpha_Y$ induces an equivalence
\[ \Map_{\cX_X}(\mathbf 1_X, G^s_X(Y)) \simeq \Map_{\cX_X}(\mathbf 1_X, F_X^s(Y\an)) . \]
In virtue of \cref{lem:F_continuous_and_cocontinuous}, we can identify the right hand side with $\Map_{\dAnSt_k}(X, Y\an)$.
The left hand side plays instead the role of $\Map_{\dSt_k}(X\alg, Y)$.
However, since the functor $G_X \colon \dAfd_X \to \dAff_k$ is not continuous, the functor $G^s_X$ is typically not a right adjoint.
This prevents us from rewriting $\Map_{\cX_X}(\mathbf 1_X, G^s_X(Y))$ as a mapping space computed in $\dSt_k$.
We will nevertheless see that one can effectively use \cref{thm:generalized_adjunction} in order to deal with the analytification of higher geometric stacks such as $\bfPerf_k$.

\begin{proof}[Proof of \cref{thm:generalized_adjunction}]
	We proceed by induction on the geometric level of $Y$.
	Suppose first that $Y = \Spec(A)$ is affine.
	For any $U \in \dAfd_X$, we have
	\begin{align*}
		G^p_X(Y)(U) & \simeq \Map_{\dSt_k}(\Spec(\cO_U\alg(U)), Y ) \\
		& \simeq \Map_{\RTop(\cTetk)}(U\alg, Y) \\
		& \simeq \Map_{\RTop(\cTank)}(U, Y\an) \\
		& \simeq \Map_{\dAnSt_k}(U, Y\an) \\
		& \simeq F^s_X(Y)(U) .
	\end{align*}
	The composition is $\widetilde{\alpha}$.
	As it is an equivalence and $F^s_X(Y)$ is a sheaf, we conclude that $G^p_X(Y) \simeq G^s_X(Y)$, and that $\alpha \colon G^s_X(Y) \to F^s_X(Y)$ is an equivalence as well.
	
	Let now $Y$ be an $n$-geometric derived stack locally almost of finite presentation.
	Choose an $n$-atlas $u \colon U \to Y$ and let $U_\bullet$ be its \v{C}ech nerve.
	Then \cref{lem:F_continuous_and_cocontinuous} and \cref{cor:G_cocontinuous} imply that
	\[ | G^s_X(U_\bullet) | \simeq G^s_X(Y) \quad , \quad | F^s_X(U_\bullet) | \simeq F^s_X(Y) . \]
	As the natural transformation $\alpha_{U_n} \colon G^s_X(U_n) \to F^s_X(U_n)$ is an equivalence by induction hypothesis for every $[n] \in \mathbf \Delta\op$, we conclude that $\alpha_Y \colon G^s_X(Y) \to F^s_X(Y)$ is an equivalence as well.
\end{proof}

\subsection{Controlling the analytification}

In this paper we are mostly concerned with the following type of question.
Suppose that $X \in \dSt_k^{\mathrm{afp}}$ is a derived geometric stack locally almost of finite presentation.
Its analytification $X\an$ is obtained via a left Kan extension.
This prevents us from providing an easy description of $X\an$ in terms of its functor of points.
Nevertheless, when $X$ itself parametrizes algebraic families of certain kind of objects (such as vector bundles, principal $G$-bundles, perfect complexes, morphisms between algebraic stacks etc.), then there is often an analytic analogue $Y$ parametrizing  analytic families of the same type of objects.
It is then a natural question to compare the analytification of $X$ with its analytic counterpart $Y$.
Our current goal is to describe a general strategy to prove similar statements (see \cref{prop:general_strategy_analytification} for a precise statement and a proof).
In the rest of the paper we will repeatedly apply this strategy.\\

To start we assume given a derived geometric stack locally almost of finite presentation $X \in \dSt_k^{\mathrm{afp}}$, an analytic stack $Y \in \dAnSt_k$ and a morphism
\[ \varepsilon \colon X\an \longrightarrow Y , \]
which we wish to prove an equivalence.
Notice that we do not assume a priori that $Y$ is geometric.
It is enough to check that $\varepsilon$ induces an equivalence
\[ \Map_{\dAnSt_k}(U, X\an) \simeq \Map_{\dAnSt_k}(U, Y) \]
for all $U \in \dAnSt_{k}$.
Using \cref{lem:F_continuous_and_cocontinuous} we see that it is enough to check that $\varepsilon$ induces an equivalence
\[ F^s_U(X\an) \longrightarrow F^s_U(Y) . \]
Using \cref{thm:generalized_adjunction} and the geometricity of $X$, we can replace $F^s_U(X\an)$ by $G^s_U(X)$.
In this way, we get rid of the analytification.
However, checking in practice that the morphism
\[ G^s_U(X) \longrightarrow F^s_U(X\an) \longrightarrow F^s_U(Y) \]
induced by $\varepsilon$ is an equivalence is as difficult as the original problem of proving that $\varepsilon$ is an equivalence.
The reason is that, once again, $G^s_U(X)$ is not explicitly defined, but it is rather the result of a sheafification process.

In the non-archimedean setting, it happens that in the situations we will consider in the subsequent sections, the map
\[ G^p_U(X) \longrightarrow F^s_U(Y) \]
is already an equivalence.
This can ultimately be traced back to Tate's acyclicity and Kiehl's theorem (see for instance \cref{lem:perfect_complexes_affinoid}).
This implies that $G^p_U(X)$ is a sheaf and therefore that $G^p_U(X) \simeq G^s_U(X)$.
In the complex case this statement is typically false.
To remedy this, we are lead to with compact Stein spaces (see \cref{def:compact_Stein}).

\begin{prop} \label{prop:general_strategy_analytification}
	Let $X \in \dSt_k^{\mathrm{afp}}$ be a derived geometric stack locally almost of finite presentation.
	Let $Y \in \dAnSt_k$ be a derived analytic stack and let
	\[ \varepsilon \colon X\an \longrightarrow Y \]
	be a morphism in $\dAnSt_k$.
	Suppose that:
	\begin{enumerate}
		\item If $k$ is a non-archimedean field then for every $U \in \dAfd_k$ the map $\varepsilon$ induces an equivalence
		\[ G^p_U(X) \longrightarrow F^s_U(Y) . \]
		\item If $k = \mathbb C$ then for every $U \in \dStn_\C$ and every compact Stein subset $K \subset U$, $\varepsilon$ induces an equivalence
		\begin{equation} \label{eq:equivalence_ind_object}
			\colim_{K \subset V \subset U} G^p_U(X)(V) \simeq \colim_{K \subset V \subset U} F^s_U(Y)(V) .
		\end{equation}
	\end{enumerate}
	Then $\varepsilon \colon X\an \to Y$ is an equivalence.
\end{prop}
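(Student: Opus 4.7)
The plan is to check that $\varepsilon$ is an equivalence pointwise on the site, then to eliminate the analytification on the source via Theorem \ref{thm:generalized_adjunction}. The payoff of this reformulation is that it turns the problem into a statement about the sheafification of the concrete presheaf $G^p_U(X)$, which is exactly what hypotheses (1) and (2) are designed to control.

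First I would reduce the question. By Yoneda, $\varepsilon$ is an equivalence iff for each $U$ in the site the map $\Map_{\dAnSt_k}(V, X\an) \to \Map_{\dAnSt_k}(V, Y)$ is an equivalence for every $V \to U$ in $\dAfd_U$. By Lemma \ref{lem:F_continuous_and_cocontinuous}, this amounts to asking that $F^s_U(X\an) \to F^s_U(Y)$ be an equivalence in $\cX_U$. Since $X$ is geometric, Theorem \ref{thm:generalized_adjunction} supplies $G^s_U(X) \simeq F^s_U(X\an)$, so the remaining task is to prove
\[ G^s_U(X) \longrightarrow F^s_U(Y) \]
is an equivalence of sheaves on $\dAfd_U$.

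In the non-archimedean case, this is immediate from hypothesis (1): the presheaf-level equivalence $G^p_U(X) \xrightarrow{\sim} F^s_U(Y)$ together with the fact that $F^s_U(Y)$ is a sheaf forces $G^p_U(X)$ itself to be a sheaf, so the comparison with $F^s_U(Y)$ factors through $G^s_U(X)$ via equivalences. In the complex case, $G^p_U(X)$ is typically not a sheaf, because $V \mapsto \Map_{\dSt_k}(\Spec(\cO(V)\alg), X)$ does not satisfy \'etale descent. Here I would invoke Theorem \ref{thm:sheaves_compact_subsets} and Lemma \ref{lem:sheaves_open_sheaves_compact} to identify sheaves on the locally compact space $|U|$ underlying $U$ with sheaves on its poset of compact subsets. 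Hypothesis (2) says exactly that the restrictions of the left Kan extensions $\widetilde{G^p_U(X)}$ and $\widetilde{F^s_U(Y)}$ agree on compact Stein subsets of $|U|$, and since Stein compacts are cofinal among compact subsets of a Stein space, this equivalence extends to all of $\cK(|U|)$. Applying Lemma \ref{lem:sheaves_open_sheaves_compact} on both sides---using that $\widetilde{F^s_U(Y)}|_{\cK(|U|)}$ is a sheaf on compact subsets by Theorem \ref{thm:sheaves_compact_subsets} applied to the sheaf $F^s_U(Y)$---then identifies the sheafification $G^s_U(X)$ of $G^p_U(X)$ with $F^s_U(Y)$ via the right Kan extension back to open subsets.

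The main obstacle will be the complex case, and specifically the passage from the equivalence on compact Stein subsets furnished by hypothesis (2) to the equivalence on all compact subsets required to invoke Lemma \ref{lem:sheaves_open_sheaves_compact}. The delicate point is to verify that $\widetilde{G^p_U(X)}|_{\cK(|U|)}$ is genuinely a sheaf on compacts, and that Stein compacts form a rich enough family to determine such a sheaf; this is where the specific geometry of Stein spaces (fundamental systems of Stein neighbourhoods of compacts) enters. Once this is in place, the conclusion is a formal consequence of the right Kan extension description of sheafification in Lemma \ref{lem:sheaves_open_sheaves_compact}.
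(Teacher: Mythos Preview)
Your proposal is correct and follows essentially the same route as the paper: reduce via Lemma~\ref{lem:F_continuous_and_cocontinuous} and Theorem~\ref{thm:generalized_adjunction} to checking $G^s_U(X) \simeq F^s_U(Y)$, dispatch the non-archimedean case immediately from hypothesis (1), and in the complex case pass to compact subsets via Theorem~\ref{thm:sheaves_compact_subsets} and conclude with Lemma~\ref{lem:sheaves_open_sheaves_compact}. The only remark is that where you say ``Stein compacts are cofinal among compact subsets'', the paper phrases the relevant geometric input as ``compact Stein subsets form a basis for $U$''; your final paragraph correctly identifies the underlying content (fundamental systems of Stein neighbourhoods), so this is merely a matter of wording.
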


\begin{proof}
	As we already discussed, combining \cref{lem:F_continuous_and_cocontinuous} and \cref{thm:generalized_adjunction} it is enough to check that $\varepsilon$ induces an equivalence
	\[ G^s_U(X) \longrightarrow F^s_U(Y) \]
	for every derived $k$-affinoid (resp.\ Stein) space $U$.
	In the non-archimedean situation, the hypothesis guarantees that $G^p_U(X)$ is a sheaf and that it is equivalent to $F^s_U(Y)$.
	Since $G^s_U(X)$ is the sheafification of $G^p_U(X)$, we conclude that $G^s_U(X) \simeq F^s_U(Y)$ via the morphism induced by $\varepsilon$.
	Therefore $\varepsilon$ is an equivalence.
	
	In the \canal setting, we first use the correspondence provided by \cref{thm:sheaves_compact_subsets} to recast $G^p_U(X)$ and $F^s_U(Y)$ as presheaves defined on compact subsets of $U$.
	We will abuse notation and write $G^p_U(X)(K)$ for $\mathrm{Lan}_u(G^{p}_{U})(K)$ if $K$ is a compact Stein in $U$.
	Then the hypothesis guarantees that
	\[ G^p_U(X)(K) \simeq \colim_{K \subset V \subset U} G^p_U(X)(V) \simeq \colim_{K \subset V \subset U} F^s_U(Y)(V) \simeq F^s_U(Y)(K) . \]
	Therefore, $G^p_U(X)$ is a sheaf on compact Stein subsets of $U$ which is furthermore equivalent to $F^s_U(Y)$.
	As compact Stein subsets of $U$ form a basis for $U$, the conclusion now follows from \cref{lem:sheaves_open_sheaves_compact}.
\end{proof}

\section{Analytic perfect complexes} \label{sec:analytic_perfect_complexes}

As usual we let $k$ be either the field of complex numbers or a non-archimedean field equipped with a non-trivial valuation.
In this section we are concerned with the derived analytic stack $\bfAnPerf_k$ parametrizing families of perfect complexes over derived analytic spaces (see below for its precise definition).
Our main goal is to prove that there is a natural equivalence
\[ \bfPerf_k\an \simeq \bfAnPerf_k . \]
See \cref{prop:analytification_perfect_complexes_absolute}.
The proof is based on the general method described in \cref{prop:general_strategy_analytification}.
Building on the results obtained in \cite{Porta_Yu_Mapping}, it is easy to verify the assumptions of that proposition in the non-archimedean setting.
On the other hand, verifying the hypotheses in the \canal situation requires a lot of extra work.
For this reason, the biggest part of this section is essentially \canal in nature, and the main object of study is the category of perfect complexes on a compact Stein, seen as a pro-object in $\dAn_k$.

\subsection{The stack of perfect complexes}

We start with the basic definitions.
Let $\cX$ be an $\infty$-topos and let $\cO \in \CAlg_k(\cX)$ be a sheaf of connective derived $k$-algebras.
Formally speaking, we set
\[ \CAlg_k(\cX) \coloneqq \Str_{\cTdisck}(\cX) . \]
It can be naturally identified with the $\infty$-category of sheaves of simplicial commutative $k$-algebras on $\cX$.
For every object $U \in \cX$ we can form a stably symmetric monoidal $\infty$-category $\cO |_U \Mod$.
Its objects are the sheaves of $\cO |_U$-modules in the $\infty$-topos $\cX_{/U}$.
See \cite[\S 2.1]{DAG-VIII}.

These categories glue together into a sheaf on $\cX$ with values in $\Cat_\infty^{\mathrm{st}, \otimes}$, the $\infty$-category of stably symmetric monoidal $\infty$-categories.
We denote the resulting functor by
\[ \cO \Mod \colon \cX\op \longrightarrow \Cat_\infty^{\mathrm{st}, \otimes} . \]
Notice that when $\mathrm{char}(k) = 0$ the existence of this $\infty$-functor follows from the technology developed in \cite[\S 7.3.4]{Lurie_Higher_algebra}, and notably the equivalence
\[ \cO |_U \Mod \simeq \Sp(\CAlg(\cX_{/U})_{/\cO |_U}) , \]
which reduces the $\infty$-functoriality of the categories $\cO |_U \Mod$ to the $\infty$-functoriality of the comma categories $\cX_{/U}$.
When $\mathrm{char}(k) > 0$, the same strategy applies, but we have to use instead the identification
\[ \cO |_U \Mod \simeq \Sp(\Ab( \CAlg(\cX_{/U})_{/\cO |_U} )) , \]
proven in \cite[Corollary 8.3]{Porta_Yu_Representability}.

For every $U \in \cX$ we let $\Perf^{\mathrm{strict}}(U)$ denote the smallest full stable subcategory of $\cO |_U \Mod$ closed under retracts and containing $\cO |_U$.
Restriction along morphisms $V \to U$ in $\cX$ preserves strict perfect complexes.
So the assignment $U \mapsto \Perf^{\mathrm{strict}}(U)$ can be promoted to a sub-presheaf of $\cO \Mod$.
We let $\Perf_{\cX, \cO}$ denote its sheafification, computed in the $\infty$-category $\Cat_\infty^{\mathrm{st}, \mathrm{idem}}$ of idempotent complete stable $\infty$-categories.
It is straightforward to observe that the symmetric monoidal structure on $\cO |_U \Mod$ induces a symmetric monoidal structure on $\Perf(U)$, and so we can actually promote $\Perf_{\cX, \cO}$ to a sheaf with values in idempotent complete stably symmetric monoidal $\infty$-categories.\\

When $\cX = \dSt_k$ is the $\infty$-topos of derived stacks and $\cO$ is the global section functor, we denote $\Perf_{\cX, \cO}$ simply by $\Perf_k$.
According to our general convention, we denote by $\bfPerf_k$ the associated $\cS$-valued stack, determined by the relation
\[ \bfPerf_k(X) \coloneqq \Perf_k(X)^\simeq . \]
It coincides with the usual stack of perfect complexes (see \cite{Toen_Vaquie_Moduli_of_objects}).
Let now $\cX = \dAnSt_k$ be the $\infty$-topos of derived analytic stacks.
The functor
\[ \dAfd_k\op \longrightarrow \Cat_\infty^{\mathrm{st}, \otimes} \]
sending $U = (\cU, \cO_U)$ to $\cO_U\alg \Mod$ extends to a functor $\cO \Mod \colon \dAnSt_k\op \to \Cat_\infty^{\mathrm{st}, \otimes}$.
In this case, we simply denote by $\mathrm{AnMod}_k$ the stack $\cO \Mod$ and by $\AnPerf_k$ the stack $\Perf_{\dAnSt_k, \cO}$.
When $X \in \dAnSt_k$, we set
\[ \cO_X \Mod \coloneqq \mathrm{AnMod}_k(X) \quad , \quad \Perf(X) \coloneqq \AnPerf_k(X) . \]
When $X$ is a derived Stein (resp.\ derived $k$-affinoid) space, we can also identify $X$ with a $\cTank$-structured topos.
In this case, the above notations are compatible with \cite[\S 2.1]{DAG-VIII}.
Notice that $\cO_X\Mod$ has a canonical $t$-structure, where connective objects are defined locally:

\begin{lem}\label{lem:tstructure_mx}
	Let $X \in \dAnSt_k$ be a derived analytic stack.
	Then the the stable $\infty$-category $\cO_X \Mod$ has a $t$-structure where an object $\cF$ is connective if and only if for every morphism $f \colon U \to X$ with $U \in \dAfdk$, the pullback $f^* \cF \in \cO_U\alg \Mod$ is connective.
\end{lem}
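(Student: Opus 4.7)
The approach is to construct the $t$-structure by glueing the canonical $t$-structures that exist on each derived affinoid over $X$. First, for any $U \in \dAfdk$, the stable $\infty$-category $\cO_U\alg\Mod$ carries a natural $t$-structure (see \cite[\S 2.1]{DAG-VIII}) in which the connective part consists of objects $\cF$ with $\pi_i \cF = 0$ for $i < 0$. A preliminary observation is that for any morphism $f \colon V \to U$ in $\dAfd_k$, the pullback functor $f^* \colon \cO_U\alg\Mod \to \cO_V\alg\Mod$ is right $t$-exact: it is given by tensoring up along the morphism $f^{-1}\cO_U\alg \to \cO_V\alg$ of connective sheaves of algebras, and such a tensor product preserves connective objects.

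Next, since $\cO\Mod$ is a sheaf on $\dAnSt_k$ with values in $\Cat_\infty^{\mathrm{st},\otimes}$, one has
\[ \cO_X\Mod \simeq \lim_{U \in \dAfd_X\op} \cO_U\alg\Mod \]
with transition maps the pullback functors. Define the candidate connective part $(\cO_X\Mod)_{\geq 0}$ to consist of those $\cF$ whose restriction $f^*\cF$ to every affinoid $f \colon U \to X$ is connective. By the right $t$-exactness of the transition maps, this subcategory coincides with $\lim_{U \in \dAfd_X\op} (\cO_U\alg\Mod)_{\geq 0}$. As a limit of presentable $\infty$-categories along colimit-preserving functors it is itself presentable, and it is closed in $\cO_X\Mod$ under small colimits and extensions because both operations are testable componentwise in the limit.

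With this presentable subcategory in hand, closed under small colimits and extensions, the existence of a unique accessible $t$-structure on $\cO_X\Mod$ whose connective part is exactly $(\cO_X\Mod)_{\geq 0}$ is automatic from \cite[Proposition 1.4.4.11]{Lurie_Higher_algebra}. The main obstacle I anticipate is not the formal construction itself but the verification that the connectivity condition is robust under the limit presentation, i.e.\ that the prescribed subcategory really matches the limit of the local connective parts; this reduces to the right $t$-exactness of pullbacks combined with the joint conservativity of \'etale covers on $\cO$-modules, so that an object is connective in the sense of the lemma precisely when it is connective on any sufficiently fine atlas.
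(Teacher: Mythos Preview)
Your proof is correct and follows essentially the same approach as the paper: reduce to the affinoid case via the limit presentation of $\cO_X\Mod$, observe that the locally-defined connective objects form a presentable subcategory closed under colimits and extensions, and invoke \cite[Proposition 1.4.4.11]{Lurie_Higher_algebra}. Your version is in fact more detailed than the paper's, which simply points to the two parts of that proposition without spelling out the right $t$-exactness of pullbacks or the presentability verification.
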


\begin{proof}
	This is clear for $X \in \dAfdk$ using the $t$-structure on $\cO_X\alg$-modules for an $\infty$-topos $\cX$ \cite[Proposition 2.1.3]{DAG-VIII}.
	For general $X$ we need to define the $t$-structure on a limit of categories. 
	We may define connective objects locally and then use the two parts of \cite[Proposition 1.4.4.11]{Lurie_Higher_algebra} to extend to a uniquely defined $t$-structure. 
	\end{proof}

\subsection{Analytification of $\bfPerf_k$} \label{subsec:analytification_Perf}

We define the analytification $\bfPerf_k\an$ as in \cref{sec:analytification_geometric_stacks} by first restricting to $\dAff_k^{\mathrm{afp}}$ and then performing left Kan extension along the analytification $(-)\an \colon \dAff_k^{\mathrm{afp}} \to \dAfd_k$.
In a similar way, we define the analytification $\Perf_k\an$ of the $\Cat_\infty$-valued stack $\Perf_k$.

\begin{rem}
	The maximal $\infty$-groupoid functor $(-)^\simeq \colon \Cat_\infty \to \cS$ does not commute with colimits in general.
	Therefore, for $U \in \dAfd_k$, we can no longer identify $\bfPerf_k\an(U)$ with $(\Perf_k\an(U))^\simeq$.
\end{rem}

Since $\Perf_k\an$ is defined by left Kan extension, in order to give a morphism
\begin{equation} \label{eq:analytification_perfect_complexes}
	\varepsilon^* \colon \Perf_k\an \longrightarrow \AnPerf_k ,\footnote{The notation suggests that this morphism is induced by pullback along a certain map $\varepsilon$. We will make this idea explicit below.}
\end{equation}
it is enough to produce a natural transformation
\[ \Perf_k \to \AnPerf_k \circ (-)\an . \]
If $X \in \dAff_k^{\mathrm{afp}}$ then $X\an \in \dAfd_k$ and therefore the underlying $\cTetk$-structured topos $(X\an)\alg$ is well defined.
Pulling back along the natural map
\[ \theta_X \colon (X\an)\alg \longrightarrow X \]
provides an analytification functor that respects perfect complexes:
\[ (-)\an_X \colon \Perf(X) \to \Perf((X\an)\alg) . \]
Observe that, according to our definitions, $\Perf((X\an)\alg) \simeq \Perf(X\an)$.
The same construction also provides a morphism $\bfPerf_k\an \to \bfAnPerf_k$, which we still denote $\varepsilon^*$.\\

Our goal is to prove that $\varepsilon^* \colon \bfPerf_k\an \to \bfAnPerf_k$ is an equivalence, and we will do so by verifying the hypotheses of \cref{prop:general_strategy_analytification}.
Fix therefore $X \in \dAfd_k$.
The first step is to make the construction of the morphism
\[ \varepsilon_X^* \colon G^p_X(\bfPerf_k) \longrightarrow F^s_X(\bfAnPerf_k) \]
explicit.
Let $U \in X\et$ and define
\[ A_U \coloneqq \Gamma(U; \cO_U\alg) . \]
Then
\[ G^p_X(\bfPerf_k)(U) \simeq \Perf(A_U)^\simeq , \]
while
\[ F^s_X(\bfAnPerf_k)(U) \simeq \Perf(U)^\simeq . \]
The universal property of $\Spec$ proven in \cite[Theorem 2.2.12]{DAG-V} provides us with a canonical morphism in $\RTop(\cTet(\C))$
\[ \varepsilon_U \colon (\cU, \cO_U\alg) \longrightarrow \Spec(A_U) . \]
Pullback along $\varepsilon_U$ provides a functor
\[ \widetilde{\varepsilon}_U^* \colon \cO_{A_U} \Mod \longrightarrow \cO_U \Mod . \]
This is simply the pullback along $\varepsilon_U$, but we reserve the notation $\varepsilon_U^*$ for the restriction
\[ \begin{tikzcd}
	\varepsilon_U^* \colon A_U \Mod \simeq \QCoh(\Spec(A_U)) \arrow[hook]{r}{i_U} & \cO_{A_U} \Mod \arrow{r}{\widetilde{\varepsilon}_U^*} & \cO_U \Mod .
\end{tikzcd} \]
The functor $\varepsilon_U^*$ preserves perfect complexes and therefore further restricts to
\[ \varepsilon_U^* \colon \Perf(A_U) \longrightarrow \Perf(U) , \]
which coincide with the functor induced by $\varepsilon^* \colon \Perf_k\an \to \AnPerf_k$.\\

Notice that we also have a functor in the opposite direction
\[ \Gamma(U;-) \colon \cO_U \Mod \longrightarrow A_U \Mod . \]
Observe that $\varepsilon_U^*$ and $\Gamma(U;-)$ are not adjoint to each other.
However, the inclusion $i_U \colon A_U \Mod \hookrightarrow \cO_{A_U} \Mod$ admits a left adjoint
\[ L_U \colon \cO_{A_U} \Mod \longrightarrow A_U \Mod , \]
and, similarly the functor $\widetilde{\varepsilon}_U^* \colon \cO_{A_U} \Mod \to \cO_U \Mod$ admits a right adjoint
\[ \widetilde{\varepsilon}_{U*} \colon \cO_U \Mod \longrightarrow \cO_{A_U} \Mod . \]
Then $\Gamma(U; -)$ is naturally equivalent to the functor $L_U \circ \widetilde{\varepsilon}_{U*}$, and we have two canonical zig-zags of natural transformations
\begin{equation} \label{eq:zig_zag_natural_transformations}
	\begin{tikzcd}
		{} & L_U \circ i_U \arrow{dr} \arrow{dl} \\
		\Gamma(U;-) \circ \varepsilon_U^* & & \mathrm{Id}_{A_U \Mod}
	\end{tikzcd} \quad , \quad \begin{tikzcd}
		{} & \widetilde{\varepsilon}_U^* \circ \widetilde{\varepsilon}_{U*} \arrow{dr} \arrow{dl} \\
		\varepsilon_U^* \circ \Gamma(U;-) & & \mathrm{Id}_{\cO_U \Mod}
	\end{tikzcd}
\end{equation}
Notice that $L_U \circ i_U \to \mathrm{Id}_{A_U \Mod}$ is always an equivalence.
In particular, we obtain a well defined natural transformation
\[ \delta \colon \mathrm{Id}_{A_V \Mod} \longrightarrow \Gamma(U;-) \circ \varepsilon_U^* . \]
We now summarize the most basic properties of the functors we introduced so far:

\begin{prop} \label{prop:basic_properties_global_sections}
	Let $U \in \dAfd_k$ be a derived $k$-affinoid (resp.\ Stein) space.
	Then:
	\begin{enumerate}
		\item The lax symmetric monoidal functor
		\[ \Gamma(U;-) \colon \cO_U \Mod \longrightarrow A_U \Mod \]
		is $t$-exact and symmetric monoidal when restricted to the full subcategory $\Coh(U)$ of unbounded complexes with coherent cohomology.
		
		\item The functor
		\[ \varepsilon_U^* \colon A_U \Mod \longrightarrow \cO_U \Mod \]
		is $t$-exact and monoidal, and its restriction to $\Coh(A_U)$ factors through $\Coh(U)$.
		
		\item The restriction of the composition
		\[ \Gamma(U;-) \circ \varepsilon_U^* \colon A_U \Mod \longrightarrow A_U \Mod \]
		to $\Coh(A_U)$ factors through $\Coh(A_U)$.
		
		\item The natural transformation
		\[ \delta \colon \mathrm{Id}_{A_U \Mod} \longrightarrow \Gamma(U;-) \circ \varepsilon_U^* \]
		is an equivalence when restricted to $\Coh(A_U)$.
		
		\item The functor $\varepsilon_U^* \colon A_U \Mod \to \cO_U \Mod$ commutes with filtered colimits and it is conservative.
	\end{enumerate}
\end{prop}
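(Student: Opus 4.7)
The plan is to reduce each claim to classical facts about coherent sheaves on Stein or $k$-affinoid spaces and their algebraic counterparts. The cornerstone is that the morphism $\varepsilon_U \colon (\cU, \cO_U\alg) \to \Spec(A_U)$ in $\RTop(\cTetk)$ is flat: in the non-archimedean case this is Tate's acyclicity theorem, and in the Stein case it follows from Cartan's theorem B. Once flatness is in hand, $t$-exactness of the ringed-topos pullback $\widetilde{\varepsilon}_U^*$ is automatic, and monoidality is formal from pullback along a morphism of ringed $\infty$-topoi. Composing with the symmetric monoidal embedding $i_U \colon A_U \Mod \hookrightarrow \cO_{A_U} \Mod$ then yields the $t$-exactness and monoidality halves of (2). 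The filtered-colimit preservation part of (5) follows because $\widetilde{\varepsilon}_U^*$ is a left adjoint, while $i_U$ preserves filtered colimits (quasi-coherent sheaves being closed under them inside all $\cO$-modules).

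For the coherent statements, I would proceed in two steps: first treat the hearts, then lift via Postnikov towers. On hearts, $\varepsilon_U^*$ sends a finitely presented $\pi_0(A_U)$-module to a coherent analytic sheaf; this is the classical analytification of coherent modules, given by Kiehl's theorem in the $k$-affinoid case and by standard Stein theory in the complex case. Together with flatness this completes the coherence half of (2). For (1), $t$-exactness of $\Gamma(U;-)$ on $\Coh(U)$ is Cartan B or Tate acyclicity applied to each coherent cohomology sheaf, extended to unbounded coherent complexes by a truncation argument and a convergent hypercohomology spectral sequence; the symmetric monoidality on $\Coh(U)$ then drops out of (4).

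The substantive input is (4), the comparison $\delta \colon M \to \Gamma(U; \varepsilon_U^* M)$ for $M \in \Coh(A_U)$. On hearts this is the standard GAGA equality: present $M$ by a finite complex of finitely generated free modules on a suitable cover, apply $\varepsilon_U^*$, and compute global sections using the vanishing of higher cohomology of free analytic modules. One lifts to unbounded coherent complexes stage by stage along the Postnikov tower, using that each $\pi_i(M)$ is coherent and that both sides commute with the tower up to the connectivity control provided by coherence. From (4) together with the coherence-preservation statement in (2), claim (3) follows immediately. For conservativity in (5), $t$-exactness lets us test on the heart, where any nonzero module is a filtered colimit of its finitely generated submodules, and applying (4) to one such submodule forces $\varepsilon_U^* M$ to be nonzero. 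The main obstacle I anticipate is the derived Stein case, where $A_U$ is not noetherian and the almost-finite-presentation hypothesis has to be invoked carefully to make the Postnikov-tower argument converge; this amounts to repeating the bootstrapping developed in \cite{Porta_Yu_DNAnG_I, Porta_DCAGI}.
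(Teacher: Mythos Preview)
Your approach is essentially the paper's: defer the non-archimedean case to \cite{Porta_Yu_Mapping}, and in the Stein case argue via $t$-exactness on hearts, derive (3) from (4), and prove conservativity in (5) by writing a discrete module as the union of its finitely generated submodules and invoking (4) on each.

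Two points need sharpening in the Stein case. First, flatness of $\varepsilon_U$ is not a direct consequence of Cartan~B; the paper obtains the $t$-exactness in (2) from the flatness of restriction maps $A_V \to A_W$ for nested relatively compact Stein opens $W \Subset V \Subset U$, proved separately in \cref{lem:flatness_global_sections_nested_compact_Stein}. Cartan~B enters that argument only indirectly, through the identification $\pi_i(A_V) \simeq \Gamma(V;\pi_i(\cO_V))$. Second, your deduction of the symmetric monoidality in (1) from (4) only covers the essential image of $\varepsilon_U^*$ inside $\Coh(U)$, and for a Stein space this is a proper subcategory: a coherent analytic sheaf need not have finitely generated global sections over $A_U$. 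The paper's proof is equally terse here (it simply cites Cartan~B), but note that its subsequent uses of the monoidality, notably in \cref{lem:global_section_relatively_compact}, are always applied to sheaves restricted from a strictly larger Stein, where global finite generation does hold and your route via (4) goes through.
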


\begin{proof}
	In the non-archimedean case, points (1) -- (4) have been proved in \cite[Theorems 3.1 and 3.4]{Porta_Yu_Mapping}.
	In the \canal setting, point (1) follows from Cartan's theorem B.
	The $t$-exactness part of point (2) follows from the flatness result proven in \cref{{lem:flatness_global_sections_nested_compact_Stein}}.
	At this point, we are left to check that $\varepsilon_U^*$ takes $\Cohh(A_U)$ to $\Cohh(U)$.
	This follows immediately from the fact that $A_U$ is taken to $\cO_U$ and the fact that every object in $\Cohh(A_U)$ admits a finite presentation.
	Points (3) follows immediately from point (4).
	Point (4) is a consequence of Cartan's theorem B and point (2).
	
	We now prove point (5).
	Observe that the inclusion
	\[ \begin{tikzcd}[column sep = small]
		A_U \Modh \arrow[hook]{r} & \cO_{A_U} \Modh
	\end{tikzcd} \]
	commutes with filtered colimits.
	Since $A_U \Mod \hookrightarrow \cO_{A_U} \Mod$ is $t$-exact and fully faithful, it follows that it also commutes with filtered colimits.
	Therefore, $\varepsilon_U^*$ has the same property.
	We now prove conservativity.
	Since $\varepsilon_U^*$ is an exact functor, it is enough to prove that if $\cF$ is such that $\varepsilon_U^*(\cF) \simeq 0$, then $\cF \simeq 0$.
	Since $\varepsilon_U^*$ is $t$-exact and the $t$-structures on $A_U \Mod$ and $\cO_U \Mod$ are complete, we can reduce ourselves to the case where $\cF \in A_U \Modh$.
	In this case, we can write
	\[ \cF \simeq \bigcup_\alpha \cF_\alpha , \]
	where the union ranges over all finitely generated $A_U$-submodules of $\cF$.
	Since $\varepsilon_U^*$ is $t$-exact, we see that $\varepsilon_U^*(\cF_\alpha)$ is a submodule of $\varepsilon_U^*(\cF)$.
	This implies that $\varepsilon_U^*(\cF_\alpha) = 0$.
	Since $\cF_\alpha \in \Cohh(A_U)$, point (4) implies that $\cF_\alpha = 0$ for every $\alpha$.
	In particular, $\cF = 0$, whence the conclusion.
\end{proof}

In the non-archimedean case, we can strengthen the above result:

\begin{lem}[{cf.\ \cite[Theorem 3.4]{Porta_Yu_Mapping}}] \label{lem:perfect_complexes_affinoid}
	Let $U$ be a derived $k$-affinoid space and let $A_U \coloneqq \Gamma(U; \cO_U\alg)$.
	Then the functors $\varepsilon_U^*$ and $\Gamma(U;-)$ realize an equivalence of stable $\infty$-categories
	\[ \Coh(U) \simeq \Coh(A_U) , \]
	which furthermore restricts to an equivalence
	\[ \Perf(U) \simeq \Perf(A_U) . \]
	In particular, there is an equivalence $\Perf(U)^\simeq \simeq \Perf(A_{U})^\simeq$.
\end{lem}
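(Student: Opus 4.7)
The plan is to leverage \cref{prop:basic_properties_global_sections}, which already imports the main analytic content from \cite{Porta_Yu_Mapping}, and check that the functors $\varepsilon_U^*$ and $\Gamma(U;-)$ are mutually inverse equivalences on $\Coh$.

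First, I would verify that both functors restrict appropriately. The restriction $\varepsilon_U^* \colon \Coh(A_U) \to \Coh(U)$ is part of \cref{prop:basic_properties_global_sections}(2). For $\Gamma(U;-)$, $t$-exactness on $\Coh(U)$ (part (1)) together with the classical Kiehl theorem on the heart and completeness of the $t$-structure on $A_U \Mod$ show that $\Gamma(U;-)$ lands in $\Coh(A_U)$: the $t$-exactness gives $\pi_n \Gamma(U; \cF) \simeq \Gamma(U; \pi_n \cF)$, which lies in $\Cohh(A_U)$ by classical Kiehl, and completeness then recovers the unbounded complex from its Postnikov tower. The unit $\delta \colon \mathrm{Id}_{\Coh(A_U)} \to \Gamma(U;-) \circ \varepsilon_U^*$ coming from the first zig-zag in \eqref{eq:zig_zag_natural_transformations} is an equivalence by \cref{prop:basic_properties_global_sections}(4).

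Next, I would establish the counterpart equivalence. For $\cF \in \Coh(U)$, the key observation is that the unit $\widetilde{\varepsilon}_{U*} \cF \to i_U L_U \widetilde{\varepsilon}_{U*} \cF = i_U \Gamma(U;\cF)$ of the adjunction $L_U \dashv i_U$ is an equivalence: this is a manifestation of Tate's acyclicity together with Kiehl's theorem, which imply that the pushforward of a coherent sheaf on a $k$-affinoid is already quasi-coherent on its spectrum. Inverting this map inside the second zig-zag of \eqref{eq:zig_zag_natural_transformations} produces a natural transformation $\varepsilon_U^* \circ \Gamma(U;-) \to \mathrm{Id}_{\Coh(U)}$. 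To verify it is pointwise an equivalence on $\Coh(U)$, use conservativity and $t$-exactness of $\varepsilon_U^*$ (parts (2) and (5)) together with completeness of the $t$-structure to reduce to the heart, where the statement reduces once more to classical Kiehl.

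Finally, the restriction to perfect complexes is essentially formal: since $\varepsilon_U^*$ is symmetric monoidal and sends $A_U$ to $\cO_U$, it preserves dualizable objects, and on either side perfect complexes coincide with the dualizable objects of $\Coh$. Passing to maximal $\infty$-groupoids yields $\Perf(U)^\simeq \simeq \Perf(A_U)^\simeq$. The main obstacle is the counit step above, namely constructing $\varepsilon_U^* \circ \Gamma(U;-) \to \mathrm{Id}_{\Coh(U)}$ and verifying it is an equivalence on unbounded coherent complexes. The underlying geometric input (Kiehl's theorem and Tate acyclicity on $k$-affinoids) is classical, but packaging it into an $\infty$-categorical equivalence is precisely where we borrow the full strength of \cite[Theorem 3.4]{Porta_Yu_Mapping}.
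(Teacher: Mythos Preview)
Your proposal is correct. For the $\Coh$ equivalence you and the paper ultimately rely on the same external input \cite[Theorem 3.4]{Porta_Yu_Mapping}; your additional unpacking of the zig-zags and the counit is fine but, as you yourself note, the core step is borrowed from that reference just as in the paper. The genuine divergence is in how the $\Perf$ restriction is deduced. The paper picks a finite derived affinoid cover $\{U_i\}$ on which $\cF$ becomes strictly perfect, uses that the maps $A_U \to A_{U_i}$ are faithfully flat together with $\Gamma(U_i;\cF|_{U_i}) \simeq \Gamma(U;\cF) \otimes_{A_U} A_{U_i}$, and invokes flat descent of finite tor-amplitude \cite[Proposition 2.8.4.2(5)]{Lurie_SAG}. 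You instead observe that a perfect complex on $U$ is dualizable, and since $\Gamma(U;-)$ is symmetric monoidal on coherent inputs (\cref{prop:basic_properties_global_sections}(1)) it carries the duality data to $A_U\Mod$, where dualizable equals perfect. Your route is more conceptual and avoids the cover entirely; the paper's route is more hands-on and serves as the template reused in the Stein setting (\cref{lem:global_section_relatively_compact}). One small imprecision: the phrase ``dualizable objects of $\Coh$'' presumes $\Coh$ is a monoidal subcategory, which is not obvious for unbounded complexes; what you actually use, and what suffices, is that the lax monoidal structure maps of $\Gamma(U;-)$ are equivalences on coherent inputs, so the evaluation and coevaluation witnessing dualizability of a perfect complex are preserved and $\Gamma(U;\cF)$ is dualizable in $A_U\Mod$.
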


\begin{proof}
	Theorem 3.4 in \cite{Porta_Yu_Mapping} shows that the global section functor induces a $t$-exact equivalence of stable $\infty$-categories
	\[ \Coh(U) \simeq \Coh(A_U) . \]
	On the other hand, we know $\Perf(A_U)$ coincides with the smallest full stable subcategory of $\Coh^-(A_U)$ closed under retracts and containing $A_U$.
	As $A_U$ is mapped to $\cO_U$ under the above equivalence, we see that $\Perf(A_U)$ is mapped into $\Perf(U)$.
	It is therefore sufficient to prove that the global section functor takes $\Perf(U)$ to $\Perf(A_U)$.
	Fix $\cF \in \Perf(U)$ and let $\{U_i\}$ be a finite derived affinoid cover of $U$ so that $\cF |_{U_i}$ belongs to $\Perf^{\mathrm{strict}}(U_i)$.
	Let $A_i \coloneqq \Gamma(U_i; \cO_{U_i}\alg)$.
	In this case, we immediately see that $\Gamma(U_i; \cF|_{U_i}) \in \Perf(A_i)$.
	In particular, $\Gamma(U_i; \cF|_{U_i})$ has finite tor-amplitude.
	As the maps $A \to A_i$ are faithfully flat and
	\[ \Gamma(U_i; \cF|_{U_i}) \simeq \Gamma(X; \cF) \otimes_A A_i , \]
	we conclude that $\Gamma(X; \cF)$ has also finite tor-amplitude by \cite[Proposition 2.8.4.2(5)]{Lurie_SAG}.
	In particular, it belongs to $\Perf(A)$.
\end{proof}

This verifies the hypotheses of \cref{prop:general_strategy_analytification} for the map $\varepsilon^* \colon \bfPerf_k\an \to \bfAnPerf_k$ in the non-archimedean setting.
In the \canal case, the analogue of \cref{lem:perfect_complexes_affinoid} is simply false: indeed, the restriction of the functor $\Gamma(U;-)$ to $\Perf(U)$ does not factor through $\Perf(A_U)$.
However, \cref{prop:general_strategy_analytification} asks for a different statement: we have to check that for every compact Stein subset $K$ of $X$, the map $\varepsilon^* \colon \bfPerf_k\an \to \bfAnPerf_k$ induces an equivalence
\begin{equation} \label{eq:analytification_stack_perfect_complexes}
	\colim_{K \subset U \subset X} \Perf(A_U)^\simeq \longrightarrow \colim_{K \subset U \subset X} \Perf(U)^\simeq ,
\end{equation}
where the colimits run through the open Stein neighbourhoods $U$ of $K$ inside $X$.
In order to prove that \eqref{eq:analytification_stack_perfect_complexes} is an equivalence, it is easier (and more natural) to work with the stacks with values in $\Cat_\infty$.
We will prove below that $\varepsilon^* \colon \Perf_\C\an \to \AnPerf_\C$ induces an equivalence
\[ \colim_{K \subset U \subset X} \Perf( A_U ) \longrightarrow \colim_{K \subset U \subset X} \Perf( U ) . \]
However, since the maximal $\infty$-groupoid functor $(-)^\simeq$ does not commute with colimits, it is not straightforward to deduce that \eqref{eq:analytification_stack_perfect_complexes} is an equivalence from the above statement.
To circumvent this problem, we prove the following stronger statement:

\begin{thm} \label{thm:perfect_complexes_compact_Stein}
	Let $X \in \dAn_\C$ and let $K$ be a compact Stein subset of $X$.
	The functors
	\[ \varepsilon_U^* \colon \Perf( A_U ) \longrightarrow \Perf( U ) \]
	induce a morphism
	\[ \varepsilon_{(K)}^* \colon \fcolim_{K \subset U \subset X} \Perf( A_U ) \longrightarrow \fcolim_{K \subset U \subset X} \Perf( U ) \]
	in $\Ind(\Cat_\infty^{\mathrm{st}, \otimes})$, which is furthermore an equivalence.
\end{thm}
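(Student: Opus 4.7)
The strategy is to construct an inverse to $\varepsilon_{(K)}^*$ in the ind-category $\Ind(\Cat_\infty^{\mathrm{st}, \otimes})$, realizing a derived and $\infty$-categorical upgrade of Taylor's classical equivalence \cite[Proposition 11.9.2]{Taylor_Several_complex}. Since each $\varepsilon_V^*$ is symmetric monoidal by construction, it suffices to produce an inverse on underlying stable $\infty$-categories: the monoidal compatibility is then automatic. I would first establish the following local result: for each Stein neighborhood $V$ of $K$, the symmetric monoidal exact functor $\varepsilon_V^* \colon \Perf(A_V) \to \Perf(V)$ is fully faithful with essential image the subcategory $\Perf^{\mathrm{strict}}(\cO_V)$ of strict perfect complexes. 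Both sides are idempotent completions of the thick subcategory generated by a tensor unit ($A_V$, respectively $\cO_V$); since $\varepsilon_V^*(A_V) = \cO_V$, fully faithfulness reduces via Cartan's theorem B to the identity $\pi_* \Gamma(V; \cO_V) \simeq \pi_* A_V$.

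The crux of the proof is essential surjectivity in the ind-sense: for every $\cF \in \Perf(U)$ with $K \subset U$ Stein, I would produce a Stein $K \subset V \subset U$ on which $\cF|_V$ is strict perfect, and hence in the essential image of $\varepsilon_V^*$. Since $\Perf$ is the sheafification of the presheaf of strict perfect complexes and $K$ is compact, there is a finite Stein cover $\{W_i\}$ of some neighborhood of $K$ inside $U$ such that each $\cF|_{W_i}$ is strict perfect. These local strict perfect models are then glued into a single strict perfect model on a slightly shrunk Stein $V$ via a Cousin-type argument: the higher $\mathrm{Ext}$ groups between coherent sheaves on Stein subsets vanish by Cartan's theorem B, annihilating the higher coherence obstructions to the $\infty$-categorical gluing, so that the derived descent reduces to the classical one. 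By the local step, the resulting strict perfect model lifts canonically to an object of $\Perf(A_V)$.

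For fully faithfulness, given $M, N \in \Perf(A_U)$ and setting $P \coloneqq M^\vee \otimes_{A_U} N$, I would use dualizability of perfect complexes to rewrite the two mapping spaces as
\[ \Map_{\Perf(A_V)}(M|_V, N|_V) \simeq P \otimes_{A_U} A_V \quad\text{and}\quad \Map_{\Perf(V)}(\varepsilon_V^*(M|_V), \varepsilon_V^*(N|_V)) \simeq \Gamma(V; \varepsilon_V^*(P|_V)) , \]
the second identification using symmetric monoidality of $\varepsilon_V^*$. The needed equivalence in the ind-limit
\[ \colim_{K \subset V \subset U} \bigl(P \otimes_{A_U} A_V\bigr) \simeq \colim_{K \subset V \subset U} \Gamma\bigl(V; \varepsilon_V^*(P|_V)\bigr) \]
then follows from Cartan's theorem B (which makes $\Gamma(V;-)$ $t$-exact on coherent objects) together with the essential surjectivity applied to $\varepsilon_V^*(P|_V)$.

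The main obstacle is the derived gluing in the essential surjectivity step. Taylor's classical proof handles one coherent sheaf at a time and relies on the underived vanishing of $\mathrm{Ext}^i$ between coherent sheaves on Stein subsets for $i > 0$; the derived version requires gluing local strict perfect models compatibly up to all higher homotopies, and Cartan's theorem B is precisely what controls these higher coherences. A secondary technical point is ensuring that the pointwise inverses assemble into a morphism of ind-objects in $\Cat_\infty^{\mathrm{st}, \otimes}$, which requires choosing the shrinking $V \rightsquigarrow V'$ functorially along a cofinal subsystem of Stein neighborhoods of $K$.
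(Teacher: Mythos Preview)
Your overall architecture matches the paper's: both construct an inverse to $\varepsilon_{(K)}^*$ in $\Ind(\Cat_\infty^{\mathrm{st},\otimes})$, and your verification that $\Gamma(V;-)\circ\varepsilon_V^*\simeq\mathrm{id}$ on $\Perf(A_V)$ (hence full faithfulness of each $\varepsilon_V^*$) is exactly the paper's \cref{prop:basic_properties_global_sections}(4). The difference, and the gap, is in the essential surjectivity step.

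Your proposed ``Cousin-type'' gluing contains a false claim: Cartan's theorem B gives $H^i(V;\cF)=0$ for $\cF$ coherent and $i>0$, but it does \emph{not} force $\mathrm{Ext}^i_{\cO_V}(\cF,\cG)=0$. The local-to-global spectral sequence collapses on a Stein space to $\mathrm{Ext}^i(\cF,\cG)\simeq\Gamma(V;\mathcal E xt^i(\cF,\cG))$, which is generically nonzero. So the higher coherence obstructions you want to kill are still present, and there is no evident mechanism for patching the local strict perfect models $\cF|_{W_i}$ into a single strict perfect complex on a shrunk Stein. In other words, the assertion ``$\cF|_V$ is strict perfect for some Stein $V\supset K$'' is precisely the content of the theorem and cannot be obtained by descent alone.

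The paper bypasses gluing entirely. Its inverse is the global-sections functor: for a \emph{relatively compact} Stein $V\Subset U$ one shows directly that $\Gamma(V;\cF|_V)\in\Perf(A_V)$ (\cref{lem:global_section_relatively_compact}), using that $\Gamma(V;-)$ is $t$-exact on coherent objects and that finitely generated $A_V$-modules arise from the compactness of $\overline V$ via \cite[Lemma 8.12]{Porta_Yu_Higher_analytic_stacks_2014}; finite tor-amplitude is then checked by faithfully flat descent along $\{A_V\to A_{V_j}\}$ for a suitable cover. The remaining identity $\varepsilon_{(K)}^*\circ\Gamma_{(K)}\simeq\mathrm{id}$ is established by proving that $\widetilde\varepsilon_{W*}(\cF|_W)$ is already quasi-coherent on $\Spec(A_W)$ for $W\Subset V\Subset U$ (\cref{lem:coherent_pushforward_relatively_compact}), so that the zig-zag \eqref{eq:zig_zag_natural_transformations} collapses to an honest natural equivalence (\cref{lem:mock_counit}). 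All of this is first carried out for $\Coh$ (\cref{thm:unbounded_coherent_pro_compact_Stein}) and then restricted to $\Perf$. Replace your gluing paragraph by this global-sections argument and the rest of your outline goes through.
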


The proof of this theorem is technical and it will occupy the rest of this section.
Before delving into the details, let us record its main consequence:

\begin{prop} \label{prop:analytification_perfect_complexes_absolute}
	Let $k$ be either the field of complex numbers or a non-archimedean field equipped with a non-trivial valuation.
	The natural morphism $\varepsilon^* \colon \bfPerf_k\an \to \bfAnPerf_k$ is an equivalence of derived analytic stacks.
	In particular, $\bfAnPerf_k$ is a locally geometric derived analytic stack.
\end{prop}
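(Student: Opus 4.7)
The plan is to deduce this proposition directly from \cref{prop:general_strategy_analytification}, using the results already collected for $\bfPerf_k$ and $\bfAnPerf_k$. We need to verify the hypotheses of that proposition for the morphism $\varepsilon^* \colon \bfPerf_k\an \to \bfAnPerf_k$, separately in the non-archimedean and in the complex analytic case, and then conclude local geometricity by transport along the resulting equivalence.

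In the non-archimedean setting the verification is immediate from \cref{lem:perfect_complexes_affinoid}. For any $U \in \dAfd_k$ and any \'etale morphism $V \to U$, the functor $\varepsilon_V^*$ induces an equivalence $\Perf(A_V) \simeq \Perf(V)$. Passing to maximal $\infty$-groupoids and unfolding the definitions of $G^p_U$ and $F^s_U$ yields an equivalence
\[ G^p_U(\bfPerf_k)(V) \simeq \Perf(A_V)^\simeq \simeq \Perf(V)^\simeq \simeq F^s_U(\bfAnPerf_k)(V) , \]
which is exactly hypothesis (1) of \cref{prop:general_strategy_analytification}.

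In the complex case we rely on \cref{thm:perfect_complexes_compact_Stein}. Fix $U \in \dStn_\C$ and a compact Stein subset $K \subset U$. The theorem provides an equivalence
\[ \fcolim_{K \subset V \subset U} \Perf(A_V) \simeq \fcolim_{K \subset V \subset U} \Perf(V) \]
in $\Ind(\Cat_\infty^{\mathrm{st}, \otimes})$. Since passage to the maximal $\infty$-groupoid is a functor $\Cat_\infty \to \cS$, it induces a well-defined functor on ind-objects, and applying it term by term we obtain an equivalence of ind-objects in $\cS$. Realizing this equivalence (i.e.\ passing to the actual colimit in $\cS$, which is legal now that the ind-diagrams agree) gives
\[ \colim_{K \subset V \subset U} \Perf(A_V)^\simeq \simeq \colim_{K \subset V \subset U} \Perf(V)^\simeq . \]
This is exactly the equivalence \eqref{eq:equivalence_ind_object} for $\bfPerf_k$ and $\bfAnPerf_k$, so hypothesis (2) of \cref{prop:general_strategy_analytification} holds. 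Therefore $\varepsilon^* \colon \bfPerf_k\an \to \bfAnPerf_k$ is an equivalence.

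For local geometricity, recall from \cite{Toen_Vaquie_Moduli_of_objects} that $\bfPerf_k$ is a locally geometric derived stack, locally almost of finite presentation. The derived analytification functor $(-)\an \colon \dSt_k^{\mathrm{afp}} \to \dAnSt_k$ preserves local geometricity (it commutes with fiber products and the étale coverings descend along $(-)\an \colon \dAff_k^{\mathrm{afp}} \to \dAfd_k$; this is the analogue of \cite[\S 6.1]{Porta_Yu_Higher_analytic_stacks_2014}), so $\bfPerf_k\an$ is locally geometric, and by the just-established equivalence so is $\bfAnPerf_k$. The main obstacle in this argument is really \cref{thm:perfect_complexes_compact_Stein}, which the paper devotes the rest of the section to proving; once that theorem is in hand, the proposition itself is a clean consequence of the general analytification strategy of \cref{prop:general_strategy_analytification}.
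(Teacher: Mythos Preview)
Your proof is correct and follows essentially the same approach as the paper's own argument: both reduce to \cref{prop:general_strategy_analytification}, invoke \cref{lem:perfect_complexes_affinoid} in the non-archimedean case, and in the \canal case apply the maximal $\infty$-groupoid functor termwise to the equivalence of ind-objects provided by \cref{thm:perfect_complexes_compact_Stein} before realizing in $\cS$. The concluding remark on local geometricity likewise matches the paper's one-line appeal to the fact that analytification preserves locally geometric stacks.
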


\begin{proof}
	We know that $\bfPerf_k$ is a locally geometric stack.
	It is therefore enough to check that the hypotheses of \cref{prop:general_strategy_analytification} are satisfied.
	Fix a derived Stein (resp.\ $k$-affinoid) space $U$ and let $V \subset U$ be an open Stein subspace (resp.\ $k$-affinoid domain embedding).
	Unravelling the definitions, we see that
	\[ G^p_U( \bfPerf_k )(V) \simeq \Perf(A_V)^\simeq \quad , \quad F^s_U( \bfAnPerf_k )(V) \simeq \Perf( V )^\simeq . \]
	In the non-archimedean case, the conclusion therefore follows from \cref{lem:perfect_complexes_affinoid}.
	In the \canal case, the equivalence of ind-objects provided by \cref{thm:perfect_complexes_compact_Stein} induces an equivalence
	\[ \fcolim_{K \subset V \subset U} \Perf(A_V)^\simeq \longrightarrow \fcolim_{K \subset V \subset U} \Perf(V)^\simeq \]
	in $\Ind(\cS)$.
	By realizing this equivalence of ind-objects, we see that the hypotheses of \cref{prop:general_strategy_analytification} are satisfied.
	The second statement follows at once because the analytification functor preserves locally geometric stacks.
\end{proof}

\subsection{Proof of \cref{thm:perfect_complexes_compact_Stein}}

We now turn to the proof of \cref{thm:perfect_complexes_compact_Stein}.
As in \cref{lem:perfect_complexes_affinoid}, we will deduce the statement from the analogous statement concerning unbounded complexes with coherent cohomology.
As we already remarked, the difference from the non-archimedean setting is that the composition
\[ \begin{tikzcd}
	\Coh(U) \arrow{r} & \cO_U \Mod \arrow{r}{\Gamma(U;-)} & A_U \Mod
\end{tikzcd} \]
does not factor through $\Coh(A_U)$.
We therefore lack a candidate for the inverse of $\varepsilon_U^*$.
When working with ind-objects, however, the situation improves thanks to the following couple of lemmas:

\begin{lem} \label{lem:global_section_relatively_compact}
	Let $U \in \dStn_\C$ and let $V \Subset U$ be a relatively compact Stein subset (see \cref{def:relatively_compact_Stein}).
	Then the composition
	\[ \begin{tikzcd}
		\Coh(U) \arrow{r}{(-)|_V} & \Coh(V) \arrow{r}{\Gamma(V;-)} & A_V \Mod
	\end{tikzcd} \]
	factors, as a symmetric monoidal functor, through $\Coh(A_V)$.
	Moreover, if $\cF \in \Perf(U)$, then $\Gamma(V; \cF|_V) \in \Perf(A_V)$.
\end{lem}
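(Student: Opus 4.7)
The plan is to handle the coherent statement first and then upgrade to perfect complexes via a tor-amplitude argument. I begin by reducing the coherent case to a single discrete coherent sheaf: by \cref{prop:basic_properties_global_sections}(1), the restriction of $\Gamma(V;-)$ to $\Coh(V)$ is $t$-exact and symmetric monoidal, so for any $\cF \in \Coh(U)$ one has natural identifications $\pi_i(\Gamma(V;\cF|_V)) \simeq \Gamma(V;\pi_i(\cF)|_V)$. Since $\Coh(A_V)$ consists by definition of complexes whose cohomology modules are finitely presented over $A_V$, it suffices to show that $\Gamma(V;\cG|_V) \in \Cohh(A_V)$ whenever $\cG \in \Cohh(U)$; symmetric monoidality of the factorization is then automatic from \cref{prop:basic_properties_global_sections}(1) combined with symmetric monoidality of restriction $\Coh(U) \to \Coh(V)$.

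To establish finite presentation, I fix a nested pair of relatively compact Stein opens $V \Subset V' \Subset U$. By Cartan's Theorem~A on the Stein $V'$, together with the relative compactness of $V'$ in $U$, the coherent sheaf $\cG|_{V'}$ admits a surjection $\cO_{V'}^n \twoheadrightarrow \cG|_{V'}$, whose kernel $\cK'$ is again coherent on $V'$ by Oka's coherence theorem. Restricting this short exact sequence to $V$ and applying Cartan's Theorem~B (equivalently, \cref{prop:basic_properties_global_sections}(1)) produces a surjection $A_V^n \twoheadrightarrow \Gamma(V;\cG|_V)$ whose kernel is $\Gamma(V;\cK'|_V)$. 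Running the same argument with $\cG$ replaced by $\cK'$, relative to the nested pair $V \Subset V'$, yields a further surjection $A_V^m \twoheadrightarrow \Gamma(V;\cK'|_V)$, and these two surjections together exhibit $\Gamma(V;\cG|_V)$ as a finitely presented $A_V$-module.

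For the perfect statement, given $\cF \in \Perf(U)$ the previous steps already ensure $\Gamma(V;\cF|_V) \in \Coh(A_V)$, so only finite tor-amplitude over $A_V$ remains. Since $\overline V$ is compact and $\cF$ is locally strictly perfect on $U$, I cover $\overline V$ by finitely many open Steins $W_i \subset U$ on which $\cF|_{W_i}$ is strictly perfect, and then shrink to a finite open Stein cover $\{V_i\}$ of $V$ refining $\{V \cap W_i\}$, so that each $\Gamma(V_i;\cF|_{V_i}) \in \Perf(A_{V_i})$ has bounded tor-amplitude. Invoking Frisch's flatness theorem for open immersions of Stein spaces, the restriction maps $A_V \to A_{V_i}$ are flat and jointly faithfully flat, so the criterion \cite[Proposition 2.8.4.2(5)]{Lurie_SAG} transfers finite tor-amplitude from $\Gamma(V;\cF|_V) \otimes_{A_V} A_{V_i} \simeq \Gamma(V_i;\cF|_{V_i})$ back to $\Gamma(V;\cF|_V)$ over $A_V$, exactly as in the proof of \cref{lem:perfect_complexes_affinoid}. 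The main obstacle is upgrading this classical Stein-theoretic input to the derived setting: one has to lift the classical surjections of discrete coherent sheaves to the derived level and check that Frisch's flatness applies to the derived algebras $A_V$, not merely to their $\pi_0$-truncations.
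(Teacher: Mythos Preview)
Your overall strategy matches the paper's: reduce the coherent statement to the heart via $t$-exactness, then for perfect complexes descend finite tor-amplitude along a faithfully flat family using \cite[Proposition 2.8.4.2(5)]{Lurie_SAG}. The coherent part is fine --- what you spell out is exactly the content of \cite[Lemma 8.12]{Porta_Yu_Higher_analytic_stacks_2014}, which the paper simply cites.

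The perfect-complex argument, however, has a real gap. You take the refinement $\{V_i\}$ to be an arbitrary open Stein cover of $V$ and then invoke ``Frisch's flatness theorem for open immersions of Stein spaces'' to get flatness of $A_V \to A_{V_i}$. No such general statement is available: flatness of the restriction map on global sections between Stein spaces is only known when the smaller domain is \emph{relatively compact} in the larger one (this is \cite[Lemma 8.13]{Porta_Yu_Higher_analytic_stacks_2014} classically, and \cref{lem:flatness_global_sections_nested_compact_Stein} in the derived setting --- note the latter needs a nested chain $V_j \Subset V \Subset U$, precisely so that the $\pi_i(A_V)$ are finitely generated over $\pi_0(A_V)$). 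The same relative compactness hypothesis is what makes the base-change identification $\Gamma(V;\cF|_V)\otimes_{A_V} A_{V_j} \simeq \Gamma(V_j;\cF|_{V_j})$ hold; this is \cref{cor:coherent_sheaves_and_restriction_derived_Stein}, which again requires $V_j \Subset V \Subset U$, and which you assert without justification. The fix is easy but essential: when you refine the cover, impose the extra condition $V_j \Subset V$ (this is exactly what the paper does), and replace the appeal to Frisch with \cref{lem:flatness_global_sections_nested_compact_Stein} and \cref{cor:coherent_sheaves_and_restriction_derived_Stein}. These two lemmas also resolve the ``main obstacle'' you flag at the end, so once you invoke them the proof is complete.
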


\begin{proof}
	We know from \cref{prop:basic_properties_global_sections} that $\Gamma(V;-)$ is $t$-exact and monoidal.
	Therefore it is enough to check that when $\cF \in \Cohh(U)$, then $\Gamma(V;\cF|_V) \in \Cohh(A_V)$.
	This follows at once from Cartan's Theorem B and \cite[Lemma 8.12]{Porta_Yu_Higher_analytic_stacks_2014}.
	
	Suppose now that $\cF \in \Perf(U)$.
	We have to prove that $\Gamma(V;\cF|_V)$ has finite tor-amplitude.
	If $\cF \in \Perf^{\mathrm{strict}}(U)$ then $\cF |_V \in \Perf^{\mathrm{strict}}(V)$ and therefore $\Gamma(V; \cF|_V) \in \Perf^{\mathrm{strict}}(A_V)$.
	In general, we can find a cover $\{U_i\}_{i \in I}$ of $U$ such that $\cF |_{U_i}$ belongs to $\Perf^{\mathrm{strict}}(U_i)$.
	Choose now a cover $\{V_j\}_{j \in J}$ of $V$ satisfying the following properties:
	\begin{enumerate}
		\item for every $j \in J$ there exists $i \in I$ such that $V_j \subset U_i$;
		\item for every $j \in J$, the open $V_j$ is Stein and relatively compact inside $V$.
	\end{enumerate}
	It follows that each $\Gamma(V_j; \cF |_{V_j})$ is perfect over $A_{V_j}$.
	Since $V_j \Subset V \Subset U$, we can apply \cref{lem:flatness_global_sections_nested_compact_Stein} to deduce that the family of maps $\{A_V \to A_{V_j}\}$ is faithfully flat.
	Similarly, \cref{cor:coherent_sheaves_and_restriction_derived_Stein} shows that the natural morphism
	\[ \Gamma(V;\cF|_V) \otimes_{A_V} A_{V_j} \longrightarrow \Gamma(V_j ; \cF |_{V_j}) \]
	is an equivalence.
	At this point, \cite[Proposition 2.8.4.2(5)]{Lurie_SAG} implies that $\Gamma(V; \cF|_V)$ has finite tor-amplitude over $A_V$, completing the proof.
\end{proof}

This lemma shows that the functor $\Gamma(U;-) \colon \Coh(U) \to A_U \Mod$ induce a morphism in $\Ind(\Cat_\infty^{\mathrm{st}, \otimes})$
\[ \Gamma_{(K)} \colon \fcolim_{K \subset V \subset U} \Coh(U) \longrightarrow \fcolim_{K \subset V \subset U} \Coh(A_U) . \]
We now wish to prove that $\Gamma_{(K)}$ and $\varepsilon_{(K)}^*$ form an equivalence of ind-objects.
We need the following lemma.

\begin{lem} \label{lem:coherent_pushforward_relatively_compact}
	Let $U$ be a derived Stein space and let $W \Subset V \Subset U$ be two nested relatively compact Stein subsets.
	For any $\cF \in \Coh(U)$, the $\cO_{A_{W}}$-module $\widetilde{\varepsilon}_{W*}(\cF |_{W})$ is a coherent sheaf over $\Spec(A_W)$.
\end{lem}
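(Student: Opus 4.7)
Set $M \coloneqq \Gamma(W;\cF|_W) \simeq L_W\bigl(\widetilde{\varepsilon}_{W*}(\cF|_W)\bigr)$. Since $\cF \in \Coh(U)$ we have $\cF|_V \in \Coh(V)$, so applying \cref{lem:global_section_relatively_compact} to the relatively compact pair $W \Subset V$ yields $M \in \Coh(A_W)$. Consequently $i_W(M)$ is already a coherent sheaf on $\Spec(A_W)$. The unit of the adjunction $L_W \dashv i_W$ provides a canonical comparison
\[
\eta \colon \widetilde{\varepsilon}_{W*}(\cF|_W) \longrightarrow i_W(M).
\]
The lemma thus reduces to showing that $\eta$ is an equivalence, or equivalently that $\widetilde{\varepsilon}_{W*}(\cF|_W)$ lies in the essential image of $i_W$, i.e.\ is quasi-coherent on $\Spec(A_W)$.

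Quasi-coherence can be checked on the étale topology of $\Spec(A_W)$: one needs the base-change identity
\[
\Gamma(W_B;\ \cF) \simeq M \otimes_{A_W} B
\]
for every étale morphism $\Spec(B) \to \Spec(A_W)$, where $W_B \coloneqq W \times_{\Spec(A_W)} \Spec(B)$ is computed in $\RTop(\cTetk)$ (and identifies with the étale pullback along $\varepsilon_W$). By descent and the structure of étale morphisms it suffices to treat basic étale neighbourhoods, i.e.\ standard étale covers of localizations $A_W[1/f]$. The intermediate Stein $V$ enters at this step: by \cref{lem:flatness_global_sections_nested_compact_Stein} the restriction $A_V \to A_W$ is flat, and $W \Subset V$ ensures that sufficiently small étale neighbourhoods of $W$ can be realized as Stein opens contained in $V$ on which $\cF$ remains coherent, making classical Cartan B available.

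The main obstacle is the derived base change: one must control the higher $\Tor$-terms in $M \otimes_{A_W} B$ against the higher sheaf cohomology of $\cF|_{W_B}$, not merely the underived truncations. The plan is to leverage the $t$-exactness of $\Gamma(W;-)$ on $\Coh(W)$ from \cref{prop:basic_properties_global_sections}(1) together with a Postnikov-tower argument, reducing step by step to the heart. There the claim becomes the classical compatibility of coherent sheaves on a (relatively compact) Stein space with localization of its algebra of holomorphic functions, which is ensured by the flatness provided by the nested relative compactness $W \Subset V \Subset U$.
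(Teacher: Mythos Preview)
Your overall structure---reduce to showing $\widetilde{\varepsilon}_{W*}(\cF|_W)$ is quasi-coherent by verifying the base-change identity on a basis of opens---matches the paper's. But two concrete points separate your sketch from a working proof.

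\textbf{The missing geometric input.} You assert that ``sufficiently small \'etale neighbourhoods of $W$ can be realized as Stein opens contained in $V$,'' but never justify it. This is the crux. The paper restricts to \emph{principal Zariski} opens $W' = D(f) \subset \Spec(A_W)$ and invokes a nontrivial classical fact: the preimage $\widetilde{W'} \coloneqq \varepsilon_W^{-1}(W') = W \smallsetminus Z(f)$ is again a Stein space. This is not automatic (an arbitrary open in a Stein space need not be Stein), and the paper cites \cite[\S 1.4.4]{Grauert_Coherent_1984} together with the reconstruction theorem of \cite[\S IV.7.4]{Grauert_Theory_Stein_1979} for it. Without this, there is no reason Cartan~B applies on $W_B$, and your argument does not go through. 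Working with general \'etale $B$ only makes this worse: the fibre product $W_B$ has no obvious Stein structure.

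\textbf{The unnecessary Postnikov detour.} Once one knows $\widetilde{W'}$ is Stein and $\widetilde{W'} \Subset V \Subset U$, the paper applies \cref{cor:coherent_sheaves_and_restriction_derived_Stein} directly at the derived level: it gives
\[
\Gamma(V;\cF|_V) \otimes_{A_V} A_{\widetilde{W'}} \;\simeq\; \Gamma(\widetilde{W'};\cF|_{\widetilde{W'}}) \;=\; \bigl(\widetilde{\varepsilon}_{W*}(\cF|_W)\bigr)(W'),
\]
and combining with the same corollary for $W \Subset V \Subset U$ identifies this with $M \otimes_{A_W} A_{\widetilde{W'}}$. No reduction to the heart, no separate control of higher $\Tor$, is needed: the corollary already packages the flatness of \cref{lem:flatness_global_sections_nested_compact_Stein} into a derived statement. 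Your proposed Postnikov-tower reduction is a detour around a tool the paper has already set up precisely for this purpose.

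In short: check on principal opens, cite Grauert--Remmert for the Steinness of $\varepsilon_W^{-1}(D(f))$, and invoke \cref{cor:coherent_sheaves_and_restriction_derived_Stein} once.
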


\begin{proof}
	We already know from the previous lemma that the global sections of $\widetilde{\varepsilon}_{W*}(\cF |_{W})$ belong to $\Coh(A_W)$.
	It is therefore enough to prove that all its cohomologies are quasi-coherent.
	For this, it is enough to check that for every principal open $W'$ of $\Spec(A_W)$ the canonical map
	\[ \Gamma(W; \cF|_W) \otimes_{A_W} A_{W'} \longrightarrow (\widetilde{\varepsilon}_{W*}(\cF |_W))(W') \]
	is an equivalence.
	We immediately observe that
	\[ \widetilde{\varepsilon}_{W*}(\cF |_W)(W') = \cF( \varepsilon_W\inv(W') ) . \]
	It follows from the discussion in \cite[\S 1.4.4]{Grauert_Coherent_1984} and from the reconstruction theorem proved in \cite[\S IV.7.4]{Grauert_Theory_Stein_1979} that $\widetilde{W'} \coloneqq \varepsilon_W\inv(W')$ is itself a Stein space.
	Furthermore, we have $\widetilde{W'} \Subset V$.
	We can therefore apply \cref{cor:coherent_sheaves_and_restriction_derived_Stein} to the sequence of nested derived Stein $\widetilde{W'} \Subset V \Subset U$ to deduce that
	\[ \Gamma(W;\cF|_W) \otimes_{A_W} A_{\widetilde{W'}} \simeq \Gamma(V; \cF |_V) \otimes_{A_V} A_{\widetilde{W'}} . \]
	The proof is therefore complete.
\end{proof}

\begin{lem} \label{lem:mock_counit}
	Let $U$ be a derived Stein space and let $W \Subset V \Subset U$ be two nested relatively compact Stein subsets.
	Let $\rho_{U,W} \colon \cO_U \Mod \longrightarrow \cO_W \Mod$ be the restriction functor and let $j_U \colon \Coh(U) \hookrightarrow \cO_U \Mod$ be the natural inclusion.
	Then:
	\begin{enumerate}
		\item the natural transformation
		\[ \widetilde{\varepsilon}_{W*} \circ \rho_{U,W} \circ j_U \longrightarrow i_W \circ L_W \circ \widetilde{\varepsilon}_{W*} \circ \rho_{U,W} \circ j_U \]
		is an equivalence;
		\item the natural transformation
		\[ \vartheta \colon \varepsilon_W^* \circ \Gamma(W;-) \circ \rho_{U,W} \circ j_U \longrightarrow \rho_{U,W} \circ j_U \]
		induced by the zig-zag \eqref{eq:zig_zag_natural_transformations} and by the previous point is an equivalence.
	\end{enumerate}
\end{lem}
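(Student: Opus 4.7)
Plan for the proof. Part (1) will be essentially immediate from \cref{lem:coherent_pushforward_relatively_compact}. Since $i_W \colon A_W\Mod \hookrightarrow \cO_{A_W}\Mod$ is fully faithful with left adjoint $L_W$, the unit $\mathrm{Id} \to i_W \circ L_W$ is an equivalence on an object precisely when that object lies in the essential image of $i_W$, i.e.\ is quasi-coherent on $\Spec(A_W)$. But \cref{lem:coherent_pushforward_relatively_compact} says that $\widetilde{\varepsilon}_{W*}(\cF|_W)$ is a coherent (hence quasi-coherent) sheaf on $\Spec(A_W)$ for every $\cF \in \Coh(U)$; this yields Part (1) at once.

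For Part (2), I would first unwind the zig-zag \eqref{eq:zig_zag_natural_transformations} to exhibit $\vartheta$ as the composition
\[ \varepsilon_W^*\circ\Gamma(W;-)\circ\rho_{U,W}\circ j_U \;=\; \widetilde{\varepsilon}_W^*\circ i_W\circ L_W\circ\widetilde{\varepsilon}_{W*}\circ\rho_{U,W}\circ j_U \xleftarrow{\;\sim\;} \widetilde{\varepsilon}_W^*\circ\widetilde{\varepsilon}_{W*}\circ\rho_{U,W}\circ j_U \longrightarrow \rho_{U,W}\circ j_U , \]
where the leftward arrow is $\widetilde{\varepsilon}_W^*$ applied to the natural transformation of Part (1) (and hence an equivalence by Part (1)), while the rightward arrow is the counit of the adjunction $\widetilde{\varepsilon}_W^* \dashv \widetilde{\varepsilon}_{W*}$. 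Consequently $\vartheta$ is an equivalence if and only if the counit $\widetilde{\varepsilon}_W^*\widetilde{\varepsilon}_{W*}(\cF|_W) \to \cF|_W$ is an equivalence for every $\cF \in \Coh(U)$.

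Using Part (1) once more, I would identify $\widetilde{\varepsilon}_{W*}(\cF|_W)$ with the quasi-coherent sheaf on $\Spec(A_W)$ associated to the $A_W$-module $\Gamma(W;\cF|_W)$. Since pulling back a quasi-coherent sheaf along the morphism of ringed topoi $\varepsilon_W$ coincides, by construction, with the functor $\varepsilon_W^*$ applied to its global sections, the counit becomes the canonical comparison map $\varepsilon_W^*\Gamma(W;\cF|_W) \to \cF|_W$. To verify that this map is an equivalence, I would invoke the $t$-exactness of both $\varepsilon_W^*$ and $\Gamma(W;-)$ on coherent complexes (\cref{prop:basic_properties_global_sections}) together with completeness of the $t$-structures on $A_W\Mod$ and $\cO_W\Mod$ to reduce, via a Postnikov tower argument, to the case $\cF \in \Cohh(U)$. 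For such an $\cF$ the statement is local on $W$, so by covering $W$ with Stein opens $W''\Subset W$ and invoking \cref{cor:coherent_sheaves_and_restriction_derived_Stein} one rewrites the restriction of the map to $W''$ as the analogous comparison $\varepsilon_{W''}^*\Gamma(W'';\cF|_{W''}) \to \cF|_{W''}$, and concludes by the classical Cartan theorems A and B on the Stein space $W''$.

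The main obstacle lies in the reduction from unbounded coherent complexes to the classical heart situation: although each of $\varepsilon_W^*$, $\Gamma(W;-)$, and $\rho_{U,W}$ is $t$-exact on the relevant subcategories, one needs to verify carefully that taking Postnikov towers is compatible with the counit $\widetilde{\varepsilon}_W^*\widetilde{\varepsilon}_{W*} \to \mathrm{Id}$ on coherent complexes, which in turn relies on the preservation of coherence under the relevant pushforward (Part (1) of this very lemma). Once the reduction to the heart is settled, the classical Stein theory combined with the compatibilities for relatively compact Stein pairs already established in \cref{lem:flatness_global_sections_nested_compact_Stein} and \cref{cor:coherent_sheaves_and_restriction_derived_Stein} completes the argument.
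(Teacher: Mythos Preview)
Your proof is correct and follows the same strategy as the paper: Part~(1) via \cref{lem:coherent_pushforward_relatively_compact}, and Part~(2) by a $t$-exactness reduction to $\Cohh(U)$ followed by classical Stein theory. The paper's proof of Part~(2) is more direct, however: once reduced to the heart, it simply invokes \cite[Lemma~8.11]{Porta_Yu_Higher_analytic_stacks_2014}, which says precisely that a coherent sheaf on $W$ extending to the larger Stein space $U \Supset W$ is globally finitely generated, and hence is recovered from its global sections via $\varepsilon_W^*$. Your extra localization step to smaller opens $W'' \Subset W$ and the appeal to \cref{cor:coherent_sheaves_and_restriction_derived_Stein} are unnecessary, since the nested hypothesis $W \Subset V \Subset U$ already provides the relative compactness needed to run the classical argument directly on $W$. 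Likewise, your caution about compatibility of Postnikov towers with the counit is more than is needed: since $\vartheta$ is a natural transformation between two $t$-exact functors with target a stable $\infty$-category whose $t$-structure is left-complete, checking it on the heart suffices.
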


\begin{proof}
	Point (1) is a direct consequence of \cref{lem:coherent_pushforward_relatively_compact}, because $\widetilde{\varepsilon}_{W*} \circ \rho_{U,W} \circ j_U$ factors through $\Coh(A_W)$ and the unit of the adjunction $L_W \dashv i_W$ is an equivalence on the objects belonging to $A_W \Mod$.
	As for point (2), all the functors appearing are $t$-exact.
	It is therefore sufficient to check that $\theta$ is an equivalence when evaluated on objects in $\Cohh(U)$.
	This follows immediately from \cite[Lemma 8.11]{Porta_Yu_Higher_analytic_stacks_2014}.
\end{proof}

We are now ready to state and prove the key result:

\begin{thm} \label{thm:unbounded_coherent_pro_compact_Stein}
	Let $X \in \dAn_\C$ and let $K$ be a compact Stein subset of $X$.
	The morphism
	\begin{equation} \label{eq:coherent_compact_Stein}
		\varepsilon_{(K)}^* \colon \fcolim_{K \subset U \subset X} \Coh(A_U) \longrightarrow \fcolim_{K \subset U \subset X} \Coh(U)
	\end{equation}
	is an equivalence in $\Ind(\Cat_\infty^{\mathrm{st}, \otimes})$, whose inverse is given by $\Gamma_{(K)}$.
\end{thm}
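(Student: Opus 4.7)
The plan is to exhibit $\Gamma_{(K)}$ as a two-sided inverse to $\varepsilon_{(K)}^*$ by passing to a cofinal reindexing of the ind-diagrams by pairs of nested Stein neighborhoods of $K$, and then verifying the two compositions pointwise on this reindexed diagram. The four technical inputs I plan to use are \cref{prop:basic_properties_global_sections}(4) (the unit $\delta$ is an equivalence on $\Coh(A_U)$), \cref{lem:global_section_relatively_compact} (which places $\Gamma(V;\cF|_V)$ in $\Coh(A_V)$ when $V\Subset U$), \cref{lem:mock_counit}(2) (the mock counit identity on restrictions), and the flat base-change property for nested derived Stein spaces given by \cref{cor:coherent_sheaves_and_restriction_derived_Stein}.

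First I would set up the reindexing. Let $\cI$ be the filtered poset of open Stein neighborhoods of $K$ in $X$, and let $\cI^{\Subset}$ be the filtered poset of pairs $(V,U)\in \cI\times\cI$ with $V\Subset U$, ordered componentwise by reverse inclusion. Both projections $\pi_1,\pi_2\colon\cI^{\Subset}\to\cI$ are cofinal, since every Stein neighborhood of $K$ contains a relatively compact Stein neighborhood of $K$ that is again Stein. Hence the two ind-objects in the statement are represented equally well by $\fcolim_{(V,U)}\Coh(A_V)$ and $\fcolim_{(V,U)}\Coh(U)$. By \cref{lem:global_section_relatively_compact}, the assignment $\cF\mapsto \Gamma(V;\cF|_V)$ yields symmetric monoidal exact functors $\Gamma_{V,U}\colon \Coh(U)\to \Coh(A_V)$, and \cref{cor:coherent_sheaves_and_restriction_derived_Stein} guarantees compatibility with the transitions in both ind-systems; this defines $\Gamma_{(K)}$ in $\Ind(\Cat_\infty^{\mathrm{st},\otimes})$.

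Next I would compute the two compositions. For $\Gamma_{(K)}\circ \varepsilon_{(K)}^*\simeq \mathrm{id}$, pointwise on $(V,U)$ and $F\in\Coh(A_U)$, flat base change along $A_U\to A_V$ identifies $\varepsilon_U^*(F)|_V$ with $\varepsilon_V^*(A_V\otimes_{A_U}F)$; applying \cref{prop:basic_properties_global_sections}(4) to the coherent $A_V$-module $A_V\otimes_{A_U}F$ gives
\[ \Gamma(V;\varepsilon_V^*(A_V\otimes_{A_U}F))\simeq A_V\otimes_{A_U}F, \]
which is exactly the transition $\Coh(A_U)\to\Coh(A_V)$ of the source ind-system. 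For $\varepsilon_{(K)}^*\circ\Gamma_{(K)}\simeq\mathrm{id}$, I would reindex once more by the cofinal system of triples $W\Subset V\Subset U$ and apply \cref{lem:mock_counit}(2): for $\cF\in\Coh(U)$, the natural map $\varepsilon_W^*(\Gamma(W;\cF|_W))\to \cF|_W$ is an equivalence, and this is precisely the transition $\Coh(U)\to\Coh(W)$ of the target ind-system.

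The main obstacle I expect is bookkeeping the naturality across the double (or triple) index so that the pointwise equivalences above assemble into coherent natural transformations of diagrams in $\Cat_\infty^{\mathrm{st},\otimes}$, rather than merely in $\Cat_\infty^{\mathrm{st}}$. Symmetric monoidality of each $\Gamma_{V,U}$ and $\varepsilon_U^*$ is supplied by \cref{prop:basic_properties_global_sections}, but the naturality of the flat base-change identification $\varepsilon_U^*(F)|_V\simeq\varepsilon_V^*(A_V\otimes_{A_U}F)$ in $(V,U)$ needs to be pinned down carefully via \cref{cor:coherent_sheaves_and_restriction_derived_Stein}, i.e.\ by checking that restriction of sheaves and scalar extension of modules commute up to coherent homotopy across the filtered diagram. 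Once this is in place, both compositions agree with the identity morphisms of the respective ind-objects, and the theorem follows.
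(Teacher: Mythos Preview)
Your proposal is correct and follows essentially the same strategy as the paper: use \cref{prop:basic_properties_global_sections}(4) for $\Gamma_{(K)}\circ\varepsilon_{(K)}^*$ and \cref{lem:mock_counit}(2) for $\varepsilon_{(K)}^*\circ\Gamma_{(K)}$, the latter after a cofinal reindexing by nested Stein neighborhoods.

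The only notable difference is in the first composition. You reindex to pairs $(V,U)$ with $V\Subset U$, identify $\varepsilon_U^*(F)|_V\simeq\varepsilon_V^*(A_V\otimes_{A_U}F)$, and then apply \cref{prop:basic_properties_global_sections}(4) at level $V$. The paper instead observes that no reindexing is needed for this direction: by \cref{prop:basic_properties_global_sections}(3) and (4), $\Gamma(U;\varepsilon_U^*(F))$ already lies in $\Coh(A_U)$ and is canonically equivalent to $F$, so $\Gamma_{(K)}\circ\varepsilon_{(K)}^*\simeq\mathrm{id}$ holds at each level $U$ directly. Your route is valid (note that the identification $\varepsilon_U^*(F)|_V\simeq\varepsilon_V^*(A_V\otimes_{A_U}F)$ is just functoriality of pullback along the commuting square $V\alg\to\Spec(A_V)\to\Spec(A_U)$ and $V\alg\to U\alg\to\Spec(A_U)$, so calling it ``flat base change'' is a slight misnomer; flatness is not needed here, and $A_V\otimes_{A_U}F\in\Coh(A_V)$ because base change preserves almost perfect modules). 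The paper's shortcut is cleaner, but your more explicit reindexing makes the bookkeeping you flag in your final paragraph more uniform across the two directions.
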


\begin{proof}
	\Cref{prop:basic_properties_global_sections} implies that the natural transformation $\delta \colon \mathrm{Id}_{A_U \Mod} \longrightarrow \Gamma(U;-) \circ \varepsilon_U^*$ is an equivalence when evaluated on objects in $\Coh(A_U)$, for every open Stein neighbourhood $U$ of $K$ in $X$.
	This shows that $\Gamma_{(K)} \circ \varepsilon_{(K)}^*$ is equivalent to the identity of the left hand side of \eqref{eq:coherent_compact_Stein}.
	
	For the other direction, the natural transformation $\theta$ we constructed in \cref{lem:mock_counit} provides a path between morphisms in $\Ind(\Cat_\infty^{\mathrm{st}, \otimes})$
	\[ \varepsilon_{(K)}^* \circ \Gamma_{(K)} \longrightarrow  \mathrm{Id}_{\Coh((K)_U)} , \]
	where $\Coh((K)_U)$ denotes the right hand side of \eqref{eq:coherent_compact_Stein}.
	Moreover, \cref{lem:mock_counit} shows that this morphism is invertible, thereby completing the proof.
\end{proof}

From here, deducing \cref{thm:perfect_complexes_compact_Stein} is straightforward:

\begin{proof}[Proof of \cref{thm:perfect_complexes_compact_Stein}]
	It is enough to check that the functors $\varepsilon_{(K)}^*$ and $\Gamma_{(K)}$ restrict to morphism of ind-objects
	\[ \varepsilon_{(K)}^* \colon \fcolim_{K \subset U \subset X} \Perf(A_U) \longrightarrow \fcolim_{K \subset U \subset X} \Perf(U) \quad , \quad \Gamma_{(K)} \colon \fcolim_{K \subset U \subset X} \Perf(U) \longrightarrow \fcolim_{K \subset U \subset X} \Perf(A_U) . \]
	In the case of $\varepsilon_{(K)}^*$ this follows directly from the construction, while for $\Gamma_{(K)}$ this has already been in checked in \cref{lem:global_section_relatively_compact}.
\end{proof}

By realizing the equivalences of ind-objects obtained in Theorems \ref{thm:unbounded_coherent_pro_compact_Stein} and \ref{thm:perfect_complexes_compact_Stein} we obtain the following weaker result, which is closer in spirit to \cite[Proposition 11.9.2]{Taylor_Several_complex}.
Before stating it, let us fix a couple of notations: first of all, we write
\[ \varepsilon_K^* \colon \colim_{K \subset U \subset X} \Perf(A_U) \longrightarrow \colim_{K \subset U \subset X} \Perf(U) \]
and
\[ \Gamma_K \colon \colim_{K \subset U \subset X} \Perf(U) \longrightarrow \colim_{K \subset U \subset X} \Perf(A_U) \]
for the realizations of $\varepsilon_{(K)}^*$ and $\Gamma_{(K)}$.
We also set
\[ A_K \coloneqq \colim_{K \subset U \subset X} A_U . \]
Then we have:

\begin{cor} \label{cor:perfect_complexes_compact_Stein}
	Let $X \in \dAn_\C$ and let $K$ be a compact Stein inside $X$.
	Then there is a canonical equivalence
	\[ \Perf(A_K) \simeq \colim_{K \subset U \subset X} \Perf(U) . \]
\end{cor}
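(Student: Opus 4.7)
The strategy is to combine \cref{thm:perfect_complexes_compact_Stein} with the fact that the functor $\Perf \colon \CAlg_k \to \Cat_\infty^{\mathrm{st},\otimes}$ commutes with filtered colimits.

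First I would realize the equivalence of ind-objects provided by \cref{thm:perfect_complexes_compact_Stein}. Applying the canonical colimit functor $\Ind(\Cat_\infty^{\mathrm{st},\otimes}) \to \Cat_\infty^{\mathrm{st},\otimes}$ to the equivalence
\[ \fcolim_{K \subset U \subset X} \Perf(A_U) \simeq \fcolim_{K \subset U \subset X} \Perf(U) \]
yields an equivalence
\[ \colim_{K \subset U \subset X} \Perf(A_U) \simeq \colim_{K \subset U \subset X} \Perf(U) \]
in $\Cat_\infty^{\mathrm{st},\otimes}$, where both colimits are indexed by the filtered poset of open Stein neighbourhoods of $K$ inside $X$ (ordered so that inclusions induce the restriction morphisms $A_U \to A_V$ and $\Perf(U) \to \Perf(V)$ for $V \subset U$).

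Next I would identify the left-hand side with $\Perf(A_K)$. Since the indexing poset is filtered and the assignment $R \mapsto \Perf(R)$ preserves filtered colimits of simplicial commutative (or equivalently $E_\infty$) rings, valued in $\Cat_\infty^{\mathrm{st},\otimes}$ (this is a standard result; see e.g.\ \cite[Proposition 4.6.1.11]{Lurie_Higher_algebra} via the characterization of perfect modules as dualizable objects, or equivalently as compact objects in $R\Mod$), we obtain
\[ \colim_{K \subset U \subset X} \Perf(A_U) \simeq \Perf\!\left( \colim_{K \subset U \subset X} A_U \right) = \Perf(A_K). \]
Concatenating this with the previous equivalence produces the desired identification
\[ \Perf(A_K) \simeq \colim_{K \subset U \subset X} \Perf(U). \]

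The only non-routine step is \cref{thm:perfect_complexes_compact_Stein} itself, which is already established; the commutation of $\Perf$ with filtered colimits of rings is a formal fact about dualizable objects, so the present corollary is essentially bookkeeping once that theorem is in hand. The main subtlety to check is that the symmetric monoidal structure is respected throughout, but this is automatic because the realization functor $\Ind(\Cat_\infty^{\mathrm{st},\otimes}) \to \Cat_\infty^{\mathrm{st},\otimes}$ and the functor $\Perf$ are both symmetric monoidal in the relevant sense.
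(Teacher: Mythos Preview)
Your proposal is correct and follows exactly the same strategy as the paper: realize the equivalence of ind-objects from \cref{thm:perfect_complexes_compact_Stein} to obtain $\colim_{K \subset U \subset X} \Perf(A_U) \simeq \colim_{K \subset U \subset X} \Perf(U)$, and then identify the left-hand side with $\Perf(A_K)$ using that $R \mapsto \Perf(R)$ commutes with filtered colimits. The only cosmetic difference is that the paper cites \cite[4.5.1.8]{Lurie_SAG} for this last fact rather than the reference you give.
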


\begin{proof}
	By realizing the equivalence of \cref{thm:perfect_complexes_compact_Stein} we see that the functors $\Gamma_K$ and $\varepsilon_K^*$ induce an equivalence
	\[ \colim_{K \subset U} \Perf(A_U) \simeq \colim_{K \subset U \subset X} \Perf(U) . \]
	On the other hand, it is proven in \cite[4.5.1.8]{Lurie_SAG} that the construction $R \mapsto \Perf(R)$ commutes with filtered colimits.
	Therefore, we also have an equivalence
	\[ \Perf(A_K) \simeq \colim_{K \subset U \subset X} \Perf(A_U) . \]
	The conclusion follows.
\end{proof}

Let us give another application that stems from the combination of \cref{cor:perfect_complexes_compact_Stein} and \cref{thm:sheaves_compact_subsets}.

\begin{prop}
	Let $X$ be a derived \canal space.
	Then the subcategory $\Perf(X) \subset \Coh^-(X)$ coincides exactly with the subcategory of dualizable objects.
\end{prop}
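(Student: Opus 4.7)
\emph{Proof plan.} The plan is to reduce the non-trivial inclusion to an algebraic dualizability statement via the pro-compact-Stein equivalences established earlier in this section.

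The easy inclusion $\Perf(X)\subseteq\{\text{dualizable objects of }\Coh^-(X)\}$ is formal. The structure sheaf $\cO_X$ is the monoidal unit, hence self-dual; dualizability is preserved by shifts, retracts, finite (co)limits and \'etale local gluing, so every perfect complex is dualizable in $\cO_X\Mod$. Since the dual of a perfect complex is again perfect, it lies in $\Coh^-(X)$.

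For the converse, let $\cF\in\Coh^-(X)$ be dualizable with dual $\cF^\vee\in\Coh^-(X)$. Perfectness is \'etale local, so it suffices to show that $\cF$ is perfect in a neighbourhood of each point $x\in\trunc(X)$. Fix such an $x$ and choose a compact Stein subset $K\subset\trunc(X)$ containing $x$ and admitting a fundamental system of Stein neighbourhoods. Applying the symmetric monoidal equivalences
\[
\colim_{K\subset U\subset X}\Coh(U)\simeq\Coh(A_K),\qquad \colim_{K\subset U\subset X}\Perf(U)\simeq\Perf(A_K)
\]
obtained from \cref{thm:unbounded_coherent_pro_compact_Stein} and \cref{cor:perfect_complexes_compact_Stein} (by realizing the ind-objects), the germ of $\cF$ at $K$ corresponds to an object $M_K\in\Coh(A_K)$ which is dualizable, with dual the germ of $\cF^\vee$. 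Since the inclusion $\Coh(A_K)\hookrightarrow A_K\Mod$ preserves both the unit and the tensor product, $M_K$ is also dualizable in $A_K\Mod$.

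The final step invokes the standard fact that over any connective derived commutative ring $A$, dualizable $A$-modules coincide with perfect complexes (cf.\ \cite[Proposition~7.2.4.4]{Lurie_Higher_algebra}). Applied to $A_K$, this gives $M_K\in\Perf(A_K)$; via the second equivalence above, this means $\cF|_U\in\Perf(U)$ for some open Stein neighbourhood $U$ of $K$. Ranging over $x$ produces an open cover of $X$ on which $\cF$ is perfect, and the sheaf property of $\Perf_{\dAnSt_k,\cO}$ concludes. The genuine content of the argument is concentrated in \cref{thm:unbounded_coherent_pro_compact_Stein}, already proved above; the remaining steps are either formal or consequences of standard Higher Algebra, so no new obstacle arises.
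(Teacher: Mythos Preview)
Your approach is essentially the same as the paper's: both directions are handled by passing to compact Stein subsets and invoking the equivalence $\Perf(A_K)\simeq\colim_{K\subset U}\Perf(U)$ together with the algebraic fact that dualizable $=$ perfect over a connective ring. One small imprecision: the equivalence $\colim_{K\subset U}\Coh(U)\simeq\Coh(A_K)$ you display is not what \cref{thm:unbounded_coherent_pro_compact_Stein} provides---realizing that ind-equivalence only gives $\colim_{K\subset U}\Coh(A_U)\simeq\colim_{K\subset U}\Coh(U)$, and $\Coh(-)$ does not commute with filtered colimits of rings the way $\Perf(-)$ does. This does not break your argument, since all you actually need is a symmetric monoidal functor from $\Coh^-(X)$ to $A_K\Mod$ (the composite of restriction and $\Gamma_{(K)}$ suffices, as the paper uses), which carries the dualizable $\cF$ to a dualizable, hence perfect, $A_K$-module; the backward step then only uses the $\Perf$ equivalence of \cref{cor:perfect_complexes_compact_Stein}, which you cite correctly.
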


\begin{proof}
	Denote by $\AnPerf_X$ and $\mathrm{AnCoh}^-_X$ the restrictions of $\AnPerf$ and $\mathrm{AnCoh}^-$ to $X^{\mathrm{top}}$.
	Let $\cF \in \Coh^-(X)$ be a dualizable object.
	Then we observe that for every compact Stein subset $K \subset X$, $\Gamma_{(K)}(\cF)$ is a dualizable object in $\Coh^-(A_K)$, and hence it belongs to $\Perf(A_K)$.
	We now use the equivalence
	\[ \Perf((K)_X) \simeq \Perf(A_K) \]
	provided by \cref{cor:perfect_complexes_compact_Stein} and we observe that under the equivalence 
	\[ \Sh(X^{\mathrm{top}}; \Cat_\infty^{\mathrm{st}, \mathrm{idem}}) \simeq \Sh_\cK(X^{\mathrm{top}}, \Cat_\infty^{\mathrm{st}, \mathrm{idem}}) . \]
	provided by \cref{thm:sheaves_compact_subsets}, $\AnPerf_X$ corresponds to the stack sending a compact Stein subset $K \subset X$ to $\Perf((K)_X)$.
	Therefore, $\cF$ defines a global section of $\AnPerf_X$, i.e.\ $\cF \in \Perf(X)$.
	
	Suppose vice-versa that $\cF \in \Perf(X)$.
	Then the functor
	\[ \cF \otimes_{\cO_X} - \colon \cO_X \Mod \longrightarrow \cO_X \Mod \]
	is left adjoint to
	\[ \cHom_{\cO_X}(\cF, -) \colon \cO_X \Mod \longrightarrow \cO_X \Mod . \]
	In particular, we have unit and counit transformations
	\[ \cG \longrightarrow \cF \otimes_{\cO_X} \cHom_{\cO_X}(\cF, \cG) \quad, \quad \cHom_{\cO_X}(\cF, \cF \otimes_{\cO_X} \cG) \longrightarrow \cG , \]
	satisfying the triangular identities.
	Furthermore, as $\cF$ is a perfect complex, the canonical morphism
	\[ \cHom_{\cO_X}(\cF, \cO_X) \otimes_{\cO_X} \cF \otimes_{\cO_X} \cG \longrightarrow \cHom_{\cO_X}(\cF, \cF \otimes_{\cO_X} \cG) \]
	is an equivalence.
	Taking $\cG = \cO_X$, we obtain the evaluation and coevaluation morphisms for $\cF$ and $\cHom_{\cO_X}(\cF, \cO_X)$.
	All that is left to check is therefore that $\cHom_{\cO_X}(\cF, \cO_X)$ is perfect.
	Combining once again Theorems \ref{cor:perfect_complexes_compact_Stein} and \ref{thm:sheaves_compact_subsets}, we are reduced to check that for every compact Stein subset $K \subset U$ we have
	\[ \Gamma_{(K)}( \cHom_{\cO_X}(\cF, \cO_X) ) \in \Perf(A_K) . \] 
	However, for every open Stein neighbourhood $V$ of $K$ inside $X$, one has
	\[ \Gamma(V; \cHom_{\cO_V}(\cF |_V, \cO_V)) \simeq \Hom_{A_V}( \Gamma(V; \cF |_V), A_V ) . \]
	Hence
	\[ \Gamma_{(K)}( \cHom_{\cO_X}(\cF, \cO_X) ) \simeq \Hom_{A_K}(\Gamma_{(K)}(\cF), A_K) \in \Perf(A_K) . \]
	This completes the proof.
\end{proof}

\section{GAGA properties} \label{sec:GAGA}

In this section we discuss several variations of the GAGA property on derived stacks locally almost of finite presentation.
We verify that they are satisfied in a number of different examples, proving a relative version of \cref{prop:analytification_perfect_complexes_absolute}.

\subsection{(Universally) GAGA stacks}

We start by generalizing the natural transformation $\varepsilon^* \colon \Perf_k\an \to \AnPerf_k$ introduced in the previous section.
Fix $X \in \dAff_k^{\mathrm{afp}}$ and $U \in \dAfd_k$.
As usual, we set $A_U \coloneqq \Gamma(U; \cO_U\alg)$.
The counit of the analytification adjunction $\varepsilon_X \colon (X\an)\alg \to X$ induces a well defined morphism
\[ \varepsilon_{X,U} \colon (X\an \times U)\alg \longrightarrow (X\an)\alg \times \Spec(A_U) \longrightarrow X \times \Spec(A_U) . \]
The pullback functor along $\varepsilon_{X,U}$ induces a well defined symmetric monoidal functor
\[ \varepsilon_{X,U}^* \colon \QCoh( X \times \Spec(A_U) ) \longrightarrow \cO_{X\an \times U} \Mod , \]
which is natural in both $X$ and $U$.
Moreover, it restricts to a symmetric monoidal functor
\[ \varepsilon_{X,U}^* \colon \Perf( X \times \Spec(A_U) ) \longrightarrow \Perf(X\an \times U) . \]
Naturality in $X$ allows to extend this map by colimits.
Therefore, we obtain a commutative square
\[ \begin{tikzcd}
	\Perf(X \times \Spec(A_U)) \arrow{r}{\varepsilon_{X,U}^*} \arrow[hook]{d} & \Perf(X\an \times U) \arrow[hook]{d} \\
	\QCoh(X \times \Spec(A_U)) \arrow{r}{\varepsilon_{X,U}^*} & \cO_{X\an \times U} \Mod
\end{tikzcd} \]
for every $X \in \dSt_k^{\mathrm{afp}}$.
When $U = \Sp(k)$, we write $\varepsilon_X^*$ instead of $\varepsilon_{X,\Sp(k)}^*$.

\begin{defin} \label{defin:strong_GAGA}
	Let $X \in \dSt_k^{\mathrm{afp}}$ be a derived stack locally almost of finite presentation.
	We say that:
	\begin{enumerate}
		\item \emph{$X$ satisfies the GAGA property} if the functor $\varepsilon_X^* \colon \mathrm{QCoh}(X) \to \cO_{X\an} \Mod$ is conservative and $t$-exact and the functor
		\[ \varepsilon_X^* \colon \Perf(X) \longrightarrow \Perf(X\an) \]
		is an equivalence of $\infty$-categories.
		
		\item Let $k$ be a non-archimedean field and let $U \in \dAfd_k$ be a derived $k$-affinoid space.
		We say that \emph{$X$ satisfies the GAGA property relative to $U$} if the functor
		\[ \varepsilon_{X,U}^* \colon \QCoh(\Spec(A_U) \times X) \longrightarrow \cO_{U \times X\an} \Mod \]
		is conservative and $t$-exact and the functor
		\[ \varepsilon_{X,U}^* \colon \Perf(\Spec(A_U) \times X) \longrightarrow \Perf(U \times X\an) \]
		is an equivalence.
		
		\item Let $U \in \dStn_\C$ be a derived Stein space and let $K \subset U$ be a compact Stein subset.
		We say that \emph{$X$ satisfies the pro-GAGA property relative to $(K)_U$} if for every Stein neighbourhood $V$ of $K$ inside $U$ the map
		\[ \varepsilon_{X,V}^* \colon \QCoh(\Spec(A_V) \times X) \longrightarrow \cO_{V \times X\an} \Mod \]
		is conservative and $t$-exact and the morphism
		\[ \varepsilon_{X,(K)}^* \colon \fcolim_{K \subset V \subset U} \Perf(\Spec(A_V) \times X) \longrightarrow \fcolim_{K \subset V \subset U} \Perf(V \times X\an) . \]
		induced by the functors $\varepsilon_{X,V}^*$ is an equivalence in $\Ind(\Cat_\infty^{\mathrm{st}, \otimes})$.
		We say that \emph{$X$ satisfies the GAGA property relative to $U$} if it satisfies the pro-GAGA property relative to $(K)_U$ for every compact Stein subset $K \subset U$.
			
		\item \emph{$X$ satisfies the universal GAGA property} if it satisfies the GAGA property relative to $U$ for every $U \in \dAfd_k$.
	\end{enumerate}
\end{defin}

Fix $X \in \dSt_k^{\mathrm{afp}}$.
For every $S \in \dAff_k^{\mathrm{afp}}$ the analytification functor
\[ \Perf(X \times S) \longrightarrow \Perf(X\an \times S\an) \]
induces a natural transformation of $\Cat_\infty$-valued stacks
\[ \bfMap(X, \Perf_k) \longrightarrow \bfAnMap(X\an, \AnPerf_k) \circ (-)\an , \]
which restricts to a natural transformation
\[ \bfMap(X, \bfPerf_k) \longrightarrow \bfAnMap(X\an, \bfAnPerf_k) \circ (-)\an . \]
By adjunction, this morphism determines a map
\[ \mu_X \colon \colon \bfMap(X, \bfPerf_k)\an \longrightarrow \bfAnMap(X\an, \bfAnPerf_k) . \]
The universal GAGA property enables us to check that $\mu_X$ is an equivalence:

\begin{prop} \label{prop:universal_GAGA_relative_stack_perfect_complexes}
	Let $X \in \dSt_k^{\mathrm{afp}}$ be a derived stack locally almost of finite presentation.
	Suppose that:
	\begin{enumerate}
		\item the mapping stack $\bfMap(X, \bfPerf_k)$ is locally geometric;
		\item the stack $X$ satisfies the universal GAGA property.
	\end{enumerate}
	Then the canonical morphism
	\[ \mu_X \colon \bfMap(X, \bfPerf_k)\an \longrightarrow \bfAnMap(X, \bfAnPerf_k) \]
	is an equivalence.
\end{prop}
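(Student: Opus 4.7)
The plan is to apply the general strategy of \cref{prop:general_strategy_analytification} directly to the morphism $\mu_X$. Since $\bfMap(X, \bfPerf_k)$ is locally geometric by assumption (1), its analytification is well defined, and the strategy applies in the locally geometric setting exactly as it was applied in the proof of \cref{prop:analytification_perfect_complexes_absolute}.

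The first step is to unravel the sections of $G^p_U$ and $F^s_U$. For $U \in \dAfd_k$ and a morphism $V \to U$ in the relevant site, the adjunctions defining $\bfMap$ and $\bfAnMap$, combined with the definitions of $G^p_U$ and $F^s_U$, produce equivalences
\[ G^p_U(\bfMap(X, \bfPerf_k))(V) \simeq \bfPerf_k(\Spec(A_V) \times X) \simeq \Perf(\Spec(A_V) \times X)^\simeq \]
and
\[ F^s_U(\bfAnMap(X\an, \bfAnPerf_k))(V) \simeq \bfAnPerf_k(V \times X\an) \simeq \Perf(V \times X\an)^\simeq . \]
By the very construction of $\mu_X$, the map induced on sections at level $V$ is the restriction to maximal $\infty$-groupoids of the pullback functor $\varepsilon_{X,V}^*$.

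Next I would invoke assumption (2) on $X$. In the non-archimedean case, the universal GAGA property delivers, for every $V \in \dAfd_k$, an equivalence of stable symmetric monoidal $\infty$-categories $\varepsilon_{X,V}^* \colon \Perf(\Spec(A_V) \times X) \xrightarrow{\simeq} \Perf(V \times X\an)$, so passing to $(-)^\simeq$ verifies hypothesis (1) of \cref{prop:general_strategy_analytification}. In the complex analytic case, for each $U \in \dStn_\C$ and each compact Stein $K \subset U$, the pro-GAGA property relative to $(K)_U$ gives an equivalence of ind-objects in $\Ind(\Cat_\infty^{\mathrm{st}, \otimes})$; applying $(-)^\simeq$ levelwise to this equivalence and then realizing produces the required equivalence of colimits in $\cS$, which verifies hypothesis (2). \Cref{prop:general_strategy_analytification} then concludes the argument.

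The only technical point to secure is the commutation of $(-)^\simeq$ with the filtered colimits arising in the complex case. This is standard: the forgetful functor $\Cat_\infty^{\mathrm{st}, \otimes} \to \Cat_\infty$ preserves filtered colimits, and the functor $(-)^\simeq \colon \Cat_\infty \to \cS$ is corepresented by the terminal $\infty$-category, which is a compact object of $\Cat_\infty$, so $(-)^\simeq$ preserves filtered colimits as well. Beyond this point, the proof is essentially formal, since all the substantive geometric content has been packaged into the universal GAGA hypothesis and into the machinery of \cref{thm:generalized_adjunction} and \cref{prop:general_strategy_analytification}.
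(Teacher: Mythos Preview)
Your proposal is correct and follows essentially the same approach as the paper: apply \cref{prop:general_strategy_analytification}, identify $G^p_U$ and $F^s_U$ with the maximal $\infty$-groupoids of the relevant categories of perfect complexes, and in the \canal case pass from the equivalence of ind-objects supplied by the universal GAGA property to an equivalence in $\Ind(\cS)$ via $(-)^\simeq$ before realizing. Your added justification that $(-)^\simeq$ preserves filtered colimits is a helpful elaboration, though note that strictly speaking it suffices to apply $(-)^\simeq$ levelwise to the equivalence of ind-objects (any functor preserves equivalences) and then realize.
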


\begin{proof}
	We apply \cref{prop:general_strategy_analytification}.
	Notice that for $U \in \dAfd_k$ and $V \in U\et$, we have
	\[ G^p_U(\bfMap(X, \bfPerf_k))(V) \simeq \Perf(X \times \Spec(A_V))^\simeq \quad , \quad F^s_U(\bfAnMap(X, \AnPerf_k))(V) \simeq \Perf(X\an \times V)^\simeq . \]
	In the non-archimedean case, the conclusion therefore follows directly from the assumption on $X$.
	In the \canal case we have to check that for every compact Stein subset $K \subset U$ the natural map
	\[ \colim_{K \subset V \subset U} \Perf(X \times \Spec(A_V))^\simeq \longrightarrow \colim_{K \subset V \subset U} \Perf(X\an \times V)^\simeq \]
	is an equivalence.
	Since $X$ satisfies the universal GAGA property, the natural map
	\[ \fcolim_{K \subset V \subset U} \Perf(X \times \Spec(A_V)) \longrightarrow \fcolim_{K \subset V \subset U} \Perf(X\an \times V) \]
	is an equivalence in $\Ind(\Cat_\infty^{\mathrm{st}, \otimes})$.
	The conclusion therefore follows by applying the maximal $\infty$-groupoid functor $(-)^\simeq$ and then realizing the equivalence in $\Ind(\cS)$.
\end{proof}

The following example is of course expected:

\begin{eg} \label{eg:Artin_strong_GAGA}
	A proper derived geometric stack locally almost of finite presentation over $\mathbb C$ satisfies the GAGA property. Indeed, it follows from \cite[Theorem 7.2]{Porta_DCAGI} that the analytification functor induces an equivalence $\Perf(X) \simeq \Perf(X\an)$.
	The argument given in loc.\ cit.\ is an extension to the derived setting of the analogous statement for underived stacks, which has been proven in \cite{Porta_Yu_Higher_analytic_stacks_2014} in both the \canal and the non-archimedean setting.
	The same extension argument works in the non-archimedean case, which shows that $X$ satisfies the GAGA property also in this situation.
	Furthermore, the map $\QCoh( \Spec(A_U) \times X ) \longrightarrow  \cO_{U\times X\an} \Mod$ is conservative and $t$-exact: this easily follows by choosing a smooth hypercover of $X$ by derived affine schemes, and observing that $t$-exactness and conservativeness can be checked locally with respect to this hypercover.
	In the affine case, the result follows from the flatness of the natural map $(X\an)\alg \to X$, see \cite[Corollary 5.15]{Porta_DCAGI} in the \canal case and \cite[Proposition 4.17]{Porta_Yu_Representability} in the \kanal case.
	
	This example covers a great variety of situations.
	Indeed, the following are special cases of proper derived geometric stacks locally almost of finite presentation over $k$:
	\begin{enumerate}
		\item Proper schemes and algebraic spaces.
		\item Proper Deligne-Mumford stacks. For instance if $X$ is a smooth and proper algebraic variety over $k$ and $\overline{\cM}_{g,n}(X)$ denotes the moduli stack of stable curves of genus $g$ with $n$ marked points, then $\overline{\cM}_{g,n}(X)$ is a proper \DM stack. The same holds true for its natural derived enhancement.
		\item Higher classifying stacks $\mathrm K(G,n)$ where G is a compact (and abelian if $n \ge 2$) algebraic group scheme.
		For the case of $\rB G$ with $G$ reductive, see \cref{eg:reductive_strong_GAGA}.
	\end{enumerate}
\end{eg}

We would like to prove that a proper derived geometric stack locally almost of finite presentation over $k$ also satisfies the universal GAGA property.
Notice that when $X = \Spec(k)$ saying that $X$ satisfies the GAGA property relative to $U$ is equivalent to \cref{lem:perfect_complexes_affinoid} (in the \kanal case) and to \cref{thm:perfect_complexes_compact_Stein} (in the \canal case).
In order to generalize these results to a more general $X$ requires some effort.
We start dealing with the non-archimedean case, where the argument is technically easier.
However, we first state explicitly a lemma implicitly used in \cite{Porta_DCAGI}:

\begin{lem} \label{lem:analytification_Hom_sheaf}
	Let $X$ be a derived analytic stack locally almost of finite presentation over $k$.
	Let $\cF, \cG \in \Cohb(X)$.
	Then the canonical map
	\[ \cHom_{\cO_X}(\cF, \cG)\an \longrightarrow \cHom_{\cO_{X\an}}(\cF\an, \cG\an) \]
	is an equivalence.
\end{lem}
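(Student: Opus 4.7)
My plan is to reduce the claim to an affine base-change statement and handle it by devissage on $\cF$.

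First, the claim is \'etale-local on $X$: analytification commutes with \'etale pullback, and the sheaves $\cHom_{\cO_X}(\cF,\cG)$ and $\cHom_{\cO_{X\an}}(\cF\an,\cG\an)$ are étale sheaves compatibly with restriction to an atlas. Using a smooth atlas of $X$, I would reduce to $X=\Spec A$ affine, with $A$ a simplicial commutative $k$-algebra almost of finite presentation. Then $\cF,\cG$ correspond to bounded almost perfect $A$-modules $M,N$. Unravelling the analytification as pullback along the flat morphism $\varepsilon_X\colon (X\an)\alg\to X$ (flatness by \cite[Corollary 5.15]{Porta_DCAGI} in the complex case and \cite[Proposition 4.17]{Porta_Yu_Representability} in the non-archimedean case), the lemma becomes the assertion that the canonical base-change map
\[ \varepsilon_X^*\, \RHom_A(M,N)\longrightarrow \RHom_{\cO_{(X\an)\alg}}(\varepsilon_X^*M,\varepsilon_X^*N) \]
is an equivalence of sheaves on $(X\an)\alg$.

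Next, I would handle this base change by devissage on $M$. Both sides send cofiber sequences in $M$ to fiber sequences; for $M=A^{\oplus n}$ finite free, both sides tautologically compute $(\varepsilon_X^*N)^{\oplus n}$ and the map is the identity. A general bounded almost perfect $M$ admits, locally on $X$, a free resolution $P_\bullet\to M$ by finite free $A$-modules (using that $\pi_0 A$ is noetherian, being finitely presented over $k$, together with the coherence of each $\pi_i A$). Since $N$ is bounded above and $\varepsilon_X^*$ is $t$-exact, for each fixed cohomological degree only a bounded portion of $P_\bullet$ contributes to either side of the canonical map, and on that bounded portion the base change holds trivially by the perfect case. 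Passing to the limit gives the equivalence.

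The main obstacle will be controlling the interplay between the $t$-structure and the a priori infinite free resolution of $M$: bounded coherent is strictly larger than perfect, so no finite free resolution of $M$ exists in general. This is resolved by combining the $t$-exactness and conservativity of $\varepsilon_X^*$ established in \cref{eg:Artin_strong_GAGA} with the boundedness hypothesis on $\cG$: together they imply that the truncation error in replacing $P_\bullet$ by its bounded sub-resolution $P_{\le n}$ is highly connective for $n$ large and so contributes nothing in any fixed cohomological degree of $\RHom$. Everything else reduces to standard flat base change for perfect modules.
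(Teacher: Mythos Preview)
Your proposal is correct and follows essentially the same route as the paper: reduce to the affine case, verify the claim for $\cF$ finite free (the paper: perfect), resolve a general $\cF\in\Cohb$ by such objects, and then use boundedness of $\cG$ together with $t$-exactness of $\varepsilon_X^*$ to see that only a finite piece of the resolution contributes in any fixed cohomological degree. The paper packages the resolution as a geometric realization $\cF\simeq|\cP_\bullet|$ of perfect complexes via \cite[7.2.4.11(5)]{Lurie_Higher_algebra} and the truncation as passing from $\mathbf\Delta$ to $\mathbf\Delta_{\le m'}$, but this is exactly your argument; note also that conservativity of $\varepsilon_X^*$ is not actually needed here, only $t$-exactness.
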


\begin{proof}
	This question is local on $X$ and we can therefore suppose that $X$ is affine.
	Notice that the map
	\[ \gamma_{\cF, \cG} \colon \cHom_{\cO_X}(\cF, \cG)\an \longrightarrow \cHom_{\cO_{X\an}}(\cF\an, \cG\an) \]
	is defined for all $\cF, \cG \in \QCoh(X)$.
	It is tautologically an equivalence when $\cF = \cO_X$.
	From here, it follows that it is an equivalence whenever $\cF$ is perfect.
	When $\cF \in \Cohb(X)$ is arbitrary, we use \cite[7.2.4.11(5)]{Lurie_Higher_algebra} to write $\cF$ as a geometric realization
	\[ \cF \simeq |\cP_\bullet| \]
	of a simplicial diagram $\cP_\bullet$ such that each $\cP_n$ is perfect.
	We obtain
	\[ \cHom_{\cO_X}(\cF, \cG) \simeq \lim_{[n] \in \mathbf \Delta} \cHom_{\cO_X}(\cP_n, \cG) . \]
	On the other hand, since $(-)\an$ commutes with arbitrary colimits, we have a canonical equivalence $\cF\an \simeq |\cP_\bullet\an|$.
	As a consequence, we obtain the equivalence
	\[ \cHom_{\cO_{X\an}}(\cF\an, \cG\an) \simeq \lim_{[n] \in \mathbf \Delta} \cHom_{\cO_{X\an}}(\cP_n\an, \cG\an) . \]
	It is therefore enough to prove that the natural map
	\[ \bigg( \lim_{[n] \in \mathbf \Delta} \cHom_{\cO_X}(\cP_n, \cG) \bigg)\an \longrightarrow \lim_{[n] \in \mathbf \Delta} \cHom_{\cO_X}(\cP_n,\cG)\an  \]
	is an equivalence.
	In order to check this, it is enough to check that for every integer $m \in \mathbb Z$ the above map induces an isomorphism
	\[ \pi_m \bigg( \lim_{[n] \in \mathbf \Delta} \cHom_{\cO_X}(\cP_n, \cG) \bigg)\an \longrightarrow \pi_m \lim_{[n] \in \mathbf \Delta} \cHom_{\cO_X}(\cP_n, \cG)\an . \]
	Since the analytification is $t$-exact, we are therefore reduced to check that the map
	\[ \bigg( \pi_m \bigg( \lim_{[n] \in \mathbf \Delta} \cHom_{\cO_X}(\cP_n, \cG) \bigg) \bigg)\an \longrightarrow \pi_m \bigg( \lim_{[n] \in \mathbf \Delta} \cHom_{\cO_X}(\cP_n, \cG)\an \bigg) \]
	is an isomorphism.
	Observe now that there exists $m' \gg 0$ such that
	\[ \pi_m \bigg( \lim_{[n] \in \mathbf \Delta} \cHom_{\cO_X}(\cP_n, \cG) \bigg) \simeq \pi_m \bigg( \lim_{[n] \in \mathbf \Delta_{\le m'}} \cHom_{\cO_X}(\cP_n, \cG) \bigg) \]
	and similarly
	\[ \pi_m \bigg( \lim_{[n] \in \mathbf \Delta} \cHom_{\cO_X}(\cP_n, \cG)\an \bigg) \simeq \pi_m \bigg( \lim_{[n] \in \mathbf \Delta_{\le m'}} \cHom_{\cO_X}(\cP_n, \cG)\an \bigg) . \]
	Since $\mathbf \Delta_{\le m}$ is a finite category and $(-)\an$ is an exact functor between stable $\infty$-categories, we deduce that the canonical map
	\[ \bigg( \lim_{[n] \in \mathbf \Delta_{\le m'}} \cHom_{\cO_X}(\cP_n, \cG) \bigg)\an \longrightarrow \lim_{[n] \in \mathbf \Delta_{\le m'}} \cHom_{\cO_X}(\cP_n, \cG)\an \]
	is an equivalence.
	The conclusion follows.
\end{proof}

\begin{thm} \label{thm:relative_GAGA}
	Let $k$ be either the field of complex numbers or a non-archimedean field equipped with a non-trivial valuation.
	Let $X$ be a proper derived geometric stack locally almost of finite presentation over $k$.
	Then $X$ satisfies the universal GAGA property.
\end{thm}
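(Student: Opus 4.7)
The plan is to combine the absolute GAGA property for proper derived geometric stacks (\cref{eg:Artin_strong_GAGA}) with the comparison between perfect complexes on a $k$-affinoid or compact Stein and the perfect modules over its algebra of global sections established in \cref{sec:analytic_perfect_complexes}. The passage from absolute to relative statements will be handled by smooth descent on $X$.

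First, I would verify the conservativity and $t$-exactness assertions. Both conditions can be checked smooth-locally on $X$, so by fixing a smooth hypercover $X_\bullet \to X$ by derived affines $X_n = \Spec(B_n)$ almost of finite presentation, one reduces to the case $X = \Spec(B)$ affine. There the claim becomes the flatness of the canonical morphism $(\Spec(B)\an \times V)\alg \to \Spec(B \otimes_k A_V)$, which follows from the absolute flatness of $(\Spec(B)\an)\alg \to \Spec(B)$ (\cref{thm:generalities_analytification}) via flat base change along $\Spec(A_V) \to \Spec(k)$.

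For the equivalence on perfect complexes in the non-archimedean case, using smooth descent for $\Perf$ on the algebraic side and for $\AnPerf_k$ on the analytic side (the latter following from the local geometricity established in \cref{prop:analytification_perfect_complexes_absolute} together with the sheaf property of $\bfPerf_k$), the hypercover $X_\bullet \to X$ yields
\[
\Perf(\Spec(A_U) \times X) \simeq \lim_{[n] \in \mathbf{\Delta}} \Perf(\Spec(A_U \otimes_k B_n)), \quad \Perf(U \times X\an) \simeq \lim_{[n] \in \mathbf{\Delta}} \Perf(U \times \Spec(B_n)\an).
\]
Since $B_n$ is almost of finite presentation, $\Spec(B_n)\an$ is a derived $k$-affinoid and $U \times \Spec(B_n)\an$ is again $k$-affinoid with algebra of global sections $A_U \otimes_k B_n$. \Cref{lem:perfect_complexes_affinoid} then produces a termwise equivalence, which is compatible with the face and degeneracy maps by the naturality of $\varepsilon_{X,U}^*$.

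In the complex case, a direct termwise equivalence $\Perf(\Spec(A_V \otimes_k B_n)) \simeq \Perf(V \times \Spec(B_n)\an)$ fails outside of a compact Stein context. Instead, for each compact Stein $K \subset U$, I would establish a relative ind-equivalence
\[
\fcolim_{K \subset V \subset U} \Perf(\Spec(A_V \otimes_k B_n)) \simeq \fcolim_{K \subset V \subset U} \Perf(V \times \Spec(B_n)\an)
\]
for every $n$, by repeating the arguments in the proofs of \cref{lem:global_section_relatively_compact}, \cref{lem:coherent_pushforward_relatively_compact}, \cref{lem:mock_counit}, and \cref{thm:perfect_complexes_compact_Stein} with $V$ replaced by $V \times \Spec(B_n)\an$; the needed inputs (Cartan's theorem B and the flatness result of \cref{lem:flatness_global_sections_nested_compact_Stein}) remain applicable because the projection $V \times \Spec(B_n)\an \to V$ is flat. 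The hard part will then be to glue these ind-equivalences along the simplicial direction, that is, to interchange the limit over $\mathbf{\Delta}$ with the filtered colimit over compact Stein neighbourhoods in $\Ind(\Cat_\infty^{\mathrm{st},\otimes})$; this should be handled using the bounded geometric level of $X$, which permits truncating the descent diagram to a finite one and commuting the resulting finite limit with the filtered colimit of ind-objects.
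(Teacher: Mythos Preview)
Your proposal has a fundamental gap: the descent-to-affines strategy discards the properness hypothesis, and the levelwise comparison you need is simply false for affine (hence non-proper) $X_n = \Spec(B_n)$.

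Concretely, in the non-archimedean case you assert that ``$\Spec(B_n)\an$ is a derived $k$-affinoid and $U \times \Spec(B_n)\an$ is again $k$-affinoid with algebra of global sections $A_U \otimes_k B_n$.'' Neither claim holds. The analytification of an affine scheme of finite type is almost never affinoid: already $(\mathbb A^1_k)\an$ is an increasing union of closed discs, not an affinoid space. Even on an affinoid product the global sections are a \emph{completed} tensor product, not $A_U \otimes_k B_n$. More importantly, the equivalence $\Perf(\Spec(A_U)\times\Spec(B_n))\simeq\Perf(U\times\Spec(B_n)\an)$ you need at each level is precisely the universal GAGA property for the non-proper stack $\Spec(B_n)$, and this fails: take $B_n=k[t]$ and observe that $\Coh((\mathbb A^1)\an)$ contains analytic coherent sheaves that are not algebraic (e.g.\ over $\mathbb C$, the ideal sheaf of $\mathbb Z\subset\mathbb C$ generated by $\sin(\pi z)$). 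The same objection applies verbatim to your complex-analytic argument. If the descent reduction worked, it would prove universal GAGA for \emph{every} geometric stack, contradicting elementary examples.

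The paper's proof proceeds in the opposite direction: rather than breaking $X$ into affines, it exploits properness by pushing forward along the projection $X\times\Spec(A_V)\to\Spec(A_V)$ (resp.\ $X\an\times V\to V$). One first shows that pushforward commutes with the relative analytification on $\Coh^-$ (reducing to the heart and then, in the $\mathbb C$-analytic case, to closed points via base change and Nakayama, where classical GAGA applies). Combined with \cref{lem:analytification_Hom_sheaf} this yields full faithfulness of $\Coh(X\times\Spec(A_V))\to\Coh(X\an\times V)$. Essential surjectivity (or, in the $\mathbb C$-case, the factorization after restriction to $W\Subset V$) is obtained by reducing to the heart, then via Chow's lemma to $\bbP^n$, where a Serre-twist presentation argument applies; the passage to higher geometric level follows the noetherian induction of \cite[Theorem~7.4]{Porta_Yu_Higher_analytic_stacks_2014}. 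Properness is used at every step.
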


In particular, if $X$ is as above, \cref{prop:universal_GAGA_relative_stack_perfect_complexes} gives us the following result:

\begin{cor} \label{cor:analytification_relative_stack_perfect_complexes}
	Let $k$ be either the field of complex numbers or a non-archimedean field equipped with a non-trivial valuation.
	Let $X$ be a derived geometric stack locally almost of finite presentation over $k$.
	Assume that:
	\begin{enumerate}
		\item the stack $X$ is proper;
		\item the stack $\bfMap(X, \bfPerf_k)$ is locally geometric.
	\end{enumerate}
	Then the canonical map
	\[ \mu_X \colon \bfMap(X, \bfPerf_k)\an \longrightarrow \bfAnMap(X\an, \bfAnPerf_k) \]
	is an equivalence.
\end{cor}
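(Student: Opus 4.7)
The plan is to simply combine the two results that immediately precede the statement. The hypothesis that $X$ is a proper derived geometric stack locally almost of finite presentation over $k$ is exactly what \cref{thm:relative_GAGA} assumes, so from that theorem we conclude that $X$ satisfies the universal GAGA property of \cref{defin:strong_GAGA}(4). This is the substantive input; it is proved separately and may be treated as a black box here.

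With universal GAGA in hand, the second hypothesis of the corollary — that $\bfMap(X, \bfPerf_k)$ is locally geometric — matches precisely the first assumption of \cref{prop:universal_GAGA_relative_stack_perfect_complexes}. Therefore that proposition applies directly and yields that the canonical morphism
\[ \mu_X \colon \bfMap(X, \bfPerf_k)\an \longrightarrow \bfAnMap(X\an, \bfAnPerf_k) \]
is an equivalence of derived analytic stacks. No further argument is required.

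Since both \cref{thm:relative_GAGA} and \cref{prop:universal_GAGA_relative_stack_perfect_complexes} have already been established (the former via the careful analysis of perfect complexes on compact Stein spaces culminating in \cref{thm:perfect_complexes_compact_Stein}, and the latter via the general strategy of \cref{prop:general_strategy_analytification}), there is no genuine obstacle in the proof of the corollary itself: the entire content is packaged into those two preceding results. The proof therefore consists of a single sentence invoking them in sequence.
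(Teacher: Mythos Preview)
Your proposal is correct and matches the paper's approach exactly: the paper does not even write a formal proof environment for this corollary, instead introducing it with ``In particular, if $X$ is as above, \cref{prop:universal_GAGA_relative_stack_perfect_complexes} gives us the following result,'' which is precisely the two-step combination you describe.
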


\begin{rem} \label{rem:geometricity_mapping_stacks}
	In the above corollary, the need for the geometricity assumption on $\bfMap(X, \bfPerf_k)$ ultimately comes from \cref{thm:generalized_adjunction}.
	The main theorem of \cite{Toen_Vaquie_Moduli_of_objects} shows that this assumption is satisfied when $X$ is a smooth and proper scheme over $k$.
	More generally, this problem can be seen as a particular instance of the geometricity of $\bfMap(X,Y)$ for $X, Y \in \dSt_k$.
	In \cite[Proposition 3.3.8]{DAG-XIV} it is shown that $\bfMap(X,Y)$ is geometric whenever $X$ is a proper and flat derived algebraic space and $Y$ is a derived \DM stack locally almost of finite presentation.
	These results can be improved: we expect $\bfMap(X, \bfPerf_k)$ to be locally geometric whenever $X$ is proper and of finite tor-amplitude.
	The main tool to prove this theorem is the version of Artin-Lurie's representability theorem for derived Artin stacks that will appear in \cite[Chapter 27]{Lurie_SAG}.
	As usual, the critical assumptions to be verified are the integrability of $\bfMap(X, \bfPerf_k)$ and the existence of its cotangent complex.
	The latter can easily be established by combining properness and finite tor-amplitude following the same method of \cite[Proposition 3.3.23]{DAG-XII}.
	Notice that we do not need the functor $f_+$ to be defined on the whole $\QCoh( X \times \bfMap(X, \bfPerf_k))$, but it is enough to have it defined on $\Perf( X \times \bfMap(X, \bfPerf_k))$: see \cite[Lemma 8.4]{Porta_Yu_Mapping} for a similar situation where $f_+$ can only be defined for perfect complexes.
	On the other hand, the integrability of $\bfMap(X, \bfPerf_k)$ can be reduced to the statement of the formal GAGA equivalence for the stack $X \times \Spec(R) \to \Spec(R)$, for $R$ a local complete Noetherian ring.
	This result can be obtained by extension from the analogous statements for proper schemes over $\Spec(R)$ in the same way as in \cite{Porta_Yu_Higher_analytic_stacks_2014}.
	
	Finally, let us remark that, in the absence of the strong version of Artin-Lurie's representability theorem one can combine the main theorem of \cite{Olsson_Mapping_stack} with the weak version of Lurie's representability theorem \cite[Theorem 18.1.0.2]{Lurie_SAG} to deduce the representability of $\bfMap(X,\bfPerf_k)$ from the representability of its truncation.
	In order for this method to go through, one needs to assume $X$ to be proper and flat over $k$.
\end{rem}

We now turn to the proof of the theorem:

\begin{proof}[Proof of \cref{thm:relative_GAGA}]
	Let us first deal with the non-archimedean setting.
	In this case, we have to check that the analytification functor
	\begin{equation} \label{eq:relative_analytification_perfect_complexes_non_archimedean}
		(-)\an \colon \Perf(\Spec(A_V) \times X) \longrightarrow \Perf(V \times X\an)
	\end{equation}
	is an equivalence.
	We will prove in fact that the analytification functor
	\begin{equation} \label{eq:relative_analytification_unbounded_coherent_non_archimedean}
		(-)\an \colon \Coh(X \times \Spec(A_V)) \longrightarrow \Coh(X\an \times V)
	\end{equation}
	is an equivalence.
	Notice that the flatness of derived analytification proved in \cite[Proposition 4.17]{Porta_Yu_Representability} together with \cite[Proposition 2.8.4.2(5)]{Lurie_SAG} implies that an object $\cF \in \Coh(X \times \Spec(A_V))$ is a perfect complex if and only if $\cF\an$ belongs to $\Perf(X\an \times V)$.
	From this remark and the claimed equivalence, it follows immediately that \eqref{eq:relative_analytification_perfect_complexes_non_archimedean} is an equivalence as well.
	
	We start by proving that the functor \eqref{eq:relative_analytification_unbounded_coherent_non_archimedean} is fully faithful.
	Recall that pushing forward along the natural closed immersions
	\[ \trunc(X \times \Spec(A_V)) \hookrightarrow X \times \Spec(A_V) \quad , \quad \trunc(X\an \times V) \hookrightarrow X\an \times V \]
	induces equivalences of abelian categories 
	\[ \Cohh(X \times \Spec(A_V)) \simeq \Cohh(\trunc(X \times \Spec(A_V))) \quad , \quad \Cohh(X\an \times V) \simeq \Cohh(\trunc(X\an \times V)) . \]
	Notice that $\trunc(X \times \Spec(A_V)) \simeq \trunc(X) \times \Spec(\pi_0(A_V))$.
	Furthermore $\pi_0(A_V) \simeq \Gamma(\trunc(V); \cO_{\trunc(V)}\alg)$.
	Applying \cite[Theorem 7.1]{Porta_Yu_Higher_analytic_stacks_2014} we see that the diagram
	\[ \begin{tikzcd}
		\Cohh(\trunc(X) \times \Spec(\pi_0(A_V))) \arrow{r}{(-)\an} \arrow{d}{p_*} & \Cohh(\trunc(X\an) \times \trunc(V)) \arrow{d}{p\an_*} \\
		\Coh(\pi_0(A_V)) \arrow{r} & \Coh(\trunc(V))
	\end{tikzcd} \]
	commutes.
	Since the diagram
	\[ \begin{tikzcd}
		\Coh(\pi_0(A_V)) \arrow{r} \arrow{d} & \Coh(\trunc(V)) \arrow{d} \\
		\Coh(A_V) \arrow{r} & \Coh(V)
	\end{tikzcd} \]
	commutes as well, we may form a cube with five commuting faces and deduce that its sixth face, the diagram 
		\[ \begin{tikzcd}
		\Cohh(X \times \Spec(A_V)) \arrow{d}{q_*} \arrow{r} & \Cohh(X\an \times V) \arrow{d}{q\an_*} \\
		\Coh(A_V) \arrow{r} & \Coh(V) 
	\end{tikzcd}, \]
	also commutes.
	From here, proceeding by induction on the cohomological amplitude as in the proof of \cite[Theorem 7.1]{Porta_DCAGI} we deduce that the diagram (written in \emph{homological} convention)
	\[ \begin{tikzcd}
		\Coh^-(X \times \Spec(A_V)) \arrow{d}{q_*} \arrow{r} & \Coh^-(X\an \times V) \arrow{d}{q\an_*} \\
		\Coh(A_V) \arrow{r} & \Coh(V)
	\end{tikzcd} \]
	commutes as well.
	Let now $\cF, \cG \in \Cohb(X \times \Spec(A_V))$.
	Applying \cref{lem:analytification_Hom_sheaf} we see that
	\begin{equation} \label{eq:analytification_Hom_sheaf}
		\cHom_{X \times \Spec(A_V)}(\cF, \cG)\an \simeq \cHom_{X\an \times V}(\cF\an, \cG\an) .
	\end{equation}
	Notice furthermore that the functor $\Coh(A_V) \to \Coh(V)$ coincides with the equivalence provided by \cite[Theorem 3.4]{Porta_Yu_Mapping}.
	Combining the equivalence $\Coh(A_V) \simeq \Coh(V)$ with \eqref{eq:analytification_Hom_sheaf} and with the commutativity of the above diagram, we deduce that the analytification functor restricts to a fully faithful functor
	\[ \Cohb(X \times \Spec(A_V)) \longrightarrow \Coh(X\an \times V) . \]
	From here, a second induction on the cohomological amplitude as the one that can be found in \cite[Theorem 7.2]{Porta_DCAGI} proves that the functor \eqref{eq:relative_analytification_unbounded_coherent_non_archimedean} is also fully faithful.
	
	For essential surjectivity, we first use \cite[Theorem 7.4]{Porta_Yu_Higher_analytic_stacks_2014} to deduce that the analytification induces an equivalence
	\[ \Cohh(X \times \Spec(A_V)) \simeq \Cohh(X \times V) . \]
	Next we bootstrap on this using full faithfulness of \eqref{eq:relative_analytification_unbounded_coherent_non_archimedean} to deduce that the analytification functor on unbounded coherent sheaves is also essentially surjective.
	The conclusion follows.\\
	
	We now turn to the \canal situation.
	In this case, we have to prove that for every compact Stein subset $K \subset U$ the morphism

\[ \fcolim_{K \subset V \subset U} \Perf(X \times \Spec(A_V)) \longrightarrow \fcolim_{K \subset V \subset U} \Perf(X\an \times V) \]
is an equivalence in $\Ind(\Cat_\infty^{\mathrm{st}, \otimes})$.
Just as in the non-archimedean setting, we prove below that actually the map
\begin{equation} \label{eq:relative_GAGA_complex_II}
	\fcolim_{K \subset V \subset U} \Coh(X \times \Spec(A_V)) \longrightarrow \fcolim_{K \subset V \subset U} \Coh(X\an \times V)
\end{equation}
is an equivalence, where the two colimits range over the open Stein neighbourhoods of $K$ inside $U$.
Notice that when $X = \Spec(k)$, this is exactly the result proven in \cref{thm:unbounded_coherent_pro_compact_Stein}.

In order to prove that the functor \eqref{eq:relative_GAGA_complex_II} is an equivalence we will need the following two claims, which will be proved below:
\begin{enumerate}
	\item \label{item:full_faithfulness_relative_GAGA_complex} for every open Stein neighbourhood $V$ of $K$ in $U$ the analytification map
	\[ \Coh(X \times \Spec(A_V)) \longrightarrow \Coh(X\an \times V) \]
	is fully faithful (see \cref{prop:relative_analytification_coherent_sheaves_fully_faithful} below).
	
	\item \label{item:essential_surjectivity_relative_GAGA_complex} Let $W \Subset V \Subset U$ be two relatively compact Stein neighbourhood of $K$ inside $U$.
	Then the map
	\[ \Coh(X\an \times V) \longrightarrow \Coh(X\an \times W) \]
	factors through $\Coh(X \times \Spec(A_W)) \to \Coh(X\an \times W)$ (see \cref{prop:relative_GAGA_complex_discrete} below).
\end{enumerate}
We can therefore promote the functors of assertion (2) to a morphism
\[ \fcolim_{K \subset V \subset U} \Coh(X\an \times V) \longrightarrow \fcolim_{K \subset V \subset U} \Coh(X \times \Spec(A_V)) . \]
It is easily checked that this forms an equivalence together with the functor \eqref{eq:relative_GAGA_complex_II}.
\end{proof}

To prove the claims we need the following preliminary result:

\begin{lem} \label{lem:relative_GAGA_I}
	Let $X$ be a proper derived geometric $\C$-stack.
	Let $U \in \dStn_{\mathbb C}$ be a derived Stein space.
	Write $A_U \coloneqq \Gamma(U;\cO_U)$ and let $p_U \colon X \times \Spec(A_U) \to \Spec(A_U)$ and $p_U\an \colon X\an \times U \to U$ be the two canonical projections.
	Then the diagram (written in \emph{homological} convention)
	\[ \begin{tikzcd}
		\Coh^-(X \times \Spec(A_U)) \arrow{d}{p_{U*}} \arrow{r}{\varepsilon_{X,U}^*} & \Coh^-(X\an \times U) \arrow{d}{p\an_{U*}} \\
		\Coh^-(\Spec(A_U)) \arrow{r}{\varepsilon_U^*} & \Coh^-(U)
	\end{tikzcd} \]
	canonically commutes.
	Here $\varepsilon^*_U$ denotes the functor introduced in \cref{subsec:analytification_Perf}.
\end{lem}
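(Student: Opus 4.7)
The plan is to follow closely the strategy employed in the non-archimedean part of the proof of \cref{thm:relative_GAGA}, reducing the claim by induction on cohomological amplitude to a classical relative GAGA statement at the level of hearts.

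First, I would construct the Beck-Chevalley natural transformation
\[
	\beta \colon \varepsilon_U^* \circ p_{U*} \longrightarrow p\an_{U*} \circ \varepsilon_{X,U}^*
\]
from the commutative square of $\cTetk$-structured topoi relating the counits $\varepsilon_U$, $\varepsilon_{X,U}$ and the projections $p_U$, $p\an_U$. The problem then becomes showing that $\beta_\cF$ is an equivalence for every $\cF \in \Coh^-(X \times \Spec(A_U))$.

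Next, I would reduce to checking $\beta$ on the heart $\Cohh$. The analytification pullbacks $\varepsilon_U^*$ and $\varepsilon_{X,U}^*$ are $t$-exact by the flatness statements recalled in \cref{eg:Artin_strong_GAGA}, and the pushforwards $p_{U*}$, $p\an_{U*}$ have bounded cohomological amplitude on coherent sheaves by properness of $X$. A double induction on cohomological amplitude --- first from $\Coh^-$ to $\Cohb$ via Postnikov towers, and then from $\Cohb$ to $\Cohh$ via the truncation fiber sequences --- reduces the claim to the case $\cF \in \Cohh(X \times \Spec(A_U))$, exactly as in the argument given at the beginning of the proof of \cref{thm:relative_GAGA}.

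Finally, pushforward along the closed immersions $\trunc(X \times \Spec(A_U)) \hookrightarrow X \times \Spec(A_U)$ and $\trunc(X\an \times U) \hookrightarrow X\an \times U$ identifies the coherent hearts with those of the underlying classical objects. Using $\trunc(X \times \Spec(A_U)) \simeq \trunc(X) \times \Spec(\pi_0 A_U)$ and $\pi_0 A_U \simeq \Gamma(\trunc(U); \cO_{\trunc(U)}\alg)$, the statement becomes the classical relative GAGA commutativity for the proper morphism $\trunc(X) \times \Spec(\pi_0 A_U) \to \Spec(\pi_0 A_U)$ and its analytification; this is covered by \cite[Theorem 7.1]{Porta_Yu_Higher_analytic_stacks_2014}, whose treatment handles both the complex and the non-archimedean settings uniformly. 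I expect the main subtlety to lie precisely in this last step, namely in confirming that the classical relative GAGA input applies over a Stein base; this ultimately rests on Cartan's theorem B and Grauert coherence, both already used to establish the analogous commutativity in the non-archimedean part of \cref{thm:relative_GAGA}.
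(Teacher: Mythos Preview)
Your reduction to the heart via induction on cohomological amplitude is fine and matches the paper.  The gap is in the final step: you cannot invoke \cite[Theorem 7.1]{Porta_Yu_Higher_analytic_stacks_2014} over the Stein base.  That theorem, like every GAGA comparison in the literature, is about the analytification functor for objects locally almost of finite presentation over $k$.  Here $\pi_0(A_U) = \Gamma(\trunc(U);\cO_{\trunc(U)})$ is almost never finitely generated over $\C$, so $\trunc(X) \times \Spec(\pi_0 A_U)$ is not locally of finite type and has no analytification in the sense required by that theorem.  The functor $\varepsilon^*_U$ being compared is \emph{not} the usual analytification pullback; it is the ad hoc functor of \cref{subsec:analytification_Perf}.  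Cartan~B and Grauert coherence give you properties of each side separately (vanishing and preservation of coherence), but they do not produce the comparison isomorphism you need.  This is precisely the asymmetry between the two settings: in the non-archimedean case one has the equivalence $\Coh(A_V) \simeq \Coh(V)$ of \cref{lem:perfect_complexes_affinoid}, which lets the cited relative GAGA go through verbatim, whereas over a Stein base the analogous statement is simply false.

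The paper's proof bypasses this obstruction by a genuinely different maneuver.  After reducing to $\cF \in \Cohh$, it first treats the case where $X$ is a proper \emph{scheme}: since the \v{C}ech complexes computing $p_{U*}(\cF)$ and $p^{\mathrm{an}}_{U*}(\cF\an)$ are then cohomologically bounded, the fibre of the comparison map lives in $\Cohb(U)$, and the cohomological Nakayama lemma reduces the question to closed points $x \colon \Sp(\C) \to U$.  Derived base change (and its analytic analogue for the closed immersion $x$) then collapses the statement to absolute GAGA over $\C$, where \cite[Theorem 7.1]{Porta_Yu_Higher_analytic_stacks_2014} does apply.  The passage from schemes to general proper geometric stacks is a separate induction on the geometric level, via the noetherian induction of loc.\ cit.\ performed on $X$ (not on $X \times \Spec(A_U)$).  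Both of these ingredients --- the Nakayama/base-change reduction to points and the geometric-level induction --- are missing from your outline.
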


\begin{proof}
	Proceeding by induction on the cohomological amplitude as in the proof of \cite[Theorem 7.1]{Porta_DCAGI}, we see that it is enough to prove that the diagram
	\[ \begin{tikzcd}
		\Cohh(X \times \Spec(A_U)) \arrow{d}{p_{U*}} \arrow{r}{\varepsilon_{X,U}^*} & \Cohh(X\an \times U) \arrow{d}{p\an_{U*}} \\
		\Coh^-(\Spec(A_U)) \arrow{r}{\varepsilon_U^*} & \Coh^-(U)
	\end{tikzcd} \]
	commutes.
	
	We first deal with the case where $X$ is a proper derived $\C$-\emph{scheme}.
	Fix $\cF \in \Cohh(X \times \Spec(A_U))$.
	In this case, the \v{C}ech complex computing both $p_{U*}(\cF)$ and $p\an_{U*}(\cF\an)$ is cohomologically bounded.
	As \cref{prop:basic_properties_global_sections} shows that $\varepsilon^*_U$ is $t$-exact, we deduce that $\varepsilon^*_U(p_{U*}(\cF))$ is also cohomologically bounded.
	We are therefore left to check that the canonical map
	\[ \gamma_\cF \colon \varepsilon^*_U(p_{U*}(\cF)) \longrightarrow p\an_{U*}(\cF\an) \]
	between objects in $\Cohb(U)$ is an equivalence.
	Let $\cG \coloneqq \fib(\gamma_\cF)$.
	Equivalently, we have to prove that $\cG \simeq 0$.
	Since $\cG$ is cohomologically bounded, the cohomological Nakayama's lemma implies that it is enough to check that for every closed point $x \colon \Sp(\C) \to U$ one has $x^* \cG \simeq 0$.
	On the other hand, the derived base change and its analytic counterpart\footnote{Since $x \colon \Sp(\C) \hookrightarrow U$ is a closed immersion, the analytic base change follows from the unramifiedness of $\cTanc$. This can  be proved as in Lemma 6.4 of \cite{Porta_Yu_Mapping}, the key ingredients in the derived setting are \cite[Propositions 11.12(3) and 12.10]{DAG-IX}.} imply that the two diagrams
	\[ \begin{tikzcd}[column sep = large]
		\Coh^+(X \times \Spec(A_U)) \arrow{d}{p_{U*}} \arrow{r}{(\mathrm{id}_X \times x)^*} & \Coh^+(X) \arrow{d}{p_*} \\
		\Coh^+(A_U) \arrow{r}{x^*} & \Coh^+(\C)
	\end{tikzcd} \quad \text{and} \quad \begin{tikzcd}[column sep = large]
	\Coh^+(X\an \times U) \arrow{d}{p\an_{U*}} \arrow{r}{(\mathrm{id}_{X\an} \times x)^*} & \Coh^+(X\an) \arrow{d}{p\an_*} \\
	\Coh^+(U) \arrow{r}{x^*} & \Coh^+(\C)
	\end{tikzcd} \]
	are commutative.
	In this way, we can reduce ourselves to the case where $U = \Sp(\C)$, and in this case the statement follows from the equivalences
	\[ \Cohh(X \times \Spec(A_U)) \simeq \Cohh(\trunc(X) \times \Spec(\pi_0(A_U))) \quad \text{and} \quad \Cohh(X\an \times U) \simeq \Cohh(\trunc(X)\an \times \trunc(U)) \]
	and \cite[Theorem 7.1]{Porta_Yu_Higher_analytic_stacks_2014} (in fact, the classical GAGA theorem that can be found in \cite[Expos\'e XII, Th\'eor\`eme 4.4]{SGA1} is enough for this step).
	
	At this point, we proceed by induction on the geometric level of $X$.
	We notice that the same proof as in \cite[Theorem 7.1]{Porta_Yu_Higher_analytic_stacks_2014} applies.
	The reader should be wary that also in this case the noetherian induction has to be performed on $X$ (and not on $X \times \Spec(A_U)$).
	The reader should also be aware that in loc.\ cit.\ the cohomological convention was used, while in this paper we are following the homological one.
\end{proof}

\begin{prop} \label{prop:relative_analytification_coherent_sheaves_fully_faithful}
	Let $X$ be a proper derived geometric $\C$-stack.
	Let $U \in \dStn_{\mathbb C}$ be a derived Stein space.
	Then the analytification functor
	\[ \Coh(X \times \Spec(A_U)) \longrightarrow \Coh(X\an \times U) \]
	is fully faithful.
\end{prop}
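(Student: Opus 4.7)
The plan is to adapt the scheme of the non-archimedean half of \cref{thm:relative_GAGA}, substituting the equivalence $\Coh(A_V)\simeq \Coh(V)$ of \cite{Porta_Yu_Mapping} (which has no \canal analogue) by the weaker statement that the unit $\delta\colon \mathrm{Id}\to \Gamma(U;-)\circ \varepsilon_U^*$ is an equivalence on $\Coh(A_U)$, as supplied by \cref{prop:basic_properties_global_sections}(4).

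I would first treat the case of bounded $\cF,\cG\in \Cohb(X\times\Spec(A_U))$. Using the $(\otimes,\cHom)$-adjunction together with the identification
\[ \Map(\cF,\cG) \simeq \Gamma(\Spec(A_U);\, p_{U*}\cHom(\cF,\cG)), \]
and its analytic counterpart, full faithfulness on such pairs reduces to showing that the comparison map
\[ \Gamma(\Spec(A_U);\, p_{U*}\cHom(\cF,\cG)) \longrightarrow \Gamma(U;\, p^\mathrm{an}_{U*} \cHom(\cF\an,\cG\an)) \]
is an equivalence. The analytic internal hom is identified with $\cHom(\cF,\cG)\an$ via \cref{lem:analytification_Hom_sheaf}, and its analytic pushforward with $\varepsilon_U^*(p_{U*}\cHom(\cF,\cG))$ via \cref{lem:relative_GAGA_I} applied to the coherent complex $\cHom(\cF,\cG)$. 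Under these identifications, the comparison map becomes an instance of $\delta$ evaluated on the object $p_{U*}\cHom(\cF,\cG)\in\Coh(A_U)$, hence an equivalence by \cref{prop:basic_properties_global_sections}(4).

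Having settled the bounded case, I would then bootstrap to unbounded $\cF,\cG\in\Coh(X\times\Spec(A_U))$ by the same cohomological-amplitude induction used in the second half of the proof of \cite[Theorem 7.2]{Porta_DCAGI}: one propagates full faithfulness across successive Postnikov truncations of $\cF$ and $\cG$, exploiting the $t$-exactness and conservativeness of $\varepsilon_{X,U}^*$ recorded in \cref{eg:Artin_strong_GAGA}.

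The main technical obstacle is the diagrammatic one of ensuring that $p_{U*}\cHom(\cF,\cG)$ lands precisely in the part of $\Coh(A_U)$ on which \cref{lem:relative_GAGA_I} delivers the promised identification; this is where the properness of $X$ is essential, since it guarantees that the proper pushforward preserves the appropriate cohomological bounds. Once this is arranged, the argument runs formally parallel to the non-archimedean half of \cref{thm:relative_GAGA}, with \cref{prop:basic_properties_global_sections}(4) playing the role of the Kiehl-type equivalence $\Coh(A_V)\simeq\Coh(V)$ used there.
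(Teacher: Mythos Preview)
Your proposal is correct and follows essentially the same route as the paper's proof: reduce to bounded $\cF,\cG$, rewrite the mapping spaces as global sections of the pushforward of the internal Hom, apply \cref{lem:analytification_Hom_sheaf} to identify $\cHom(\cF,\cG)\an$ with $\cHom(\cF\an,\cG\an)$, use \cref{lem:relative_GAGA_I} to commute $\varepsilon_U^*$ with $p_{U*}$ on this $\Coh^-$ object, and then invoke \cref{prop:basic_properties_global_sections}(4) (the $\delta$-unit equivalence on $\Coh(A_U)$) to finish; the extension to unbounded complexes is by the amplitude induction of \cite[Theorem 7.2]{Porta_DCAGI}. The only clarification worth making is that \cref{lem:relative_GAGA_I} is applied to $\cHom(\cF,\cG)\in\Coh^-(X\times\Spec(A_U))$ itself (not to its pushforward), and the properness of $X$ then ensures $p_{U*}\cHom(\cF,\cG)\in\Coh^-(A_U)\subset\Coh(A_U)$ so that \cref{prop:basic_properties_global_sections}(4) applies.
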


\begin{proof}
	Fix $\cF, \cG \in \Coh(X \times \Spec(A_U))$.
	We have to prove that the natural morphism
	\[ \psi_{\cF, \cG} \colon \Map^{\mathrm{st}}_{X \times \Spec(A_U)}(\cF, \cG) \longrightarrow \Map^{\mathrm{st}}_{X\an \times U}(\cF\an, \cG\an) \]
	is an equivalence.
	Let $p \colon X \times \Spec(A_U) \to \Spec(\C)$ and $p\an \colon X\an \times U \to \Sp(\C)$ be the two canonical maps to the point.
	It follows from the definitions that we have natural equivalences
	\[ \Map^{\mathrm{st}}_{X \times \Spec(A_U)}(\cF, \cG) \simeq \tau_{\ge 0} p_* \cHom_{X \times \Spec(A_U)}(\cF, \cG) \quad , \quad \Map^{\mathrm{st}}_{X\an \times U}(\cF\an, \cG\an) \simeq \tau_{\ge 0} p\an_* \cHom_{X\an \times U}(\cF\an, \cG\an) . \]
	If $\cF, \cG \in \Cohb(X \times \Spec(A_U))$ then the same argument given in \cref{lem:analytification_Hom_sheaf} shows that the canonical map
	\[ \zeta_{\cF, \cG} \colon \cHom_{X \times \Spec(A_U)}(\cF, \cG)\an \longrightarrow \cHom_{X\an \times U}(\cF\an, \cG\an) \]
	is an equivalence.
	Furthermore, in this case $\cHom_{X \times \Spec(A_U)}(\cF, \cG)$ belongs to $\Coh^-(X \times \Spec(A_U))$ and therefore we can use \cref{lem:relative_GAGA_I} to deduce that the canonical map
	\[ \varepsilon^*_U \big( p_{U*} \big( \cHom_{X \times \Spec(A_U)}(\cF, \cG) \big) \big) \longrightarrow p\an_{U*} \big( \cHom_{X \times \Spec(A_U)}(\cF, \cG)\an \big)  \]
	is an equivalence.
	Composing with the equivalence $\zeta_{\cF, \cG}$, applying the global section functor $\Gamma(U;-)$ and using \cref{prop:basic_properties_global_sections}(4) we deduce that the canonical map
	\[ p_* \cHom_{X \times \Spec(A_U)}(\cF, \cG) \longrightarrow p\an_* \cHom_{X\an \times U}(\cF\an, \cG\an) \]
	is an equivalence.
	Therefore, the conclusion follows in the case where $\cF$ and $\cG$ are (locally) cohomologically bounded.
	At this point, the argument given in the proof of \cite[Theorem 7.2]{Porta_DCAGI} shows that the map $\psi_{\cF, \cG}$ is an equivalence whenever $\cF, \cG$ belong to $\Coh(X)$.
\end{proof}

For later use we record the following useful consequence:

\begin{cor} \label{cor:relative_algebraizability}
	Let $X$ be a proper derived geometric $\C$-stack and let $j \colon X_{\mathrm{red}} \to X$ be the canonical map.
	Let $U \in \dSt_\C$ be a derived affinoid.
	Then for $\cF \in \Coh(U \times X\an)$ the following conditions are equivalent:
	\begin{enumerate}
		\item the coherent sheaf $\cF$ is algebraizable, i.e.\ it belongs to the essential image of the functor $\Coh(\Spec(A_U) \times X) \to \Coh(U \times X\an)$;
		
		\item the discrete sheaf $\rH^i(\cF)$ is algebraizable for every $i \in \mathbb Z$.
	\end{enumerate}
	If furthermore $\cF \in \Coh^+(U \times X\an)$, then the above conditions are equivalent to:
	\begin{enumerate} \setcounter{enumi}{2}
		\item the pullback $(\mathrm{id}_{U} \times j\an)^* \cF \in \Coh(U \times X_{\mathrm{red}}\an)$ is algebraizable.
	\end{enumerate}
\end{cor}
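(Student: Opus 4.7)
The core input is the full faithfulness of the analytification functor $(-)\an \colon \Coh(\Spec(A_U) \times X) \to \Coh(U \times X\an)$ established in \cref{prop:relative_analytification_coherent_sheaves_fully_faithful}; algebraizability means lying in the essential image, and morphisms between algebraizable objects lift uniquely along $(-)\an$. The implications $(1) \Rightarrow (2)$ and $(1) \Rightarrow (3)$ are immediate, respectively from the $t$-exactness of the analytification (which follows from flatness of $\varepsilon_{X,U}$, see \cite[Corollary 5.15]{Porta_DCAGI}) and from its compatibility with pullback along $\mathrm{id}_U \times j$.

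For $(2) \Rightarrow (1)$ I would devise a Postnikov-tower devissage. When $\cF$ is cohomologically bounded I induct on the amplitude: the fiber sequence $\tau_{\ge n+1}\cF \to \cF \to \rH^n(\cF)[n]$ has algebraizable outer terms (by induction and by hypothesis), and the connecting morphism between them lifts uniquely along $(-)\an$ by \cref{prop:relative_analytification_coherent_sheaves_fully_faithful}, so $\cF$ is algebraizable as the fiber. For unbounded $\cF$ I would reconstruct $\cF \simeq \lim \tau_{\le n}\cF$ using left completeness of the standard $t$-structure, lift the compatible tower of truncations to $\Coh(\Spec(A_U) \times X)$ via full faithfulness, and take its limit there; the resulting object remains in $\Coh$ because its cohomology sheaves stabilize in each degree.

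The interesting direction is $(3) \Rightarrow (1)$ for $\cF \in \Coh^+$. By the already-established $(2) \Rightarrow (1)$, it suffices to algebraize each $\rH^i(\cF)$. The map $j \colon X_{\mathrm{red}} \to X$ is a nilpotent closed immersion (combining the $\cI$-adic thickening $X_{\mathrm{red}} \hookrightarrow \trunc(X)$ and the Postnikov-tower thickening $\trunc(X) \hookrightarrow X$), so $\cO_X$ admits a finite filtration whose successive quotients are coherent sheaves pushed forward from $X_{\mathrm{red}}$. This induces a filtration of each $\rH^i(\cF)$ whose graded pieces are $j\an_*$-pushforwards of coherent sheaves on $U \times X_{\mathrm{red}}\an$ expressible in terms of $(\mathrm{id}_U \times j\an)^*\cF$; applying the already-proved $(1) \Rightarrow (2)$ on the proper derived geometric $\C$-stack $X_{\mathrm{red}}$ together with the hypothesis that $(\mathrm{id}_U \times j\an)^*\cF$ is algebraizable shows that each of its cohomology sheaves, hence each of these graded pieces, is algebraizable. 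The extensions rebuilding $\rH^i(\cF)$ lift by full faithfulness of \cref{prop:relative_analytification_coherent_sheaves_fully_faithful}. The main technical obstacle is organizing this double devissage (classical nilpotents plus Postnikov tower) cleanly, and the boundedness-below hypothesis on $\cF$ is essential to ensure that the filtration terminates in each cohomological degree.
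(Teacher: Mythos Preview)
Your argument for $(1) \Leftrightarrow (2)$ is correct and matches the paper's: $t$-exactness of analytification plus full faithfulness from \cref{prop:relative_analytification_coherent_sheaves_fully_faithful} lets you reduce to bounded amplitude and induct on the number of nonzero cohomology sheaves.

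Your approach to $(3) \Rightarrow (1)$, however, has two concrete gaps. First, the claim that $\cO_X$ admits a \emph{finite} filtration with graded pieces pushed forward from $X_{\mathrm{red}}$ is not justified: $X$ is a derived geometric stack and its Postnikov tower need not terminate. Second, and more seriously, even after passing to $\trunc(X)$, the graded pieces of the $\cJ$-adic filtration on $\rH^i(\cF)$ are \emph{not} directly expressible in terms of the cohomology sheaves of $(\mathrm{id}_U \times j\an)^*\cF$. The first quotient is $\rH^i(\cF)/\cJ\an_U \rH^i(\cF) \simeq \rL^0(\mathrm{id}_U \times j\an)^* \rH^i(\cF)$, which in general differs from $\rH^i\big((\mathrm{id}_U \times j\an)^*\cF\big)$: the derived pullback of a bounded complex contributes Tor terms from \emph{all} lower cohomology sheaves. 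Your claim that ``each of its cohomology sheaves, hence each of these graded pieces, is algebraizable'' conflates these two objects.

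The paper resolves both issues by a cleaner organization. It first reduces to proving that the \emph{lowest} nonvanishing $\rH^i(\cF)$ is algebraizable (this is where the fibre sequence $\tau_{\le i+1}\cF \to \cF \to \rH^i(\cF)$ is used, implicitly to set up an induction peeling off one cohomology sheaf at a time). Working in the heart, one can replace $U$ and $X$ by their truncations, so $X$ is underived and proper, hence the nilradical $\cJ$ satisfies $\cJ^n = 0$ for some $n$. For the \emph{lowest} $i$ the identification $\rH^i(\cF)/\cJ\an_U \rH^i(\cF) \simeq \rH^i\big((\mathrm{id}_U \times j\an)^*\cF\big)$ \emph{does} hold (no lower cohomology sheaves interfere), and from there one climbs the $\cJ$-adic filtration using the multiplicative structure as in \cite[Theorem~5.13]{Porta_Yu_Higher_analytic_stacks_2014}. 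The $\Coh^+$ hypothesis is what allows this induction on the lowest nonvanishing degree to get started.
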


\begin{proof}
	Since the analytification functor $(-)\an \colon \Coh(\Spec(A_U) \times X) \to \Coh(U \times X\an)$ is $t$-exact, it commutes with both the limit and the colimit of the Postnikov tower.
	This shows immediately that $\cF$ is algebraizable if and only if for every $n, m \in \mathbb Z$ the sheaf $\tau_{\le n}\tau_{\ge m}(\cF)$ is algebraizable.
	Moreover, \cref{prop:relative_analytification_coherent_sheaves_fully_faithful} shows that this functor is also fully faithful.
	A simple induction on the number of nonvanishing cohomology groups implies therefore the equivalence between (1) and (2).
	
	Assume now that $\cF \in \Coh^+(U \times X\an)$.
	Then the implication (1) $\Rightarrow$ (3) is clear.
	Let us prove that (3) implies (2).
	Since $\cF$ is eventually connective, we can choose the minimum integer $i$ such that $\rH^i(\cF)$ is non-zero.
	Using the fibre sequence
	\[ \tau_{\le i + 1} \cF \longrightarrow \cF \longrightarrow \rH^i(\cF) \]
	we see that it is enough to prove that $\rH^i(\cF)$ is algebraizable.
	We can furthermore replace both $U$ and $X$ by their truncations, and therefore assume that they are underived.
	Let $\cJ$ be the nilradical ideal sheaf of $X$, $\cJ\an$ its analytification and let $\cJ_U\an$ be pullback of $\cJ\an$ along $U \times X\an \to X\an$.
	Since $X$ is proper, there exists an integer $n$ such that $\cJ^n = 0$.
	We now observe that
	\[ \rH^i(\cF) / \cJ_U\an \rH^i(\cF) \simeq \rL^0 (\mathrm{id}_{U} \times j\an)^* \rH^i(\cF) \simeq \rH^i( (\mathrm{id}_{U} \times j\an)^* \cF ) . \]
	This implies that $\rH^i(\cF) / \cJ_U\an$ is algebraizable.
	Proceeding by induction on $m$ as in \cite[Theorem 5.13]{Porta_Yu_Higher_analytic_stacks_2014} we see that $\rH^i(\cF) / (\cJ_U\an)^m \rH^i(\cF)$ is algebraizable for every $m \ge 1$.
	Taking $m = n$ we conclude that $\rH^i(\cF)$ is algebraizable, thus completing the proof.
\end{proof}

At this point, the only missing piece needed for the proof of \cref{thm:relative_GAGA} is the following:

\begin{prop} \label{prop:relative_GAGA_complex_discrete}
	Let $X$ be a proper derived geometric $\C$-stack.
	Let $U \in \dStn_{\mathbb C}$ be a derived Stein space and let $W \Subset V \Subset U$ be a nested sequence of relatively compact open Stein subsets of $U$.
	Then there exists a functor $\Coh(X\an \times U) \to \Coh(X \times \Spec(A_W))$ making the diagram
	\[ \begin{tikzcd}
		{} & \Coh(X \times \Spec(A_W)) \arrow{d}{(-)\an} \\
		\Coh(X\an \times U) \arrow{ur} \arrow{r} & \Coh(X\an \times W)
	\end{tikzcd} \]
	commutative.
\end{prop}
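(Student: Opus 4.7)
The plan is to invoke the full faithfulness result of \cref{prop:relative_analytification_coherent_sheaves_fully_faithful}, which ensures that the desired functor is essentially unique, and then to reduce its existence to a classical GAGA-type statement for discrete coherent sheaves on a reduced proper geometric stack relative to a compact Stein. More precisely, since the analytification $(-)\an \colon \Coh(X \times \Spec(A_W)) \to \Coh(X\an \times W)$ is fully faithful, once we know that the restriction $\cF|_{X\an \times W}$ of every $\cF \in \Coh(X\an \times U)$ lies in its essential image, the lift is canonical up to contractible choice and the required functor assembles automatically from this essential-uniqueness. So the first reduction is to prove algebraizability of $\cF|_{X\an \times W}$ for every $\cF \in \Coh(X\an \times U)$.

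I would then apply \cref{cor:relative_algebraizability} with its ``$U$'' replaced by our $W$. The equivalence (1)$\Leftrightarrow$(2) there reduces the question to each individual cohomology sheaf, so we may assume $\cF \in \Cohh(X\an \times U)$; since this is eventually connective, the equivalence (1)$\Leftrightarrow$(3) then reduces the problem to algebraizability on the reduced substack $X_\mathrm{red}$. We are therefore brought to the following base case: given a proper reduced (ordinary) geometric $\mathbb{C}$-stack $X$ and a discrete coherent sheaf $\cG \in \Cohh(X\an \times U)$, show that $\cG|_{X\an \times W}$ is algebraizable over $A_W$.

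The hard part will be this base case. The strategy is: combining a smooth hypercover of $X$ by schemes with Chow's lemma, one reduces to the situation where $X$ is a projective scheme over $\mathbb{C}$; for projective $X$ with a very ample $\cO(1)$, Serre's theorem applied fibrewise over $V$ (with $W \Subset V \Subset U$) produces a two-term presentation
\[ \bigoplus_j (p\an_V)^* \cH_j \otimes \cO(-n_j) \longrightarrow \bigoplus_i (p\an_V)^* \cH_i \otimes \cO(-n_i) \longrightarrow \cG|_{X\an \times V} \longrightarrow 0, \]
where the $\cH_\bullet \in \Coh(V)$ arise as direct images of twists of $\cG|_{X\an \times V}$ along the proper projection $p\an_V$ (using Grauert's coherence theorem). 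Upon restricting to $W$, each $\cH|_W$ becomes algebraizable over $A_W$ by the classical Taylor-type arguments underlying \cref{thm:unbounded_coherent_pro_compact_Stein}, while the twists $\cO(-n)$ are tautologically algebraic; this exhibits $\cG|_{X\an \times W}$ as the analytification of a coherent sheaf on $X \times \Spec(A_W)$. The main technical obstacle will be to descend the algebraization from the projective cover back to the original stack $X$, which should be carried out following the same pattern as in \cite[Theorem 7.1]{Porta_Yu_Higher_analytic_stacks_2014}.
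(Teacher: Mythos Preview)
Your proposal is correct and follows essentially the same route as the paper: reduce to the heart via \cref{cor:relative_algebraizability} and the full faithfulness of \cref{prop:relative_analytification_coherent_sheaves_fully_faithful}, then pass to the underived/reduced situation, use Chow's lemma to reduce to projective space, run the standard two-term presentation argument with twists, algebraize the coefficient sheaves via the absolute case of \cref{thm:unbounded_coherent_pro_compact_Stein}, and finally pass from schemes to proper geometric stacks by the method of \cite{Porta_Yu_Higher_analytic_stacks_2014}. Two small remarks: the reduction from an arbitrary proper scheme to $\bbP^n$ via Chow's lemma requires a noetherian induction on $\dim X$, which you should make explicit; and the reference for the final descent to proper geometric stacks should be \cite[Theorem 7.4]{Porta_Yu_Higher_analytic_stacks_2014} (the essential surjectivity part of GAGA) rather than Theorem 7.1 (which concerns compatibility of pushforward).
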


\begin{proof}
	Using \cref{cor:relative_algebraizability} we see that it is enough to prove the same statement at the level of hearts.
	Using \cref{prop:relative_analytification_coherent_sheaves_fully_faithful}, we see that the relative analytification functor
	\begin{equation} \label{eq:relative_GAGA_complex_discrete_neighborhood}
		(-)\an \colon \Cohh(X \times \Spec(A_W)) \to \Cohh(X\an \times W)
	\end{equation}
	is fully faithful.
	Therefore, it is enough to prove that the restriction functor
	\[ \Cohh(X\an \times U) \longrightarrow \Cohh(X\an \times W) \]
	factors through the essential image of $(-)\an$.
	
	We first deal with the case where $X$ is a scheme.
	Notice that, using the equivalences
	\[ \Cohh(X \times \Spec(A_U)) \simeq \Cohh(\trunc(X) \times \Spec(\pi_0(A_U))) \quad , \quad \Cohh(X\an \times U) \simeq \Cohh(\trunc(X)\an \times \trunc(U)) , \]
	we can assume that both $X$ and $U$ (and hence $V$ and $W$) are underived.
	Under this hypothesis, we proceed by noetherian induction on the dimension of $X$.
	Using Chow's lemma as in \cite[Expos\'e XII, Th\'eor\`eme 4.4]{SGA1} we are readily reduced to the case of projective space, $X = \bbP^n_\C$.
	Write $\bP^n_\C \coloneqq (\bbP^n_\C)\an$.
	Let
	\begin{gather*}
		p_U \colon \bbP^n_\C \times \Spec(A_U) \longrightarrow \Spec(A_U) \quad , \quad p\an_U \colon \bP^n_\C \times U \longrightarrow U , \\
		q_U \colon \bbP^n_\C \times \Spec(A_U) \longrightarrow \bbP^n_\C \quad , \quad q\an_U \colon \bP^n_\C \times U \longrightarrow \bP^n_\C
	\end{gather*}
	be the natural projections.
	For $m \in \mathbb Z$, we write
	\[ \cO_{\bbP^n_\C \times \Spec(A_U)}(m) \coloneqq q_U^* \cO_{\bbP^n_\C}(m) \quad , \quad \cO_{\bP^n_\C \times U}(m) \coloneqq q^{\mathrm{an}*}_U \cO_{\bP^n_\C}(m) . \]
	Given $\cF \in \Cohh(\bP^n_\C \times U)$, we consider the canonical map
	\[ \phi_m \colon \cG \coloneqq \rL^0 p^{\mathrm{an}*}_U \rR^0 p\an_{U*}( \cF(-m) ) \otimes \cO_{\bP^n_\C \times U}(m) \longrightarrow \cF . \]
	For every point $x \in \overline{V}$ there is an integer $m_x \in \mathbb Z$ such that for $m \ge m_x$ the pullback of this morphism along $\mathrm{id}_{\bP^n_\C} \times x \colon \bP^n_\C \to \bP^n_\C \times U$ becomes surjective.
	As both $\cG$ and $\cF$ are coherent, Nakayama's lemma implies that there exists a neighbourhood $U_x$ of $x$ such that for $m \ge m_u$ the map $\phi_m$ becomes surjective when restricted to $U_x$.
	As $\overline{V}$ is compact, we can therefore find an open Stein subset $U'$ of $U$ containing $\overline{V}$ and an integer $m$ such that the restriction of $\phi_m$ to $U'$ is surjective.
	In particular, the restriction of $\phi_m$ to $V$ is surjective.
	Repeating the same reasoning for the kernel of $\phi$ on $U'$, we find a second open Stein subset $U''$ of $U$ containing $\overline{V}$ such that $\cF |_{U''}$ admits a presentation of the form
	\[ \begin{tikzcd}
		\cH \arrow{r}{f} & \cG \arrow{r} & \cF |_{U''} \arrow{r} & 0 ,
	\end{tikzcd} \]
	where $\cH$ and $\cG$ can be written as
	\[ \cH \simeq \rL^0 p^{\mathrm{an}*}_{U''}( \cH_0 ) \otimes \cO_{\bP^n_\C \times U''}(m_2) \quad , \quad \cG \simeq \rL^0 p^{\mathrm{an}*}_{U''}( \cG_0 ) \otimes \cO_{\bP^n_\C \times U''}(m_1)  \]
	for $\cH_0, \cG_0 \in \Cohh(U'')$ and $m_1, m_2 \gg 0$.
	In particular, the same remains true after restricting to $V$.
	Using \cref{thm:unbounded_coherent_pro_compact_Stein} we see that $\cH_0|_W$, $\cG_0 |_W$ come from objects $\cH_0\alg$ and $\cG_0\alg$ in $\Cohh(A_{W})$ via the functor $\varepsilon_W^*$.
	Since we already argued that the functor \eqref{eq:relative_GAGA_complex_discrete_neighborhood} is fully faithful, we can find a map
	\[ f\alg \colon p_{W}^*(\cH_0\alg) \otimes \cO_{\bbP^n_\C \times \Spec(A_W)}(m_2) \longrightarrow p_{W}^*(\cG_0\alg) \otimes \cO_{\bbP^n_\C \times \Spec(A_W)}(m_1) \]
	whose analytification coincides with the map $f \colon \cH \to \cG$.
	Set $\cF\alg \coloneqq \mathrm{coker}(f\alg)$.
	Then $(\cF\alg)\an \simeq \cF$, i.e.\ $\cF$ belongs to the essential image of the analytification functor.
	At this point, the extension to a generic proper geometric stack $X$ goes as in \cite[Theorem 7.4]{Porta_Yu_Higher_analytic_stacks_2014}.
\end{proof}

\subsection{More examples} \label{section:examples}

Building on \cref{thm:relative_GAGA} we can prove that a number of different stacks satisfy the universal GAGA property.
We start by discussing the case of formal completions.
Next we study three stacks coming from Hodge theory, the de Rham, Betti and Dolbeault stacks.
Although we briefly recall their definitions below, we refer the reader to \cite[\S 2.1]{PTVV_2013} for a more thorough discussion.
Finally we consider the case of $\rB G$ where $G$ is a complex reductive algebraic group.

\subsubsection{Formal completions} \label{eg:formal_completion_strong_GAGA}

Let $X$ be a derived geometric stack locally almost of finite presentation over $k$ and let $Y \hookrightarrow X$ be a closed immersion.
We also suppose that $Y$ is locally almost of finite presentation.

\begin{defin}
	Let $\mathrm{Nil}_{/X}(Y)$ be the full subcategory of $(\dSt_k)_{Y_{\mathrm{red}} // X}$ spanned by morphisms $Y_{\mathrm{red}} \to Z \to X$ where $Z$ is a derived geometric stack locally almost of finite presentation and the map $Y_{\mathrm{red}} \to Z$ induces an equivalence $Y_{\mathrm{red}} \simeq Z_{\mathrm{red}}$.
\end{defin}

\begin{defin}
	Let $X, Y$ be derived geometric stacks locally almost of finite presentation over $k$ and let $i \colon Y \hookrightarrow X$ be a closed immersion.
	We define the \emph{formal completion of $X$ along $Y$} as the colimit
	\[ X_Y^\wedge \coloneqq \colim_{Z \in \mathrm{Nil}_{/X}(Y)} Z . \]
\end{defin}

The same definitions can be applied in the analytic setting.
We have the following global analogue of \cite[Proposition 8.2]{Porta_DCAGI}

\begin{lem}
	The analytification functor induces an equivalence of $\infty$-categories
	\[ (-)\an \colon \mathrm{Nil}_{/X}(Y) \simeq \mathrm{Nil}_{/X\an}(Y\an) . \]
\end{lem}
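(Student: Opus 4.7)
The plan is to reduce to the affine case treated in \cite[Proposition 8.2]{Porta_DCAGI} via smooth descent. First I would check that $(-)\an$ is well-defined as a functor $\mathrm{Nil}_{/X}(Y) \to \mathrm{Nil}_{/X\an}(Y\an)$: analytification preserves geometric stacks locally almost of finite presentation (by construction, together with the fact that the transformation of pregeometries $\cTetk \to \cTank$ preserves smoothness), and it commutes with the reduction functor. The latter holds on derived affine schemes as a consequence of the flatness of $(X\an)\alg \to X$ in \cref{thm:generalities_analytification}(2) and the compatibility of $(-)\an$ with $\pi_0$, and extends to geometric stacks via descent along a smooth atlas. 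Hence for any $Z \in \mathrm{Nil}_{/X}(Y)$ one has $(Z\an)_{\mathrm{red}} \simeq (Z_{\mathrm{red}})\an \simeq (Y_{\mathrm{red}})\an \simeq (Y\an)_{\mathrm{red}}$, so $Z\an$ lies in $\mathrm{Nil}_{/X\an}(Y\an)$.

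Next, fix a smooth atlas $p \colon V \to X$ with $V$ a disjoint union of derived affine schemes almost of finite presentation, and write $V_\bullet$ for its \v Cech nerve. For $Z \in \mathrm{Nil}_{/X}(Y)$, the fibre product $V_n^Z \coloneqq V_n \times_X Z$ is a derived affine scheme whose reduction is $V_n \times_X Y_{\mathrm{red}} = (V_n \times_X Y)_{\mathrm{red}}$, so $V_n^Z$ belongs to $\mathrm{Nil}_{/V_n}(V_n \times_X Y)$. Smooth descent then identifies
\[
\mathrm{Nil}_{/X}(Y) \simeq \lim_{[n] \in \mathbf \Delta} \mathrm{Nil}_{/V_n}(V_n \times_X Y) ,
\]
and the parallel analysis on the analytic side (using that $V\an \to X\an$ is a smooth atlas whose \v Cech nerve is $V_\bullet\an$, since analytification commutes with the relevant fibre products) yields
\[
\mathrm{Nil}_{/X\an}(Y\an) \simeq \lim_{[n] \in \mathbf \Delta} \mathrm{Nil}_{/V_n\an}(V_n\an \times_{X\an} Y\an) .
\]

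The final step is to apply \cite[Proposition 8.2]{Porta_DCAGI} level by level: since each $V_n$ is a disjoint union of derived affine schemes, one obtains equivalences $\mathrm{Nil}_{/V_n}(V_n \times_X Y) \simeq \mathrm{Nil}_{/V_n\an}(V_n\an \times_{X\an} Y\an)$ compatibly in $[n]$, and passing to the limit over $\mathbf \Delta$ gives the claimed global equivalence.

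The main obstacle I foresee is precisely the descent formula $\mathrm{Nil}_{/X}(Y) \simeq \lim_{[n]} \mathrm{Nil}_{/V_n}(V_n \times_X Y)$. Establishing this requires verifying that the assignment $Z \mapsto V_\bullet^Z$ is fully faithful and essentially surjective onto the full subcategory of simplicial objects satisfying the nilpotent thickening condition at every level, which in turn depends on the effectivity of smooth hypercovers in $\dSt_k^{\mathrm{afp}}$ restricted to the nilpotent thickening condition, and on its analytic counterpart in $\dAnSt_k$.
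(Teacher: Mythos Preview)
Your overall strategy matches the paper's: both arguments first check that analytification commutes with reduction (so the functor is well-defined), then reduce to the affine case by descent, and finally handle the affine case separately.

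There is one technical slip in your reduction step. You take the \v Cech nerve $V_\bullet$ of a smooth atlas $V \to X$ and assert that each $V_n$ is a disjoint union of derived affine schemes. This is not automatic: for an $n$-geometric stack $X$ the iterated fibre products $V \times_X \cdots \times_X V$ are only $(n-1)$-geometric, not affine. The paper sidesteps this by choosing an \emph{affine hypercover} $U_\bullet$ of $X$ (so that every $U_n$ is affine by construction), and then sets $Y_n \coloneqq Y \times_X U_n$; since $Y \hookrightarrow X$ is a closed immersion, each $Y_n$ is affine as well. Your descent formula is correct once you make this adjustment (or argue by induction on the geometric level), and the paper simply states the resulting equivalences
\[
\mathrm{Nil}_{/X}(Y) \simeq \lim_{\mathbf \Delta} \mathrm{Nil}_{/U_\bullet}(Y_\bullet), \qquad \mathrm{Nil}_{/X\an}(Y\an) \simeq \lim_{\mathbf \Delta} \mathrm{Nil}_{/U_\bullet\an}(Y_\bullet\an)
\]
without further comment, so your concern about effectivity is not treated as a genuine obstacle.

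The other difference is in the affine case itself. You invoke \cite[Proposition 8.2]{Porta_DCAGI} as a black box, whereas the paper re-proves the statement from scratch. For full faithfulness it uses the analytification adjunction (available when $X$ is affine) to rewrite $\Map_{Y_{\mathrm{red}}\an // X\an}(Z\an, T\an)$ as $\Map_{(Y_{\mathrm{red}}\an)\alg // X}((Z\an)\alg, T)$, then shows by induction on square-zero extensions (via \cite[Theorem 5.21]{Porta_Yu_Representability}) that the comparison map is an equivalence for every $Z \in \mathrm{Nil}_{/X}(Y)$. Essential surjectivity is deduced from \cite[Proposition 8.1]{Porta_DCAGI}, using that a derived geometric analytic stack is algebraizable if and only if its reduction is. One reason to re-prove rather than cite is that \cite{Porta_DCAGI} treats only the complex analytic setting, while the present lemma is meant to apply uniformly over both $\C$ and non-archimedean $k$.
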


\begin{proof}
	We first remark that for every derived geometric stack $Z$ locally almost of finite presentation over $k$, the canonical map
	\[ (Z_{\mathrm{red}})\an \longrightarrow (Z\an)_{\mathrm{red}} \]
	is an equivalence.
	This implies that the analytification functor induces a well defined map
	\[ (-)\an \colon \mathrm{Nil}_{/X}(Y) \longrightarrow \mathrm{Nil}_{/X\an}(Y\an) . \]
	Let $U_\bullet$ be an affine hypercover of $X$ and let $Y_\bullet \coloneqq Y \times_X U_\bullet$.
	Then for any $[n] \in \mathbf \Delta$, the map $Y_n \to U_n$ is a closed immersion, and in particular $Y_n$ is an affine derived scheme almost of finite presentation.
	We have canonical equivalences
	\[ \mathrm{Nil}_{/X}(Y) \simeq \lim_{\mathbf \Delta} \mathrm{Nil}_{/U_\bullet}(Y_\bullet) \quad , \quad \mathrm{Nil}_{/X\an}(Y\an) \simeq \lim_{\mathbf \Delta} \mathrm{Nil}_{/U_\bullet\an}(Y_\bullet\an) . \]
	We are therefore reduced to the case where $X$ itself is affine.
	
	We first prove that it is fully faithful.
	Let $Z, T \in \mathrm{Nil}_{/Y}(X)$.
	Since $X$ is affine, we have
	\[ \Map_{Y_{\mathrm{red}}\an // X\an}(Z\an, T\an) \simeq \Map_{(Y_{\mathrm{red}}\an)\alg // X}((Z\an)\alg, T) . \]
	Fix now $T \in \mathrm{Nil}_{/Y}(X)$ and consider the full subcategory $\cC$ of $\mathrm{Nil}_{/X}(Y)$ spanned by those $Z$ for which the canonical map
	\[ \Map_{Y_{\mathrm{red}} // X}( Z, T) \longrightarrow \Map_{(Y_{\mathrm{red}})\alg // X}( (Z\an)\alg, T ) \]
	is an equivalence.
	We observe that:
	\begin{enumerate}
		\item the object $Y_{\mathrm{red}}$ belongs to $\cC$;
		\item the category $\cC$ is closed under colimits in $\mathrm{Nil}_{/X}(Y)$.
	\end{enumerate}
	Proceeding by induction, we are therefore left to check that if $Z \in \cC$ and $M \in \Coh^{\ge 1}(Z)$, then $Z[M] \in \cC$.
	This follows at once from \cite[Theorem 5.21]{Porta_Yu_Representability}.
	This shows that the functor is fully faithful.
	For essential surjectivity, recall from \cite[Proposition 8.1]{Porta_DCAGI} that a derived geometric analytic stack $Z$ is algebraizable if and only if $\trunc(Z)$ is algebraizable.
	Since $\trunc(Z)$ is algebraizable if and only if $\trunc(Z)_{\mathrm{red}} = Z_{\mathrm{red}}$ is algebraizable, we conclude that the above functor is essentially surjective.
\end{proof}

\begin{cor} \label{cor:analytification_formal_completions}
	Let $X, Y$ be derived geometric stacks locally almost of finite presentation over $k$.
	The canonical morphism
	\[ (X_Y^\wedge)\an \longrightarrow (X\an_{Y\an})^\wedge \]
	is an equivalence.
\end{cor}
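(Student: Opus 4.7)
The plan is to combine the preceding lemma on nilpotent thickenings with the fact that derived analytification commutes with colimits, and then transport the defining colimit of $X_Y^\wedge$ through the equivalence of indexing categories.

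First, I would recall that the derived analytification functor $(-)\an \colon \dSt_k \to \dAnSt_k$ commutes with arbitrary colimits. This follows from its construction in \cref{subsec:derived_analytification}: it is the composite $(-)^{\mathrm{an,afp}} \circ j^s$, where $j^s$ is a left adjoint (it has the further left adjoint $j_s$), and $(-)^{\mathrm{an,afp}}$ is the functor induced by the continuous morphism of sites $(-)\an \colon \dAff_k^{\mathrm{afp}} \to \dAfd_k$, hence is itself a left adjoint. Composing left adjoints, the analytification commutes with colimits.

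Applying this to the defining colimit
\[ X_Y^\wedge \coloneqq \colim_{Z \in \mathrm{Nil}_{/X}(Y)} Z , \]
we obtain a canonical equivalence
\[ (X_Y^\wedge)\an \simeq \colim_{Z \in \mathrm{Nil}_{/X}(Y)} Z\an . \]

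Next, I would invoke the preceding lemma, which asserts that the analytification functor induces an equivalence of $\infty$-categories $(-)\an \colon \mathrm{Nil}_{/X}(Y) \simeq \mathrm{Nil}_{/X\an}(Y\an)$. Under this equivalence, the diagram $Z \mapsto Z\an$ on $\mathrm{Nil}_{/X}(Y)$ is taken to the tautological inclusion of $\mathrm{Nil}_{/X\an}(Y\an)$ into $\dAnSt_k$. Consequently
\[ \colim_{Z \in \mathrm{Nil}_{/X}(Y)} Z\an \simeq \colim_{W \in \mathrm{Nil}_{/X\an}(Y\an)} W = (X\an)_{Y\an}^\wedge . \]

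Combining the two equivalences yields the desired identification $(X_Y^\wedge)\an \simeq (X\an)_{Y\an}^\wedge$, and one checks that the composite agrees with the canonical morphism of the statement by tracing through the universal properties. There is no substantial obstacle here: all the real work has been absorbed into the preceding lemma (whose key input was \cite[Theorem 5.21]{Porta_Yu_Representability} for square-zero extensions and \cite[Proposition 8.1]{Porta_DCAGI} for algebraizability of truncations), and the commutation of analytification with colimits is a formal consequence of its construction as a composition of left adjoints.
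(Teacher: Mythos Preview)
Your proof is correct and is essentially an expanded version of the paper's one-line argument, which simply cites that $(-)\an$ commutes with colimits by construction together with the preceding lemma. One small slip: having a left adjoint $j_s$ makes $j^s$ a \emph{right} adjoint, not a left adjoint; the reason $j^s$ commutes with colimits is that it also has a right adjoint ${}_s j$.
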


\begin{proof}
	This follows from the fact that $(-)\an$ commutes with colimits by construction and from the above lemma.
\end{proof}

\begin{prop} \label{prop:analytification_formal_completion_conservative_t_exact}
	Let $X, Y$ be derived geometric stacks locally almost of finite presentation over $k$.
	Let $U \in \dAfd_k$ be a derived $k$-affinoid (resp.\ Stein) space.
	The canonical map
	\begin{equation} \label{eq:analytification_formal_completion_conservative_t_exact}
		(-)\an \colon \QCoh(\Spec(A_U) \times X_Y^\wedge) \longrightarrow \cO_{U \times (X_Y^\wedge)\an} \Mod
	\end{equation}
	is conservative and $t$-exact.
\end{prop}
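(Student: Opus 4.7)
The plan is to reduce the claim to the corresponding statement for an honest derived geometric stack locally almost of finite presentation, i.e.\ the case without any formal completion, which is part of \cref{eg:Artin_strong_GAGA}. That case is proved there \emph{without} using properness: it relies only on the flatness of the counit $(X\an)\alg \to X$ via \cite[Corollary 5.15]{Porta_DCAGI} and \cite[Proposition 4.17]{Porta_Yu_Representability}, checked smooth-locally on $X$. So for \emph{any} derived geometric stack $Z$ locally almost of finite presentation over $k$, the relative analytification functor
\[
\QCoh(\Spec(A_U) \times Z) \longrightarrow \cO_{U \times Z\an}\Mod
\]
is conservative and $t$-exact.

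Using the definition of the formal completion as the colimit $X_Y^\wedge \simeq \colim_{Z \in \mathrm{Nil}_{/X}(Y)} Z$ in $\dSt_k$ and the descent property of $\QCoh$, I would write
\[
\QCoh(\Spec(A_U) \times X_Y^\wedge) \simeq \lim_{Z \in \mathrm{Nil}_{/X}(Y)} \QCoh(\Spec(A_U) \times Z).
\]
On the analytic side, \cref{cor:analytification_formal_completions} identifies $(X_Y^\wedge)\an$ with $(X\an)_{Y\an}^\wedge$, and the equivalence $\mathrm{Nil}_{/X}(Y) \simeq \mathrm{Nil}_{/X\an}(Y\an)$ from the previous lemma similarly gives
\[
\cO_{U \times (X_Y^\wedge)\an}\Mod \simeq \lim_{Z \in \mathrm{Nil}_{/X}(Y)} \cO_{U \times Z\an}\Mod.
\]
Under these identifications, naturality of $\varepsilon_{Z,U}^*$ in $Z$ shows that the functor in question is the limit of the analytification functors for each $Z \in \mathrm{Nil}_{/X}(Y)$.

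It remains to check that conservativity and $t$-exactness pass through this limit. For $t$-exactness, the $t$-structure on both sides is characterized by declaring $\cF$ connective iff each restriction $\cF|_Z$ (resp.\ $\cF|_{Z\an}$) is connective; on the analytic side this is the content of \cref{lem:tstructure_mx} combined with the limit description. Since the restriction functors to each $Z$ commute with $(-)\an$ and each component is $t$-exact by the reduction above, the limit functor is $t$-exact. For conservativity, if $\varepsilon_{X_Y^\wedge, U}^*(\cF) \simeq 0$, then each $\varepsilon_{Z,U}^*(\cF|_Z) \simeq 0$, so $\cF|_Z \simeq 0$ for every $Z$, and hence $\cF \simeq 0$ by the limit presentation. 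The main subtlety will be in carefully justifying that the $t$-structures on both sides of the colimit decomposition agree with the componentwise ones; this should however be a formal consequence of the definition of $\QCoh$ on a colimit of stacks and of \cref{lem:tstructure_mx}, together with the fact that the transition pullbacks in $\mathrm{Nil}_{/X}(Y)$ (coming from morphisms $Z \to Z'$ in that category) are $t$-exact, being compositions of pullbacks along closed immersions and of identity-on-reduced square-zero extensions.
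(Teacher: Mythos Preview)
Your overall strategy is exactly the paper's: write both sides as limits over $\mathrm{Nil}_{/X}(Y)$ via \cref{cor:analytification_formal_completions} and the equivalence $\mathrm{Nil}_{/X}(Y)\simeq\mathrm{Nil}_{/X\an}(Y\an)$, reduce to the geometric case using \cref{eg:Artin_strong_GAGA}, and deduce conservativity from joint conservativity of the projections $j_Z^*$, $j_{Z\an}^*$ together with conservativity of each $\varepsilon_{Z,U}^*$.

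The one claim that needs correction is your justification that the transition pullbacks in $\mathrm{Nil}_{/X}(Y)$ are $t$-exact. Morphisms $Z\to Z'$ in $\mathrm{Nil}_{/X}(Y)$ are arbitrary maps of nil-thickenings of $Y_{\mathrm{red}}$ inside $X$ and need not be closed immersions; and even pullback along a closed immersion is only \emph{right} $t$-exact in general (already $k\otimes^{\mathrm L}_{k[\epsilon]/(\epsilon^2)}k$ has nontrivial $\pi_1$). What is automatic is right $t$-exactness of all transitions, which is enough to make the connective aisle componentwise on both sides and hence to get preservation of connective objects immediately. For the coconnective direction the paper does not argue by asserting $t$-exact transitions; instead it takes $\cF$ coconnective, writes $\Map(\cG,\cF\an)\simeq\lim_Z\Map(j_{Z\an}^*\cG,(j_Z^*\cF)\an)$ for $\cG$ connective, uses that $j_{Z\an}^*\cG$ stays connective \emph{by definition} of the $t$-structure on the target, and then invokes $t$-exactness of $\varepsilon_{Z,U}^*$ on each individual $Z$. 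You were right to flag this as the main subtlety; just replace your transition-$t$-exactness claim with this direct mapping-space computation.
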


\begin{proof}
	For every $Z \in \mathrm{Nil}_{/X}(Y)$ we let
	\[ j_Z \colon Z \longrightarrow X_Y^\wedge \quad , \quad j_{Z\an} \colon Z\an \longrightarrow (X_Y)^\wedge \]
	be the two canonical maps.
	The morphisms
	\[ j_Z^* \colon \QCoh(\Spec(A_U) \times X_Y^\wedge) \longrightarrow \QCoh(\Spec(A_U) \times Z) \quad , \quad j_{Z\an}^* \colon \cO_{U \times (X_Y^\wedge)\an} \Mod \longrightarrow \cO_{U \times Z\an} \Mod \]
	are jointly conservative as $Z$ varies in $\mathrm{Nil}_{/X}(Y)$.
	Since $Z$ is a derived geometric stack, \cref{eg:Artin_strong_GAGA} implies that the map
	\[ (-)\an \colon \QCoh(\Spec(A_U) \times Z) \longrightarrow \cO_{U \times Z\an} \Mod \]
	is conservative.
	As
	\[ \QCoh(\Spec(A_U) \times X_Y^\wedge) \simeq \lim_{Z \in \mathrm{Nil}_{/X}(Y)} \QCoh(\Spec(A_U) \times Z) \quad , \quad \cO_{U \times (X_Y^\wedge)\an} \Mod \simeq \lim_{Z \in \mathrm{Nil}_{/X}(Y)} \cO_{U \times Z\an} \Mod , \]
	it follows that \eqref{eq:analytification_formal_completion_conservative_t_exact} is conservative.
	
	It is also clear that the functor \eqref{eq:analytification_formal_completion_conservative_t_exact} is left $t$-exact.
	Let now $\cF \in \QCoh(\Spec(A_U) \times X_Y^\wedge)^{\le 0}$.
	We have to prove that for every $\cG \in \cO_{U \times (X_Y^\wedge)\an} \Mod^{\ge 1}$, we have
	\[ \Map_{\cO_{U \times (X_Y^\wedge)\an} \Mod}( \cG, \cF\an) \simeq 0 . \]
	By definition, we have
	\[ \Map_{\cO_{U \times (X_Y^\wedge)\an} \Mod}( \cG, \cF\an) \simeq \lim_{Z \in \mathrm{Nil}_{/X}(Y)} \Map_{\cO_{U \times Z\an}}( j_{Z\an}^* \cG, j_{Z\an}^* \cF\an ) . \]
	However $j_{Z\an}^* \cF\an \simeq ( j_Z^* \cF )\an$, and the analytification functor
	\[ (-)\an \colon \QCoh(\Spec(A_U) \times Z) \longrightarrow \cO_{U \times Z\an} \Mod \]
	is $t$-exact.
	Since moreover $j_{Z\an}^* \cG \in \cO_{U \times Z\an} \Mod^{\ge 1}$ by the definition of the $t$-structure on $\cO_{U \times (X_Y^\wedge)\an} \Mod$, the conclusion follows.
\end{proof}

\begin{prop} \label{prop:formal_completions_relative_GAGA}
	Let $X, Y$ be derived geometric stacks locally almost of finite presentation over $k$.
	Suppose furthermore that $Y$ is proper.
	Then the formal completion $X_Y^\wedge$ satisfies the universal GAGA property.
\end{prop}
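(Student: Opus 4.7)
We proceed in two steps, handling first the condition on $\QCoh$ and then the equivalence on $\Perf$. The first is precisely the content of \cref{prop:analytification_formal_completion_conservative_t_exact}, so we focus on the second. The decisive observation is that every $Z \in \mathrm{Nil}_{/X}(Y)$ is proper: by assumption $Z$ is locally almost of finite presentation with $Z_{\mathrm{red}} \simeq Y_{\mathrm{red}}$, and since $Y$ is proper so is $Y_{\mathrm{red}} \simeq Z_{\mathrm{red}}$, whence $Z$ itself is proper (properness being stable under nilpotent thickenings). By \cref{thm:relative_GAGA} each such $Z$ therefore satisfies the universal GAGA property.

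Using $X_Y^\wedge = \colim_{Z} Z$ together with \cref{cor:analytification_formal_completions} to identify $(X_Y^\wedge)\an \simeq \colim_{Z} Z\an$ (indexed by the same category via the equivalence $\mathrm{Nil}_{/X}(Y) \simeq \mathrm{Nil}_{/X\an}(Y\an)$), the fact that $\Perf$ sends colimits of stacks to limits in $\Cat_\infty^{\mathrm{st},\otimes}$ yields
\[ \Perf(\Spec(A_V) \times X_Y^\wedge) \simeq \lim_{Z} \Perf(\Spec(A_V) \times Z), \quad \Perf(V \times (X_Y^\wedge)\an) \simeq \lim_{Z} \Perf(V \times Z\an) . \]
In the non-archimedean setting, the equivalences $\Perf(\Spec(A_U) \times Z) \simeq \Perf(U \times Z\an)$ supplied by \cref{thm:relative_GAGA} are natural in $Z$, so applying $\lim_{Z}$ immediately produces the required equivalence.

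In the complex case we need instead an equivalence of ind-objects indexed by Stein neighbourhoods $V$ of a compact Stein $K \subset U$. By the Yoneda description of $\Ind(\Cat_\infty^{\mathrm{st},\otimes})$, it suffices to prove that for every compact $\cC$ the map
\[ \colim_V \lim_Z \Map(\cC, \Perf(\Spec(A_V) \times Z)) \longrightarrow \colim_V \lim_Z \Map(\cC, \Perf(V \times Z\an)) \]
is an equivalence of spaces. For each fixed $Z$ the universal GAGA property of $Z$ provides the ind-equivalence $\colim_V \Map(\cC, \Perf(\Spec(A_V) \times Z)) \simeq \colim_V \Map(\cC, \Perf(V \times Z\an))$. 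The main obstacle is then to swap $\colim_V$ with $\lim_Z$: this is not formal, since $\mathrm{Nil}_{/X}(Y)^{\mathrm{op}}$ is not a finite index category. We expect to handle it by replacing $\mathrm{Nil}_{/X}(Y)$ by a cofinal countable subcategory built from iterated square-zero extensions of $Y_{\mathrm{red}}$, and then invoking a Mittag-Leffler-type argument using the compatibility of the algebraizations from \cref{thm:relative_GAGA} along the pullback maps of the tower, together with the finiteness properties of perfect complexes on proper stacks.
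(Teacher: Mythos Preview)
Your treatment of the non-archimedean case is correct and matches the paper's. The gap is in the \canal case: you reduce to interchanging $\colim_V$ and $\lim_Z$, then admit this is non-formal and sketch a Mittag--Leffler style fix. Even after passing to a cofinal tower of infinitesimal thickenings, exchanging a filtered colimit of spaces with a countable inverse limit is not controlled by Mittag--Leffler conditions alone (those are about $\lim^1$ of abelian groups, not mapping spaces), and nothing in the paper provides the kind of uniform finiteness one would need to make such an argument go through. As written, this step is incomplete.

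The paper avoids the interchange entirely. The key observation you are missing is that the proof of \cref{thm:relative_GAGA} in the \canal case actually establishes two separate facts for each proper $Z$: first, that
\[ \Perf(\Spec(A_V) \times Z) \longrightarrow \Perf(V \times Z\an) \]
is \emph{fully faithful} for every Stein $V$ (\cref{prop:relative_analytification_coherent_sheaves_fully_faithful}); second, that for $W \Subset V$ the restriction $\Perf(V \times Z\an) \to \Perf(W \times Z\an)$ factors through the algebraic side (\cref{prop:relative_GAGA_complex_discrete}). Full faithfulness passes to limits over $Z$, so
\[ \Perf(\Spec(A_V) \times X_Y^\wedge) \longrightarrow \Perf(V \times (X_Y^\wedge)\an) \]
is fully faithful for each $V$. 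For essential surjectivity of the ind-map, one then only needs that for some $W \Subset V$ the restriction on the analytic side lands in the essential image; since the $W$ provided by \cref{prop:relative_GAGA_complex_discrete} depends only on $V$ and not on $Z$, this factorization passes to the limit over $Z$ as well. No exchange of $\colim_V$ and $\lim_Z$ is required.
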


\begin{proof}
	Since the analytification is defined as a left Kan extension, we have a natural equivalence
	\[ (X_Y^\wedge)\an \simeq (X\an)_{Y\an}^\wedge . \]
	We have
	\[ \Perf( X_Y^\wedge \times \Spec(A_U) ) \simeq \lim_{Z \to X} \Perf(Z \times \Spec(A_U)) , \]
	and similarly
	\[ \Perf( (X\an)_{Y\an}^\wedge \times U ) \simeq \lim_{Z\to X} \Perf(Z\an \times U) \]
	Notice that each $Z$ is still a proper derived geometric stack locally almost of finite presentation.
	In the non-archimedean setting, \cref{thm:relative_GAGA} shows that the analytification functor
	\[ \Perf(Z \times \Spec(A_U)) \longrightarrow \Perf(Z\an \times U) \]
	is an equivalence.
	In the \canal case, as always \cref{thm:relative_GAGA} shows that for every compact Stein subset $K \subset U$ and every open Stein neighbourhood $V$ of $K$ the functor
	\[ \Perf(Z \times \Spec(A_V)) \longrightarrow \Perf(Z\an \times V) \]
	is fully faithful.
	In particular, we obtain that
	\[ \Perf( X^\wedge_Y \times \Spec(A_U) ) \longrightarrow \Perf( (X\an)^\wedge_{Y\an} \times U ) \]
	is fully faithful as well.
	In order to prove that
	\[ \fcolim_{K \subset V \subset U} \Perf( X_Y^\wedge \times \Spec(A_V)) \longrightarrow \fcolim_{K\subset V \subset U} \Perf( (X\an)^\wedge_{Y\an} \times V) \]
	is an equivalence in $\Ind(\Cat_\infty^{\mathrm{st}, \otimes})$ it is therefore enough to prove that given $W \Subset V$ a relatively compact Stein open neighbourhood of $K$ in $V$ the restriction functor
	\[ \Perf( (X\an)^\wedge_{Y\an} \times V ) \longrightarrow \Perf( (X\an)^\wedge_{Y\an} \times W ) \]
	factors through $\Perf( X^\wedge_Y \times \Spec(A_W))$.
	This fact follows directly from \cref{prop:relative_GAGA_complex_discrete}.
	Notice that the choice of $W$ does not depend on $Z \in \mathrm{Nil}_{/X}(Y)$.
\end{proof}

\subsubsection{De Rham stacks} \label{eg:derhamstack}

The de Rham stack can be defined in both the algebraic and analytic setting as follows.
Let
\[ j \colon \Aff^{\mathrm{red}}_k \longrightarrow \dAff_k \]
be the natural inclusion.
This is a continuous and cocontinuous morphism of sites with respect to the \'etale topology on both sides.
In particular, the functor
\[ j^s \colon \dSt_k \longrightarrow \St( \Aff_k^{\mathrm{red}}, \tauet ) \]
admits both a left adjoint $j_s$ and a right adjoint ${}_s j$.
We set
\[ (-)_{\mathrm{dR}} \coloneqq {}_s j \circ j^s \quad , \quad (-)_{\mathrm{red}} \coloneqq j_s \circ j^s . \]
It can be shown that when $X = \Spec(A)$ then $X_{\mathrm{red}} \simeq \Spec( \pi_0(A)_{\mathrm{red}} )$, while it is always true that
\[ X_{\mathrm{dR}}( \Spec(A) ) \simeq X( \Spec( \pi_0(A) )_{\mathrm{red}} ) . \]
The same definitions can be carried over in the analytic setting, using $\Afd_k^{\mathrm{red}}$ instead of $\Aff_k^{\mathrm{red}}$.
See \cite[\S 3]{Porta_Derived_Riemann_Hilbert} for the \canal case.

\begin{lem}
	Let $X \in \dSt_k^{\mathrm{afp}}$ be a derived stack locally almost of finite presentation.
	Then there is a canonical map
	\[ (X_{\mathrm{dR}})\an \longrightarrow (X\an)_{\mathrm{dR}} \]
	which is furthermore an equivalence when $X$ is a smooth geometric stack.
\end{lem}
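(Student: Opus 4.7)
First I would construct the canonical map. The key observation is that analytification preserves reducedness: for a reduced simplicial commutative $k$-algebra $A$ almost of finite presentation, the analytification $\Spec(A)\an$ has reduced underlying analytic space. This yields a commutative square of pregeometries
\[
\begin{tikzcd}
\Aff^{\mathrm{red},\mathrm{afp}}_k \arrow[r, hook, "j"] \arrow[d, "(-)\an"'] & \dAff_k^{\mathrm{afp}} \arrow[d, "(-)\an"] \\
\Afd^{\mathrm{red}}_k \arrow[r, hook, "j'"] & \dAfd_k
\end{tikzcd}
\]
in which all arrows are continuous and cocontinuous with respect to the \'etale topology, giving rise to adjoint triples $j_s \dashv j^s \dashv {}_sj$ and $j'_s \dashv j'^s \dashv {}_s j'$ after passing to sheaves. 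The Beck--Chevalley transformation associated with this square provides a canonical natural transformation $(X_{\mathrm{dR}})\an \to (X\an)_{\mathrm{dR}}$ for any $X \in \dSt_k^{\mathrm{afp}}$, unravelling to the map that sends a pair consisting of $U \to T\an$ (with $T \in \dAff_k^{\mathrm{afp}}$) and a point $T_{\mathrm{red}} \to X$ to the composite $U_{\mathrm{red}} \to (T\an)_{\mathrm{red}} \simeq (T_{\mathrm{red}})\an \to X\an$.

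For the equivalence assertion, assume $X$ is smooth geometric locally almost of finite presentation. The strategy is to express both sides as geometric realizations of simplicial objects built from formal completions. Concretely, the natural map $X \to X_{\mathrm{dR}}$ is an effective epimorphism in the \'etale topology whose \v{C}ech nerve at level $n$ is the formal completion $(X^{n+1})^\wedge_{\Delta}$ of the $(n+1)$-fold product along the small diagonal; this classical identification extends to the derived setting under smoothness because the relative cotangent complex of the diagonal is locally free, so the formal thickenings considered in $\mathrm{Nil}_{/X^{n+1}}(\Delta)$ capture precisely the infinitesimal data classified by $X_{\mathrm{dR}}$. Applying analytification and combining three compatibilities already established in the paper, namely (i) analytification commutes with colimits by construction, (ii) it commutes with finite limits of derived geometric stacks by \cref{prop:analytification_on_limits}, and (iii) it commutes with formal completions by \cref{cor:analytification_formal_completions}, we get
\[ (X_{\mathrm{dR}})\an \simeq \big| ((X\an)^{\bullet+1})^\wedge_{\Delta\an} \big| \simeq (X\an)_{\mathrm{dR}}, \]
where the final equivalence is the analogous formal-neighborhood presentation for the smooth analytic stack $X\an$. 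Compatibility with the canonical map of the first paragraph is a diagram chase using the explicit description given above.

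The main obstacle is establishing the formal-neighborhood \v{C}ech-nerve presentation of $X_{\mathrm{dR}}$, and its analytic counterpart, in the derived geometric setting: one must verify that $X \to X_{\mathrm{dR}}$ is an effective epimorphism of \'etale sheaves and that its iterated fiber products realize the expected formal completions along diagonals. Once these inputs are in place, the conclusion follows formally from the three commutation properties listed above, each of which is already available in the paper.
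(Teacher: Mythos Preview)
Your proposal is correct and follows essentially the same approach as the paper: construct the map as a Beck--Chevalley transformation arising from the compatibility of analytification with reducedness, then for smooth geometric $X$ present both $X_{\mathrm{dR}}$ and $(X\an)_{\mathrm{dR}}$ as geometric realizations of the simplicial objects given by formal completions of iterated products along the small diagonals, and conclude via \cref{cor:analytification_formal_completions}. The paper's proof is terser and does not explicitly separate out the finite-limit compatibility you flag in point~(ii), but the underlying argument is the same; note that \cref{prop:analytification_on_limits} depends only on material from \S\ref{sec:analytification_geometric_stacks}, so invoking it here creates no circularity.
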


\begin{proof}
	We observe that the analytification functor takes $\Aff_k^{\mathrm{red,afp}}$ to $\Afd_k^{\mathrm{red}}$.
	In particular, the natural transformation $(-)\an \circ (-)_{\mathrm{dR}} \to (-)_{\mathrm{dR}} \circ (-)\an$ is simply a Beck-Chevalley transformation.
	When $X$ is smooth and geometric, we observe that the maps
	\[ X \longrightarrow X_{\mathrm{dR}} \quad , \quad X\an \longrightarrow (X\an)_{\mathrm{dR}} \]
	are effective epimorphism.
	Their \v{C}ech nerves can be identified with the simplicial objects $(X^\bullet)^{\wedge}_X$ and $((X\an)^\bullet)^{\wedge}_{X\an}$ given by the formal completions of $X^n$ and $(X\an)^n$ along the small diagonals.
	The conclusion now follows from \cref{cor:analytification_formal_completions}.
\end{proof}

\begin{prop}
	Let $X$ be a smooth geometric stack locally almost of finite presentation over $k$.
	Then for any $U \in \dAfd_k$ the canonical map
	\[ (-)\an \colon \QCoh( \Spec(A_U) \times X_{\mathrm{dR}} ) \longrightarrow \cO_{U \times X_{\mathrm{dR}}\an} \Mod  \]
	is conservative and $t$-exact.
\end{prop}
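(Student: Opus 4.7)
The plan is to reduce the statement to \cref{prop:analytification_formal_completion_conservative_t_exact} by exploiting the description of $X_{\mathrm{dR}}$ through the Čech nerve of the canonical map $X \to X_{\mathrm{dR}}$. As recalled in the proof of the preceding lemma, when $X$ is smooth and geometric the map $X \to X_{\mathrm{dR}}$ is an effective epimorphism of derived stacks whose Čech nerve is the simplicial diagram $(X^{\bullet+1})^\wedge_X$ of formal completions of $X^{n+1}$ along the small diagonal. The same holds for $X\an \to (X\an)_{\mathrm{dR}}$, and by \cref{cor:analytification_formal_completions} the two nerves are compatible with analytification term by term.

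Using descent for $\QCoh$ and for analytic $\cO$-modules along effective epimorphisms, one would obtain equivalences
\begin{align*}
\QCoh(\Spec(A_U) \times X_{\mathrm{dR}}) & \simeq \lim_{[n] \in \mathbf{\Delta}} \QCoh\big(\Spec(A_U) \times (X^{n+1})^\wedge_X\big), \\
\cO_{U \times X_{\mathrm{dR}}\an} \Mod & \simeq \lim_{[n] \in \mathbf{\Delta}} \cO_{U \times ((X\an)^{n+1})^\wedge_{X\an}} \Mod,
\end{align*}
which intertwine the analytification functor on both sides. For each $[n] \in \mathbf{\Delta}$ the stack $(X^{n+1})^\wedge_X$ is a formal completion of a derived geometric stack locally almost of finite presentation along a closed immersion of one such stack, so \cref{prop:analytification_formal_completion_conservative_t_exact} applies at each simplicial level and provides conservativity and $t$-exactness there. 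Conservativity would then pass to the limit because the restriction functors to the simplicial terms are jointly conservative on both sides and commute with analytification. Left $t$-exactness is automatic, while right $t$-exactness can be tested after pulling back to each simplicial level, using the levelwise description of the $t$-structure on analytic $\cO$-modules afforded by \cref{lem:tstructure_mx} together with the analogous description on the algebraic side.

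The main obstacle I foresee is verifying the descent at the level of $\QCoh$ (and of analytic $\cO$-modules) along the Čech nerve of $X \to X_{\mathrm{dR}}$ and its analytic counterpart. Although $X \to X_{\mathrm{dR}}$ is not a flat cover in the traditional sense, it is an effective epimorphism in the ambient $\infty$-topos, and the face maps of its nerve are natural projections between formal neighbourhoods of diagonals in $X^{n+1}$; the required descent should then follow from the hypercomplete sheaf property of $\QCoh$ and $\cO_{(-)\an}\Mod$ combined with flatness of those projections onto each factor of $X$. Granting this essentially standard descent, the argument reduces cleanly to the formal completion case established earlier, and all that is left is the easy verification that analytification intertwines the two limit diagrams, which is built into \cref{cor:analytification_formal_completions}.
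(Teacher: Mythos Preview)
Your proposal is correct and follows essentially the same route as the paper: both reduce to the \v{C}ech nerve of $X \to X_{\mathrm{dR}}$, identify each simplicial level with a formal completion of a small diagonal, and invoke \cref{prop:analytification_formal_completion_conservative_t_exact} levelwise. Your worry about descent is unnecessary: since $X_{\mathrm{dR}} \simeq |X^\bullet / X_{\mathrm{dR}}|$ is a colimit presentation and both $\QCoh(-)$ and $\cO_{(-)}\Mod$ are, by construction, right Kan extended from affines (resp.\ affinoids), they send this colimit to a limit automatically---no flatness argument is needed. The only refinement in the paper is that, for $t$-exactness, it cites the argument of \cite[Proposition 5.1]{Porta_Derived_Riemann_Hilbert} to equip both limit categories with $t$-structures for which the forgetful functor to level~$0$ is $t$-exact; this makes the reduction to the levelwise statement immediate, where you instead appeal to \cref{lem:tstructure_mx}.
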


\begin{proof}
	Write $X^\bullet / X_{\mathrm{dR}}$ and $(X\an)^\bullet / X_{\mathrm{dR}}\an$ to denote the \v{C}ech nerves of the maps $X \to X_{\mathrm{dR}}$ and $X\an \to X_{\mathrm{dR}}\an$.
	Since $X$ is smooth, we have
	\[ X_{\mathrm{dR}} \simeq | X^\bullet / X_{\mathrm{dR}} | \quad , \quad X_{\mathrm{dR}}\an \simeq | (X\an)^\bullet / X_{\mathrm{dR}}\an | . \]
	In turn, this provides canonical equivalences
	\[ \QCoh( \Spec(A_U) \times X_{\mathrm{dR}} ) \simeq \lim_{\mathbf \Delta} \QCoh( \Spec(A_U) \times ( X^\bullet / X_{\mathrm{dR}} ) ) \quad , \quad \cO_{U \times X_{\mathrm{dR}}\an} \Mod \simeq \lim_{\mathbf \Delta} \cO_{U \times ( (X\an)^\bullet / X_{\mathrm{dR}}\an)} \Mod . \]
	The argument given in \cite[Proposition 5.1]{Porta_Derived_Riemann_Hilbert} shows that we can endow $\QCoh( \Spec(A_U) \times X_{\mathrm{dR}})$ and $\cO_{U \times X_{\mathrm{dR}}\an} \Mod$ with $t$-structures characterized by the fact that the forgetful functors
	\[ \QCoh(\Spec(A_U) \times X_{\mathrm{dR}}) \longrightarrow \QCoh(\Spec(A_U) \times X) \quad , \quad \cO_{U\times X_{\mathrm{dR}}\an} \Mod \longrightarrow \cO_{U \times X} \Mod \]
	are $t$-exact.
	Moreover, the analytification functor is obtained by passing to the limit from the analytification functors
	\[ (-)\an \colon \QCoh( \Spec(A_U) \times (X^\bullet / X_{\mathrm{dR}})) \to \cO_{U \times ( (X\an)^\bullet / X_{\mathrm{dR}}\an )} \Mod . \]
	Since we can identify $X^n / X_{\mathrm{dR}}$ and $(X\an)^n / X_{\mathrm{dR}}\an$ with the formal completion of the small diagonal in $X^n$ and in $(X\an)^n$ respectively, the conclusion now follows from \cref{prop:analytification_formal_completion_conservative_t_exact}.
\end{proof}

\begin{prop} \label{prop:de_Rham_universal_GAGA}
	Let $X$ be a smooth and proper geometric stack locally almost of finite presentation over $k$.
	Then $X_{\mathrm{dR}}$ satisfies the universal GAGA property.
\end{prop}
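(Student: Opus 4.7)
The idea is to reduce the statement to the formal completion case handled in \cref{prop:formal_completions_relative_GAGA}. Since $X$ is smooth, the morphism $X \to X_{\mathrm{dR}}$ is an effective epimorphism whose \v Cech nerve has as its $n$-th level the fibre product $X^n/X_{\mathrm{dR}} := X \times_{X_{\mathrm{dR}}} \cdots \times_{X_{\mathrm{dR}}} X$, which is canonically identified with the formal completion $(X^n)^\wedge_X$ of $X^n$ along the small diagonal $X \hookrightarrow X^n$. Since $X$ is proper, each $X^n$ and each such diagonal is proper, so \cref{prop:formal_completions_relative_GAGA} applies and each $X^n/X_{\mathrm{dR}}$ satisfies the universal GAGA property. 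Conservativity and $t$-exactness of $(-)\an \colon \QCoh(\Spec(A_U) \times X_{\mathrm{dR}}) \to \cO_{U \times X_{\mathrm{dR}}\an}\Mod$ are already established in the preceding proposition, so we only have to treat the statement about perfect complexes.

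By flat descent for perfect complexes along the effective epimorphisms $X \to X_{\mathrm{dR}}$ and $X\an \to X_{\mathrm{dR}}\an$, we have canonical equivalences
\[ \Perf(\Spec(A_V) \times X_{\mathrm{dR}}) \simeq \lim_{[n] \in \mathbf \Delta} \Perf(\Spec(A_V) \times X^n/X_{\mathrm{dR}}) \]
and
\[ \Perf(V \times X_{\mathrm{dR}}\an) \simeq \lim_{[n] \in \mathbf \Delta} \Perf(V \times (X^n/X_{\mathrm{dR}})\an) , \]
compatibly with the analytification functors. In the non-archimedean case, \cref{prop:formal_completions_relative_GAGA} provides a level-wise equivalence, so passing to the totalization yields the desired equivalence $\Perf(\Spec(A_U) \times X_{\mathrm{dR}}) \simeq \Perf(U \times X_{\mathrm{dR}}\an)$.

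In the complex case, I would first deduce from the level-wise full faithfulness (an immediate consequence of the equivalences of ind-objects of \cref{prop:formal_completions_relative_GAGA} together with \cref{prop:relative_analytification_coherent_sheaves_fully_faithful}) the full faithfulness of $\Perf(\Spec(A_V) \times X_{\mathrm{dR}}) \to \Perf(V \times X_{\mathrm{dR}}\an)$, using that $\lim_{\mathbf \Delta}$ preserves fully faithful functors. Then, imitating the end of the proof of \cref{prop:formal_completions_relative_GAGA}, it suffices to show that for every pair of relatively compact nested Stein neighbourhoods $W \Subset V$ of a compact Stein $K$, the restriction
\[ \Perf(V \times X_{\mathrm{dR}}\an) \longrightarrow \Perf(W \times X_{\mathrm{dR}}\an) \]
factors through $\Perf(\Spec(A_W) \times X_{\mathrm{dR}})$. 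Given $\cF$ in the source, its image on each simplicial level $V \times (X^n/X_{\mathrm{dR}})\an$ restricts to a perfect complex on $W \times (X^n/X_{\mathrm{dR}})\an$ that admits an algebraization on $\Spec(A_W) \times X^n/X_{\mathrm{dR}}$ by the level-wise factorization from \cref{prop:formal_completions_relative_GAGA}.

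The main obstacle is therefore to promote these level-wise algebraizations to a coherent object of the totalization $\lim_{[n] \in \mathbf \Delta} \Perf(\Spec(A_W) \times X^n/X_{\mathrm{dR}}) \simeq \Perf(\Spec(A_W) \times X_{\mathrm{dR}})$. For this, the critical input is the level-wise full faithfulness: the structural equivalences (degeneracies, face maps, and all higher coherences) witnessing $\cF$ as an object of the analytic totalization transport uniquely to the algebraic side, because on each simplicial dimension the analytification functor is fully faithful and hence induces an equivalence between the relevant spaces of coherence data. This assembles the pointwise algebraizations into a genuine section of the simplicial diagram, producing the required algebraic lift in $\Perf(\Spec(A_W) \times X_{\mathrm{dR}})$ and completing the proof.
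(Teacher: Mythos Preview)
Your proposal is correct and follows essentially the same route as the paper: reduce via the \v Cech nerve of $X \to X_{\mathrm{dR}}$ to the formal completion case (\cref{prop:formal_completions_relative_GAGA}), treat the non-archimedean case level-wise, and in the \canal case combine level-wise full faithfulness with a uniform restriction-factorization through $\Spec(A_W)$. The only difference is presentational: where the paper dispatches the factorization step with a one-line appeal to \cref{prop:relative_GAGA_complex_discrete}, you spell out explicitly how level-wise full faithfulness lets you transport the simplicial coherence data to the algebraic side and assemble a lift in the totalization --- which is exactly the mechanism implicit in the paper's terse citation.
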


\begin{proof}
	Let us first assume that $k$ is a non-archimedean field.
	Then thanks to the previous proposition, we only have to check that the canonical map
	\[ \Perf( \Spec(A_U) \times X_{\mathrm{dR}} ) \longrightarrow \Perf( U \times X_{\mathrm{dR}}\an ) \]
	is an equivalence for every $U \in \dAfd_k$.
	Let $X^\bullet / X_{\mathrm{dR}}$ and $(X\an)^\bullet / X_{\mathrm{dR}}\an$ be the \v{C}ech nerves of the maps $X \to X_{\mathrm{dR}}$ and $X\an \to X_{\mathrm{dR}}\an$, respectively.
	Then
	\[ \Perf( \Spec(A_U) \times X_{\mathrm{dR}} ) \simeq \lim_{\mathbf \Delta} \Perf( \Spec(A_U) \times ( X^\bullet / X_{\mathrm{dR}} ) ) \]
	and
	\[ \Perf( U \times X_{\mathrm{dR}}\an ) \simeq \lim_{\mathbf \Delta} \Perf( U \times ((X\an)^\bullet / X_{\mathrm{dR}}\an) ) . \]
	The conclusion now follows directly from \cref{prop:formal_completions_relative_GAGA}.
	
	We now turn to the \canal case.
	Fix therefore a compact Stein subset $K \subset U$.
	Since we can identify both $X^n / X_{\mathrm{dR}}$ and $(X\an)^n / X_{\mathrm{dR}}\an$ with the formal completion of the small diagonal in $X^n$ and in $(X\an)^n$, we can use \cref{prop:formal_completions_relative_GAGA} to deduce that for every open Stein neighbourhood $V$ of $K$ in $U$ the map
	\[ \Perf( \Spec(A_V) \times ( X^n / X_{\mathrm{dR}} ) ) \longrightarrow \Perf( V \times ( (X\an)^n / X_{\mathrm{dR}}\an ) ) \]
	is fully faithful.
	Therefore, to prove that the map
	\[ \fcolim_{K \subset V \subset U} \Perf(\Spec(A_V) \times X_{\mathrm{dR}}) \longrightarrow \fcolim_{K \subset V \subset U} \Perf(V \times X_{\mathrm{dR}}\an) \]
	is an equivalence in $\Ind(\Cat_\infty^{\mathrm{st}, \otimes})$ it is enough to check that if $W \Subset V$ is a relatively compact open Stein neighbourhood of $K$ in $V$ then the restriction functor
	\[ \Perf(V \times X_{\mathrm{dR}}\an) \longrightarrow \Perf( W \times X_{\mathrm{dR}}\an ) \]
	factors through $\Perf(\Spec(A_W) \times X_{\mathrm{dR}}\an)$.
	This follows immediately from \cref{prop:relative_GAGA_complex_discrete}.
\end{proof}

\subsubsection{Betti stacks} \label{eg:constantstack}

The canonical functor $\pi \colon \dAff_k^{\mathrm{afp}} \to \{*\}$ induces an adjunction
\[ \pi_s \colon \dSt_k^{\mathrm{afp}} \leftrightarrows \cS \colon \pi^s .  \]
Given a space $K \in \cS$, we set
\[ K_\rB \coloneqq \pi^s(K) . \]
We refer to $K_\rB$ as the \emph{Betti stack associated to $B$}.
In other words, $K_\rB$ is the sheafification of the constant presheaf with values $K$.
We similarly define $K_\rB\an$ as the constant analytic stack associated to $K$.

\begin{lem}
	There is a canonical equivalence $(K_\rB)\an \simeq K_\rB\an$.
\end{lem}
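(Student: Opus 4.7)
The plan is to exhibit both $K \mapsto (K_\rB)\an$ and $K \mapsto K_\rB\an$ as colimit-preserving functors $\cS \to \dAnSt_k$ and then reduce the statement to the case $K = *$, where both sides are canonically the terminal analytic stack. Both morphisms of sites $\pi \colon \dAff_k^{\mathrm{afp}} \to \{*\}$ and $\pi^{\mathrm{an}} \colon \dAfd_k \to \{*\}$ are trivially cocontinuous, since the target has only the trivial covering, so \cite[Lemma 2.19]{Porta_Yu_Higher_analytic_stacks_2014} guarantees that the Betti stack functors $\pi^s$ and $(\pi^{\mathrm{an}})^s$ preserve arbitrary colimits. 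On the other hand, the analytification $(-)^{\mathrm{an,afp}} \colon \dSt_k^{\mathrm{afp}} \to \dAnSt_k$ is by construction the left adjoint of restriction along the continuous morphism of sites $(-)\an \colon \dAff_k^{\mathrm{afp}} \to \dAfd_k$, hence also preserves arbitrary colimits.

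Next I would produce the canonical comparison map naturally in $K$. Since $\pi = \pi^{\mathrm{an}} \circ (-)\an$ as morphisms of sites, passing to left adjoints gives $\pi_s \simeq \pi^{\mathrm{an}}_s \circ (-)^{\mathrm{an,afp}}$, and applying the $(-)^{\mathrm{an,afp}} \dashv ((-)\an)^s$ adjunction to the counit $\pi_s \pi^s(K) \to K$ produces a natural map $(K_\rB)\an \to K_\rB\an$. This is a natural transformation between two colimit-preserving functors $\cS \to \dAnSt_k$, so since $\cS$ is generated under colimits by the point, it suffices to prove that it is an equivalence when $K = *$.

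In that case $*_\rB$ and $*_\rB\an$ are the terminal objects of $\dSt_k^{\mathrm{afp}}$ and $\dAnSt_k$ respectively, represented by $\Spec(k)$ and $\Sp(k)$; since the analytification functor sends $\Spec(k)$ to $\Sp(k)$ by construction, both sides are canonically equivalent to the terminal analytic stack, and the comparison map is the unique such equivalence. There is no substantive obstacle in this argument — it is essentially a formal consequence of the adjoint functor machinery set up earlier — but a little care is needed to correctly identify the canonical comparison map with the one coming from the above adjunctions, so that the reduction to $K = *$ is legitimate.
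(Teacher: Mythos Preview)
Your proof is correct and takes essentially the same approach as the paper: construct a natural comparison map via the adjunctions, note that both $K \mapsto (K_\rB)\an$ and $K \mapsto K_\rB\an$ commute with colimits, and verify the base case $K = *$. One minor slip: to extract $(K_\rB)\an \to K_\rB\an$ from the counit $\pi_s\pi^s(K) \to K$ you should invoke the $\pi^{\mathrm{an}}_s \dashv (\pi^{\mathrm{an}})^s$ adjunction rather than the $(-)^{\mathrm{an,afp}} \dashv ((-)\an)^s$ one (equivalently, the paper uses the identification $\varphi^s(K_\rB\an) \simeq K_\rB$ and the adjunction $(-)\an \dashv \varphi^s$ applied to the identity of $K_\rB$), but the resulting canonical map is the same.
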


\begin{proof}
	Let us denote by $\varphi$ the derived analytification functor
	\[ \varphi \coloneqq (-)\an \colon \dAff_k^{\mathrm{afp}} \longrightarrow \dAn_k . \]
	Then $\varphi^s( K_\rB\an ) \simeq \pi^s( K ) \simeq K_\rB$.
	Therefore,
	\[ \Map_{\dAnSt_k}( (K_\rB)\an , K_\rB\an ) \simeq \Map_{\dSt_k^{\mathrm{afp}}}( K_\rB, \varphi^s( K_\rB\an ) ) . \]
	Therefore the identity of $K_\rB$ corresponds to a canonical morphism $(K_\rB)\an \to K_\rB\an$.
	We now observe that this morphism is an equivalence when $K \simeq *$, and moreover both $(-)\an$ and the formation of $K_\rB\an$ commute with arbitrary colimits.
	The conclusion therefore follows.
\end{proof}

\begin{prop} \label{prop:Betti_universal_GAGA}
	Let $K \in \cS$ be a space.
	Then for any $U \in \dAfd_k$, $K_\rB$ satisfies the universal GAGA property.
\end{prop}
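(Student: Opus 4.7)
The strategy is to reduce the universal GAGA property for $K_\rB$ to the case $K = *$, where it is already established by Lemma \ref{lem:perfect_complexes_affinoid} in the non-archimedean setting and Theorem \ref{thm:perfect_complexes_compact_Stein} in the complex-analytic setting (together with Example \ref{eg:Artin_strong_GAGA} for conservativity and $t$-exactness of analytification on affine schemes).

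The first step is to identify $K_\rB$ with the colimit $\colim_K *$ in $\dSt_k^{\mathrm{afp}}$, where $K$ is regarded as an $\infty$-groupoid and $*$ denotes the terminal stack. This holds because the construction $K \mapsto K_\rB = \pi^s(K)$ factors as the constant-presheaf functor $\cS \to \PSh(\dAff_k^{\mathrm{afp}})$, which preserves all limits and colimits (being precomposition with $\dAff_k^{\mathrm{afp}} \to *$), followed by sheafification, which preserves colimits; and $K \simeq \colim_K *$ in $\cS$. The analogous identification $K_\rB\an \simeq \colim_K *$ holds in $\dAnSt_k$. Since colimits are universal in both $\infty$-topoi, for every $V \in \dAfd_k$ we obtain
\[ \Spec(A_V) \times K_\rB \simeq \colim_K \Spec(A_V) \quad , \quad V \times K_\rB\an \simeq \colim_K V . \]
As $\QCoh$, $\cO_{(-)} \Mod$ and $\Perf$ are $\Cat_\infty^{\mathrm{st},\otimes}$-valued sheaves on the respective sites, they convert these colimits of stacks into limits of categories, i.e.\ into cotensors by $K$; for instance
\[ \Perf(\Spec(A_V) \times K_\rB) \simeq \Fun(K, \Perf(A_V)) \quad , \quad \Perf(V \times K_\rB\an) \simeq \Fun(K, \Perf(V)) , \]
and under these identifications the relative analytification $\varepsilon_{K_\rB, V}^*$ is nothing but post-composition with the absolute analytification $\varepsilon_V^*$.

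With this reduction, conservativity and $t$-exactness of $\varepsilon_{K_\rB, V}^*$ follow directly from the corresponding properties of $\varepsilon_V^* \colon \QCoh(\Spec(A_V)) \to \cO_V \Mod$ (a special case of Example \ref{eg:Artin_strong_GAGA}) combined with the pointwise nature of the $t$-structure on $\Fun(K, -)$. In the non-archimedean case, Lemma \ref{lem:perfect_complexes_affinoid} gives an equivalence $\Perf(A_V) \simeq \Perf(V)$ in $\Cat_\infty^{\mathrm{st},\otimes}$, and applying the endofunctor $\Fun(K, -)$ of this $\infty$-category immediately produces the required equivalence $\Perf(\Spec(A_V) \times K_\rB) \simeq \Perf(V \times K_\rB\an)$.

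In the complex-analytic case we appeal instead to Theorem \ref{thm:perfect_complexes_compact_Stein}, which yields an equivalence
\[ \fcolim_{L \subset V \subset U} \Perf(A_V) \simeq \fcolim_{L \subset V \subset U} \Perf(V) \]
in $\Ind(\Cat_\infty^{\mathrm{st},\otimes})$ for every compact Stein subset $L \subset U$. Any functor between $\infty$-categories extends canonically to a functor between their $\Ind$-categories by termwise application, and this extension preserves equivalences; applying $\Fun(K, -)$ to the displayed equivalence therefore delivers the pro-GAGA equivalence relative to $(L)_U$ required by Definition \ref{defin:strong_GAGA}(3). The main point that warrants care is precisely this final manipulation of ind-objects under $\Fun(K, -)$, but it is purely formal once one observes that $\Fun(K, -)$ is a well-defined endofunctor of $\Cat_\infty^{\mathrm{st}, \otimes}$.
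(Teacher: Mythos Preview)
Your proof is correct and follows essentially the same approach as the paper: both identify $\Perf(K_\rB \times \Spec(A_V))$ and $\Perf(K_\rB\an \times V)$ with $\Fun(K,\Perf(A_V))$ and $\Fun(K,\Perf(V))$ respectively, reducing everything to the case $K=*$. Two minor remarks: the paper cites \cref{prop:basic_properties_global_sections} rather than \cref{eg:Artin_strong_GAGA} for conservativity and $t$-exactness of $\varepsilon_V^*$ (a more direct reference, since $\Spec(A_V)$ need not be almost of finite presentation); and in the \canal case the paper re-assembles the ind-equivalence by hand from full faithfulness (\cref{prop:relative_analytification_coherent_sheaves_fully_faithful} applied with $X=\Spec(\C)$) plus the factoring of restriction through $\Perf(A_W)$ (\cref{lem:global_section_relatively_compact}), whereas you apply $\Ind(\Fun(K,-))$ directly to the equivalence of \cref{thm:perfect_complexes_compact_Stein}. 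Your route is slightly more streamlined and is in fact the argument the paper itself uses later in \cref{prop:analyticfunctorcat}.
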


\begin{proof}
	We first observe that
	\begin{equation} \label{eq:Betti_t_exact_conservative}
		\QCoh( K_\rB \times \Spec(A_U) ) \simeq \Fun(K, A_U \Mod) \quad , \quad \cO_{K_\rB\an \times U} \Mod \simeq \Fun(K, \cO_U \Mod) .
	\end{equation}
	Moreover, the analytification functor is simply obtained by composition with the analytification functor
	\[ \varepsilon^*_U \colon A_U \Mod \longrightarrow \cO_U \Mod . \]
	As \cref{prop:basic_properties_global_sections} guarantees that $\varepsilon_U^*$ is $t$-exact and conservative, we deduce that the same goes for the functor \eqref{eq:Betti_t_exact_conservative}.
	Next, in the non-archimedean case the equivalence $\Perf(A_U) \simeq \Perf(U)$ immediately implies that the analytification
	\[ \Perf( K_\rB \times \Spec(A_U) ) \longrightarrow \Perf( K_\rB\an \times U ) \]
	is an equivalence.
	In the \canal case, fix a compact Stein subset $K \subset U$.
	Then \cref{prop:relative_analytification_coherent_sheaves_fully_faithful} implies that for each open Stein neighbourhood $V$ of $K$ inside $U$ the canonical map
	\[ \Perf(K_\rB \times \Spec(A_V)) \longrightarrow \Perf( K_\rB\an \times V ) \]
	is fully faithful, while \cref{lem:global_section_relatively_compact} implies that if $W \Subset V$ is a relatively compact open Stein neighbourhood of $K$ inside $V$ then the restriction
	\[ \Perf( K_\rB\an \times V ) \longrightarrow \Perf( K_\rB\an \times W ) \]
	factors through $\Perf( K_\rB \times \Spec(A_W) )$.
	This implies that the canonical map
	\[ \fcolim_{K \subset V \subset U} \Perf( K_\rB \times \Spec(A_V) ) \longrightarrow \fcolim_{K \subset V \subset U} \Perf( K_\rB\an \times V ) \]
	is an equivalence.
\end{proof}

\subsubsection{Dolbeault stacks}

The Dolbeault stack of a derived formal stack $X$ appears in Simpson's nonabelian Hodge theory in dealing with Higgs bundles.
It is defined as follows: let
\[ \rT X \coloneqq \Spec_X( \Sym_{\cO_X}( \mathbb L_X ) ) \]
be the derived tangent bundle to $X$.
Let $\widehat{\rT X}$ be the formal completion of $\rT X$ along the zero section.
Using \cite[4.2.2.9]{Lurie_Higher_algebra} we can convert the natural commutative group structure of $\rT X$ relative to $X$ (seen as an associative one) into a simplicial diagram $\rT^\bullet X \colon \mathbf \Delta\op \to (\dSt_k)_{/X}$.
Unwinding the definitions, we see that $\rT^\bullet X$ can be identified with the $n$-fold product $\rT X \times_X \cdots \times_X \rT X$.
The zero section $X \to \rT X$ allows to promote $\rT^\bullet X$ to a simplicial diagram
\[ \rT^\bullet X \colon \mathbf \Delta\op \longrightarrow (\dSt_k)_{X // X} . \]
Formal completion along the natural maps $X \to \rT^n X$ provides us with a new simplicial object
\[ \widehat{\rT^\bullet X} \colon \mathbf \Delta\op \longrightarrow (\dSt_k)_{/X} . \]

\begin{defin}
	The \emph{Dolbeault stack of $X$} is the geometric realization
	\[ X_{\mathrm{Dol}} \coloneqq \left| \widehat{\rT^\bullet X} \right| \in (\dSt_k)_{/X} . \]
\end{defin}

The Dolbeault stack can be defined directly at the analytical level by the exact same procedure.
We have:

\begin{lem} \label{lem:analytification_Dolbeault_shape}
	Let $X$ be a derived geometric $k$-stack.
	Then there is a natural equivalence $(X_{\mathrm{Dol}})\an \simeq (X\an)_{\mathrm{Dol}}$.
\end{lem}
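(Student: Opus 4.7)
The plan is to analyze the simplicial diagram $\widehat{\rT^\bullet X}$ piece by piece, showing that the analytification functor commutes with each construction entering its definition, and then to combine these identifications into an equivalence of simplicial objects whose geometric realizations yield the claim.

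First, since $(-)\an \colon \dSt_k \to \dAnSt_k$ is a composition of left adjoints (the restriction $j^s$ followed by left Kan extension along $(-)\an \colon \dAff_k^{\mathrm{afp}} \to \dAfd_k$), it preserves all small colimits, and in particular the geometric realization defining the Dolbeault stack. Thus it suffices to produce a natural equivalence of simplicial objects $(\widehat{\rT^\bullet X})\an \simeq \widehat{\rT^\bullet(X\an)}$ in $(\dAnSt_k)_{/X\an}$.

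Next I would verify the required compatibilities. The functor $(-)\an$ preserves finite limits of derived geometric stacks locally almost of finite presentation (this is \cref{prop:analytification_on_limits} from the introduction), so in particular $(\rT^n X)\an \simeq (\rT X)\an \times_{X\an} \cdots \times_{X\an} (\rT X)\an$. It remains to identify $(\rT X)\an$ with the analytic tangent bundle $\rT(X\an) = \Spec_{X\an}(\Sym_{\cO_{X\an}} \mathbb L_{X\an}\an)$. This follows from the compatibility between the algebraic and analytic cotangent complexes established in \cite{Porta_Yu_Representability}, combined with the fact that the analytification of a relative spectrum $\Spec_X(\Sym_{\cO_X} \cF)$ for $\cF$ almost perfect agrees with the analytic relative spectrum of the analytified symmetric algebra. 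Finally, to pass from $\rT^n X$ to its formal completion $\widehat{\rT^n X}$ along the zero section $X \hookrightarrow \rT^n X$, I would invoke \cref{cor:analytification_formal_completions}, which asserts that analytification commutes with formal completions of derived geometric stacks locally almost of finite presentation.

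Assembling these identifications, the equivalences at each level $[n] \in \mathbf{\Delta}\op$ are induced by canonical maps (base change morphisms for the fiber products and units of the analytification adjunction for the formal completions), so they upgrade to an equivalence of simplicial diagrams; taking colimits then yields $(X_{\mathrm{Dol}})\an \simeq (X\an)_{\mathrm{Dol}}$. The main obstacle will be checking that these level-wise equivalences assemble coherently into a natural transformation of simplicial diagrams rather than being merely defined pointwise; once all the building blocks (finite limits, tangent bundles, formal completions) are shown to commute with analytification through canonical natural transformations, functoriality in the simplicial direction should make this assembly essentially formal.
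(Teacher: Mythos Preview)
Your proposal is correct and follows essentially the same approach as the paper: the paper's proof invokes the analytic cotangent complex comparison $(\mathbb L_X)\an \simeq \mathbb L_{X\an}\an$ from \cite[Theorem 5.21]{Porta_Yu_Representability}, then appeals to \cref{cor:analytification_formal_completions} and to the fact that $(-)\an$ commutes with finite limits and arbitrary colimits. Your version simply unpacks these steps in more detail (and correctly flags the coherence issue for the simplicial assembly, which the paper leaves implicit); note that \cref{prop:analytification_on_limits} appears in \cref{sec:applications} rather than the introduction, but there is no circularity since its proof does not depend on the Dolbeault material.
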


\begin{proof}
	Using \cite[Theorem 5.21]{Porta_Yu_Representability} we see that $(\mathbb L_X)\an \simeq \anL_{X\an}$.
	From here, the conclusion follows directly from \cref{cor:analytification_formal_completions} and from the fact that the analytification functor $(-)\an \colon \dSt_k^{\mathrm{afp}} \to \dAnSt_k$ commutes with finite limits and arbitrary colimits.
\end{proof}

\begin{prop} \label{prop:Dolbeault_universal_GAGA}
	Let $X$ be a proper derived geometric $k$-stack.
	For any $U \in \dAfd_k$, the Dolbeault stack $X_{\mathrm{Dol}}$ satisfies the universal GAGA property.
\end{prop}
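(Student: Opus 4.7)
The plan is to mirror closely the proof of \cref{prop:de_Rham_universal_GAGA}, replacing the \v{C}ech nerve of $X \to X_{\mathrm{dR}}$ by the simplicial presentation $X_{\mathrm{Dol}} \simeq |\widehat{\rT^\bullet X}|$. Combined with \cref{lem:analytification_Dolbeault_shape}, which gives $(X_{\mathrm{Dol}})\an \simeq (X\an)_{\mathrm{Dol}} \simeq |\widehat{\rT^\bullet X\an}|$, this reduces everything to a simplicial-level analysis. The crucial observation is that each $\widehat{\rT^n X}$ is the formal completion of $\rT^n X$, which is locally almost of finite presentation, along the zero section $X \hookrightarrow \rT^n X$. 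Since $X$ is proper by hypothesis, \cref{prop:formal_completions_relative_GAGA} applies and provides that each $\widehat{\rT^n X}$ satisfies the universal GAGA property, while \cref{prop:analytification_formal_completion_conservative_t_exact} applies and provides that the corresponding analytification functor on $\QCoh$ is conservative and $t$-exact.

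For the conservativity and $t$-exactness of
\[ (-)\an \colon \QCoh(\Spec(A_U) \times X_{\mathrm{Dol}}) \longrightarrow \cO_{U \times X_{\mathrm{Dol}}\an} \Mod, \]
I would use the equivalences
\[ \QCoh(\Spec(A_U) \times X_{\mathrm{Dol}}) \simeq \lim_{\mathbf{\Delta}} \QCoh(\Spec(A_U) \times \widehat{\rT^n X}) , \]
\[ \cO_{U \times X_{\mathrm{Dol}}\an} \Mod \simeq \lim_{\mathbf{\Delta}} \cO_{U \times (\widehat{\rT^n X})\an} \Mod , \]
and endow both sides with $t$-structures for which the forgetful functor to the $[0]$-level is $t$-exact, exactly as in \cite[Proposition 5.1]{Porta_Derived_Riemann_Hilbert} and its use in \cref{prop:de_Rham_universal_GAGA}. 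The analytification is then compatible with the simplicial structure, and conservativity and $t$-exactness propagate from each level to the limit.

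For the equivalence on perfect complexes in the non-archimedean case, I would take the simplicial limit of the level-wise equivalences $\Perf(\Spec(A_U) \times \widehat{\rT^n X}) \simeq \Perf(U \times (\widehat{\rT^n X})\an)$ furnished by \cref{prop:formal_completions_relative_GAGA}. In the \canal case, fix a compact Stein subset $K \subset U$. For every open Stein neighbourhood $V$ of $K$, \cref{prop:formal_completions_relative_GAGA} yields full faithfulness of $\Perf(\Spec(A_V) \times \widehat{\rT^n X}) \to \Perf(V \times (\widehat{\rT^n X})\an)$ at each simplicial level, hence full faithfulness after taking the limit over $\mathbf{\Delta}$. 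For any relatively compact Stein $W \Subset V$ containing $K$, the level-wise factorization of $\Perf(V \times (\widehat{\rT^n X})\an) \to \Perf(W \times (\widehat{\rT^n X})\an)$ through $\Perf(\Spec(A_W) \times \widehat{\rT^n X})$—established inside the proof of \cref{prop:formal_completions_relative_GAGA} via \cref{prop:relative_GAGA_complex_discrete}—assembles over $\mathbf{\Delta}$ into a factorization of $\Perf(V \times X_{\mathrm{Dol}}\an) \to \Perf(W \times X_{\mathrm{Dol}}\an)$ through $\Perf(\Spec(A_W) \times X_{\mathrm{Dol}})$, giving the required equivalence
\[ \fcolim_{K \subset V \subset U} \Perf(\Spec(A_V) \times X_{\mathrm{Dol}}) \longrightarrow \fcolim_{K \subset V \subset U} \Perf(V \times X_{\mathrm{Dol}}\an) \]
in $\Ind(\Cat_\infty^{\mathrm{st}, \otimes})$. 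The main delicacy lies in verifying that the factorizations at different simplicial levels are compatible so as to yield a genuine factorization after taking the limit; this is essentially formal once the level-wise statements are in hand, and proceeds exactly as in the concluding paragraph of \cref{prop:de_Rham_universal_GAGA}.
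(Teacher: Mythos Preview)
Your proposal is correct and follows essentially the same route as the paper's proof: both use the simplicial presentation $X_{\mathrm{Dol}} \simeq |\widehat{\rT^\bullet X}|$ together with \cref{lem:analytification_Dolbeault_shape}, reduce each simplicial level to \cref{prop:formal_completions_relative_GAGA} and \cref{prop:analytification_formal_completion_conservative_t_exact}, and handle the \canal case via level-wise full faithfulness plus the factorization from \cref{prop:relative_GAGA_complex_discrete}. The only cosmetic difference is that for conservativity and $t$-exactness the paper explicitly invokes the flatness of the face maps in $\widehat{\rT^\bullet X}$ (which is what makes the $t$-structure on the totalization behave well), whereas you cite the analogous de Rham argument; you should make this flatness explicit, since it is precisely what underlies the applicability of \cite[Proposition 5.1]{Porta_Derived_Riemann_Hilbert} here.
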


\begin{proof}
	Since the face maps in the simplicial diagram $\widehat{\rT^\bullet X}$ are flat, we deduce directly from \cref{prop:analytification_formal_completion_conservative_t_exact} that the canonical map
	\[ \QCoh( X_{\mathrm{Dol}} \times \Spec(A_U) ) \longrightarrow \cO_{X_{\mathrm{Dol}}\an \times U} \Mod \]
	is conservative and $t$-exact.
	
	In the non-archimedean case, \cref{prop:formal_completions_relative_GAGA} implies that the analytification functor induces an equivalence
	\[ \Perf( \widehat{\rT^n X} \times \Spec(A_U) ) \simeq \Perf( \widehat{\rT^n X\an} \times U)  \]
	for every $U \in \dAfd_k$ and every $n \ge 0$.
	Therefore we deduce that the canonical map
	\[ \Perf( X_{\mathrm{Dol}} \times \Spec(A_U) ) \longrightarrow \Perf( X_{\mathrm{Dol}}\an \times U ) \]
	is an equivalence as well.
	In the \canal case, we deduce from \cref{prop:formal_completions_relative_GAGA} that each map
	\[ \Perf( \widehat{\rT^n X} \times \Spec(A_U) ) \longrightarrow \Perf( \widehat{ \rT^n X\an } \times U ) \]
	is fully faithful, and therefore that for every $U \in \dSt_\C$ the functor
	\[ \Perf( X_{\mathrm{Dol}} \times \Spec(A_U) ) \longrightarrow \Perf( X_{\mathrm{Dol}}\an \times U ) \]
	is fully faithful.
	Let now $K \subset U$ be a compact Stein subset.
	It is enough to prove that if $W \Subset V$ are two open Stein neighbourhoods of $K$ inside $U$, with $W$ relatively compact inside $V$, then the canonical map
	\[ \Perf( X_{\mathrm{Dol}}\an \times V ) \longrightarrow \Perf( X_{\mathrm{Dol}}\an \times W ) \]
	factors through $\Perf( X_{\mathrm{Dol}} \times \Spec(A_W) )$.
	This follows once again from \cref{prop:relative_GAGA_complex_discrete}.
\end{proof}

\subsubsection{Classifying stack of a complex reductive group} \label{eg:reductive_strong_GAGA}

In the previous sections we discussed several examples of derived stacks satisfying the universal GAGA property.
All the examples we considered so far are consequences of the analysis carried out in order to prove \cref{thm:relative_GAGA}.
We now consider a different kind of example.

Let $G$ be a connected reductive group over $\mathbb C$.
Then $\rB G$ is a smooth geometric stack, but it is not proper in the sense of \cite[Definition 4.8]{Porta_Yu_Higher_analytic_stacks_2014}.
We nevertheless can prove the following result:

\begin{prop}
	If $G$ is a connected reductive group over $\mathbb C$, then $\rB G$ satisfies the GAGA property.
\end{prop}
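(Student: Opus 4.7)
I would verify the two clauses of the GAGA property separately, by smooth descent along the atlas $p \colon \Spec(\mathbb C) \to \rB G$, whose derived analytification $p\an \colon \Sp(\mathbb C) \to (\rB G)\an$ is again a smooth surjection. The \v{C}ech nerves of $p$ and $p\an$ are respectively $G^\bullet$ and $(G\an)^\bullet$, and both $\QCoh$ and $\cO\Mod$ satisfy smooth descent, so that
\[
\QCoh(\rB G) \simeq \lim_{[n] \in \mathbf{\Delta}} \QCoh(G^n), \qquad \cO_{(\rB G)\an}\Mod \simeq \lim_{[n] \in \mathbf{\Delta}} \cO_{(G^n)\an}\Mod,
\]
and similarly for $\Perf$ via \cref{prop:analytification_perfect_complexes_absolute} and the fact that $\AnPerf$ is a stack.

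For conservativity and $t$-exactness of $\varepsilon_{\rB G}^*$, the above limit description reduces the claim to the corresponding property of $\varepsilon_{G^n}^* \colon \QCoh(G^n) \to \cO_{(G^n)\an}\Mod$ at each simplicial level. Each $G^n$ is a smooth affine scheme of finite type over $\mathbb C$, so the flatness of $((G^n)\an)\alg \to G^n$ recalled in \cref{eg:Artin_strong_GAGA} gives conservativity and $t$-exactness on the affine pieces, and these properties survive the limits since the transition functors are $t$-exact and commute with analytification.

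For the equivalence on perfect complexes, the same descent identifies $\Perf(\rB G)$ with the category of $G$-equivariant perfect complexes on $\Spec(\mathbb C)$, hence with the bounded derived category $\mathrm{Rep}^{\mathrm{fd}}(G)^{\mathrm{b}}$ of finite-dimensional algebraic $G$-representations. Analogously, $\Perf((\rB G)\an)$ is identified with $G\an$-equivariant perfect complexes on $\Sp(\mathbb C)$, i.e.\ with $\mathrm{Rep}^{\mathrm{fd},\mathrm{hol}}(G\an)^{\mathrm{b}}$. Under these identifications $\varepsilon_{\rB G}^*$ is simply the functor sending an algebraic representation to its underlying holomorphic one.

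The main obstacle is then reduced to the classical theorem of Chevalley and Hochschild-Mostow: for a connected complex reductive algebraic group $G$, the underlying-holomorphic functor
\[
\mathrm{Rep}^{\mathrm{fd}}(G) \longrightarrow \mathrm{Rep}^{\mathrm{fd},\mathrm{hol}}(G\an)
\]
is an equivalence of symmetric monoidal abelian categories. The proof invokes Weyl's unitary trick together with the Zariski density of a maximal compact subgroup $K \subset G\an$: both categories identify, via restriction, with the category of finite-dimensional continuous $K$-representations, which is semisimple by the Peter-Weyl theorem. Since both sides are semisimple, the equivalence on the hearts immediately upgrades to an equivalence of bounded derived categories, concluding the argument.
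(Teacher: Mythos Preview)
Your overall strategy and the treatment of conservativity/$t$-exactness are fine, and the identification of the hearts with finite-dimensional algebraic and holomorphic $G$-representations via descent is correct. The problem is the final step: you assert that $\Perf((\rB G)\an)$ is identified with the bounded derived category $\mathrm{Rep}^{\mathrm{fd},\mathrm{hol}}(G\an)^{\mathrm b}$ of its heart, and then invoke semisimplicity of both hearts. But semisimplicity of the abelian heart does \emph{not} imply that the stable $\infty$-category $\Perf((\rB G)\an) \simeq \lim_{\mathbf \Delta} \Perf((G\an)^n)$ is the derived category of that heart. What is missing is exactly the statement that for $M, N \in \Cohh(\rB G\an)$ one has $\pi_i \Map_{\cO_{\rB G\an}\Mod}(M, N) = 0$ for $i \ne 0$: the mapping spaces here are totalizations of cosimplicial spectra built from $\cO((G\an)^n)$-modules, and there is no a priori reason these totalizations should be discrete.

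This is precisely the step the paper works hardest to establish. After reducing (as you do) to comparing Ext groups, the paper identifies the cosimplicial object computing $\Map_{\cO_{\rB G\an}\Mod}(M\an, N\an)$ with $\Hom_{\cH(G\an)}(M\an, N\an \cotimes \cO(G\an)^{\cotimes \bullet + 1})$ via completed tensor products of nuclear Fr\'echet spaces, and then invokes the Hochschild--Mostow result that $N\an \cotimes \cO(G\an)$ is holomorphically injective (this uses the existence of an ample maximal compact subgroup) to conclude that the bar complex has no higher cohomology. In other words, the vanishing of higher holomorphic group cohomology with finite-dimensional coefficients is the substance of the proposition, and your unitary-trick argument, while it does give semisimplicity of the abelian category $\mathrm{Rep}^{\mathrm{fd},\mathrm{hol}}(G\an)$, does not by itself control the derived mapping spaces in the descent limit. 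You need to supply this analytic input.
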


\begin{proof}
	Let us start by remarking that since the analytification functor commutes with colimits, we have a canonical equivalence $(\rB G)\an \simeq \rB( G\an )$.
	We will therefore use the notation $\rB G\an$, since no confusion can arise.
	Next, we observe that the argument given in \cref{eg:Artin_strong_GAGA} shows that the canonical map
	\[ \QCoh(\rB G) \longrightarrow \cO_{\rB G\an} \Mod \]
	is $t$-exact and conservative.
	All we are left to check is therefore that the canonical functor
	\[ \Perf(\rB G) \longrightarrow \Perf(\rB G\an) \]
	is an equivalence.
	We will prove more generally that the morphism
	\[ \Coh( \rB G ) \longrightarrow \Coh( \rB G\an ) \]
	is an equivalence of stable $\infty$-categories.
	Since both sides are equipped with complete $t$-structures and the functor between them is $t$-exact, we reduce ourselves to the following two statements:
	\begin{enumerate}
		\item The analytification functor
		\[ \Cohh(\rB G) \longrightarrow \Coh( \rB G\an ) \]
		is fully faithful.
		\item The functor
		\[ \Cohh(\rB G) \longrightarrow \Cohh(\rB G\an) \]
		is essentially surjective.
	\end{enumerate}
	Notice that the second statement is entirely classical and it is, in fact, well known.
	In the case where $G$ is semi-simple, it is proven for instance in \cite[Corollary 15.8.7]{Taylor_Several_complex}.
	For tori it is well known.
	Finally, a general reductive group $G$ admits a finite cover by a product of a torus and a semi-simple group, and from here it is straightforward to obtain the statement for $G$.
	
	It is therefore enough to prove the first statement.
	Since we work over $\mathbb C$, \cite{Hall_Rydh_Compact_Generation} implies that $\QCoh(\rB G)$ is compactly generated and in particular that
	\[ \QCoh(\rB G) \simeq \mathrm D( \QCoh^\heartsuit(\rB G) ) . \]
	Since $G$ is reductive, for $M, N \in \QCoh^\heartsuit(\rB G)$, the mapping space $\Map_{\QCoh(\rB G)}(M, N)$ is discrete and coincides with the hom set $\Hom_G(M,N)$.
	We let $M\an$ and $N\an$ denote the analytic representations of $G\an$ associated to $M$ and $N$.
	Since $M$ and $N$ are coherent, we have an equivalence
	\[ \Hom_G(M, N) \simeq \Hom_{G\an}(M\an, N\an) \simeq \pi_0 \Map_{\QCoh(\rB G\an)}(M\an, N\an) , \]
	which readily follows from \cite[Corollary 15.8.7]{Taylor_Several_complex}.
	In order to complete the proof, we have to check that $\pi_i \Map_{\QCoh( \rB G\an)}( M\an, N\an) \simeq 0$ for $i \ne 0$.
	We denote by $\cH(G\an)$ the category of holomorphic representations of $G\an$ on topological vector spaces with continuous $G\an$-invariant maps between them.
	We now invoke the results of \cite{Hochschild_Mostow_Holomorphic}.
	Since $G$ is reductive, it has a maximal compact subgroup, which is ample in the sense of loc.\ cit.
	Moreover, any finite dimensional representation of $G\an$ is complete and locally convex.
	Therefore, Proposition 2.3 in loc.\ cit.\ implies that $N\an$ is holomorphically injective, and therefore that
	\[ \Hom_{G\an}(M\an, N\an) \simeq \Map_{\mathrm D(\cH(G\an))}( M\an, N\an ) . \]
	
	We are now reduced to prove that there is an equivalence
	\[ \Map_{\mathrm D(\cH(G\an))}( M\an, N\an) \simeq \Map_{\cO_{\rB G\an \Mod}}(M\an, N\an) . \]
	We will use the completed tensor product $\hat \otimes$ of locally convex spaces. 
	As we are working with global sections over Stein spaces the locally convex spaces are nuclear, see \cite[Proposition IX.5.18]{Demailly_Complex_analytic}, and there is no distinction between projective and injective tensor product.
	In particular $\hat \otimes$ preserves subspaces, see \cite[Proposition IX.5.6]{Demailly_Complex_analytic}.
	
	Let $(G\an)^\bullet$ be the \v{C}ech nerve of the map $p \colon \Sp(\C) \to \rB G\an$.
	We can compute the Exts in $\cO_{\rB G\an} \Mod$ by means of the equivalence
	\[ \cO_{\rB G\an} \Mod \simeq \lim_{[n] \in \mathbf \Delta} \cO_{(G\an)^{\times n}} \Mod . \]
	Write $q_n \colon (G\an)^{\times n} \to \Sp(\C)$ and $p_n \colon (G\an)^{\times n} \to \rB G$ for the standard projection maps.
	We have canonical identifications $p_n \simeq p \circ q_n$.
	The cosimplicial object computing $\Map_{\rB G\an}(M\an, N\an)$ has in degree $n$
	\[ \Map_{\Cohh((G\an)^{\times n})}(p_n^* M\an, p_n^* N\an) \simeq \Map_{\C}( p^* M\an, q_{n*} p_n^* N\an ) . \]
	As $(G\an)^{\times n}$ is Stein and $p^* M\an$ and $p^* N\an$ are globally finitely generated in the sense of \cite[Lemma 8.11]{Porta_Yu_Higher_analytic_stacks_2014}, we have an equivalence
	\[ \Hom_{\Coh((G\an)^{\times n})}( p_n^* M\an, p_n^* N\an) \simeq \Hom_{\cO( (G\an)^{\times n})}^{\mathrm{cts}}( q_{n*} p_n^* M\an, q_{n*} p_n^* N\an ) . \]
	Here the superscript $\mathrm{cts}$ denotes the subset of continuous maps for the unique complete topology on the global sections of a coherent sheaf over a Stein space.
	We further have the following equivalence:
	\[ \Hom_{\cO( (G\an)^{\times n})}^{\mathrm{cts}}( q_{n*} p_n^* M\an, q_{n*} p_n^* N\an ) \simeq \Hom_\C^{\mathrm{cts}}( p^* M\an, p^* N\an \cotimes \cO((G\an)^{\times n}) ) . \]
	We claim that there is a further isomorphism
	\[ \Hom_\C^{\mathrm{cts}}( p^* M\an, p^* N\an \cotimes \cO((G\an)^{\times n}) ) \simeq \Hom_{\cH(G\an)}(M\an , N\an \cotimes \mathcal O((G\an)^{\times n+1})) . \]
	This would follow if we could show that the forgetful functor from $\mathcal H(G\an)$ to topological vector spaces has a right adjoint given by $-\hat \otimes \mathcal O(G\an)$, equivalently that pushforward from the point to $\rB G\an$ is given by $-\hat \otimes \mathcal O(G\an)$. 
	Unfortunately this situation does not seem to be treated in the literature and leads beyond the scope of this article. 
	We will prove a weaker statement that is sufficient for our purposes, using the fact that $M$ is finite dimensional.
	
	Firstly, observe that $\Hom_{\mathcal H(G\an)}(M, \mathcal O(G\an)) \cong \Hom(M,\C)$. 
	There is clearly an embedding of the right hand side into the left hand side as $\mathcal O(G\an)$ contains the regular functions on $G$. 
	But this embedding is surjective as any $G$-equivariant map from $M$ to $\mathcal O(G\an)$ is determined by the image of a basis of $M$ evaluated at the identity, i.e.\ the space of such maps has dimension $\dim M$.
	
	Now we consider $\Hom_{\mathcal H(G\an)}(M, V \hat \otimes \mathcal O(G\an))$ where we write $V$ for the trivial $G$-representation $N \hat \otimes \mathcal O(G\an)^{\times n}$. 
	Any function from $M$ factors through some $V_{i} \hat\otimes \mathcal O(G\an)$, where $V_{i}$ is a finite dimensional subspace of $V$. 
	Thus we need to compute $\Hom_{\mathcal H(G\an)}(M, \colim V_{i} \hat \otimes \mathcal O(G\an))$ where we take the colimit over all finite-dimensional subspaces.
	Now Hom out of $M$ commutes with filtered colimits over admissible maps and we are reduced to showing that $\Hom_{\mathcal H(G\an)}(M, V' \otimes \mathcal O(G\an)) \cong \Hom(M, V')$ for $V'$ finite-dimensional, which readily follows from the case $V'$ that is 1-dimensional. 
	Then $\Hom_{\mathcal H(G\an)}(M, \colim V_{i} \otimes \mathcal O(G\an)) \cong \Hom^{\mathrm{cts}}(M, \colim V_{i}) \cong \Hom^{cts}(M, V)$.
	This proves the claim.
	
	Now we have a complex of holomorphic representations of $G\an$ which computes cohomology in $\mathcal H(G\an)$. 
	The complex $(\mathcal O(G\an)^{\otimes \bullet+1} \otimes N)$ is quasi-isomorphic to $N$ by the usual bar complex arguments, and by \cite[Proposition 2.1]{Hochschild_Mostow_Holomorphic} it is levelwise injective, thus injective as it is bounded below. 
	We note that the $G\an$ action on $\mathcal O(G\an)^{\otimes n+1}$ is on the last factor since $(q_{n})_{*}p_{n}^{*}\mathcal N \cong \mathcal O(G\an)^{\otimes n} \otimes \mathcal N)$ has the trivial $G\an$-action as $p^{*}\mathcal N$ forgets the $G\an$-action.
	This gives $\Hom_{\mathcal H(G\an)}(M, N\hat \otimes \mathcal O(G\an)^{\times \bullet + 1}) \simeq \Hom_{\mathcal H(G\an)}(M, N)$
	
	This shows that the \v Cech complex computation recovers the (trivial) holomorphic group cohomology of $G\an$.
\end{proof}

\begin{rem}
	In the above example we used in an essential way that $G$ is a reductive group over the field of complex numbers.
	We do not know what happens if $G$ is a reductive group over a non-archimedean field, but it would be interesting to know if the same property holds.
\end{rem}

\section{Tannaka duality} \label{sec:Tannaka_duality}

In this section we prove the main theorem of this paper.
Our goal is to find sufficient conditions on a geometric derived stack $Y$ and a (not necessarily geometric) derived stack $X$ so that the canonical morphism
\[ \bfMap_{\dSt_k}(X,Y)\an \longrightarrow \bfMap_{\dAnSt_k}(X\an, Y\an) \]
is an equivalence.
When both $X$ and $Y$ are proper underived schemes, Serre's GAGA theorem and a simple graph argument imply that
\[ \Hom_{\mathrm{Sch}_k}(X,Y) \simeq \Hom_{\An_k}(X\an, Y\an) . \]
The relative version of the GAGA theorem implies that the same holds true for the hom schemes.
Unfortunately, this argument breaks down when $Y$ is no longer proper, or when $Y$ is taken to be a stack (in both cases, it is the graph argument which fails).
The idea to fix this problem was first introduced by Lurie \cite{Lurie_Tannaka_duality}. He allows $X$ to be a proper \DM stack over $\C$ and $Y$ to be a geometric stack satisfying several conditions making tannakian reconstruction for $Y$ possible.
Lurie contents himself with proving that under these assumptions the canonical map
\[ \Map_{\St_k}(X,Y) \longrightarrow \Map_{\mathrm{AnSt}_k}(X\an, Y\an) \]
is an equivalence.
Our goal is to generalize this result in several directions: Firstly, we aim to prove a relative version of the result, i.e.\ we consider mapping stacks rather than just mapping spaces. Secondly, we want to allow $X$ to be a more general object than a \DM stack.
For instance, in \cref{sec:applications} we will be interested in the situation where $X$ is $S_{\mathrm{dR}}$ for $S$ a smooth and proper $k$-scheme.
Finally, we want to allow $X$, $Y$ and the mapping stacks to be derived.

The general strategy for the proof of our main theorem is the same as the one employed in \cite{Lurie_Tannaka_duality}.
However, the technical tools used in loc.\ cit.\ needed to be generalized and sharpened in order to apply to the situations we are concerned with.
These improved tools also form the other main theorems of this paper.
Notably, we are referring to Theorems \ref{thm:generalized_adjunction}, \ref{cor:perfect_complexes_compact_Stein} and \ref{thm:relative_GAGA}.

\subsection{Analytification and Tannaka duality}

We start this section by briefly reviewing the notion and the machinery of Tannaka duality.
The main references for the algebraic theory are \cite{DAG-VIII} and \cite[\S III.9]{Lurie_SAG}.
Our goal is to study how the Tannaka property interacts with the analytification functor.
Recall that we have an $\infty$-functor
\[ \Perf_k \colon \dAff_k\op \longrightarrow \Cat_\infty^{\mathrm{st}, \otimes} \]
with values in stably symmetric monoidal $\infty$-categories that sends $\Spec(A)$ to the $\infty$-category $\Perf(A)$, equipped with its canonical symmetric monoidal structure.
We also have at our disposal an $\infty$-functor
\[ \QCoh_k \colon \dAff_k\op \longrightarrow \Cat_\infty^{\mathrm{st}, \otimes} \]
sending $\Spec(A)$ to the $\infty$-category $\QCoh(A) \simeq A\Mod$ equipped with its natural symmetric monoidal structure.
Given stably symmetric monoidal $\infty$-categories $\cC$ and $\cD$, we denote by $\Fun^\otimes_{\mathrm{ex}}(\cC, \cD)$ the $\infty$-category of symmetric monoidal exact functors from $\cC$ to $\cD$.
Recall that both $\Perf_k$ and $\QCoh_k$ satisfy \'etale descent.
In particular, they extend to functors
\[ \Perf_k, \QCoh_k \colon \dSt_k\op \longrightarrow \Cat_\infty^{\mathrm{st}, \otimes} , \]
and in particular for every pair of derived stacks locally almost of finite presentation, $X$ and $Y$, we obtain morphisms
\[ P \colon \Map_{\dSt_k}(X,Y) \longrightarrow \Fun^\otimes(\Perf(Y), \Perf(X)) \]
and
\[ \widehat{P} \colon \Map_{\dSt_k}(X,Y) \longrightarrow \Fun^\otimes(\QCoh(Y), \QCoh(X)) . \]

\begin{defin} \label{def:tannakian_stacks}
	We say a derived stack $Y \colon \dAff_k\op \to \cS$ is \emph{weakly tannakian} (or that $Y$ \emph{satisfies the weak Tannaka property}) if it satisfies the following condition:
	\begin{enumerate}
		\item for any derived stack $X \colon \dAff_k\op \to \cS$, the $\infty$-functor
			\[ \widehat{P} \colon \Map_{\mathrm{dSt}_k}(X,Y) \longrightarrow \Fun^\otimes(\QCoh(Y), \QCoh(X)) \]
		is fully faithful.
	\end{enumerate}
	We say that a derived stack $Y \colon \dAff_k\op \to \cS$ is \emph{tannakian} (or that $Y$ \emph{satisfies the Tannaka property}) if it is weakly tannakian and it satisfies the following supplementary condition:
	\begin{enumerate} \setcounter{enumi}{1}
		\item the essential image of $\widehat{P}$ consists of exact symmetric monoidal functors $\QCoh(Y) \to \QCoh(X)$ that commute with colimits and preserve both connective objects and flat objects.
	\end{enumerate}
\end{defin}

Notice that a symmetric monoidal functor $F \colon \QCoh(Y) \to \QCoh(X)$ preserves dualizable objects.
As $\Perf(Y)$ and $\Perf(X)$ coincide with the full subcategories of $\QCoh(Y)$ and $\QCoh(X)$ spanned exactly by dualizable objects, we conclude that each such functor gives rise to a functor $F \colon \Perf(Y) \to \Perf(X)$, which is again symmetric monoidal.
This argument also shows that the inclusion $\Perf(X) \hookrightarrow \QCoh(X)$ induces an equivalence
\[ \Fun^\otimes(\Perf(Y), \Perf(X)) \stackrel{\sim}{\longrightarrow} \Fun^\otimes(\Perf(Y), \QCoh(X)) . \]

\begin{lem}
	Suppose that $\QCoh(Y)$ is compactly generated by perfect complexes, i.e.\ $\QCoh(Y) \simeq \Ind(\Perf(Y))$.
	Then $Y$ is weakly tannakian if and only if the functor $P \colon \Map_{\dSt_k}(X,Y) \to \Fun^\otimes(\Perf(Y), \Perf(X))$ is fully faithful.
\end{lem}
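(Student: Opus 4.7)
The strategy is to identify $P$ and $\widehat{P}$ (up to fully faithful inclusions) by means of the universal property of $\Ind$ together with the fact that symmetric monoidal functors preserve dualizable objects.

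First, I would observe that for any morphism $f \colon X \to Y$ in $\dSt_k$, the pullback $f^{*} \colon \QCoh(Y) \to \QCoh(X)$ is a left adjoint to $f_{*}$, and hence commutes with arbitrary colimits. Therefore $\widehat{P}$ factors through the full subcategory
\[ \Fun^{L,\otimes}(\QCoh(Y), \QCoh(X)) \subset \Fun^\otimes(\QCoh(Y), \QCoh(X)) \]
spanned by colimit-preserving symmetric monoidal functors. Since this inclusion is fully faithful, $\widehat{P}$ is fully faithful if and only if its corestriction $\widehat{P}'$ to $\Fun^{L,\otimes}(\QCoh(Y), \QCoh(X))$ is.

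Next, using the assumption $\QCoh(Y) \simeq \Ind(\Perf(Y))$, the universal property of the $\Ind$-completion in the presentable symmetric monoidal setting provides an equivalence
\[ \Fun^{L,\otimes}(\QCoh(Y), \QCoh(X)) \stackrel{\sim}{\longrightarrow} \Fun^{\otimes}_{\mathrm{ex}}(\Perf(Y), \QCoh(X)) \]
given by restriction along the canonical inclusion $\Perf(Y) \hookrightarrow \QCoh(Y)$; its inverse is symmetric monoidal Ind-extension. Now any symmetric monoidal functor $F \colon \Perf(Y) \to \QCoh(X)$ sends dualizable objects to dualizable objects, and since the dualizable objects of $\QCoh(X)$ are precisely the perfect complexes, such an $F$ factors uniquely through $\Perf(X)$. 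This identifies the target of the above equivalence with $\Fun^{\otimes}_{\mathrm{ex}}(\Perf(Y), \Perf(X))$.

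Finally, unravelling the constructions, the resulting composite equivalence
\[ \Fun^{L,\otimes}(\QCoh(Y), \QCoh(X)) \simeq \Fun^{\otimes}_{\mathrm{ex}}(\Perf(Y), \Perf(X)) \]
carries $\widehat{P}'$ to $P$, simply because pullback along $f \colon X \to Y$ restricts to pullback on perfect complexes. Combining these three steps, $\widehat{P}$ is fully faithful if and only if $P$ is, which is the desired conclusion. The only mildly delicate point is the universal property of $\Ind$ in the symmetric monoidal context, but this is a standard fact about presentable symmetric monoidal $\infty$-categories compactly generated by their subcategories of dualizable objects.
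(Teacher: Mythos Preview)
Your argument is correct and follows essentially the same route as the paper: both identify $\widehat{P}$ (after corestriction to colimit-preserving functors) with $P$ via the universal property of $\Ind$ and the observation that symmetric monoidal functors send dualizable objects to dualizable objects, hence land in $\Perf(X) \subset \QCoh(X)$. The paper compresses this into a single chain of equivalences $\Fun^\otimes(\Perf(Y), \Perf(X)) \simeq \Fun^\otimes(\Perf(Y), \QCoh(X)) \simeq \Fun^{\otimes}_L(\QCoh(Y), \QCoh(X))$, which is exactly what you have unpacked.
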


\begin{proof}
	Under the assumption $\QCoh(Y) \simeq \Ind(\Perf(Y))$, we have
	\[ \Fun^\otimes(\Perf(Y), \Perf(X)) \simeq \Fun^\otimes(\Perf(Y), \QCoh(X)) \simeq \Fun^{\otimes}_L(\QCoh(Y), \QCoh(X)) , \]
	where the subscript $L$ denotes the full subcategory spanned by those functors that commute with arbitrary colimits.
	The conclusion follows.
\end{proof}

\begin{eg}
	\cite[Theorem 9.3.0.3]{Lurie_SAG} guarantees that if $X$ is a geometric stack with affine diagonal then it is tannakian.
\end{eg}

\Cref{def:tannakian_stacks} is difficult to export to the analytic setting.
The main obstruction is that there is no truly satisfactory notion of quasi-coherent sheaf in the analytic setting.
However, perfect complexes make perfect sense, and in particular for every pair of derived analytic stacks $X, Y \colon \dAfd_k\op \to \cS$ we have a natural map
\[ P \colon \Map_{\dAnSt_k}(X,Y) \longrightarrow \Fun^\otimes(\Perf(Y), \Perf(X)) . \]
In virtue of the above lemma we might be tempted to use the map $P$ to define at least the weak tannakian property in the analytic setting.
However, this would still not be a satisfactory notion: indeed, the condition $\QCoh(Y) \simeq \Ind(\Perf(Y))$ is a rather strong one, which in particular implies that $\Perf(Y)$ is a saturated stable $\infty$-category in the sense of \cite{Toen_Algebrisation_2008}. However, Theorem 1.1 in loc.\ cit.\ implies that an analytic space $X$ is algebraizable if and only if $\Perf(X)$ is saturated.
However, in the case where $Y$ actually comes from analytification, we can prove the following result:

\begin{thm}\label{thm:refined_tannakian}
	Let $Y \in \dSt_k\afp$ be a derived stack locally almost of finite presentation.
	Suppose that:
	\begin{enumerate}
		\item the stable $\infty$-category $\QCoh(Y)$ satisfies $\QCoh(Y) \simeq \Ind(\Perf(Y))$;
		\item $Y$ is tannakian.
	\end{enumerate}
	Then for every $X \in \dAnSt_k$ the composition
	\begin{equation} \label{eq:refined_tannakian}
		\Map_{\dAnSt_k}( X, Y\an ) \stackrel{P}{\longrightarrow} \Fun^\otimes( \Perf(Y\an), \Perf(X) ) \longrightarrow \Fun^\otimes(\Perf(Y), \Perf(X))
	\end{equation}
	is fully faithful.
	Here the second map is the one induced by the analytification functor $\Perf(Y) \to \Perf(Y\an)$.
\end{thm}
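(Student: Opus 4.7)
The plan is to reduce the desired fully faithfulness to a pointwise statement on $\dAfd_X$ by working in the $\infty$-topos $\cX_X$ and invoking the machinery of \cref{sec:analytification_geometric_stacks}. Since $Y$ is geometric (a standing hypothesis in the tannakian setting, cf.\ \cref{thm-intro:analytic_tannaka_duality}), \cref{lem:F_continuous_and_cocontinuous} and \cref{thm:generalized_adjunction} identify
\[ \Map_{\dAnSt_k}(X, Y\an) \simeq F_X^s(Y\an)(\mathbf 1_X) \simeq G_X^s(Y)(\mathbf 1_X), \]
where $G_X^s(Y)$ is the sheafification of the presheaf $U \mapsto \Map_{\dSt_k}(\Spec(A_U), Y)$ on $\dAfd_X$. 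On the target side, I introduce the presheaf $\cH^p \colon \dAfd_X\op \to \cS$ given by $\cH^p(U) \coloneqq \Fun^\otimes(\Perf(Y), \Perf(U))^\simeq$. Since $\Perf(-)$ is a sheaf of stably symmetric monoidal $\infty$-categories on $\dAfd_k$ and both $\Fun^\otimes(\Perf(Y), -)$ and $(-)^\simeq$ preserve limits, $\cH^p$ is already a sheaf $\cH$, with $\cH(\mathbf 1_X) \simeq \Fun^\otimes(\Perf(Y), \Perf(X))^\simeq$.

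Pulling back perfect complexes and composing with the analytification $\Perf(Y) \to \Perf(Y\an)$ yields a natural transformation of presheaves
\[ \widetilde\alpha_U \colon \Map_{\dSt_k}(\Spec(A_U), Y) \longrightarrow \cH(U), \]
which sheafifies to a morphism $\alpha \colon G_X^s(Y) \to \cH$ in $\cX_X$. A diagram chase using the compatibility between the adjunction $(-)\an \dashv (-)\alg$ and pullback of perfect complexes identifies the evaluation $\alpha(\mathbf 1_X)$ with the composition in \eqref{eq:refined_tannakian}. Since $(-1)$-truncatedness is a pointwise property of presheaf morphisms, and is preserved both by sheafification (a finite-limit-preserving left adjoint) and by evaluation at $\mathbf 1_X$, it is enough to prove that $\widetilde\alpha_U$ is fully faithful for every $U \in \dAfd_X$.

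To check this I factor $\widetilde\alpha_U$ as
\[ \Map_{\dSt_k}(\Spec(A_U), Y) \longrightarrow \Fun^\otimes(\Perf(Y), \Perf(\Spec(A_U)))^\simeq \longrightarrow \Fun^\otimes(\Perf(Y), \Perf(U))^\simeq, \]
where the first arrow is the algebraic tannakian reconstruction map and the second is postcomposition with $\varepsilon_U^* \colon \Perf(\Spec(A_U)) \to \Perf(U)$. The first arrow is fully faithful by hypothesis (2) together with hypothesis (1), via the lemma immediately following \cref{def:tannakian_stacks}, which allows us to replace $\QCoh$ by $\Perf$. Postcomposition with $\varepsilon_U^*$ preserves fully faithfulness provided $\varepsilon_U^*$ is itself fully faithful. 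In the non-archimedean case this is immediate from \cref{lem:perfect_complexes_affinoid}, which in fact upgrades $\varepsilon_U^*$ to an equivalence. In the complex analytic case I deduce it from \cref{prop:basic_properties_global_sections}: for $M, N \in \Perf(A_U)$, the complex $M^\vee \otimes_{A_U} N$ is perfect and hence coherent, so using the symmetric monoidality of $\varepsilon_U^*$ together with the equivalence $\delta \colon \mathrm{Id} \simeq \Gamma(U;-) \circ \varepsilon_U^*$ on $\Coh(A_U)$ (Cartan's Theorem B),
\[ \Map_{\Perf(U)}(\varepsilon_U^* M, \varepsilon_U^* N) \simeq \tau_{\ge 0}\Gamma\bigl(U; \varepsilon_U^*(M^\vee \otimes_{A_U} N)\bigr) \simeq \tau_{\ge 0}(M^\vee \otimes_{A_U} N) \simeq \Map_{\Perf(A_U)}(M, N). \]
The main technical obstacle is precisely this last chain of equivalences in the complex analytic setting; once it is established, sheafifying $\widetilde\alpha$ and evaluating at $\mathbf 1_X$ delivers the fully faithfulness in \eqref{eq:refined_tannakian}.
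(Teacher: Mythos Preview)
Your argument is correct and shares the paper's overall scaffolding (reduce to $X \in \dAfd_k$, work in $\cX_X$ via $G^p_X(Y)$, $F^s_X(Y\an)$ and the target sheaf $U \mapsto \Fun^\otimes(\Perf(Y),\Perf(U))$, then invoke the algebraic tannakian property). The non-archimedean case is handled identically. The genuine difference lies in the \canal case: the paper passes to compact Stein subsets $K \subset X$ and appeals to the equivalence of ind-objects $\fcolim_{K \subset U} \Perf(A_U) \simeq \fcolim_{K \subset U} \Perf(U)$ of \cref{thm:perfect_complexes_compact_Stein}, whereas you argue directly that $\varepsilon_U^* \colon \Perf(A_U) \to \Perf(U)$ is fully faithful for every Stein $U$, using dualizability to reduce the comparison of mapping spectra to the equivalence $\delta$ of \cref{prop:basic_properties_global_sections}(4) on the single coherent object $M^\vee \otimes_{A_U} N$. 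Your route is more elementary and bypasses the compact-Stein machinery entirely for this statement; the paper's route, by contrast, keeps the \canal argument in strict parallel with the essential-image characterization (\cref{thm:refined_tannakian_image_canal}) and the main theorem (\cref{thm:tannakian_target}), where passage to compact Steins is genuinely unavoidable because $\varepsilon_U^*$ is not essentially surjective. One minor point: you explicitly add geometricity of $Y$ to invoke \cref{thm:generalized_adjunction}; the paper's proof uses the map $\alpha_Y$ from that theorem in the same way without listing geometricity among the hypotheses of \cref{thm:refined_tannakian}, though it does appear in the introduction version \cref{thm-intro:analytic_tannaka_duality}.
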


\begin{proof}
	We adapt the strategy of \cref{prop:general_strategy_analytification} in the current setting.	
	Let us denote the composite functor \eqref{eq:refined_tannakian} by
	\[ \tau_{X,Y} \colon \Map_{\dAnSt_k}(X, Y\an) \longrightarrow \Fun^\otimes(\Perf(Y), \Perf(X)) . \]
	This map is functorial in both $X$ and $Y$.
	Notice that left and right hand sides commute separately with colimits in $X \in \dAnSt_k$.
	We can therefore reduce ourselves to the case where $X$ is a derived $k$-affinoid (resp.\ Stein) space to begin with.
	Introduce the presheaf
	\[ T_Y \colon \dAfd_X\op \longrightarrow \Cat_\infty \]
	sending a map $U \to X$ to the $\infty$-category $\Fun^\otimes(\Perf(Y), \Perf(U))$.
	Notice that $T_Y$ is in fact a sheaf, and the maps $\tau_{U,Y}$ assemble into a natural transformation
	\[ \tau_Y \colon F^s_X(Y\an) \longrightarrow T_Y . \]
	Let $\overline{\tau}_Y$ denote the composition
	\[ \overline{\tau}_Y \colon G^p_X(Y) \longrightarrow G^s_X(Y) \stackrel{\alpha_Y}{\longrightarrow} F^s_X(Y) \stackrel{\tau_Y}{\longrightarrow} T_Y , \]
	where $\alpha_Y$ is the map from \cref{thm:generalized_adjunction}.
	Now recall that the $\infty$-topos underlying $X$ has enough points.
	Since full faithfulness can be tested on stalks and since $G^s_X(Y)$ is the sheafification of $G^p_X(Y)$ (hence they have the same stalks), we are reduced to proving that $\overline{\tau}_Y$ is fully faithful.
	
	We start dealing with the \kanal case.
	Fix $U \in \dAfd_X$ and let $A_U \coloneqq \Gamma(U; \cO_U\alg)$.
	Then, unwinding the definitions, we have
	\[ G^p_X(Y)(U) \simeq \Map_{\dSt_k}(\Spec(A_U), Y) . \]
	Since $Y$ is tannakian and $\QCoh(Y) \simeq \Ind(\Perf(Y))$, the canonical map
	\[ P \colon \Map_{\dSt_k}(\Spec(A_U), Y) \longrightarrow \Fun^\otimes(\Perf(Y), \Perf(A_U)) \]
	is fully faithful.
	On the other hand,
	\[ T_Y(U) \simeq \Fun^\otimes(\Perf(Y), \Perf(U)) . \]
	Using \cref{lem:perfect_complexes_affinoid}, we see that the global section functor $\Gamma(U;-)$ induces an equivalence
	\[ \Perf(U) \stackrel{\sim}{\longrightarrow} \Perf(A_U) , \]
	whence the full faithfulness of $\overline{\tau}_Y$.
	
	We now turn to the \canal case.
	Using the correspondence provided by \cref{thm:sheaves_compact_subsets} we extend both $G^p_X(Y)$ and $T_Y$ to compact subsets of $X$.
	We let once again $\overline{\tau}_Y$ denote the induced natural transformation between them.
	Using \cref{lem:sheaves_open_sheaves_compact} we see that it is enough to prove that for every compact Stein subset $K \subset X$ of $X$, the natural map
	\[ \overline{\tau}_{Y, (K)_X} \colon G^p_X(Y)((K)_X) \longrightarrow T_Y((K)_X) \]
	is fully faithful.
	Unravelling the definitions, we see that we have to prove that the functor
	\begin{equation} \label{eq:refined_tannakian_key_point}
		\colim_{K \subset U \subset X} \Map_{\dSt_k}(\Spec(A_U), Y) \longrightarrow \colim_{K\subset U \subset X} \Fun^\otimes(\Perf(Y), \Perf(U))
	\end{equation}
	is fully faithful.
	Here the colimit is taken over all Stein open neighbourhoods $U$ of $K$ inside $X$.
	We start dealing with the left hand side.
	Since $Y$ is tannakian and $\QCoh(Y) \simeq \Ind(\Perf(Y))$, we have fully faithful embeddings
	\[ \Map_{\dSt_k}(\Spec(A_U), Y) \longhookrightarrow \Fun^\otimes(\Perf(Y), \Perf(A_U)) . \]
	Let $\Fun^\otimes_{\mathrm{ind}}$ denote the mapping space in the $\infty$-category $\Ind(\Cat_\infty^{\mathrm{st}, \otimes})$.
	Then we have a tautological equivalence
	\[ \colim_{K \subset U \subset X} \Fun^\otimes(\Perf(Y), \Perf(A_U)) \simeq \Fun^\otimes_{\mathrm{ind}}\Big( \Perf(Y), \fcolim_{K \subset U \subset X} \Perf(A_U) \Big) . \]
	On the other hand, we also have
	\[ \colim_{K \subset U \subset X} \Fun^\otimes(\Perf(Y), \Perf(U)) \simeq \Fun^\otimes_{\mathrm{ind}} \Big(\Perf(Y), \fcolim_{K \subset U \subset X} \Perf(U) \Big) . \]
	Notice that the functor $\Gamma_{(K)}$ of \cref{thm:unbounded_coherent_pro_compact_Stein} induces an equivalence in $\Ind(\Cat_\infty^{\mathrm{st}, \otimes})$
	\[ \Gamma_{(K)} \colon \fcolim_{K \subset U \subset X} \Perf(U) \stackrel{\sim}{\longrightarrow} \fcolim_{K \subset U \subset X} \Perf(A_U) , \]
	making the diagram
	\[ \begin{tikzcd}
		\colim_{K \subset U \subset X} \Map_{\dSt_\C}(\Spec(A_U), Y) \arrow{r} \arrow{d} & \colim_{K \subset U \subset X} \Map_{\dAnSt_\C}(U, Y\an) \arrow{d} \\
		\Fun^\otimes_{\mathrm{ind}}\Big(\Perf(Y), \fcolim_{K \subset U \subset X} \Perf(A_U) \Big) \arrow{r}{\Gamma_{(K)}} & \Fun^\otimes_{\mathrm{ind}}\Big(\Perf(Y), \fcolim_{K \subset U \subset X} \Perf(U) \Big)
	\end{tikzcd} \]
	commutative.
	Notice that the composition of the right vertical functor with the top horizontal one coincides with \eqref{eq:refined_tannakian_key_point}.
	As the left vertical arrow is fully faithful and the bottom horizontal one is an equivalence, we can therefore conclude that \eqref{eq:refined_tannakian_key_point} is fully faithful, thus completing the proof.
\end{proof}

\begin{cor}
	Let $Y \in \dSt_k\afp$ be a derived stack locally almost of finite presentation.
	Suppose that
	\begin{enumerate}
		\item the stable $\infty$-category $\QCoh(Y)$ is compactly generated by perfect complexes, i.e.\ $\QCoh(Y) \simeq \Ind(\Perf(Y))$;
		\item $Y$ satisfies the GAGA property (cf.\ \cref{defin:strong_GAGA}(1));
		\item $Y$ is tannakian.
	\end{enumerate}
	Then for every derived analytic stack $X \colon \dAfd_k\op \to \cS$ the functor
	\[ P \colon \Map_{\dAnSt_k}(X,Y\an) \longrightarrow \Fun^{\otimes}(\Perf(Y\an), \Perf(X)) \]
	is fully faithful.
\end{cor}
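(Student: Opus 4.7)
The plan is to deduce this directly from \cref{thm:refined_tannakian}, using the GAGA hypothesis to replace $\Perf(Y)$ by $\Perf(Y\an)$. Recall from \cref{thm:refined_tannakian} that under assumptions (1) and (3) the composite
\[ \Map_{\dAnSt_k}(X, Y\an) \stackrel{P}{\longrightarrow} \Fun^\otimes(\Perf(Y\an), \Perf(X)) \stackrel{\Phi}{\longrightarrow} \Fun^\otimes(\Perf(Y), \Perf(X)) \]
is fully faithful, where $\Phi$ denotes precomposition with the symmetric monoidal analytification functor $(-)\an \colon \Perf(Y) \to \Perf(Y\an)$.

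The key observation is that the GAGA property (assumption (2), in the sense of \cref{defin:strong_GAGA}(1)) asserts precisely that $(-)\an \colon \Perf(Y) \to \Perf(Y\an)$ is an equivalence of stable $\infty$-categories. Since this functor is also symmetric monoidal by construction, it is an equivalence in $\Cat_\infty^{\mathrm{st}, \otimes}$. Therefore precomposition along it induces an equivalence
\[ \Phi \colon \Fun^\otimes(\Perf(Y\an), \Perf(X)) \stackrel{\sim}{\longrightarrow} \Fun^\otimes(\Perf(Y), \Perf(X)) . \]

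Combining these two facts, $P \simeq \Phi^{-1} \circ (\Phi \circ P)$ is the composition of a fully faithful functor with an equivalence, hence it is itself fully faithful. No further step is needed; there is no real obstacle, as the whole content has been packaged into \cref{thm:refined_tannakian} and the definition of the GAGA property.
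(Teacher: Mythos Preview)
Your proof is correct and follows exactly the same approach as the paper: use the GAGA hypothesis to identify $\Perf(Y) \simeq \Perf(Y\an)$, so that full faithfulness of $P$ reduces to full faithfulness of the composite $\Phi \circ P$, which is precisely the content of \cref{thm:refined_tannakian}. The only difference is the order of presentation.
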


\begin{proof}
	Since $Y$ satisfies the GAGA property, we see that the analytification functor
	\[ \Perf(Y) \longrightarrow \Perf(Y\an) \]
	is an equivalence of $\infty$-categories.
	In particular, we are reduced to check that the composition
	\[ \Map_{\dAnSt_{k}}(X, Y\an) \stackrel{P}{\longrightarrow} \Fun^\otimes(\Perf(Y\an), \Perf(X)) \longrightarrow \Fun^\otimes(\Perf(Y), \Perf(X)) \]
	is fully faithful.
	We have shown in \cref{thm:refined_tannakian} that this is true even without the assumption that $Y$ satisfies the GAGA property.
\end{proof}

In the proof of \cref{thm:tannakian_target} we will need some control on the essential image of the functor
\[ \Map_{\dAnSt_k}(X, Y\an) \longrightarrow \Fun^\otimes(\Perf(Y), \Perf(X)) \]
that we just proved is fully faithful.
Already in the algebraic case it is unreasonable to expect a characterization of the essential image (unless more restrictive hypotheses are formulated on $Y$, see \cite[Theorem 2.1]{Bhatt_algebraization_2014}).
Under the assumption $\QCoh(Y) \simeq \Ind(\Perf(Y))$, we have an equivalence
\[ \Fun^\otimes_L(\Perf(Y), \cO_X \Mod) \simeq \Fun^\otimes_L(\QCoh(Y), \cO_X \Mod) . \]
It is then much more reasonable to expect to be able to characterize the essential image of the fully faithful functor
\[ \Map_{\dAnSt_k}(X, Y\an) \longrightarrow \Fun^\otimes_L(\QCoh(Y), \cO_X \Mod) . \]
As usual, there is a difference between the \kanal case and the \canal one: in the former, we do obtain a characterization of the above functor whenever $X \in \dAfd_k$; in the latter however we are forced to replace $X$ by a compact Stein subspace.\\

We start with the following definition:

\begin{defin}
	Let $X \in \dAfd_k$ be a derived $k$-affinoid (resp.\ Stein) space.
	We say that an object $\cF \in \cO_X \Mod$ is \emph{flat} if for every $\cG \in \Cohh(X)$ the tensor product $\cF \otimes \cG$ belongs to $\cO_X \Mod^\heartsuit$.
\end{defin}

\begin{lem} \label{lem:refined_tannakian_image_necessary_conditions}
	Let $Y \in \dSt_k^{\mathrm{afp}}$ be a derived stack locally almost of finite presentation.
	Suppose that $Y$ satisfies the same assumptions as in \cref{thm:refined_tannakian} and that it is moreover geometric.
	Then for every derived $k$-affinoid (resp.\ Stein) space $X \in \dAfd_k$, the essential image of the functor
	\[ \Map_{\dAnSt_k}(X, Y\an) \longrightarrow \Fun^\otimes_L(\QCoh(Y), \cO_X \Mod) \]
	factors through the full subcategory spanned by those functors $F \in \Fun^\otimes_L(\QCoh(Y), \cO_X\Mod)$ preserving flat objects, connective objects and taking the full subcategory $\Perf(Y)$ of $\QCoh(Y)$ to $\Perf(X)$.
\end{lem}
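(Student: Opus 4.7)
The starting point is to unpack the functor $F_f \colon \QCoh(Y) \to \cO_X \Mod$ attached to a morphism $f \colon X \to Y\an$: it is the composition
\[ F_f = f^* \circ \varepsilon_Y^*, \]
where $\varepsilon_Y^* \colon \QCoh(Y) \to \cO_{Y\an} \Mod$ is the analytification and $f^*$ is pullback of analytic modules. Property (3) that $F_f$ sends $\Perf(Y)$ into $\Perf(X)$ is automatic from the symmetric monoidality of $F_f$, since perfect complexes are precisely the dualizable objects on either side and symmetric monoidal functors preserve dualizable objects.

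For property (2) I would argue that both $\varepsilon_Y^*$ and $f^*$ are right $t$-exact. The pullback $f^*$ preserves connective objects by the very construction of the $t$-structure in \cref{lem:tstructure_mx}, since connectivity is checked locally on derived affinoid/Stein charts and pullback commutes with such restriction. For $\varepsilon_Y^*$, I use that $Y$ is geometric to reduce by smooth descent along an atlas $p \colon U \to Y$ to the affine case, where $\varepsilon_U^* \colon \QCoh(U) \to \cO_{U\an} \Mod$ is $t$-exact thanks to the flatness of the canonical map $(U\an)\alg \to U$ (see \cite[Corollary 5.15]{Porta_DCAGI} and \cite[Proposition 4.17]{Porta_Yu_Representability}).

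For property (1), let $\cF \in \QCoh(Y)$ be flat and $\cG \in \Cohh(X)$; the goal is to show $F_f(\cF) \otimes_{\cO_X} \cG$ is discrete. Discreteness is local on $X$ in the \'etale topology, so pulling back a smooth atlas $U \to Y$ along $f$ and refining, I can reduce to the situation where $f$ factors through an affine chart $U\an$ with $U = \Spec(A)$, and further replace $\cF$ by its pullback to $U$ (which remains flat since the atlas is flat). Then $f$ corresponds to an $A$-algebra structure on $A_X = \Gamma(X; \cO_X\alg)$, and $F_f(\cF)$ has global sections $\cF \otimes_A A_X$. Using \cref{lem:perfect_complexes_affinoid} in the non-archimedean case (or \cref{thm:unbounded_coherent_pro_compact_Stein} locally on compact Steins in the complex case), $\cG$ corresponds to an $A_X$-module $N$, and the global sections of $F_f(\cF) \otimes_{\cO_X} \cG$ are $\cF \otimes_A^{\mathrm{L}} N$, which is discrete by flatness of $\cF$ over $A$. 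Passing back to the sheaf via the global sections correspondence on affinoids (resp.\ compact Steins) gives discreteness of $F_f(\cF) \otimes_{\cO_X} \cG$.

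The main obstacle is in property (1): the analytic notion of flatness probes the functor against \emph{all} coherent sheaves on $X$, not just those in the image of the analytification, so one genuinely needs the affinoid/compact Stein identification of analytic coherent sheaves with algebraic ones in order to transfer the flatness condition across the analytification. The reduction to the affine case via smooth descent of $f$ is the key technical step enabling this translation.
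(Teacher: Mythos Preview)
Your overall shape matches the paper's: both proofs localize on $X$ along a smooth atlas of $Y$ to reduce to the affine case $Y = \Spec(A)$, and both treat preservation of $\Perf$ separately (you via dualizability, the paper simply notes that the functor is \emph{defined} by Kan-extending from $\Perf(Y)\to\Perf(X)$, so this is tautological). Your argument for connectivity is fine and essentially equivalent to the paper's.

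The gap is in your flatness argument in the affine base case. After reducing to $Y=\Spec(A)$ you write $F_f(\cF)\otimes_{\cO_X}\cG \simeq \varepsilon_X^*\big((\cF\otimes_A A_X)\otimes_{A_X} N\big)$ and then appeal to the ``global sections correspondence'' to transfer discreteness back to the sheaf. But that correspondence---\cref{lem:perfect_complexes_affinoid} and \cref{prop:basic_properties_global_sections}(4) in the non-archimedean case, and the compact-Stein version in the complex case---is an equivalence only on \emph{coherent} modules, and $\cF\otimes_A N$ has no reason to be coherent over $A_X$ when $\cF$ is merely flat. So the step ``discrete global sections $\Rightarrow$ discrete sheaf'' is not justified as written. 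The paper sidesteps this entirely: in the affine case it invokes Lazard's theorem \cite[7.2.2.15]{Lurie_Higher_algebra} to write a flat $\cF\in\QCoh(\Spec A)$ as a filtered colimit of finite free modules, so $F_f(\cF)$ is a filtered colimit of finite free $\cO_X$-modules and hence flat against any $\cG\in\Cohh(X)$. (An alternative patch for your argument is to drop the global-sections detour and simply use that $\varepsilon_X^*$ is $t$-exact on \emph{all} of $A_X\Mod$, not just on coherent modules, so $\varepsilon_X^*$ of the discrete $A_X$-module $\cF\otimes_A N$ is automatically discrete---but in the complex case you would still have to justify the existence of $N$, which is precisely what the coherent correspondence does not give you on a single Stein.) Lazard is the clean route here.
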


\begin{proof}
	Let $f \colon X \to Y\an$ be a given morphism.
	Then its image $F$ in $\Fun^\otimes_L(\QCoh(Y), \cO_X\Mod)$ is obtained by extending by colimits along $\Perf(Y) \hookrightarrow \QCoh(Y)$ the composition
	\[ \Perf(Y) \longrightarrow \Perf(Y\an) \stackrel{f^*}{\longrightarrow} \Perf(X) \longrightarrow \cO_X \Mod . \]
	Since $\Perf(Y) \hookrightarrow \QCoh(Y)$ is fully faithful, we see that $F$ takes $\Perf(Y)$ to $\Perf(X)$ by construction.
	
	For the other statements we proceed by induction on the geometric level of $Y$.
	Suppose first that $Y = \Spec(A)$ is affine.
	Let $\cF \in \QCoh(Y)$ be a flat object.
	In this case, Lazard's theorem \cite[7.2.2.15(1)]{Lurie_Higher_algebra} implies that $\cF$ can be written as filtered colimit of finitely generated free $A$-modules.
	In particular, $f^*(\cF)$ can also be written as filtered colimit of free $\cO_X$-modules, and hence it is flat.
	Similarly, \cite[1.4.4.11]{Lurie_Higher_algebra} implies that $\QCoh(Y)^{\ge 0}$ coincides with the smallest full subcategory of $\QCoh(Y)$ closed under colimits and extensions and containing $A$.
	Since the functor $f^* \colon \QCoh(Y) \to \cO_X \Mod$ is exact and commutes with filtered colimits, it commutes with arbitrary colimits.
	The conclusion now follows from the fact that connective objects in $\cO_X \Mod$ are stable under colimits.
	
	Assume now that the statements have been proven for $n$-geometric derived stacks and let $Y$ be an $(n+1)$-geometric derived stack.
	Let $u \colon U \to Y$ be a smooth atlas and let $U_\bullet$ be its \v{C}ech nerve.
	Then
	\[ | U_\bullet\an | \simeq Y\an . \]
	Given $f \colon X \to Y\an$ we therefore see that, up to a cover $V \to X$, we can suppose that $f$ factors through $U\an$:
	\[ \begin{tikzcd}
		{} & & U\an \arrow{d}{u\an} \\
		V \arrow{r} \arrow[dashed, bend left = 15pt]{urr}{g} & X \arrow{r}{f} & Y\an .
	\end{tikzcd} \]
	Since we can check that $f^*$ commutes with flat objects and connective objects locally on $X$, we can assume from the very beginning that $f$ factors as $f \simeq u\an \circ g$.
	The inductive hypothesis guarantees that $g^*$ commutes with flat objects and connective objects.
	The same is true for $(u\an)^*$ because $u$ is a smooth atlas and the analytification functor $\QCoh(U) \to \cO_{U\an}\Mod$ commutes with flat objects and connective objects.
	Therefore, the conclusion follows.
\end{proof}

Our goal is to prove that the converse to \cref{lem:refined_tannakian_image_necessary_conditions} holds.
We start by dealing with the non-archimedean case, where the converse holds literally:

\begin{prop} \label{thm:refined_tannakian_image_kanal}
	Let $k$ be a non-archimedean field and let $Y \in \dSt_k^{\mathrm{afp}}$ be a derived $k$-stack locally almost of finite presentation.
	Suppose that $Y$ satisfies the same assumptions as in \cref{thm:refined_tannakian}.
	Then for every derived $k$-affinoid space $X \in \dAfd_k$ the essential image of the functor
	\[ \Map_{\dAnSt_k}(X, Y\an) \longrightarrow \Fun^\otimes_L(\QCoh(Y), \cO_X \Mod) \]
	contains all those functors that preserve flat objects, connective objects and take the full subcategory $\Perf(Y)$ of $\QCoh(Y)$ to $\Perf(X)$.
\end{prop}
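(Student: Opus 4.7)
The plan is to construct a one-sided inverse to the fully faithful functor from \cref{thm:refined_tannakian}, defined on the specified essential image. Fix $F \in \Fun^\otimes_L(\QCoh(Y), \cO_X \Mod)$ preserving flat objects, connective objects and perfect complexes. To produce a morphism $f \colon X \to Y\an$ we invoke \cref{lem:F_continuous_and_cocontinuous} and \cref{thm:generalized_adjunction}: it suffices to exhibit a point of $F^s_X(Y\an)(\mathbf 1_X) \simeq G^s_X(Y)(\mathbf 1_X)$, equivalently a morphism of sheaves $\mathbf 1_X \to G^s_X(Y)$. Since $G^s_X(Y)$ is the sheafification of $G^p_X(Y)$, whose value on $U \in \dAfd_X$ is $\Map_{\dSt_k}(\Spec(A_U), Y)$, it is enough to produce functorially in $U$ a morphism $f_U \colon \Spec(A_U) \to Y$.

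For each $U \in \dAfd_X$, restriction along $U \to X$ gives $F_U \colon \QCoh(Y) \to \cO_U\Mod$. Since $F$ (and thus $F_U$) sends $\Perf(Y)$ into $\Perf(U)$, and \cref{lem:perfect_complexes_affinoid} identifies $\Perf(U) \simeq \Perf(A_U)$ via $\Gamma(U;-)$, we obtain a symmetric monoidal exact functor
\[ \Phi_U \colon \Perf(Y) \longrightarrow \Perf(A_U) . \]
Using $\QCoh(Y) \simeq \Ind(\Perf(Y))$, we extend $\Phi_U$ by filtered colimits to a colimit-preserving symmetric monoidal functor
\[ \widetilde{\Phi}_U \colon \QCoh(Y) \simeq \Ind(\Perf(Y)) \longrightarrow A_U\Mod \simeq \QCoh(\Spec(A_U)) . \]
Since $Y$ is tannakian, the morphism $f_U \colon \Spec(A_U) \to Y$ we seek will be produced as soon as we verify that $\widetilde{\Phi}_U$ preserves flat and connective objects.

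The main technical point is this verification, because the hypotheses on $F$ are formulated with respect to the sheaf-theoretic $t$-structure on $\cO_X\Mod$ of \cref{lem:tstructure_mx}, whereas the condition needed is with respect to the module-theoretic $t$-structure on $A_U\Mod$. The bridge is \cref{prop:basic_properties_global_sections}: in the non-archimedean situation the equivalence $\Perf(U) \simeq \Perf(A_U)$ is $t$-exact, symmetric monoidal, and preserves flat objects, because the pullback $\varepsilon_U^*$ in the opposite direction has these properties and is an inverse on perfect complexes. Since connective (resp.\ flat) objects in both $\QCoh(Y)$ and $\QCoh(\Spec(A_U))$ are stable under filtered colimits, it then suffices to test the preservation of connectivity and flatness on $\Perf(Y)$, where the claim follows from these properties of $F_U$ together with the $t$-exactness of $\Gamma(U;-)$ on perfect complexes. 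Having established this, the tannakian hypothesis on $Y$ yields a unique $f_U \colon \Spec(A_U) \to Y$ whose pullback on $\QCoh$ recovers $\widetilde{\Phi}_U$.

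It remains to check that the assignment $U \mapsto f_U$ is a morphism of presheaves $\mathbf 1_X \to G^p_X(Y)$, and that the induced morphism $f \colon X \to Y\an$ is sent to $F$ by the functor of the statement. Functoriality in $U$ is immediate from the naturality of the construction of $\widetilde \Phi_U$: a morphism $V \to U$ in $\dAfd_X$ induces a ring map $A_U \to A_V$, and both sides of the resulting square commute with restriction by inspection, so the uniqueness part of the tannakian property forces compatibility of the $f_U$. For the last point, unravelling \cref{thm:generalized_adjunction} shows that the pullback functor $f^* \colon \Perf(Y\an) \to \Perf(X)$ extended by colimits coincides, on each $U$, with $\Phi_U$ up to the identifications made above; hence its extension to $\QCoh(Y)$ agrees with $F$. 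Full faithfulness from \cref{thm:refined_tannakian} completes the proof.
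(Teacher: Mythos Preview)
Your overall strategy coincides with the paper's: pass from $\Perf(X)$ to $\Perf(A_U)$ via \cref{lem:perfect_complexes_affinoid}, Ind-extend to get $\widetilde{\Phi}_U \colon \QCoh(Y) \to A_U\Mod$, and then invoke the tannakian property of $Y$. The paper, incidentally, does this only for $U = X$ itself (a single element of $G^p_X(Y)(X)$ already suffices, since $X$ represents the terminal object of $\cX_X$); your detour through all $U \in \dAfd_X$ and the functoriality check is harmless but unnecessary.

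There is, however, a genuine gap in your verification that $\widetilde{\Phi}_U$ preserves connective and flat objects. You write that ``since connective (resp.\ flat) objects \ldots\ are stable under filtered colimits, it then suffices to test the preservation of connectivity and flatness on $\Perf(Y)$''. This inference is not valid: a connective object of $\QCoh(Y)$ is a filtered colimit of objects of $\Perf(Y)$, but there is no reason those perfect complexes can be chosen connective; likewise for flat objects. Stability of connective objects under filtered colimits in the \emph{target} does not help if the colimit presentation in the \emph{source} involves non-connective terms. So the reduction to $\Perf(Y)$ fails as stated.

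The paper's argument avoids this problem entirely and is worth internalizing. One observes that, by construction, $\varepsilon_U^* \circ \widetilde{\Phi}_U \simeq F_U$ as functors $\QCoh(Y) \to \cO_U\Mod$ (both commute with filtered colimits and agree on $\Perf(Y)$). Now \cref{prop:basic_properties_global_sections} says that $\varepsilon_U^*$ is conservative, $t$-exact, strong monoidal, and preserves coherent objects. Conservativity plus $t$-exactness lets you \emph{reflect} connectivity: if $\varepsilon_U^*(\widetilde{\Phi}_U(M)) = F_U(M)$ is connective, then so is $\widetilde{\Phi}_U(M)$. For flatness one tests against $P \in \Cohh(A_U)$ (which suffices), uses that $\varepsilon_U^*(P) \in \Cohh(U)$, and applies the same reflection argument to $\widetilde{\Phi}_U(M) \otimes P$. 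This is the missing idea in your sketch.
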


\begin{proof}
	Let $A \coloneqq \Gamma(X; \cO_X\alg)$.
	\Cref{lem:perfect_complexes_affinoid} provides us with a canonical equivalence
	\[ \Perf(X) \simeq \Perf(A) . \]
	Let $F \in \Fun^\otimes_L(\QCoh(Y), \cO_X \Mod)$ be a functor satisfying the conditions in the statement of the proposition.
	By assumption, $F$ restricts to a symmetric monoidal functor
	\[ \overline{F} \colon \Perf(Y) \longrightarrow \Perf(X) . \]
	Using the above equivalence, we can redefine $\overline{F}$ as a functor
	\[ \overline{F} \colon \Perf(Y) \longrightarrow \Perf(A) . \]
	Consider the extension
	\[ \widetilde{F} \colon \QCoh(Y) \longrightarrow A \Mod . \]
	The functor
	\[ \varepsilon_X^* \colon A \Mod \longrightarrow \cO_X \Mod \]
	which we defined in \cref{subsec:analytification_Perf} commutes with filtered colimits.
	As a consequence, we can identify the composition
	\[ \begin{tikzcd}
		\QCoh(Y) \arrow{r}{\widetilde{F}} & A \Mod \arrow{r}{\varepsilon_X^*} & \cO_X \Mod
	\end{tikzcd} \]
	with the original functor $F$.
	Since $\varepsilon^*_X$ is conservative, strong monoidal, $t$-exact and preserves flat and coherent objects, we see that $\widetilde{F}$ preserves flat objects and connective objects.
	Therefore we can use the tannakian property of $Y$ to see that $\widetilde{F}$ comes from a map
	\[ \widetilde{f} \colon \Spec(A) \longrightarrow Y . \]
	Let $f \colon X \to Y\an$ be the image of $\widetilde{f}$ via the canonical map
	\[ \Map_{\dSt_k}(\Spec(A),Y) = G^p_X(Y)(X) \longrightarrow F^s_X(Y\an) = \Map_{\dAnSt_k}(X, Y\an) . \]
	Since the diagram
	\[ \begin{tikzcd}
		\Map_{\dSt_k}(\Spec(A), Y) \arrow{r} \arrow{d} & \Map_{\dAnSt_k}(X, Y\an) \arrow{d} \\
		\Fun^\otimes(\Perf(Y), \Perf(A)) \arrow{r} \arrow{d} & \Fun^\otimes(\Perf(Y), \Perf(X)) \arrow{d} \\
		\Fun^\otimes_L(\QCoh(Y), A\Mod) \arrow{r} & \Fun^\otimes_L(\QCoh(Y), \cO_X \Mod)
	\end{tikzcd} \]
	commutes, the conclusion follows.
\end{proof}

\begin{prop} \label{thm:refined_tannakian_image_canal}
	Let $Y \in \dSt_{\mathbb C}^{\mathrm{afp}}$ be a derived stack locally almost of finite presentation satisfying the same assumptions as in \cref{thm:refined_tannakian}.
	Let $X \in \dAfd_{\mathbb C}$ be a derived Stein space and let $V \Subset U \Subset X$ be a nested sequence of relatively compact Stein subspaces.
	Let $F \in \Fun^\otimes_L(\QCoh(Y), \cO_X\Mod)$ be a functor preserving perfect complexes, flat objects and connective objects.
	Then there exists a map $f \colon V \to Y\an$ so that the diagram
	\[ \begin{tikzcd}
		\QCoh(Y) \arrow{d} \arrow{r}{F} & \cO_X \Mod \arrow{d} \\
		\cO_{Y\an} \Mod \arrow{r}{f^*} & \cO_V \Mod
	\end{tikzcd} \]
	commutes.
\end{prop}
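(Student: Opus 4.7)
The plan is to imitate the proof of \cref{thm:refined_tannakian_image_kanal}, substituting for the non-archimedean equivalence $\Perf(X) \simeq \Perf(A_X)$ the \canal compact-Stein machinery developed in \cref{sec:analytic_perfect_complexes}. Using the hypothesis that $F$ preserves perfect complexes together with \cref{lem:global_section_relatively_compact} applied to the pair $V \Subset U$, I would define a symmetric monoidal functor
\[ \widetilde{F} \colon \Perf(Y) \longrightarrow \Perf(A_V), \qquad \cP \longmapsto \Gamma(V; F(\cP)|_V), \]
and then Ind-extend it to a colimit-preserving symmetric monoidal functor $\widetilde{F} \colon \QCoh(Y) \to A_V\Mod$ via $\QCoh(Y) \simeq \Ind(\Perf(Y))$. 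The key identity, provided by \cref{lem:mock_counit} applied to the nested sequence $V \Subset U \Subset X$, is that $\varepsilon_V^* \widetilde{F}(\cP) \simeq F(\cP)|_V$ for every $\cP \in \Perf(Y)$; by colimit-preservation this promotes to a natural equivalence $\varepsilon_V^* \circ \widetilde{F} \simeq F|_V$ on all of $\QCoh(Y)$.

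I would then verify that $\widetilde{F}$ preserves connective and flat objects by reflecting the corresponding properties of $F|_V$ through $\varepsilon_V^*$, which is $t$-exact, conservative, and symmetric monoidal by \cref{prop:basic_properties_global_sections}. For connectivity: if $\cF \in \QCoh(Y)^{\ge 0}$ then $\varepsilon_V^* \widetilde{F}(\cF) \simeq F(\cF)|_V$ is connective in $\cO_V\Mod$ by hypothesis on $F$ and $t$-exactness of restriction, and $t$-exactness plus conservativity of $\varepsilon_V^*$ force $\widetilde{F}(\cF) \in A_V\Mod^{\ge 0}$. For flatness: if $\cF$ is flat in $\QCoh(Y)$ and $M \in A_V\Modh$, then $\varepsilon_V^*(\widetilde{F}(\cF) \otimes_{A_V} M) \simeq F(\cF)|_V \otimes_{\cO_V} \varepsilon_V^* M$ lies in the heart, so that $\widetilde{F}(\cF) \otimes_{A_V} M$ does too by conservativity. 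Since $\widetilde{F}$ manifestly sends $\Perf(Y)$ into $\Perf(A_V)$ by construction, these checks together confirm that $\widetilde{F}$ falls in the essential image characterised by \cref{def:tannakian_stacks}.

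At this point the tannakian assumption on $Y$ produces a unique morphism $\widetilde{f} \colon \Spec(A_V) \to Y$ realising $\widetilde{F}$. To convert $\widetilde{f}$ into an analytic map I would invoke \cref{thm:generalized_adjunction}: the section $\widetilde{f} \in G_V^p(Y)(V) = \Map_{\dSt_k}(\Spec(A_V), Y)$ sheafifies to a section of $G_V^s(Y)(V)$, and the geometricity of $Y$ ensures that the canonical morphism $\alpha_Y \colon G_V^s(Y) \to F_V^s(Y\an)$ is an equivalence, yielding the required $f \colon V \to Y\an$ as an element of $F_V^s(Y\an)(V) = \Map_{\dAnSt_k}(V, Y\an)$. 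Commutativity of the target diagram is then reduced to the chain $f^*(\cP\an) \simeq \varepsilon_V^* \widetilde{f}^*(\cP) \simeq \varepsilon_V^* \widetilde{F}(\cP) \simeq F(\cP)|_V$ on perfect complexes, which extends to all of $\QCoh(Y)$ because every functor involved commutes with filtered colimits.

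The main obstacle is the comparison $\varepsilon_V^* \circ \widetilde{F} \simeq F|_V$ together with the resulting detection of flatness and connectivity by $\varepsilon_V^*$. Unlike the non-archimedean setting, where $\Gamma(X;-)$ and $\varepsilon_X^*$ realise a literal equivalence on perfect complexes, here $\widetilde{F}$ is built through the nontrivial global-sections step $\Gamma(V;-)$, and its compatibility with $F|_V$ is only witnessed after pulling back along $\varepsilon_V^*$. This is precisely the situation in which the nesting $V \Subset U \Subset X$ is essential, ensuring via \cref{lem:mock_counit} that enough coherent-sheaf information survives the passage through $\Coh(X) \to \Perf(A_V)$ for the algebraic and analytic descriptions to match.
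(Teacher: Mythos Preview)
Your proposal is correct and follows essentially the same route as the paper. The only cosmetic difference is that the paper first lands in $\Perf(A_U)$ via \cref{lem:global_section_relatively_compact} applied to $U \Subset X$ and then restricts along $A_U \to A_V$, whereas you go directly to $\Perf(A_V)$ via $V \Subset U$; by \cref{cor:coherent_sheaves_and_restriction_derived_Stein} the two constructions agree, and both invoke \cref{lem:mock_counit} for the nested triple $V \Subset U \Subset X$, the tannakian property of $Y$, and \cref{thm:generalized_adjunction} exactly as you do.
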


\begin{proof}
	Since $F \colon \QCoh(Y) \to \cO_X \Mod$ preserves perfect complexes, it restricts to a functor $\Perf(Y) \to \Perf(X)$.
	Furthermore, since $\QCoh(Y) \simeq \Ind(\Perf(Y))$ by assumption, we see that the extension of
	\[ \Perf(Y) \longrightarrow \Perf(X) \longrightarrow \cO_X \Mod \]
	along $\Perf(Y) \hookrightarrow \QCoh(Y)$ coincides with $F$.
	Let $A_X \coloneqq \Gamma(X; \cO_X\alg)$, $A_U \coloneqq \Gamma(U;\cO_U\alg)$ and $A_V \coloneqq \Gamma(V; \cO_V\alg)$.
	Using \cref{lem:global_section_relatively_compact}, we obtain a well defined functor
	\[ \overline{F} \colon \Perf(Y) \longrightarrow \Perf(X) \longrightarrow \Perf(A_U) . \]
	Consider the functor
	\[ \widetilde{F} \colon \QCoh(Y) \longrightarrow A_U \Mod \]
	obtained by extending by filtered colimits the composition
	\[ \Perf(Y) \stackrel{\overline{F}}{\longrightarrow} \Perf(A_U) \longhookrightarrow A_U \Mod \]
	along the inclusion $\Perf(Y) \hookrightarrow \QCoh(Y)$.
	\Cref{lem:mock_counit} implies that the composition
	\[ \begin{tikzcd}
		\Perf(X) \arrow{r}{\Gamma(U;-)}  &\Perf(A_U) \arrow{r} & \Perf(A_V) \arrow{r}{\varepsilon_V^*} & \Perf(V)
	\end{tikzcd} \]
	coincides with the restriction functor $\Perf(X) \to \Perf(V)$.
	It follows that the outer diagram in
	\[ \begin{tikzcd}
		\Perf(Y) \arrow{r} \arrow{d} & \Perf(X) \arrow{d} \\
		\Perf(A_U) \arrow{d} \arrow{r}{\varepsilon_U^*} & \Perf(U) \arrow{d} \\
		\Perf(A_V) \arrow{r}{\varepsilon_V^*} & \Perf(V)
	\end{tikzcd} \]
	commutes.
	In turn, this implies that the diagram
	\begin{equation} \label{eq:refined_tannakian_I}
		\begin{tikzcd}
			 \QCoh(Y) \arrow{r}{F} \arrow{d}{\widetilde{F}} & \cO_X \Mod \arrow{d} \\
			 A_V \Mod \arrow{r}{\varepsilon^*_V} & \cO_V \Mod
		\end{tikzcd}
	\end{equation}
	commutes.
	Recall now from \cref{prop:basic_properties_global_sections}  that the functor $\varepsilon_V^*$ commutes with filtered colimits, and it is conservative, $t$-exact and preserves flat, coherent and connective objects. 
	Therefore, we deduce that the symmetric monoidal functor $\widetilde{F} \colon \QCoh(Y) \to A_V \Mod$ commutes with colimits and preserves flat objects and connective objects.
	In particular, \cite[Theorem 9.3.0.3]{Lurie_SAG} implies the existence of a map $g \colon \Spec(A_V) \to Y$ so that $\widetilde{F} \simeq g^*$.
	The map $g$ defines an element in $G^p_X(Y)(V)$.
	Let $f$ be the image of $g$ via the canonical map
	\[ G^p_X(Y)(V) \longrightarrow G^s_X(Y)(V) \simeq F^s_X(Y\an)(V) \simeq \Map_{\dAnSt_k}(V, Y\an) . \]
	Then unravelling the definitions, we see that the diagram
	\[ \begin{tikzcd}
		\QCoh(Y) \arrow{r}{\widetilde{F}} \arrow{d} & A_V \Mod \arrow{d}{\widetilde{\varepsilon_V^*}} \\
		\cO_{Y\an} \Mod \arrow{r}{f^*} & \cO_V \Mod
	\end{tikzcd} \]
	commutes.
	Combining this with the commutativity of the diagram \eqref{eq:refined_tannakian_I}, we see that the conclusion follows.
\end{proof}

\subsection{Mapping stacks with Tannakian target}

We now turn to the main result of this paper:

\begin{thm} \label{thm:tannakian_target}
	Let $X, Y \in \mathrm{dSt}^{\mathrm{afp}}_k$ be derived stacks locally almost of finite presentation.
	Assume that:
	\begin{enumerate}
		\item the stack $Y$ is a geometric, tannakian stack such that $\QCoh(Y) \simeq \mathrm{Ind}(\Perf(Y))$;
		\item the mapping stack $\bfMap(X,Y)$ is geometric.
	\end{enumerate}
	If $X$ satisfies the GAGA property, then the canonical map
	\[ \Map(X,Y) \longrightarrow \Map(X\an, Y\an) \]
	is an equivalence.
	Furthermore, if $X$ satisfies the universal GAGA property, then the canonical map
	\[ \bfMap(X,Y)\an \longrightarrow \bfAnMap(X\an, Y\an) \]
	is an equivalence.
\end{thm}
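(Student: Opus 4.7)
The plan is to reduce both statements to a Tannakian comparison. The tannakian hypothesis on $Y$, combined with $\QCoh(Y) \simeq \Ind(\Perf(Y))$, yields a fully faithful embedding
\[
P \colon \Map_{\dSt_k}(X,Y) \longhookrightarrow \Fun^\otimes(\Perf(Y), \Perf(X)),
\]
whose essential image consists of those symmetric monoidal functors whose colimit-extension $\tilde F \colon \QCoh(Y) \to \QCoh(X)$ is colimit-preserving and respects flat and connective objects. \cref{thm:refined_tannakian} provides the analogous fully faithful embedding $\Map_{\dAnSt_k}(X\an, Y\an) \hookrightarrow \Fun^\otimes(\Perf(Y), \Perf(X\an))$, and the GAGA equivalence $\Perf(X) \simeq \Perf(X\an)$ places both embeddings in the same target, compatibly with the canonical analytification map on mapping spaces.

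The first statement then reduces to matching essential images. Analytification shows the algebraic image is contained in the analytic one. For the reverse, given $f \colon X\an \to Y\an$, the composite $\Perf(Y) \to \Perf(Y\an) \xrightarrow{f^*} \Perf(X\an) \simeq \Perf(X)$ extends by colimits to $\tilde F \colon \QCoh(Y) \to \QCoh(X)$. Its further composition with the analytification $\QCoh(X) \to \cO_{X\an}\Mod$ agrees with the colimit-extension of $f^*|_{\Perf(Y)}$, which preserves flat and connective objects by \cref{lem:refined_tannakian_image_necessary_conditions}. Since the GAGA assumption makes $\QCoh(X) \to \cO_{X\an}\Mod$ symmetric monoidal, $t$-exact and conservative, both properties transfer back to $\tilde F$: connectivity via $\pi_i$ and conservativity, and flatness by applying conservativity to tensor products with heart objects (using monoidality of analytification). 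Algebraic Tannaka then supplies an algebraic morphism $X \to Y$ inducing $f$.

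For the second statement I apply \cref{prop:general_strategy_analytification} to the canonical map $\bfMap(X,Y)\an \to \bfAnMap(X\an, Y\an)$. For $U \in \dAfd_k$ and a derived affinoid (resp.\ Stein) open $V \subset U$, unwinding the definitions gives
\[
G^p_U(\bfMap(X,Y))(V) \simeq \Map_{\dSt_k}(X \times \Spec(A_V), Y), \quad F^s_U(\bfAnMap(X\an, Y\an))(V) \simeq \Map_{\dAnSt_k}(X\an \times V, Y\an).
\]
The Tannakian argument above now applies with $X \times \Spec(A_V)$ and $X\an \times V$ in place of $X$ and $X\an$: the universal GAGA hypothesis furnishes the required equivalence $\Perf(X \times \Spec(A_V)) \simeq \Perf(X\an \times V)$ in the non-archimedean case, together with conservativity and $t$-exactness of the relative analytification functor, yielding the hypothesis of \cref{prop:general_strategy_analytification}(1). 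In the complex case, one replaces a single Stein neighbourhood by the filtered system of Stein neighbourhoods of a compact Stein $K \subset U$ and concludes via the ind-object equivalence of perfect-complex categories furnished by universal GAGA, yielding hypothesis (2).

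The main obstacle is the complex case of the second statement, where the essential-image comparison must be performed at the level of ind-objects indexed by relatively compact Stein neighbourhoods of $K$. This forces one to mimic, in the relative setting $X \times \Spec(A_V)$, the passage from Stein to compact Stein already executed in the proof of \cref{thm:refined_tannakian}; the key technical input is precisely the ind-equivalence guaranteed by \cref{thm:relative_GAGA} and \cref{thm:perfect_complexes_compact_Stein}. Once this ingredient is in place, the Tannakian bookkeeping is routine and the transfer of flatness and connectivity through the relative analytification functor proceeds exactly as in the absolute case.
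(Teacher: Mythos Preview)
Your proposal is correct and follows essentially the same route as the paper. The only organizational difference is that you prove the absolute statement directly via the Tannakian comparison, whereas the paper treats it as the special case $U=\Sp(k)$ of the relative argument (and explicitly remarks that a direct proof in that case would be easier); in both approaches the relative statement is handled by the strategy of \cref{prop:general_strategy_analytification}, with the complex case requiring the ind-object bookkeeping you correctly identify as the main obstacle.
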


\begin{proof}
	We first observe that when $U = \Sp(k)$ we have
	\[ \bfMap(X,Y)\an(U) \simeq \Map(X,Y) , \]
	so the first statement follows from the proof of the second.\footnote{Note that since $\Sp(\C)$ is a compact Stein space (and may be considered as a constant pro-object) this is indeed a special case of our result. Of course a direct proof in this case would be significantly easier than in the relative case that is our main interest.}
	
	Let now $U \in \dAfd_k$ be a derived $k$-affinoid (resp.\ Stein) space.
	Assume that $X$ satisfies the GAGA property relative to $U$.
	We will prove that the natural morphism
	\[ \bfMap(X,Y)\an(U) \longrightarrow \bfAnMap(X\an, Y\an)(U) \]
	is an equivalence.
	We work on $\cX_U$.
	Using \cref{lem:F_continuous_and_cocontinuous} we are reduced to checking that the natural morphism
	\[ F^s_U( \bfMap(X,Y)\an ) \longrightarrow F^s_U( \bfAnMap(X\an, Y\an) ) \]
	is an equivalence.
	Using \cref{thm:generalized_adjunction} and the fact that $\bfMap(X,Y)$ is geometric, we obtain a natural equivalence
	\[ G^s_U( \bfMap(X,Y) ) \longrightarrow F^s_U(\bfMap(X,Y)\an) . \]
	We consider as in \cref{prop:general_strategy_analytification} the induced map
	\begin{equation} \label{eq:tannakian_target_generalized_adjunction}
		G^p_U( \bfMap(X,Y) ) \longrightarrow F^s_U( \bfAnMap(X\an, Y\an) ) .
	\end{equation}
	
	We first deal with the non-archimedean case.
	In this case, we claim that the map \eqref{eq:tannakian_target_generalized_adjunction} is an equivalence.
	Fix an \'etale map $V \to U$ from a derived $k$-affinoid space $V$.
	Let
	\[ A_V \coloneqq \Gamma(V; \cO_V\alg) . \]
	Then
	\[ G^p_U( \bfMap(X,Y) )(V) \simeq \Map_{\dSt_k}( \Spec(A_V), \bfMap(X,Y) ) \simeq \Map_{\dSt_k}( \Spec(A_V) \times X, Y ) . \]
	Since $Y$ is tannakian, we have a fully faithful embedding
	\[ Q \colon \Map_{\dSt_k}( \Spec(A_V) \times X, Y ) \longhookrightarrow \Fun^\otimes( \Perf(Y), \Perf(\Spec(A_V) \times  X) . \]
	On the other hand, \cref{thm:refined_tannakian} provides us with a fully faithful embedding
	\[ Q\an \colon F^s_U(\bfAnMap(X\an,Y\an))(V) \simeq \Map_{\dAnSt_k}(V \times X\an, Y\an) \longhookrightarrow \Fun^\otimes( \Perf(Y), \Perf(V \times X\an) ) . \]
	Now, since $X$ satisfies the universal GAGA property, the functor
	\[ \varepsilon_{X,V}^* \colon \Perf(\Spec(A_V) \times  X) \simeq \Perf(V \times X\an) \]
	is an equivalence.
	We therefore obtain the following diagram:
	\[ \begin{tikzcd}
		\Map_{\dSt_k}( \Spec(A_V) \times X, Y ) \arrow{r} \arrow[hook]{d}{Q} & \Map_{\dAnSt_k}( V \times X\an, Y\an ) \arrow[hook]{d}{Q\an} \\
		\Fun^\otimes( \Perf(Y), \Perf(\Spec(A_V) \times X) ) \arrow{r}{\sim}[swap]{\varepsilon_{X,V}^*} & \Fun^\otimes(\Perf(Y), \Perf(V \times X\an)) .
	\end{tikzcd} \]
	This immediately implies that the top horizontal map is fully faithful.
	We are therefore left to check that it is essentially surjective.
	To do so, we fix a morphism
	\[ f \colon V \times X\an \longrightarrow Y\an . \]
	Consider the extended diagram
	\[ \begin{tikzcd}
		\Map_{\dSt_k}( \Spec(A_V) \times X, Y ) \arrow{r} \arrow[hook]{d}{Q} & \Map_{\dAnSt_k}( V \times X\an, Y\an ) \arrow[hook]{d}{Q\an} \\
		\Fun^\otimes( \Perf(Y), \Perf(\Spec(A_V) \times X) ) \arrow{d} \arrow{r}{\sim}[swap]{\varepsilon_{X,V}^*} & \Fun^\otimes(\Perf(Y), \Perf(V \times X\an)) \arrow{d} \\
		\Fun^\otimes_L( \QCoh(Y), \QCoh( \Spec(A_V) \times X ) ) \arrow{r}[swap]{\varepsilon_{X,V}^*} & \Fun^\otimes_L( \QCoh(Y), \cO_{V \times X\an} \Mod ) .
	\end{tikzcd} \]
	Let
	\[ F \colon \Perf(Y) \longrightarrow \Perf(\Spec(A_V) \times X) \]
	be the functor corresponding to $f^* \colon \Perf(Y) \to \Perf(V \times X\an)$ via the equivalence $\varepsilon_{X,V}^*$, and let
	\[ \widetilde{F} \colon \QCoh(Y) \longrightarrow \QCoh(\Spec(A_V) \times X) \]
	be the functor obtained by extension along $\Perf(Y) \hookrightarrow \QCoh(Y) \simeq \mathrm{Ind}(\Perf(Y))$.
	Since $Y$ is tannakian, it is enough to check that $\widetilde{F}$ commutes with flat objects and connective objects.
	We observe that
	\[ \varepsilon_{X,V}^* \circ \widetilde{F} \simeq \widetilde{f}^* , \]
	where
	\[ \widetilde{f}^* \colon \QCoh(Y) \longrightarrow \cO_{V \times X\an} \Mod \]
	is the functor obtained by left Kan extension of $f^*$ along $\Perf(Y) \hookrightarrow \mathrm{Ind}(\Perf(Y)) \simeq \QCoh(Y)$.
	As such, \cref{lem:refined_tannakian_image_necessary_conditions} implies that it preserves flat objects and connective objects.
	We now observe that since $X$ satisfies GAGA relative to $V$ the functor
	\[ \varepsilon_{X,V}^* \colon \QCoh(\Spec(A_V) \times X) \longrightarrow \cO_{V \times X\an} \Mod \]
	is conservative and $t$-exact.
	As flatness and connectivity of an object $\cF \in \QCoh(\Spec(A_V) \times X)$ can be checked locally with respect to $X$, we conclude that $\cF$ is flat (resp.\ connective) if and only if $\varepsilon_{X,V}^*(\cF)$ is flat (resp.\ connective).
	Since we know that $\widetilde{f}^*$ preserves flat objects and connective objects, it follows that the same is true for $\widetilde{F}$.
	Since $Y$ is tannakian, we see that $F$ comes from a morphism $\overline{f} \colon \Spec(A_V) \times X \to Y$.
	Moreover the commutativity of the diagram implies that
	\[ \overline{f}\an \simeq f . \]
	This completes the proof in the non-archimedean case.\\
	
	We now deal with the \canal case.
	We adapt the idea of \cref{prop:general_strategy_analytification} to this context.
	As usual, the overall strategy stays the same, but instead of proving that the map \eqref{eq:tannakian_target_generalized_adjunction} is an equivalence on the spot, we prove that it becomes an equivalence after passing (via \cref{thm:sheaves_compact_subsets}) to compact Stein subsets of $U$.
	We will conclude by \cref{lem:sheaves_open_sheaves_compact}.
	Let therefore $K \subset U$ be a compact Stein in $U$ and let $(K)_U$ be the associated pro-object in $\dAnSt_\C$.
	If $V \subset U$ is an open Stein subspace, we let
	\[ A_V \coloneqq \Gamma(V; \cO_V\alg) . \]
	Then, unravelling the definitions, we have
	\[ G^p_U( \bfMap(X,Y) )((K)_U) \simeq \colim_{K \subset V \subset U} \Map_{\dSt_\C}( \Spec(A_V) \times X, Y ) , \]
	where the colimit ranges over all the open Stein neighbourhood of $K$ inside $U$.
	Since $Y$ is tannakian, we have a fully faithful embedding
	\[ Q_V \colon \Map_{\dSt_\C}( \Spec(A_V) \times X,  Y) \longhookrightarrow \Fun^\otimes(\Perf(Y), \Perf(\Spec(A_V) \times X)) . \]
	Since fully faithful functors are stable under filtered colimits, we obtain a fully faithful inclusion
	\[ Q_{(K)} \colon \colim_{K \subset V \subset U} \Map_{\dSt_\C}( \Spec(A_V) \times X, Y ) \longhookrightarrow \colim_{K \subset V \subset U} \Fun^\otimes(\Perf(Y), \Perf(\Spec(A_V) \times X) . \]
	We work as in \cref{thm:refined_tannakian} and introduce $\Fun^\otimes_{\mathrm{ind}}$, the mapping space in $\Ind(\Cat_\infty^{\mathrm{st}, \otimes})$.
	Then we have a tautological equivalence
	\[ \colim_{K \subset V \subset U} \Fun^\otimes(\Perf(Y), \Perf(\Spec(A_V) \times X)) \simeq \Fun^\otimes_{\mathrm{ind}} \Big( \Perf(Y), \fcolim_{K \subset V \subset U} \Perf( \Spec(A_V) \times X) \Big) . \]
	On the other hand, \cref{thm:refined_tannakian} provides us with fully faithful embeddings
	\[ Q\an_V \colon \Map_{\dAnSt_\C}(V \times X\an, Y\an) \longhookrightarrow \Fun^\otimes(\Perf(Y), \Perf(V \times X\an)) . \]
	These embeddings assemble into a fully faithful functor
	\[ Q\an_{(K)} \colon \colim_{K \subset V \subset U} \Map_{\dAnSt_\C}(V \times X\an, Y\an) \longhookrightarrow \colim_{K \subset V \subset U} \Fun^\otimes( \Perf(Y), \Perf(V \times X\an) ) . \]
	Once again, we can formally rewrite
	\[ \colim_{K \subset V \subset U} \Fun^\otimes( \Perf(Y), \Perf( V \times X\an) ) \simeq \Fun^\otimes_{\mathrm{ind}} \Big( \Perf(Y), \fcolim_{K \subset V \subset U} \Perf(V \times X\an) \Big) . \]
	Consider now the following commutative diagram:
	\[ \begin{tikzcd}
		\colim_{K \subset V \subset U} \Map_{\dSt_\C}( \Spec(A_V) \times X, Y) \arrow{r} \arrow[hook]{d}{Q_{(K)}} & \colim_{K \subset V \subset U} \Map_{\dAnSt_\C}( V \times X\an, Y\an ) \arrow[hook]{d}{Q\an_{(K)}} \\
		\Fun^\otimes_{\mathrm{ind}}\Big( \Perf(Y) , \fcolim_{K \subset V \subset U} \Perf(\Spec(A_V) \times X) \Big) \arrow{r} & \Fun^\otimes_{\mathrm{ind}}\Big( \Perf(Y), \fcolim_{K \subset V \subset U} \Perf(V \times X\an) \Big) .
	\end{tikzcd} \]
	Recall now that $X$ satisfies the GAGA property relative to $U$ (see \cref{defin:strong_GAGA}(3)).
	In other words, the canonical morphism
	\[ \fcolim_{K \subset V \subset U} \Perf(\Spec(A_V) \times X) \longrightarrow \fcolim_{K \subset V \subset U} \Perf(V \times X\an) \]
	is an equivalence in $\Ind(\Cat_\infty^{\mathrm{st}, \otimes})$.
	This implies that the bottom horizontal arrow in the above diagram is an equivalence.
	As $Q_{(K)}$ and $Q_{(K)}\an$ are fully faithful embeddings, we deduce that the top horizontal morphism is fully faithful as well.
	We are therefore left to check that it is essentially surjective as well.
	Let
	\[ [f] \in \colim_{K \subset V \subset U} \Map_{\dAnSt_\C}(V \times X\an, Y\an) \]
	be any element and let
	\[ f \colon V \times X\an \longrightarrow Y\an \]
	be a representative for $[f]$.
	By construction, we have
	\[ Q\an_{(K)}([f]) \simeq [f^*] \in \Fun^\otimes_{\mathrm{ind}} \Big(\Perf(Y), \fcolim_{K \subset V \subset U} \Perf(V \times X\an) \Big) \simeq \colim_{K \subset V \subset U} \Fun^\otimes(\Perf(Y), \Perf(V \times X\an))  . \]
	Via the equivalence
	\[ \varepsilon^*_{X,(K)} \colon \Fun^\otimes_{\mathrm{ind}} \Big( \Perf(Y), \fcolim_{K \subset V \subset U} \Perf(\Spec(A_V) \times X) \Big) \longrightarrow \Fun^\otimes_{\mathrm{ind}}\Big( \Perf(Y) , \fcolim_{K\subset V \subset U} \Perf(V \times X\an) \Big) \]
	we can select an open Stein neighbourhood $W$ of $K$ and a symmetric monoidal functor
	\[ F_W \colon \Perf(Y) \longrightarrow \Perf(\Spec(A_W) \times X) \]
	such that
	\[ [\varepsilon^*_{X,W} \circ F_W] \simeq [f^*] . \]
	Without loss of generality, we can suppose that $W \subset V$.
	Let $f_W \colon W \times X\an \longrightarrow Y\an$ be the restriction of $f$ along $W \times X\an \to V \times X\an$.
	Then since $[\varepsilon_{X,W}^* \circ F_W] \simeq [f_W^*]$, we see that up to shrinking $W$ again, we can suppose that there is a natural equivalence
	\[ \varepsilon_{X,W}^* \circ F_W \simeq f_W^* \]
	of functors $\Perf(Y) \to \Perf(W \times X\an)$.
	Let
	\[ \widetilde{F}_W \colon \QCoh(Y) \longrightarrow \QCoh(\Spec(A_V) \times X) \]
	be the extension of $F_W$ along $\Perf(Y) \hookrightarrow \QCoh(Y) \simeq \mathrm{Ind}(\Perf(Y))$.
	We claim that $\widetilde{F}_W$ commutes with flat objects and connective objects.
	As $Y$ is tannakian, this will suffice to complete the proof.
	To prove the claim, consider the commutative diagram
	\[ \begin{tikzcd}
		{} & \QCoh(Y) \arrow{dr}{f_W^*} \arrow{dl}[swap]{\widetilde{F}_W} \\
		\QCoh(\Spec(A_W) \times X) \arrow{rr}{\varepsilon_{X,W}^*} & & \cO_{V \times X\an} \Mod .
	\end{tikzcd} \]
	Since flatness and connectivity of an object in $\QCoh(\Spec(A_W) \times X)$ can be tested locally on $X$, we can reduce ourselves to the case where $X$ is affine.
	In this case, the conclusion follows from the fact that $\varepsilon_{X,W}^*$ is $t$-exact and conservative and from \cref{lem:refined_tannakian_image_necessary_conditions}.
\end{proof}

\begin{cor}
	Let $X, Y \in \dSt_k^{\mathrm{afp}}$ be derived stacks locally almost of finite presentation over $k$.
	Then the natural map
	\[ \bfMap(X,Y)\an \longrightarrow \bfAnMap(X\an, Y\an) \]
	is an equivalence whenever $Y$ is geometric, Tannakian and $\QCoh(Y) \simeq \Ind(\Perf(Y))$ and whenever $X$ belongs to one of the following cases:
	\begin{enumerate}
		\item $X$ is a derived proper geometric stack locally almost of finite presentation over $k$ such that $\bfMap(X,Y)$ is a geometric stack;
		\item $X$ is of the form $Z_{\mathrm{dR}}$ or $Z_{\mathrm{Dol}}$ for some smooth and proper scheme $Z$;
		\item $X$ is of the form $K_\rB$ for some finite homotopy type $K$.
	\end{enumerate}
\end{cor}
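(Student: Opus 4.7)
The plan is to invoke \cref{thm:tannakian_target} in each of the three cases. The standing hypotheses on $Y$ reproduce exactly the target-side hypotheses of that theorem, so it suffices to verify, for each class of $X$ considered, that (i) $X$ satisfies the universal GAGA property and (ii) the mapping stack $\bfMap(X,Y)$ is geometric.

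Point (i) is a direct citation. Case (1) is the content of \cref{thm:relative_GAGA}; the two subcases of (2) are \cref{prop:de_Rham_universal_GAGA} and \cref{prop:Dolbeault_universal_GAGA}, both of whose hypotheses are met precisely because $Z$ is assumed smooth and proper; and case (3) is \cref{prop:Betti_universal_GAGA}. So (i) requires no new work.

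Point (ii) is the hypothesis in case (1). In case (3), I would use that a finite homotopy type $K$ admits a presentation as a finite colimit in $\cS$ of copies of the point. Since the construction $K \mapsto K_\rB$ is given as the sheafification of the constant presheaf, and both of those operations preserve colimits (this commutation is exactly what is used in the proof of $(K_\rB)\an \simeq K_\rB\an$), it follows that $K_\rB$ is a finite colimit in $\dSt_k$ of copies of $\Spec(k)$. Consequently $\bfMap(K_\rB, Y)$ becomes a finite \emph{limit} of copies of $Y$, and geometric stacks are stable under finite limits.

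The most delicate input is the geometricity of $\bfMap(Z_{\mathrm{dR}}, Y)$ and $\bfMap(Z_{\mathrm{Dol}}, Y)$ in case (2), and this is where I expect the main technical obstacle to lie. By construction, each of $Z_{\mathrm{dR}}$ and $Z_{\mathrm{Dol}}$ is a geometric realisation of a simplicial object whose terms are formal completions of a smooth closed substack inside a smooth proper derived stack (the small diagonal inside $Z^{\times n}$ for the de Rham case; the zero section inside the iterated $\rT Z$ for the Dolbeault case). Mapping into $Y$ converts this realisation into a totalisation of levelwise mapping stacks of the shape $\bfMap(\widehat{W_n}, Y)$. Representability of each such term is a standard but nontrivial application of the derived Artin--Lurie representability theorem to the proper smooth $W_n$ (cf.\ \cref{rem:geometricity_mapping_stacks}), combined with classical formal GAGA to dispatch integrability; the hard part is then promoting these levelwise geometric presentations to a geometric presentation of the totalisation. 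Once this is in place, \cref{thm:tannakian_target} gives the desired equivalence in all three cases.
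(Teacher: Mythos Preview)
Your overall strategy is identical to the paper's: apply \cref{thm:tannakian_target} and verify the universal GAGA property and geometricity of $\bfMap(X,Y)$ in each case. Your citations for universal GAGA match exactly, and your arguments for geometricity in cases (1) and (3) are the same as the paper's.

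The only substantive difference is case (2). You correctly identify the geometricity of $\bfMap(Z_{\mathrm{dR}},Y)$ and $\bfMap(Z_{\mathrm{Dol}},Y)$ as the delicate point, and you sketch an approach via Artin--Lurie representability applied levelwise to the simplicial presentations, followed by a totalisation step you openly flag as ``the hard part.'' That sketch is not a proof: the levelwise terms are mapping stacks out of \emph{formal} completions (not proper geometric stacks), so the invocation of representability for the proper smooth $W_n$ is not quite on the nose, and more importantly the passage from levelwise geometricity to geometricity of an infinite totalisation genuinely requires an argument you do not supply. The paper does not attempt this route at all: it simply remarks that geometricity in case (2) can be deduced from Lurie's representability theorem, and cites Simpson's paper \cite{Simpson_Geometricity} where it has been verified directly. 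So your gap is real but is closed in the literature; you should cite that result rather than leave the argument half-finished.
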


\begin{proof}
	We have to check that $X$ satisfies the universal GAGA property and that $\bfMap(X,Y)$ is geometric.
	For point (1), the geometricity is part of the assumption, and the universal GAGA property is exactly the content of \cref{thm:relative_GAGA}.
	In point (2), the geometricity can be deduced from Lurie's representability theorem, but it has also been verified directly in \cite{Simpson_Geometricity}.
	The universal GAGA property for these cases has been verified in Propositions \ref{prop:de_Rham_universal_GAGA} and \ref{prop:Dolbeault_universal_GAGA}.
	Finally, in point (3) the geometricity just follows from the fact that geometric stacks are closed under finite limits, while the universal GAGA property has been proven in \cref{prop:Betti_universal_GAGA}.
\end{proof}

\begin{rem}
	We discussed the question of the geometricity of $\bfMap(X,Y)$ in \cref{rem:geometricity_mapping_stacks}.
	As for the assumptions on $Y$, let us remark that they are satisfied in the following two important cases:
	\begin{enumerate}
		\item $Y$ is a quasi-compact quasi-separated Deligne-Mumford stack
		\item $Y$ is the classifying stack of an affine group scheme of finite type in characteristic 0. 
	\end{enumerate} 
	Both examples are tannakian by \cite[Theorem 3.4.2]{DAG-VIII}.
	Compact generation is proved in \cite[Theorem A]{Hall_Rydh_2017} (see also Example 9.6) for the first case and \cite[Theorem A]{Hall_Rydh_2016} for the second.
\end{rem}

\section{Applications} \label{sec:applications}

In this section we develop some applications of the main results of this paper.

\subsection{Further consequences of \cref{thm:generalized_adjunction}}

\Cref{thm:generalized_adjunction} is one of the key technical results of this paper.
It is certainly the main tool we have to deal with analytification of geometric stacks that are not Deligne-Mumford.
We use it here to deduce some other general properties of the analytification functor.
The following is a generalization of \cref{lem:perfect_complexes_affinoid} and \cref{thm:perfect_complexes_compact_Stein}:

\begin{prop}\label{prop:analyticfunctorcat}
	Let $\cC \in \Cat_\infty$ and let $\Perf_k^\cC$ be the derived stack sending $A$ to $\Fun(\cC, \Perf_k(A))$.
	Similarly, let $\AnPerf_k^\cC$ be the derived analytic $\Cat_\infty$-valued stack sending $U$ to $\Fun(\cC, \AnPerf_k(U))$.
	Then:
	\begin{enumerate}
		\item If $k$ is non-archimedean, for every derived $k$-affinoid space $U \in \dAfd_k$, there is a canonical equivalence
		\[ \Perf_k^\cC(A_U) \simeq \AnPerf_k^\cC(U) .  \]
		\item If $k = \C$, then for every derived Stein space $U \in \dStn_\C$ and every compact Stein subset $K$ in $U$, there is a canonical equivalence
		\[ \fcolim_{K \subset U \subset X} \Perf_\C^\cC(A_U) \simeq \fcolim_{K \subset U \subset X} \AnPerf_\C^\cC(U) \]
		in $\Ind(\Cat_\infty^{\mathrm{st}, \otimes})$.
	\end{enumerate}
\end{prop}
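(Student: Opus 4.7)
The plan is to reduce both statements to the corresponding equivalences for $\Perf_k$ itself (the case $\cC = *$), namely \cref{lem:perfect_complexes_affinoid} in the non-archimedean setting and \cref{thm:perfect_complexes_compact_Stein} in the complex-analytic setting. The key observation is that $\Fun(\cC, -)$ is a well-defined endofunctor on $\Cat_\infty^{\mathrm{st}, \otimes}$ (via the pointwise symmetric monoidal structure) that preserves equivalences, and it extends levelwise to $\Ind(\Cat_\infty^{\mathrm{st}, \otimes})$ by applying it to each term of an ind-diagram and to the structural morphisms between them.

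For part (1), \cref{lem:perfect_complexes_affinoid} provides a symmetric monoidal equivalence $\varepsilon_U^* \colon \Perf(A_U) \simeq \Perf(U)$ with inverse given by $\Gamma(U;-)$. Applying $\Fun(\cC,-)$ to both functors yields symmetric monoidal functors $\Fun(\cC, \Perf(A_U)) \rightleftarrows \Fun(\cC, \Perf(U))$ whose compositions in both directions are naturally equivalent to the identity, giving the desired equivalence $\Perf_k^\cC(A_U) \simeq \AnPerf_k^\cC(U)$.

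For part (2), the equivalence from \cref{thm:perfect_complexes_compact_Stein} is realized in $\Ind(\Cat_\infty^{\mathrm{st}, \otimes})$ by the explicit morphisms $\varepsilon_{(K)}^*$ and $\Gamma_{(K)}$ constructed from the levelwise functors $\varepsilon_U^* \colon \Perf(A_U) \to \Perf(U)$ and $\Gamma(U; -) \colon \Perf(U) \to \Perf(A_U)$ (the latter well-defined at the level of perfect complexes by \cref{lem:global_section_relatively_compact} after passing to relatively compact Stein subsets, which form a cofinal subsystem). Applying $\Fun(\cC, -)$ to each object $\Perf(A_U)$ and $\Perf(U)$ in the ind-diagrams, and to the functors $\varepsilon_U^*$ and $\Gamma(U;-)$ together with the restriction functors relating different $U$'s, produces new ind-diagrams in $\Cat_\infty^{\mathrm{st}, \otimes}$ whose colimits are $\fcolim_U \Fun(\cC, \Perf(A_U))$ and $\fcolim_U \Fun(\cC, \Perf(U))$, along with morphisms between them.

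The main obstacle is verifying that the composite equivalences $\Gamma_{(K)} \circ \varepsilon_{(K)}^* \simeq \mathrm{Id}$ and $\varepsilon_{(K)}^* \circ \Gamma_{(K)} \simeq \mathrm{Id}$ transfer to the functor-category setting. For the first, this is immediate since the corresponding identity $\delta \colon \mathrm{Id}_{A_V \Mod} \to \Gamma(V;-) \circ \varepsilon_V^*$ of \cref{prop:basic_properties_global_sections}(4) can be post-composed with any object of $\Fun(\cC, \Perf(A_V))$. For the second, the natural transformation $\theta$ of \cref{lem:mock_counit} producing the equivalence $\varepsilon_{(K)}^* \circ \Gamma_{(K)} \simeq \mathrm{Id}$ in the pro-setting gets promoted to the corresponding statement on $\Fun(\cC, -)$ by whiskering. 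Once these equivalences are in place, the morphism induced by $\Fun(\cC, \varepsilon_U^*)$ in $\Ind(\Cat_\infty^{\mathrm{st}, \otimes})$ is invertible with inverse induced by $\Fun(\cC, \Gamma(U;-))$, completing the proof.
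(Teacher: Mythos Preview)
Your proposal is correct and follows essentially the same approach as the paper: reduce to the base case $\cC = *$ by applying $\Fun(\cC,-)$, invoking \cref{lem:perfect_complexes_affinoid} for part~(1) and \cref{thm:perfect_complexes_compact_Stein} for part~(2). The paper is slightly more streamlined in the complex case: rather than re-examining $\delta$ and $\theta$ by hand, it simply observes that $\Fun(\cC,-)$ induces a functor $\Ind(\Fun(\cC,-)) \colon \Ind(\Cat_\infty^{\mathrm{st},\otimes}) \to \Ind(\Cat_\infty^{\mathrm{st},\otimes})$, and any functor preserves equivalences --- so what you call the ``main obstacle'' is dispatched in one line.
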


\begin{proof}
	Constructing a natural transformation
	\[ \eta^\cC \colon \big(\Perf_k^\cC \big)\an \longrightarrow \AnPerf_k^\cC \]
	is equivalent to constructing a natural transformation
	\[ \Perf_k^\cC \longrightarrow \AnPerf_k^\cC \circ (-)\an . \]
	The latter is simply induced by composition with the analytification functor
	\[ \varepsilon_X^* \colon \Fun(\cC, \Perf(X)) \longrightarrow \Fun(\cC, \Perf(X\an)) . \]
	At this point, in the non-archimedean setting the conclusion follows immediately from \cref{lem:perfect_complexes_affinoid}.
	In the \canal case, consider the functor
	\[ \Fun(\cC, -) \colon \Cat_\infty^{\mathrm{st}, \otimes} \longrightarrow \Cat_\infty^{\mathrm{st}, \otimes} . \]
	Applying the ind-construction we obtain a functor
	\[ \Ind(\Fun(\cC, -)) \colon \Ind(\Cat_\infty^{\mathrm{st}, \otimes}) \longrightarrow \Ind(\Cat_\infty^{\mathrm{st}, \otimes}) , \]
	which takes an ind-object $\fcolim_{i \in I} \cD_i$ to
	\[ \fcolim_{i \in I} \Fun(\cC, \cD_i) . \]
	Evaluating this functor on the equivalence obtained in \cref{thm:perfect_complexes_compact_Stein} we therefore get the equivalence
	\[ \fcolim_{K \subset U \subset X} \Fun(\cC, \Perf(A_U)) \simeq \fcolim_{K \subset U \subset X} \Fun(\cC, \Perf(U)) \]
	we were looking for.
\end{proof}

Similarly to what we did in \cref{sec:analytic_perfect_complexes}, we can now obtain the following analogue of \cref{prop:analytification_perfect_complexes_absolute}:

\begin{cor} \label{cor:analyticfunctorcat}
	Let $\cC \in \Cat_\infty$ be a compact object and let $\bfPerf_k^\cC$ (resp.\ $\bfAnPerf_k^\cC$) be the derived stack (resp.\ derived analytic stack) associated to $\Perf_k^\cC$ and $\AnPerf_k^\cC$.\footnote{Notice that $\bfPerf_k^\cC(X) \simeq \Fun(\cC, \Perf(X))^\simeq$, which is different from $\Fun(\cC, \bfPerf(X))$.}
	Then the canonical morphism
	\[ (\bfPerf_k^\cC)\an \longrightarrow \bfAnPerf_k^\cC \]
	is an equivalence.
\end{cor}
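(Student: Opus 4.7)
The plan is to mimic the proof of \cref{prop:analytification_perfect_complexes_absolute} and apply \cref{prop:general_strategy_analytification} to the morphism $(\bfPerf_k^\cC)\an \to \bfAnPerf_k^\cC$. A prerequisite is that $\bfPerf_k^\cC$ be locally geometric, which is where the compactness hypothesis on $\cC$ enters: if $\cC$ is compact in $\Cat_\infty$, then it is a retract of a finite $\infty$-category, and so $\bfPerf_k^\cC$ is a retract of a finite limit of copies of $\bfPerf_k$. Since locally geometric stacks are stable under finite limits and retracts, and $\bfPerf_k$ is locally geometric by \cite{Toen_Vaquie_Moduli_of_objects}, the claim follows.

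Next, I would unravel the relevant sections. For $U \in \dAfd_k$ (resp.\ $U \in \dStn_\C$) and an étale map $V \to U$, one has, essentially by the definition of the presheaves $G^p$ and $F^s$,
\[ G^p_U(\bfPerf_k^\cC)(V) \simeq \Fun(\cC, \Perf(A_V))^\simeq \quad , \quad F^s_U(\bfAnPerf_k^\cC)(V) \simeq \Fun(\cC, \Perf(V))^\simeq , \]
and the comparison morphism is induced by the analytification $\varepsilon_V^* \colon \Perf(A_V) \to \Perf(V)$.

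In the non-archimedean setting, \cref{prop:analyticfunctorcat}(1) shows that $\varepsilon_V^*$ induces an equivalence $\Fun(\cC, \Perf(A_V)) \simeq \Fun(\cC, \Perf(V))$; passing to maximal $\infty$-groupoids verifies hypothesis (1) of \cref{prop:general_strategy_analytification}. In the \canal setting, for any compact Stein $K \subset U$, \cref{prop:analyticfunctorcat}(2) gives an equivalence
\[ \fcolim_{K \subset V \subset U} \Fun(\cC, \Perf(A_V)) \simeq \fcolim_{K \subset V \subset U} \Fun(\cC, \Perf(V)) \]
in $\Ind(\Cat_\infty^{\mathrm{st}, \otimes})$. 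Since the maximal $\infty$-groupoid functor $(-)^\simeq$ is defined component-wise on ind-objects, applying it produces an equivalence in $\Ind(\cS)$, whose realization verifies hypothesis (2).

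The subtlest step is the geometricity of $\bfPerf_k^\cC$: one must make precise how a compact $\cC \in \Cat_\infty$ yields $\bfPerf_k^\cC$ as a retract of a finite limit of copies of $\bfPerf_k$. Once this is granted, everything else is a formal unraveling plus a direct citation of the two parts of \cref{prop:analyticfunctorcat}, exactly paralleling the proof of \cref{prop:analytification_perfect_complexes_absolute}. The only other point deserving a line of justification is that in the complex case the passage from an equivalence in $\Ind(\Cat_\infty^{\mathrm{st}, \otimes})$ to an equivalence of colimits of maximal groupoids is safe because $(-)^\simeq$ prolongs to a functor on ind-objects and commutes with their realization in this context, making the hypothesis of \cref{prop:general_strategy_analytification} verifiable term by term.
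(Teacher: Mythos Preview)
Your proposal is correct and follows essentially the same route as the paper: verify the hypotheses of \cref{prop:general_strategy_analytification} by unwinding $G^p_U$ and $F^s_U$ and then citing the two parts of \cref{prop:analyticfunctorcat}, exactly as in the proof of \cref{prop:analytification_perfect_complexes_absolute}. The paper is in fact terser than you are on the geometricity step (it simply asserts ``since $\cC$ is compact, $\bfPerf_k^\cC$ is still a locally geometric stack''), so your retract-of-a-finite-limit sketch is already more than the paper provides; your description of the $\Ind$-to-realization passage in the complex case also matches what the paper does.
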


\begin{proof}
	In the previous proposition we constructed a morphism $\eta^\cC \colon (\Perf_k^\cC)\an \to \AnPerf_k^\cC$ which induces a canonical morphism $(\bfPerf_k^\cC)\an \to \bfAnPerf_k^\cC$.
	In order to check that the latter is an equivalence, we verify that the hypotheses of \cref{prop:general_strategy_analytification} are satisfied.
	First of all, since $\cC$ is compact, $\bfPerf_k^\cC$ is still a locally geometric stack.
	Unwinding the definitions, we see that have to check that the map
	\[ \Fun(\cC, \Perf(A_U))^\simeq \longrightarrow \Fun(\cC, \Perf(U))^\simeq \]
	is an equivalence for every $U \in \dAfd_k$ when $k$ is non-archimedean and that
	\[ \colim_{K \subset V \subset U} \Fun(\cC, \Perf(A_V))^\simeq \longrightarrow \colim_{K \subset V \subset U} \Fun(\cC, \Perf(V))^\simeq \]
	is an equivalence for every $U \in \dStn_\C$ and every compact Stein subset $K$ of $U$.
	Both statements follow at once from \cref{prop:analyticfunctorcat}.
\end{proof}

The following result answers a question raised by G.\ Ginot:

\begin{prop} \label{prop:analytification_on_limits}
	Let $I$ be a finite $\infty$-category and let $F \colon I \to \dSt_k^{\mathrm{afp}}$ be a diagram.
	Suppose that for every $i \in I$, $X_i \coloneqq F(i)$ is a geometric stack. Then
	\[ \Big( \lim_{i \in I} F(i) \Big)\an \longrightarrow \lim_{i \in I} F(i)\an \]
	is an equivalence.
\end{prop}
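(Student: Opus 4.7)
The plan is to test the canonical morphism against maps from an arbitrary derived $k$-affinoid (resp.\ Stein) space $U \in \dAfd_k$ and reduce to a statement about the presheaf $G^s_U$. First, set $\mathscr X \coloneqq \lim_{i \in I} F(i)$. Since $I$ is finite and finite limits of (derived) geometric stacks locally almost of finite presentation remain geometric and locally almost of finite presentation, $\mathscr X$ belongs to $\dSt_k^{\mathrm{afp}}$ and is itself geometric. In particular both $\mathscr X\an$ and each $X_i\an$ are defined via the standard analytification, and Theorem~\ref{thm:generalized_adjunction} applies to $\mathscr X$ and to every $X_i$.

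Now use Theorem~\ref{thm:generalized_adjunction}, together with \cref{lem:F_continuous_and_cocontinuous}, to rewrite both sides. On the one hand,
\[ \Map_{\dAnSt_k}(U, \mathscr X\an) \simeq \Map_{\cX_U}(\mathbf 1_U, F^s_U(\mathscr X\an)) \simeq \Map_{\cX_U}(\mathbf 1_U, G^s_U(\mathscr X)) . \]
On the other hand, since mapping spaces commute with limits in the target,
\[ \Map_{\dAnSt_k}(U, \lim_i X_i\an) \simeq \lim_i \Map_{\dAnSt_k}(U, X_i\an) \simeq \Map_{\cX_U}\big(\mathbf 1_U, \lim_i G^s_U(X_i)\big) . \]
So the whole statement reduces to producing a natural equivalence $G^s_U(\mathscr X) \simeq \lim_i G^s_U(X_i)$ in $\cX_U$, compatible with the canonical comparison map coming from the natural transformation $\alpha$ of Theorem~\ref{thm:generalized_adjunction}.

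For this, observe that at the presheaf level the functor $G^p_U \colon \dSt_k \to \PSh(\dAfd_U)$ is defined pointwise by $G^p_U(Y)(V) = \Map_{\dSt_k}(\Spec(A_V), Y)$, so it manifestly commutes with all limits in $Y$. Since sheafification in the $\infty$-topos $\cX_U = \St(\dAfd_U, \tauet)$ is left exact, the functor $G^s_U$ commutes with finite limits. Thus $G^s_U(\mathscr X) \simeq \lim_i G^s_U(X_i)$, and assembling the two displayed equivalences above yields the proposition.

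The main (mild) obstacle is checking that the comparison map built from the universal property of the limit genuinely agrees with the one induced by Theorem~\ref{thm:generalized_adjunction} on geometric stacks, and that $\mathscr X$ truly lies in $\dSt_k^{\mathrm{afp}}$ so that both $G^s_U(\mathscr X)$ and $F^s_U(\mathscr X\an)$ are defined and related by $\alpha_{\mathscr X}$; both follow from functoriality of the constructions in Section~\ref{sec:analytification_geometric_stacks} and from the fact that $(-)^{\mathrm{an, afp}}$ was defined on all of $\dSt_k^{\mathrm{afp}}$. No further analytic input (such as the GAGA-type results of Sections~\ref{sec:analytic_perfect_complexes} and~\ref{sec:GAGA}) is needed here: this application is purely a consequence of Theorem~\ref{thm:generalized_adjunction} and the elementary observation that $G^s_U$ preserves finite limits.
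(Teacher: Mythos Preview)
Your proof is correct and follows essentially the same approach as the paper: reduce to showing $G^s_U(\mathscr X) \simeq \lim_i G^s_U(X_i)$ via \cref{thm:generalized_adjunction} and \cref{lem:F_continuous_and_cocontinuous}, then conclude from the fact that $G^p_U$ preserves limits and sheafification is left exact. The paper's argument is organized identically, with the minor difference that it phrases the final reduction as checking that $G^p_U(X) \to \lim_i G^p_U(X_i)$ is an equivalence before sheafifying.
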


\begin{proof}
	Let
	\[ X \coloneqq \lim_{i \in I} X_i \in \dSt_k \]
	be the limit.
	Since $I$ is finite, we see that $X$ is a geometric stack.
	
	It is now enough to check that for every derived $k$-affinoid (resp.\ Stein) space $U \in \dAfd_k$ the canonical map
	\[ \Map_{\dAnSt_k}(U, X\an) \longrightarrow \lim_{i \in I} \Map_{\dAnSt_k}(U, X_i\an)  \]
	is an equivalence.
	We work in the $\infty$-topos $\cX_U$.
	Since $\Map_{\cX_U}(\mathbf 1_U, -)$ commutes with limits, \cref{lem:F_continuous_and_cocontinuous} implies that it is enough to prove that the canonical map
	\[ F^s_U(X\an) \longrightarrow \lim_{i \in I} F^s_U(X_i\an) \]
	is an equivalence.
	Since $X$ and all the $X_i$ are geometric stacks, we can use \cref{thm:generalized_adjunction} to obtain (functorial) equivalences
	\[ G^s_U(X) \simeq F^s_U(X\an) \quad , \quad G^s_U(X_i) \simeq F^s_U(X\an) . \]
	Since the sheafification commutes with finite colimits, we see that the canonical map
	\[ G^s_U(X) \longrightarrow \lim_{i \in I} G^s_U(X_i) \]
	is induced by sheafification from the map
	\[ G^p_U(X) \longrightarrow \lim_{i \in I} G^p_U(X_i) . \]
	It is then sufficient to prove that this second map is an equivalence.
	For every \'etale map from a derived $k$-affinoid (resp.\ Stein) space $V \to U$, we can rewrite
	\[ G^p_U(X)(V) \simeq \Map_{\dSt_k}(\Spec(A_V), X) \quad , \quad G^p_U(X_i)(V) \simeq \Map_{\dSt_k}(\Spec(A_V), X_i) , \]
	where we set as usual $A_V \coloneqq \Gamma(V; \cO_V\alg)$.
	The conclusion now follows from the fact that $X \simeq \lim_{i \in I} X_i$ in $\dSt_k$.
\end{proof}

\subsection{The derived period domain}

The following corollary clarifies a construction that arises when considering the derived period map that is constructed in \cite{DiNatale_Global_Period_2016}. 
This is a derived enhancement of Griffith's classical period map, that associates to smooth projective family of derived stacks a map from the base to a derived period domain.
\begin{cor}\label{cor:period_domain}
	The derived period domain constructed in \cite{DiNatale_Global_Period_2016} is an analytic moduli stack.
\end{cor}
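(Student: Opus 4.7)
The plan is to unwind the construction of the derived period domain in \cite{DiNatale_Global_Period_2016} and recognize it as an object naturally living in $\dAnSt_{\mathbb C}$, rather than as the image under $(-)\an$ of something purely algebraic. Recall that the derived period domain is built there as the analytification of an algebraic moduli stack $\mathcal D$ of filtered perfect complexes on a point equipped with appropriate rank/Hodge-theoretic constraints; equivalently, $\mathcal D$ sits as a locally closed substack of $\bfPerf_{\mathbb C}^{\cC}$ for a suitable compact $\infty$-category $\cC$ encoding the filtration shape (e.g.\ $\cC = \Delta^n$ or a finite poset), together with a fibre product over $\bfPerf_{\mathbb C}$ imposing that the underlying object is a fixed perfect complex.

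First I would describe the analytic moduli stack $\mathcal D\an_{\mathrm{univ}} \in \dAnSt_{\mathbb C}$ intrinsically, as the analogous locally closed substack of $\bfAnPerf_{\mathbb C}^{\cC}$ parametrizing families of filtered analytic perfect complexes satisfying the same conditions. The point is that the defining conditions (ranks of cohomology sheaves, the morphisms at each step being closed inclusions of perfect sub-objects of fixed rank, and the fixing of the underlying object) are local and are preserved, both on the nose and in a conservative/$t$-exact manner, by the analytification functor $\Perf(X) \to \Perf(X\an)$.

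Next I would apply \cref{cor:analyticfunctorcat} to identify $(\bfPerf_{\mathbb C}^\cC)\an \simeq \bfAnPerf_{\mathbb C}^\cC$, and \cref{prop:analytification_on_limits} to handle the fibre products that cut out $\mathcal D$ inside $\bfPerf_{\mathbb C}^\cC$ (the relevant stacks in these fibre products are all geometric). Combined with the fact that analytification commutes with the formation of open substacks (since it is a left Kan extension along a continuous and fully faithful functor of sites), this yields a natural equivalence $\mathcal D\an \simeq \mathcal D\an_{\mathrm{univ}}$ in $\dAnSt_{\mathbb C}$, exhibiting the derived period domain as an honest derived analytic moduli stack.

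The main obstacle will not be the analytification machinery itself — which is by now tailored exactly to this kind of statement — but rather the careful matching of DiNatale's ad hoc construction with a clean description of $\mathcal D$ as a finite limit of opens inside $\bfPerf_{\mathbb C}^\cC$. Once this bookkeeping is done, the combination of \cref{cor:analyticfunctorcat} and \cref{prop:analytification_on_limits}, together with the preservation of the open conditions under $(-)\an$, delivers the result without further work.
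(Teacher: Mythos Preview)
Your proposal is correct and follows essentially the same route as the paper. The paper makes the ``bookkeeping'' you anticipate explicit: it writes $\mathbf D_n(V,Q)$ as an iterated homotopy fibre built from stacks of the form $\bfPerf_{\mathbb C}^{\cC_n}$ (with $\cC_n$ the category of $n$ composable arrows), then applies \cref{cor:analyticfunctorcat} to each piece and \cref{prop:analytification_on_limits} to the fibre products, exactly as you outline; the open Hodge--Riemann degeneracy condition is imposed only after analytification, so no separate argument about ``locally closed'' substacks is needed.
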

\begin{proof}
We recall the definition of the derived period domain. 
Assume we are given a perfect complex $V$, concentrated in degrees $0$ to $2n$ and equipped with a $2n$-shifted bilinear form $Q$ that is non-degenerate on cohomology. 
We think of $V$ as the cohomology complex of a smooth projective variety.
Then there is a derived geometric stack $\mathbf D_{n}(V,Q)$ which classifies decreasing filtrations $F^{*}$ of $V$ of length $n+1$ which induce filtrations on cohomology groups and which satisfy the Hodge-Riemann orthogonality relation with respect to $Q$, i.e.\ $Q$ vanishes on $F^{i}\otimes F^{n+1-i}$.
For precise definitions see Theorem 3.4 of \cite{DiNatale_Global_Period_2016}.
The coarse moduli space of the underived truncation of $\mathbf D_{n}(V,Q)$ recovers the closure of the classical period domain.

The derived period domain is the open analytic substack $U$ of $\mathbf D_{n}(V,Q)\an$ given by the Hodge-Riemann degeneracy conditions on cohomology groups of $F^{*}$. $U$ is a geometric stack \cite{DiNatale_Global_Period_2016}.

The claim of this corollary is that $U$ is in fact the derived analytic moduli stack classifying filtrations of $V$ of length $n+1$ that satisfy the Hodge-Riemann conditions. 
It suffices to show that $\mathbf D_{n}(V,Q)\an$ is equivalent to the derived analytic moduli stack classifying filtrations satisfying Hodge-Riemann orthogonality, which we will denote by $\mathbf{AnD}_{n}(V,Q)$.

To prove this we observe that following the proof of Theorem 3.4 in \cite{DiNatale_Global_Period_2016} $\mathbf D_{n}(V,Q)$ is constructed in a categorical way from the stack of perfect complexes $\mathbf{Perf}_{\C}$.
The stack of $(n+1)$-term filtrations $\mathbf{Filt}_{n}$ is defined as the stack of $n$ composable morphisms in $\mathbf{Perf}_{\C}$. If $\cC_{n}$ denotes the the category of $n$ composable arrows then we have $\mathbf{Filt}_{n} = \mathbf{Perf}_{\C}^{\cC_{n}}$.
The derived flag variety $\mathbf{Flag}_{n}(V)$ is defined as the homotopy fibre of the forgetful map $\mathbf{Filt}_{n} \to \mathbf{Perf}_{\C}$ over $V \in \mathbf{Perf}_{\C}$. 
We can also define the stack $\mathbf{Flag}_{1}(Q)$ as the homotopy fibre of $\mathbf{Filt}_{1}^{ \cC^{1}} \to \mathbf{Perf}_{\C}^{\cC_{1}}$ over $Q: \Sym^{2}V \to \C[2n]$. 
Then $\mathbf D_{n}(V,Q)$ is the limit of a diagram 
\[
\mathbf{Flag}_{n}(V) \stackrel \sigma \longrightarrow \mathbf{Flag}_{1}(\Sym^{2}V) \times \mathbf{Flag}_{1}(\C[2n]) \stackrel e \longleftarrow \mathbf{Flag}_{1}(Q).
\]
Here $e$ is just given by the evaluation maps at source and target. The map $\sigma$ sends a filtration $F^n \to \dots \to F^{0}$ to $S \times (0 \to \cO)$ where $S$ is the 2-term filtration on $\Sym^{2} F^{0}$ given by the image of  $\oplus_{i} (F^{i}\otimes F^{n+1-i})$.

We can now perform the exact same constructions starting from $\bfAnPerf_{\C}$ instead of $\mathbf{Perf}_{\C}$ to obtain $\mathbf{AnD}_{n}(V,Q)$. 

We first construct $\mathbf{AnFilt}_{n} = \mathbf{AnPerf}_{\C}^{\cC_{n}}$ which can be expressed as $\mathbf{Filt}_{n}\an$ by \cref{cor:analyticfunctorcat}. Similarly $\mathbf{AnFlag}_{n}(V)$, defined as the homotopy fibre of $\mathbf{AnPerf}_{\C}^{\cC_{n}} \to \mathbf{AnPerf}_{\C}$ over $V$, is the analytification of $\mathbf{Flag}(V)$
since homotopy fibres commute with analytification. The same holds for $\mathbf{AnFlag}_{1}(Q)$
and putting all of this together $\mathbf{AnD}_{n}(V,Q)$ is a derived analytic moduli stack which is equivalent to the analytification of $\mathbf D_{n}(V,Q)$.
\end{proof}

\subsection{The derived Riemann-Hilbert correspondence}

One of the main applications of the techniques of this paper is to obtain an extended version of the derived Riemann-Hilbert correspondence first proven in \cite{Porta_Derived_Riemann_Hilbert}.
In loc.\ cit.\ the second author following a suggestion of C.\ Simpson introduced for every \canal space $X$ a morphism
\[ \eta_{\mathrm{RH}} \colon X_{\mathrm{dR}} \longrightarrow X_{\mathrm{B}} \]
called the Riemann-Hilbert transformation.
He then showed that if $X$ is smooth the canonical morphism
\[ \eta_{\mathrm{RH}}^* \colon \bfAnMap( X_{\mathrm{B}}, \bfAnPerf_\C ) \longrightarrow \bfAnMap( X_{\mathrm{dR}}, \bfAnPerf_\C ) \]
is an equivalence (see Theorem 6.11 in loc.\ cit.).

\begin{cor}
	Let $X$ be a smooth proper scheme over $\mathbb C$.
	Then $\eta_{\mathrm{RH}}$ induces an equivalence
	\[ \bfMap(X_{\mathrm{B}}, \bfPerf_\C)\an \simeq \bfMap(X_{\mathrm{dR}}, \bfPerf_\C)\an . \]
\end{cor}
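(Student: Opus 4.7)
The plan is to apply Proposition \ref{prop:universal_GAGA_relative_stack_perfect_complexes} to both $X_{\mathrm{dR}}$ and $X_{\mathrm{B}}$ separately in order to reduce the statement to the analytic form of the derived Riemann--Hilbert correspondence already proved in \cite{Porta_Derived_Riemann_Hilbert}. Concretely, I would first verify the two hypotheses of \ref{prop:universal_GAGA_relative_stack_perfect_complexes} for each of $X_{\mathrm{dR}}$ and $X_{\mathrm{B}}$. The universal GAGA property is already established: for $X_{\mathrm{dR}}$ it is the content of Proposition \ref{prop:de_Rham_universal_GAGA} (using that $X$ is smooth and proper), while for $X_{\mathrm{B}}$ it follows from Proposition \ref{prop:Betti_universal_GAGA} applied to the underlying homotopy type of $X$. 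The local geometricity of $\bfMap(X_{\mathrm{dR}}, \bfPerf_\C)$ is proved by Simpson in \cite{Simpson_Geometricity}, and the local geometricity of $\bfMap(X_{\mathrm{B}}, \bfPerf_\C)$ follows from the fact that, since $X$ is a smooth proper complex scheme, the underlying homotopy type of $X\an$ is a finite CW complex $K$, so that $\bfMap(K_\rB, \bfPerf_\C)$ is computed by the finite limit $\Fun(K, \bfPerf_\C)$ and therefore inherits local geometricity from $\bfPerf_\C$.

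With these hypotheses in hand, Proposition \ref{prop:universal_GAGA_relative_stack_perfect_complexes} yields canonical equivalences of derived analytic stacks
\[ \bfMap(X_{\mathrm{dR}}, \bfPerf_\C)\an \simeq \bfAnMap((X_{\mathrm{dR}})\an, \bfAnPerf_\C) \quad , \quad \bfMap(X_{\mathrm{B}}, \bfPerf_\C)\an \simeq \bfAnMap((X_{\mathrm{B}})\an, \bfAnPerf_\C) . \]
Next I would identify the analytifications of $X_{\mathrm{dR}}$ and $X_{\mathrm{B}}$ with their analytic counterparts: the lemma proved in \S\ref{eg:derhamstack} gives $(X_{\mathrm{dR}})\an \simeq (X\an)_{\mathrm{dR}}$ for the smooth geometric stack $X$, and the analogous lemma for Betti stacks in \S\ref{eg:constantstack} gives $(X_{\mathrm{B}})\an \simeq (X\an)_{\mathrm{B}}$.

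At this point the question has been translated entirely into the derived analytic setting, where we may invoke the analytic derived Riemann--Hilbert correspondence of \cite[Theorem~6.11]{Porta_Derived_Riemann_Hilbert}: pullback along $\eta_{\mathrm{RH}} \colon (X\an)_{\mathrm{dR}} \to (X\an)_{\mathrm{B}}$ induces an equivalence
\[ \eta_{\mathrm{RH}}^* \colon \bfAnMap((X\an)_{\mathrm{B}}, \bfAnPerf_\C) \stackrel{\sim}{\longrightarrow} \bfAnMap((X\an)_{\mathrm{dR}}, \bfAnPerf_\C) . \]
Stringing together the equivalences above, we recover the desired equivalence between $\bfMap(X_{\mathrm{B}}, \bfPerf_\C)\an$ and $\bfMap(X_{\mathrm{dR}}, \bfPerf_\C)\an$, and by construction it is induced by $\eta_{\mathrm{RH}}$. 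The only conceptually nontrivial input is the geometricity of $\bfMap(X_{\mathrm{dR}}, \bfPerf_\C)$, but since this is already available in the literature there is no real obstacle; the rest is essentially an orchestration of tools already developed in the paper.
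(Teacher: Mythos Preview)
Your proposal is correct and follows essentially the same route as the paper: invoke the universal GAGA property for $X_{\mathrm{dR}}$ and $X_{\mathrm B}$ (via \cref{prop:de_Rham_universal_GAGA} and \cref{prop:Betti_universal_GAGA}) together with \cref{prop:universal_GAGA_relative_stack_perfect_complexes} to identify the analytified mapping stacks with their analytic counterparts, identify $(X_{\mathrm{dR}})\an \simeq (X\an)_{\mathrm{dR}}$ and $(X_{\mathrm B})\an \simeq (X\an)_{\mathrm B}$, and then apply \cite[Theorem~6.11]{Porta_Derived_Riemann_Hilbert}. You are in fact slightly more explicit than the paper about the local geometricity hypotheses, which the paper's proof leaves implicit.
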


\begin{proof}
	It follows from \cref{eg:derhamstack} and \cref{eg:constantstack} that $X_{\mathrm{dR}}$ and $X_{\mathrm{B}}$ satisfy the universal GAGA property.
	In other words, we proved that the canonical maps
	\[ \bfMap(X_{\mathrm B}, \bfPerf_\C)\an \longrightarrow \bfAnMap((X_{\mathrm B})\an, \bfAnPerf_\C) \]
	and
	\[ \bfMap(X_{\mathrm{dR}}, \bfPerf_\C)\an \longrightarrow \bfAnMap((X_{\mathrm{dR}})\an, \bfAnPerf_\C) \]
	are equivalences.
	Furthermore, we saw in \cref{eg:derhamstack} that there is a canonical morphism
	\[ (X_{\mathrm{dR}})\an \longrightarrow (X\an)_{\mathrm{dR}} \]
	which is furthermore an equivalence because $X$ is smooth.
	Since there is an obvious equivalence $(X_{\mathrm B})\an \simeq (X\an)_{\mathrm B}$, we can use the Riemann-Hilbert transformation for $X\an$ to obtain the equivalence we are looking for.
\end{proof}

As our last application we notice that \cref{thm:refined_tannakian_image_canal} and \cref{thm:tannakian_target} imply together that the Riemann-Hilbert correspondence with coefficients in an algebraic stack satisfying the Tannakian property is still an equivalence:

\begin{cor} \label{cor:generalized_RH_correspondence}
	Let $Y \in \dSt_\C^{\mathrm{afp}}$ be derived stack locally almost of finite presentations and satisfying the assumptions of \cref{thm:tannakian_target}.
	Then for every smooth analytic space $X$, the Riemann-Hilbert transformation $\eta_{\mathrm{RH}} \colon X_{\mathrm{dR}} \to X_{\mathrm{B}}$ induces an equivalence
	\[ \eta_{\mathrm{RH}}^* \colon \bfAnMap(X_{\mathrm{B}}, Y\an) \longrightarrow \bfAnMap(X_{\mathrm{dR}}, Y\an) . \]
\end{cor}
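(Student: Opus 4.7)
The plan is to apply the analytic Tannaka duality of \cref{thm:refined_tannakian} to both mapping stacks and then to invoke the Riemann--Hilbert correspondence at the level of perfect complexes (Theorem 6.11 of \cite{Porta_Derived_Riemann_Hilbert}) to conclude.

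First, in order to check that $\eta_{\mathrm{RH}}^*$ is an equivalence of derived analytic stacks, it suffices by Yoneda to show that for every $U \in \dStn_{\mathbb C}$ the induced morphism
\[ \Map_{\dAnSt_{\mathbb C}}(U \times X_{\mathrm B}, Y\an) \longrightarrow \Map_{\dAnSt_{\mathbb C}}(U \times X_{\mathrm{dR}}, Y\an) \]
is an equivalence of spaces. Since $X$ is smooth, the analytic stacks $U \times X_{\mathrm B}$ and $U \times X_{\mathrm{dR}}$ are natural targets for the Riemann--Hilbert correspondence applied to $U \times X$ (which is smooth up to the smoothness of $U$, a condition we can impose locally on $U$).

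Next, I would apply \cref{thm:refined_tannakian} (whose hypotheses on $Y$ are exactly those of \cref{thm:tannakian_target} that we are assuming) to obtain, for both $\star \in \{\mathrm B, \mathrm{dR}\}$, a fully faithful embedding
\[ \Map_{\dAnSt_{\mathbb C}}(U \times X_\star, Y\an) \longhookrightarrow \Fun^\otimes(\Perf(Y), \Perf(U \times X_\star)) . \]
Theorem 6.11 of \cite{Porta_Derived_Riemann_Hilbert} provides a symmetric monoidal equivalence $\eta_{\mathrm{RH}}^* \colon \Perf(U \times X_{\mathrm B}) \simeq \Perf(U \times X_{\mathrm{dR}})$, which induces an equivalence between the ambient $\Fun^\otimes$ categories compatible with the two Tannakian embeddings. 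This immediately implies that $\eta_{\mathrm{RH}}^*$ is fully faithful on mapping spaces.

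The remaining and main obstacle is essential surjectivity, i.e.\ to show that the essential images of the two Tannakian embeddings correspond under the Riemann--Hilbert equivalence. For this I would invoke \cref{thm:refined_tannakian_image_canal}, which characterizes the essential image, locally on nested relatively compact Stein subsets of $U$, as consisting of those symmetric monoidal functors whose extension $\QCoh(Y) \to \cO \Mod$ commutes with colimits and preserves flat and connective objects. The key observation is that pullback along $\eta_{\mathrm{RH}}$ is compatible with the natural forgetful functors $\cO_{X_\star} \Mod \to \cO_X \Mod$, and that the smoothness of $X$ together with the presentation of $X_{\mathrm{dR}}$ and $X_{\mathrm B}$ as geometric realizations of \v{C}ech nerves of smooth atlases allows us to test flatness and connectivity after restriction to the smooth cover. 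The hardest part will be to make this reduction precise, matching the flat/connective conditions on the two sides through the Riemann--Hilbert equivalence in a way compatible with the local application of \cref{thm:refined_tannakian_image_canal}; once this is done, the fully faithful embeddings have identified essential images and $\eta_{\mathrm{RH}}^*$ is the desired equivalence.
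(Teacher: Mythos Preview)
Your overall strategy matches the paper's: reduce to Stein $S$, embed both mapping spaces into $\Fun^\otimes$ via \cref{thm:refined_tannakian}, use the Riemann--Hilbert equivalence on perfect complexes to deduce full faithfulness, and then match essential images via \cref{thm:refined_tannakian_image_canal}. The full faithfulness part is fine.

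The gap is in essential surjectivity, and it is exactly the part you flag as ``the hardest part'' without resolving. \Cref{thm:refined_tannakian_image_canal} applies only to morphisms out of a \emph{derived Stein space}, and only after passing to a nested pair $W \Subset V$ of relatively compact Steins. But $S \times X_{\mathrm B}$ and $S \times X_{\mathrm{dR}}$ are not Stein, and shrinking $S$ alone does not help because the non-Stein factor is $X_\star$. The paper fixes this by choosing three nested open Stein hypercovers $W_\bullet \Subset V_\bullet \Subset U_\bullet$ of $X$ by \emph{contractible} Stein opens; contractibility makes each $(U_m)_{\mathrm B}$ a disjoint union of copies of $\Sp(\C)$, so $S \times (U_m)_{\mathrm B}$ is a disjoint union of copies of $S$ and \cref{thm:refined_tannakian_image_canal} applies termwise (the triple nesting supplies the required $W \Subset V$ condition on the $S$-factor). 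The local maps $g_m \colon S \times (W_m)_{\mathrm B} \to Y\an$ then glue via the full faithfulness already established. To feed this machine, one must still check that the relevant $\widetilde G_m$ preserve flat and connective objects. This cannot be read off directly from the Betti side; the paper transports the question back to $\widetilde F$ on $S \times X_{\mathrm{dR}}$ via \cite[Corollary 5.3]{Porta_Derived_Riemann_Hilbert}, writes $S \times X_{\mathrm{dR}}$ as a colimit of genuine derived Stein spaces $S \times \Delta_{U_m}^{n,(i)}$ (infinitesimal neighbourhoods of diagonals), and applies \cref{lem:refined_tannakian_image_necessary_conditions} termwise. Your sketch gestures at \v{C}ech nerves but does not supply either the contractibility trick on the Betti side or the Stein decomposition of the de Rham side, without which neither \cref{thm:refined_tannakian_image_canal} nor \cref{lem:refined_tannakian_image_necessary_conditions} is applicable.
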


\begin{proof}
	Fix a derived Stein space $S \in \dStn_\C$.
	We have to prove that $\eta_{\mathrm{RH}} \colon X_{\mathrm{dR}} \to X_{\mathrm B}$ induces an equivalence
	\[ \eta_{\mathrm{RH}}^* \colon \Map_{\dAnSt_\C}( S \times X_{\mathrm B} , Y\an ) \longrightarrow \Map_{\dAnSt_\C}( S \times X_{\mathrm{dR}} , Y\an ) . \]
	Consider the commutative diagram
	\[ \begin{tikzcd}
		\Map_{\dAnSt_\C}( S \times X_{\mathrm B}, Y\an ) \arrow{d} \arrow{r} & \Map_{\dAnSt_\C}( S \times X_{\mathrm{dR}}, Y\an ) \arrow{d} \\
		\Fun^\otimes( \Perf(Y), \Perf(S \times X_{\mathrm B}) ) \arrow{r} & \Fun^\otimes( \Perf(Y), \Perf(S \times X_{\mathrm{dR}}) ) ,
	\end{tikzcd} \]
	where the vertical morphisms are the ones induced by \cref{thm:refined_tannakian}.
	This proposition shows furthermore that they are fully faithful.
	The bottom horizontal morphism is an equivalence in virtue of \cite[Theorem 6.11]{Porta_Derived_Riemann_Hilbert}.
	It follows that the top horizontal functor is fully faithful, too.
	
	We are left to check that it is essentially surjective.
	Fix a morphism
	\[ f \colon S \times X_{\mathrm{dR}} \longrightarrow Y\an , \]
	and let
	\[ F \colon \Perf(Y) \longrightarrow \Perf(S \times X_{\mathrm{dR}}) \]
	be the induced symmetric monoidal functor.
	Let
	\[ G \colon \Perf(Y) \longrightarrow \Perf(S \times X_{\mathrm B}) \]
	be the symmetric monoidal functor induced by the equivalence $\eta_{\mathrm{RH}}^* \colon \Perf(S \times X_{\mathrm B}) \xrightarrow{\sim} \Perf(S \times X_{\mathrm{dR}})$.
	We would like to invoke \cref{thm:refined_tannakian_image_canal}, but for this we first have to replace $X_{\mathrm B}$ by a colimit of derived Stein spaces.
	
	Applying the argument of \cite[Lemma 5.14 and Remark 5.15]{Porta_Yu_Higher_analytic_stacks_2014}, we produce three open hypercovers $W_\bullet$, $V_\bullet$ and $U_\bullet$ of $X$ satisfying the following conditions:
	\begin{enumerate}
		\item for every integer $m$, $U_m$, $V_m$ and $W_m$ are disjoint unions of \emph{contractible} open Stein subspaces of $X$;
		\item for every integer $m$, we have $W_m \Subset V_m \Subset U_m$.
	\end{enumerate}
	Observe that for every integer $m$, we have canonical equivalences
	\[ (U_m)_{\mathrm B} \simeq \coprod_{I_U} \Sp(\C) \quad , \quad (V_m)_{\mathrm B} \simeq \coprod_{I_V} \Sp(\C) \quad , (W_m)_{\mathrm B} \simeq \coprod_{I_W} \Sp(\C) , \]
	and that
	\[ | (W_\bullet)_{\mathrm B} | \simeq | (V_{\bullet})_{\mathrm B} | \simeq | (U_\bullet)_{\mathrm B} | \simeq X_{\mathrm B} . \]
	Therefore, we can represent $G$ as an element in the limit
	\[ \lim_{m \in \mathbf \Delta\op} \Fun^\otimes(\Perf(Y), \Perf( S \times (U_m)_{\mathrm B} ) ) . \]
	For every integer $m$, denote by $G_m$ the induced symmetric monoidal functor
	\[ G_m \colon \Perf(Y) \longrightarrow \Perf( S \times (U_m)_{\mathrm B} ) \simeq \prod_{I_U} \Perf(S) . \]
	Let
	\[ \widetilde{G}_m \colon \QCoh(Y) \longrightarrow \cO_{S \times (U_m)_{\mathrm B}} \Mod \simeq \prod_{I_U} \cO_S \Mod \]
	be symmetric monoidal functor obtained by left Kan extension along $\Perf(Y) \hookrightarrow \Ind(\Perf(Y)) \simeq \QCoh(Y)$.
	We claim that each $\widetilde{G}_m$ commutes with perfect complexes, flat objects and connective objects.
	Assuming this claim, \cref{thm:refined_tannakian_image_canal} shows that the composition
	\[ \begin{tikzcd}
		\QCoh(Y) \arrow{r}{G_m} & \cO_{S \times (U_m)_{\mathrm B}} \Mod \arrow{r} & \cO_{S \times (W_m)_{\mathrm B}} \Mod
	\end{tikzcd} \]
	can be represented by a morphism $g_m \colon S \times (W_m)_{\mathrm B} \to Y\an$.
	The full faithfulness provided by \cref{thm:refined_tannakian} shows that the morphisms $g_m$ can be glued back to a morphism
	\[ g \colon S \times X_{\mathrm B} \longrightarrow Y\an . \]
	Finally, the construction shows that $g \circ \eta_{\mathrm{RH}} \simeq f$.
	
	We are therefore left to prove the above claim.
	Reasoning as in \cite[Proposition 5.1]{Porta_Derived_Riemann_Hilbert} we see that pulling back along the canonical morphism $X \to X_{\mathrm{dR}}$ produces a conservative and $t$-exact functor
	\[ \cO_{S \times X_{\mathrm{dR}}} \Mod \longrightarrow \cO_{S \times X} \Mod . \]
	Let
	\[ \widetilde{F} \colon \QCoh(Y) \longrightarrow \cO_{S \times X_{\mathrm{dR}}} \Mod \]
	be the left Kan extension of $F$ along $\Perf(Y) \hookrightarrow \Ind(\Perf(Y)) \simeq \QCoh(Y)$ and define $\widetilde G$ similarly.
	Then
	\[ \widetilde{G} \simeq \eta^*_{\mathrm{RH}} \widetilde(F) . \]
	Invoking \cite[Corollary 5.3]{Porta_Derived_Riemann_Hilbert} it suffices to prove that $\widetilde{F}$ commutes with perfect complexes, flat objects and connective objects.
	
	Observe first that there is a canonical equivalence
	\[ | (U_\bullet)_{\mathrm{dR}} | \simeq X_{\mathrm{dR}} . \]
	Consider next the following \v{C}ech nerve:
	\[ U_{\bullet, \star} \coloneqq \Cech( U_\bullet \to (U_\bullet)_{\mathrm{dR}} ) . \]
	We can identify $U_{\bullet, \star}$ with a bisimplicial object in $\dAnSt_\C$.
	Moreover, \cite[Lemma 4.1]{Porta_Derived_Riemann_Hilbert} provides canonical identifications
	\[ U_{m,n} \simeq \colim_{i \in \mathbb N} \Delta_{U_m}^{n, (i)} , \]
	where $\Delta_{U_m}^n$ denotes the (small) diagonal of $U_m$ in $(U_m)^{\times n}$ and $\Delta_{U_m}^{n,(i)}$ denotes the $i$th infinitesimal neighbourhood of $\Delta_{U_m}^n$ inside $(U_m)^{\times n}$.
	With similar notations, we obtain the following descriptions:
	Let $J \coloneqq \mathbf \Delta\op \times \mathbf \Delta\op \times \mathbb N$.
	Since $\dAnSt_\C$ is an $\infty$-topos, colimits are universal in $\dAnSt_\C$ and in particular we obtain
	\[ \colim_{([m], [n], i) \in J} S \times \Delta_{U_m}^{n,(i)} \simeq S \times X_{\mathrm{dR}} . \]	
	It follows that we can represent the functor $F \colon \Perf(Y) \to \Perf(S \times X_{\mathrm{dR}})$ as an element in the limit
	\[ \lim_{([m], [n], i) \in J} \Fun^\otimes\Big( \Perf(Y), \Perf\big( S \times \Delta_{U_m}^{n,(i)} \big) \Big) . \]
	Let
	\[ F_{m,n}^i \colon \Perf(Y) \longrightarrow \Perf\big( S \times \Delta_{U_m}^{n,(i)} \big) \]
	denote the projection of $F$ on $\Perf\big( S \times \Delta_{U_m}^{n,(i)} \big)$ and let
	\[ \widetilde{F}_{m,n}^i \colon \QCoh(Y) \longrightarrow \cO_{\Delta_{U_m}^{n,(i)}} \Mod \]
	be the left Kan extension of $F_{m,n}^i$ along $\Perf(Y) \hookrightarrow \Ind(\Perf(Y)) \simeq \QCoh(Y)$.
	Notice that each $S \times \Delta_{U_m}^{n,(i)}$ is an derived Stein space.
	Therefore \cref{lem:refined_tannakian_image_necessary_conditions} implies that $\widetilde{F}_{m,n}^i$ preserves perfect complexes, flat objects and connective objects.
	From here, we deduce that the same is true for $\widetilde{F}$. 
	The proof is therefore complete.
\end{proof}

\appendix

\section{Some lemmas on derived Stein spaces}

In this section we prove some basic facts on Stein spaces that do not fit in the main body of the text.
We mainly focus on the \canal setting as the non-archimedean setting has already been addressed in \cite{Porta_Yu_Mapping}.\\

Given a derived \canal space $X \in \dAnc$ we denote by $X_{\mathrm{\'et}}$ its small \'etale site.
This is the $\infty$-site spanned by \'etale maps $Y \to X$ where $Y$ is a derived Stein space.
The truncation functor
\[ \trunc \colon X_{\mathrm{\'et}} \longrightarrow (\trunc(X))_{\mathrm{\'et}} \]
is an equivalence of $\infty$-categories.
This follows directly from \cite[Lemma 3.4]{Porta_DCAGI}.
In virtue of this fact, we give the following definition:

\begin{defin} \label{def:relatively_compact_Stein}
	Let $f \colon U \to V$ be an open immersion of derived Stein spaces.
	We say that \emph{$U$ is relatively compact in $V$ (via $f$)} if the closure of $\trunc(Y)$ inside $\trunc(X)$ is compact.
	In this case, we write $Y \Subset X$.
\end{defin}

\begin{lem} \label{lem:homotopy_group_global_section_Stein}
	Let $U$ be a derived Stein space and let $A_U \coloneqq \Gamma(U;\cO_U\alg)$.
	Then there is a canonical equivalence
	\[ \pi_i(A_U) \simeq \Gamma( U; \pi_i(\cO_U\alg) ) . \]
\end{lem}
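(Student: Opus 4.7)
The plan is to exploit Cartan's Theorem B via the descent (Bousfield--Kan) spectral sequence computing the homotopy groups of global sections of a sheaf of connective simplicial commutative rings. The key point is that, by the very definition of a derived \canal space (\cref{def:derived_analytic_space}), the sheaves $\pi_i(\cO_U\alg)$ are coherent $\pi_0(\cO_U\alg)$-modules, and $\trunc(U)$ is a Stein space, so higher cohomology vanishes.

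Concretely, I would proceed as follows. First, I would view $\cO_U\alg$ as a sheaf of connective simplicial commutative rings (equivalently, via Dold--Kan, a sheaf of connective $\bbE_\infty$-ring spectra in characteristic zero) on the hypercomplete $\infty$-topos underlying $U$. Its Postnikov tower
\[
\cdots \to \tau_{\le n+1} \cO_U\alg \to \tau_{\le n} \cO_U\alg \to \cdots
\]
has successive fibres given by $\pi_i(\cO_U\alg)[i]$, each of which is a coherent sheaf (shifted) on $\trunc(U)$. Applying the global sections functor $\Gamma(U; -)$ and taking homotopy groups produces a convergent spectral sequence of the form
\[
E_2^{p,q} = \rH^p\bigl(U; \pi_q(\cO_U\alg)\bigr) \Longrightarrow \pi_{q-p}(A_U).
\]
Convergence here is guaranteed by the fact that $\cO_U\alg$ is connective and $U$ has finite cohomological dimension for coherent sheaves (indeed, dimension $0$, which is exactly what we will use next).

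Second, I invoke Cartan's Theorem B: since $\trunc(U)$ is a Stein space and each $\pi_q(\cO_U\alg)$ is a coherent sheaf on it, we have $\rH^p\bigl(U; \pi_q(\cO_U\alg)\bigr) = 0$ for all $p > 0$. Consequently the spectral sequence degenerates at $E_2$ and is concentrated on the column $p=0$, yielding a canonical isomorphism
\[
\pi_i(A_U) \simeq \rH^0\bigl(U; \pi_i(\cO_U\alg)\bigr) = \Gamma\bigl(U; \pi_i(\cO_U\alg)\bigr),
\]
which is the desired statement. The naturality of the Postnikov filtration ensures this isomorphism is canonical.

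The only delicate point, rather than a genuine obstacle, is justifying the spectral sequence in the simplicial-commutative-ring setting; however, since the forgetful functor from simplicial commutative rings to connective spectra preserves both limits and the relevant homotopy groups, one may equivalently work with the sheaf of underlying connective spectra and apply the standard descent spectral sequence for spectra on an $\infty$-topos (as in \cite{DAG-VIII} or \cite[\S 1.2.4]{Lurie_Higher_algebra}). The essential geometric input is entirely Cartan's Theorem~B.
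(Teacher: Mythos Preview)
Your proof is correct and follows essentially the same approach as the paper: use the descent spectral sequence for $\Gamma(U; \cO_U\alg)$, invoke Cartan's Theorem~B on the coherent sheaves $\pi_q(\cO_U\alg)$ to kill all terms with $p>0$, and conclude degeneration at $E_2$. Your write-up is simply a more detailed version of the paper's terse argument.
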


\begin{proof}
	First of all, we observe that $\pi_i(\cO_U\alg)$ is by assumption a coherent sheaf on the underived Stein space $\trunc(U)$.
	Therefore, Cartan's theorem B implies that $\Gamma(U; \pi_i(\cO_U\alg))$ is concentrated in cohomological degree zero.
	In turn, this implies that the spectral sequence computing $\Gamma(U; \cO_U\alg)$ degenerates at page $\mathrm E_2$, yielding the desired equivalence.
\end{proof}

\begin{lem} \label{lem:flatness_global_sections_nested_compact_Stein}
	Let $W \Subset V \Subset U$ be a nested sequence of relatively compact derived Stein spaces.
	Set
	\[ A_U \coloneqq \Gamma(U; \cO_U\alg) \quad , \quad A_V \coloneqq \Gamma(V; \cO_V\alg) \quad , \quad A_W \coloneqq \Gamma(W; \cO_W\alg) . \]
	Let $\cF \in \Coh(U)$.
	Then the natural map $A_V \to A_W$ is flat.
\end{lem}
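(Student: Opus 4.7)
The plan is to verify flatness by checking the two standard conditions for a connective map of simplicial commutative rings to be flat: (a) the underived map $\pi_0(A_V) \to \pi_0(A_W)$ is flat in the classical sense, and (b) for every $i \ge 0$, the natural map
\[ \pi_i(A_V) \otimes_{\pi_0(A_V)} \pi_0(A_W) \longrightarrow \pi_i(A_W) \]
is an isomorphism. The key identification we repeatedly use is the previous lemma, which gives $\pi_i(A_U) \simeq \Gamma(\trunc(U); \pi_i(\cO_U\alg))$ for each $U \in \{V, W\}$, and which in particular identifies $\pi_0(A_U)$ with the global sections of the structure sheaf of the underlying (possibly non-reduced) ordinary analytic Stein space $\trunc(U)$.

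For (a), I would invoke the classical theorem of Frisch in several complex variables, which asserts that the restriction map $\cO(\trunc(V)) \to \cO(\trunc(W))$ between global holomorphic functions is flat whenever $\trunc(W) \Subset \trunc(V)$ is a nested pair of relatively compact Stein opens. Since the truncation functor preserves open immersions and the relatively compact inclusion assumption, this applies to our setting. For (b), each $\pi_i(\cO_V\alg)$ is by \cref{def:derived_analytic_space} a coherent sheaf on the ordinary analytic space $\trunc(V)$, and what is needed is the base-change identity
\[ \Gamma(\trunc(V); \pi_i(\cO_V\alg)) \otimes_{\cO(\trunc(V))} \cO(\trunc(W)) \;\simeq\; \Gamma(\trunc(W); \pi_i(\cO_V\alg)|_{\trunc(W)}) , \]
which is a classical consequence of Cartan's theorem B for coherent sheaves over Stein compacta, and is essentially the content of \cref{cor:coherent_sheaves_and_restriction_derived_Stein} restricted to a single coherent sheaf. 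To conclude the identification with $\pi_i(A_W)$, one uses the equivalence of sites $\trunc(V)\et \simeq V\et$ and the comparison $\pi_i(\cO_W\alg) \simeq \pi_i(\cO_V\alg)|_{\trunc(W)}$, which holds since the structure sheaf is local for the analytic topology.

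The main technical obstacle is condition (b): isolating the correct formulation of the coherent base-change formula on non-reduced Stein spaces and confirming that the Tor-vanishing required to pass from global sections to derived tensor products actually holds. This reduces to the fact that, for any coherent sheaf $\cG$ on $\trunc(V)$, the presheaf $W' \mapsto \Gamma(V; \cG) \otimes_{\cO(\trunc(V))} \cO(W')$ already agrees with $\cG$ on the basis of open Stein subsets that are relatively compact in $V$; the relatively compact hypothesis $W \Subset V$ is exactly what guarantees the required flatness of $\cO(\trunc(V)) \to \cO(\trunc(W))$ so that no higher Tor terms appear. With both (a) and (b) in hand, the criterion \cite[Proposition 7.2.2.13]{Lurie_Higher_algebra} for flatness between connective derived rings immediately yields the conclusion.
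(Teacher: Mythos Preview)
Your overall two-step strategy matches the paper's: check that $\pi_0(A_V)\to\pi_0(A_W)$ is flat, and then verify the canonical map $\pi_i(A_V)\otimes_{\pi_0(A_V)}\pi_0(A_W)\to\pi_i(A_W)$ is an isomorphism for each $i$. Step (a) is fine. The gap is in step (b).

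The base-change identity you want,
\[ \Gamma\big(\trunc(V);\pi_i(\cO_V\alg)\big)\otimes_{\pi_0(A_V)}\pi_0(A_W)\;\simeq\;\Gamma\big(\trunc(W);\pi_i(\cO_W\alg)\big), \]
is \emph{not} a direct consequence of Cartan's theorem B. What one gets from the classical literature (e.g.\ Douady, \emph{Le probl\`eme des modules\ldots}) is this identity with the \emph{completed} tensor product $\cotimes$ in place of the algebraic one. To replace $\cotimes$ by $\otimes$ you need $\Gamma(V;\pi_i(\cO_V\alg))$ to be finitely generated over $\pi_0(A_V)$. This is precisely where the third Stein space $U$ enters, and it is the step your argument omits: since $\pi_i(\cO_V\alg)$ is the restriction of the coherent sheaf $\pi_i(\cO_U\alg)$ on $\trunc(U)$ and $V\Subset U$, one invokes \cite[Lemmas 8.11--8.12]{Porta_Yu_Higher_analytic_stacks_2014} to conclude finite generation of $\Gamma(V;\pi_i(\cO_V\alg))$. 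Your justification (``the relatively compact hypothesis $W\Subset V$ is exactly what guarantees the required flatness \ldots\ so that no higher Tor terms appear'') addresses a different concern: flatness kills higher Tors, but does nothing to bridge the gap between the algebraic and completed tensor products over the non-Noetherian ring $\pi_0(A_V)$. Without the extra relatively compact inclusion $V\Subset U$, global sections of a coherent sheaf on a Stein space need not be finitely generated.

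Two smaller remarks. First, your appeal to \cref{cor:coherent_sheaves_and_restriction_derived_Stein} is circular: in the paper that corollary is deduced \emph{from} the present lemma (it uses the flatness of $A_V\to A_W$ to collapse the Tor spectral sequence). Second, your closing reference to \cite[Proposition 7.2.2.13]{Lurie_Higher_algebra} is the right criterion, but the content lies entirely in establishing (b), which as written is not yet justified.
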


\begin{proof}
	Since $W \Subset V$, we can use \cite[Lemma 8.13]{Porta_Yu_Higher_analytic_stacks_2014} to see that $\pi_0(A_V) \to \pi_0(A_W)$ is flat.
	All we are left to check is therefore that the canonical map
	\[ \pi_i(A_V) \otimes_{\pi_0(A_V)} \pi_0(A_W) \longrightarrow \pi_i(A_W) \]
	is an isomorphism.
	\Cref{lem:homotopy_group_global_section_Stein} provides us with natural equivalences
	\[ \pi_i(A_V) \simeq \Gamma(V; \pi_i(\cO_V\alg)) \quad , \quad \pi_i(A_W) \simeq \Gamma(W; \pi_i(\cO_W\alg)) . \]
	We now observe that $\pi_i(\cO_V\alg)$ is a coherent sheaf on the underived Stein space $\trunc(V)$ and furthermore $\pi_i(\cO_V\alg) |_W = \pi_i(\cO_W\alg)$.
	Therefore, \cite[Proposition 2]{Douady_Proper_1973} implies that
	\[ \Gamma(W; \pi_i(\cO_W\alg)) \simeq \pi_i(\cO_V\alg)(W) \simeq \Gamma(V; \cO_V\alg) {\cotimes}_{\pi_0(A_V)} \pi_0(A_W) . \]
	We now observe that $\pi_i(\cO_V\alg)$ is the restriction to $\trunc(V)$ of the coherent sheaf $\pi_0(\cO_U\alg)$ on $\trunc(U)$.
	Since $V \Subset U$, \cite[Lemmas 8.11 and 8.12]{Porta_Yu_Higher_analytic_stacks_2014} imply that $\Gamma(V; \cO_V\alg)$ is finitely generated over $\pi_0(A_V)$.
	Therefore the canonical map
	\[ \Gamma(V; \cO_V\alg) \otimes_{\pi_0(A_V)} \pi_0(A_W) \longrightarrow \Gamma(V; \pi_i(\cO_V\alg)) {\cotimes}_{\pi_0(A_V)} \pi_0(A_W) \]
	is an equivalence.
	The conclusion follows.
\end{proof}

The same technique used to prove the above lemma allows also to prove the following more general result:

\begin{cor} \label{cor:coherent_sheaves_and_restriction_derived_Stein}
	Let $W \Subset V \Subset U$ be a nested sequence of relatively compact derived Stein spaces.
	Let $A_U$, $A_V$ and $A_W$ be defined as in \cref{lem:flatness_global_sections_nested_compact_Stein}.
	Then for any $\cF \in \Coh(U)$, the natural map
	\[ \gamma_\cF \colon \Gamma(V; \cF|_V) \otimes_{A_V} A_W \longrightarrow \Gamma(W;\cF|_W) \]
	is an equivalence.
\end{cor}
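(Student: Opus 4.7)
The plan is to mimic the second half of the proof of \cref{lem:flatness_global_sections_nested_compact_Stein}, reducing the statement to Douady's theorem applied levelwise to the homotopy sheaves of $\cF$.

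First I would check that it suffices to verify that $\gamma_\cF$ induces an isomorphism on $\pi_i$ for every $i \in \mathbb Z$: the standard $t$-structure on $A_W \Mod$ is both left and right complete, so a map is an equivalence iff it is so on all homotopy groups. By \cref{lem:flatness_global_sections_nested_compact_Stein} the morphism $A_V \to A_W$ is flat, so the functor $(-) \otimes_{A_V} A_W$ is $t$-exact and satisfies
\[ \pi_i\bigl(M \otimes_{A_V} A_W\bigr) \cong \pi_i(M) \otimes_{\pi_0(A_V)} \pi_0(A_W) \]
for every $A_V$-module $M$. Applied to $M = \Gamma(V;\cF|_V)$, and combined with \cref{lem:homotopy_group_global_section_Stein} together with Cartan's theorem B (which gives $\pi_i \Gamma(V;\cF|_V) \cong \Gamma(V; \pi_i(\cF)|_V)$, and similarly for $W$), this identifies the map on $\pi_i$ with the underived map
\[ \Gamma(V; \pi_i(\cF)|_V) \otimes_{\pi_0(A_V)} \pi_0(A_W) \longrightarrow \Gamma(W; \pi_i(\cF)|_W). \]

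At this point I would finish exactly as in the closing paragraph of the proof of \cref{lem:flatness_global_sections_nested_compact_Stein}: since $V \Subset U$, \cite[Lemmas 8.11 and 8.12]{Porta_Yu_Higher_analytic_stacks_2014} show that $\Gamma(V; \pi_i(\cF)|_V)$ is finitely generated over $\pi_0(A_V)$, and \cite[Proposition 2]{Douady_Proper_1973} then identifies its base change along $\pi_0(A_V) \to \pi_0(A_W)$ with $\Gamma(W; \pi_i(\cF)|_W)$.

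The only subtle point is that $\cF \in \Coh(U)$ is allowed to be unbounded, so one should avoid any argument relying on the full Postnikov tower converging in a finite number of steps. This is precisely why I would reduce to checking the statement separately for each $\pi_i$ rather than attempting a simultaneous spectral sequence argument; once things are arranged this way, the unbounded case is no harder than the bounded one, because Cartan's theorem B gives the degeneration $\pi_i \Gamma(V; \cF|_V) \cong \Gamma(V; \pi_i(\cF)|_V)$ for each $i$ independently.
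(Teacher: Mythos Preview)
Your proposal is correct and follows essentially the same route as the paper: reduce to checking $\pi_i$ for each $i$, use flatness of $A_V \to A_W$ from \cref{lem:flatness_global_sections_nested_compact_Stein} to pull the tensor product through $\pi_i$, then invoke Cartan's theorem B, Douady's result, and \cite[Lemmas 8.11 and 8.12]{Porta_Yu_Higher_analytic_stacks_2014} exactly as you do. The only cosmetic difference is that the paper phrases the flatness step via degeneration of the Tor spectral sequence rather than $t$-exactness, and your closing remark on why the unbounded case causes no extra difficulty is a helpful clarification not spelled out in the paper.
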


\begin{proof}
	It is enough to check that for every integer $i \in \mathbb Z$ the map $\gamma_\cF$ induces an isomorphism
	\[ \pi_i( \Gamma(V; \cF |_V) \otimes_{A_V} A_W) \longrightarrow \pi_i( \Gamma(W; \cF|_W) ) . \]
	Thanks to \cref{lem:flatness_global_sections_nested_compact_Stein} we know that the map $A_V \to A_W$ is flat.
	As a consequence, the Tor spectral sequence of \cite[7.2.1.19]{Lurie_Higher_algebra} degenerates at the page $E_2$, yielding an equivalence
	\[ \pi_i \left( \Gamma(V; \cF |_V ) \otimes_{A_V} A_W \right) \simeq \pi_i( \Gamma(V; \cF |_V) ) \otimes_{A_V} A_W . \]
	On the other hand, Cartan's theorem B supplies a natural equivalence
	\[ \pi_i ( \Gamma(V;\cF |_V ) ) \simeq \Gamma(V; \pi_i(\cF) |_V ) . \]
	Since $\pi_i(\cF)$ is a coherent sheaf on the underived Stein space $\trunc(U)$, we can use \cite[Proposition 2]{Douady_Proper_1973} to obtain an equivalence
	\[ \Gamma(V; \pi_i(\cF) |_V ) {\cotimes}_{\pi_0(A_V)} \pi_0(A_W) \simeq \Gamma(W; \pi_i(\cF) |_W ) . \]
	We now combine \cite[Lemmas 8.11 and 8.12]{Porta_Yu_Higher_analytic_stacks_2014} to the sheaf $\pi(\cF)$ to deduce that $\Gamma(V; \pi_i(\cF)|_V)$ is finitely generated over $\pi_0(A_V)$.
	In particular, the natural map
	\[ \Gamma(V; \pi_i(\cF) |_V ) \otimes_{\pi_0(A_V)} \pi_0(A_W) \longrightarrow \Gamma(V; \pi_i(\cF) |_V) {\cotimes}_{\pi_0(A_V)} \pi_0(A_W) \]
	is an isomorphism.
	The conclusion follows.
\end{proof}

\bibliographystyle{plain}
\bibliography{dahema}

\end{document}